\documentclass[11pt]{article}

\usepackage{latexsym}
\usepackage{amssymb}
\usepackage{amsthm}
\usepackage{amscd}
\usepackage{amsmath}
\usepackage{mathrsfs}
\usepackage{graphicx}
\usepackage[colorlinks=true, pdfstartview=FitV, linkcolor=blue, citecolor=blue, urlcolor=blue]{hyperref}
\usepackage{shuffle}
\usepackage[centertableaux]{ytableau}
\usepackage{stmaryrd}
\usepackage{tikz-cd}

\usepackage[colorinlistoftodos]{todonotes}
\usetikzlibrary{chains,scopes,decorations.markings}

\usepackage{mathdots}

\theoremstyle{definition}
\newtheorem* {theorem*}{Theorem}
\newtheorem* {conjecture*}{Conjecture}
\newtheorem{theorem}{Theorem}[section]

\theoremstyle{definition}

\newtheorem* {example*}{Example}

\newtheorem{lemma}[theorem]{Lemma}
\theoremstyle{definition}
\newtheorem{definition}[theorem]{Definition}
\theoremstyle{definition}

\newtheorem{conjecture}[theorem]{Conjecture}
\newtheorem{proposition}[theorem]{Proposition}
\newtheorem{corollary}[theorem]{Corollary}

\newtheorem*{remark*}{Remark}
\theoremstyle{definition}
\newtheorem{remark}[theorem]{Remark}
\theoremstyle{definition}
\newtheorem {example}[theorem]{Example}
\theoremstyle{definition}

\theoremstyle{definition}

\theoremstyle{definition}
\newtheorem{problem}[theorem]{Problem}
\theoremstyle{definition}

\newcommand{\ytabc}[2]{
\ytableausetup{boxsize = #1,aligntableaux=center}
{\small\begin{ytableau}  #2  \end{ytableau}}
}

\def\modu{\ (\mathrm{mod}\ }

\def\({\left(}
\def\){\right)}

\newcommand{\CC}{\mathbb{C}}
\newcommand{\QQ}{\mathbb{Q}}
\newcommand{\cP}{\mathcal{P}}

\newcommand{\cR}{\mathcal{R}}

\newcommand{\cS}{\mathcal{S}}

\newcommand{\cC}{\mathcal{C}}

\def\cS{\mathcal{S}}

\def\NN{\mathbb{Z}_{\geq 0}}

\def\Hom{\mathrm{Hom}}

\def\CC{\mathbb{C}}
\def\RR{\mathbb{R}}
\def\ZZ{\mathbb{Z}}
\def\HH{\mathbb{H}}

\def\GL{\mathrm{GL}}

\def\ch{\mathsf{ch}}
\def\spanning{\textnormal{-span}}

\newcommand{\abs}[1]{\lvert #1 \rvert}

\newcommand{\Flag}{\mathcal{F}}  

\newcommand{\g}{\mathfrak{g}}

\newcommand{\one}{{1\hspace{-.12cm} 1}}

\def\fk{\mathfrak}

\def\barr{\begin{array}}
\def\earr{\end{array}}
\def\ba{\begin{aligned}}
\def\ea{\end{aligned}}
\def\be{\begin{equation}}
\def\ee{\end{equation}}

\def\qquand{\qquad\text{and}\qquad}
\def\quand{\quad\text{and}\quad}

\newcommand{\gl}{\mathfrak{gl}}

\def\id{\mathrm{id}}
\def\PP{\mathbb{P}}

\def\fkS{\fk S}

\def\ben{\begin{enumerate}}
\def\een{\end{enumerate}}

\def\bei{\begin{itemize}}
\def\eei{\end{itemize}}

\def\cT{\mathcal{T}}
\def\cE{\mathcal E}

\def\x{\textbf{x}}
\def\y{\textbf{y}}

\def\e{\textbf{e}}

\newcommand{\xRightarrow}[2][]{\ext@arrow 0359\Rightarrowfill@{#1}{#2}}

\newcommand{\poly}{\operatorname{poly}}

\newcommand{\esf}{\mathsf{e}}  

\def\R{R}

\newcommand{\cB}{\mathcal{B}}

\def\arcstart{\ \xy<0cm,-.06cm>\xymatrix@R=.1cm@C=.10cm }
\newcommand{\arcstartc}[1]{\ \xy<0cm,-.15cm>\xymatrix@R=.1cm@C=#1cm}

\newcommand{\bfa}{\mathbf{a}}
\newcommand{\ii}{\mathbf{i}}
\newcommand{\mm}{\mathbf{m}}

\newcommand{\bfN}{\mathbf{N}}

\def\sC{\mathscr{C}}

\definecolor{darkred}{rgb}{0.7,0,0} 
\newcommand{\defn}[1]{{\color{darkred}\emph{#1}}} 

\newcommand{\weight}{\operatorname{wt}}

\def\q{\mathfrak{q}}

 \def\PP{\ZZ_{>0}}

\def\row{\mathsf{row}}

\def\col{\mathsf{col}}

\def\D{\mathsf{D}}

\def\sqgln{\sqrt{\gl_n}}

\def\SetTab{\mathsf{SetTab}}
\def\SS{\mathbb{S}}

\numberwithin{equation}{section}
\allowdisplaybreaks[1]

\newcommand{\zero}{\boldsymbol{0}}
\newcommand{\iso}{\cong}

\newcommand{\bal}{\boldsymbol{\alpha}}

\newcommand{\powF}[1]{\mathtt{F}^{(#1)}}  
\newcommand{\defectF}[1]{\mathtt{F}_+} 
\newcommand{\branchF}[1]{\mathtt{B}_{#1}} 
\newcommand{\restrictF}[1]{\mathtt{R}_{#1}} 

\newcommand{\defect}{\mathsf{def}}
\newcommand{\insmap}{\mathsf{insert}}

\newcommand{\sK}{\mathscr{K}}

\newcommand{\uj}[2]{\{#2\}_{#1}}
\newcommand{\elem}[2]{\langle #2 \rangle_{#1}}

\newcommand{\fkD}{\mathfrak{D}}

\usepackage{nicefrac}
\newcommand{\halfpower}{\nicefrac{1}{2}} 

\newcommand{\sqrtB}{\cB^{\halfpower}}
\newcommand{\sqrtE}{\cP^{\halfpower}}

\newcommand{\VCrys}[1]{\mathsf{Vec}^{\halfpower}_n(#1)}

\newcommand{\VCrysRN}[2]{\mathsf{Vec}^{\halfpower}_{#1,#2}}
\newcommand{\VCrysR}[1]{\VCrysRN{#1}{n}}  

\newcommand{\PCrysR}[1]{\mathsf{LP}^{\halfpower}_{#1,n}}  

\newcommand{\BZL}{\mathrm{BZL}}
\usepackage{fullpage}

\usepackage{comment}

\begin{document}
\title{Square root crystals and the square root of $\cB(\infty)$}
\author{
Eric Marberg \\ Department of Mathematics \\  Hong Kong University of Science and Technology \\ {\tt eric.marberg@gmail.com}
\and 
Travis Scrimshaw \\ Department of Mathematics \\ Hokkaido University \\ {\tt tcscrims@gmail.com}
}

\date{}

\makeatletter
\newcommand{\subjclass}[2][1991]{%
  \let\@oldtitle\@title%
  \gdef\@title{\@oldtitle\footnotetext{#1 \emph{Mathematics subject classification:} #2}}%
}
\newcommand{\keywords}[1]{%
  \let\@@oldtitle\@title%
  \gdef\@title{\@@oldtitle\footnotetext{\emph{Key words and phrases:} #1.}}%
}
\makeatother

\subjclass[2020]{05E05, 17B37, 05A19, 14M15}

\keywords{square root crystal, Grothendieck polynomial, set-valued tableau, direct limit crystal}

\maketitle

\begin{abstract}
We introduce a general monoidal category of $\bfN$-root crystals and then study the special case of square root $\gl_n$-crystals.
The latter objects include Yu's crystals on semistandard set-valued tableaux.
Prior work of the first author, Tong, and Yu showed that regular square root $\gl_n$-crystals can be a useful tool for proving Grothendieck positivity results.
The objects studied here go beyond the regular case and allow us to construct a square root analog of the direct limit crystal $\cB(\infty)$.
We give several descriptions of our square root of $\cB(\infty)$, using marginally large tableaux, the Lusztig or PBW parameterization, and the Nakashima--Zelevinsky polyhedral model.
We show that this crystal has a simple character formula, exhibits a nontrivial Demazure filtration, and recovers Yu's semistandard set-valued tableau crystals after taking appropriate tensor products.
We also investigate a number of differences between square root crystals and classical crystal constructions.
\end{abstract}

\tableofcontents

\section{Introduction}

The recent papers \cite{Yu23,MT2023,MTY} introduce and study 
\defn{square root crystals}, which are $K$-theoretic generalizations of Kashiwara crystals in type A. Square root crystals
 have roughly the same relationship to set-valued tableaux and symmetric Grothendieck functions as their classical predecessors do to semistandard tableaux and Schur polynomials. 
 
 The goals of this article are twofold. First, we give a systemic treatment of a more general category of square root crystals, outside the regular case considered in \cite{MT2023,MTY}. Second, we study several constructions of a   ``square root'' analog of Kashiwara's $\cB(\infty)$-crystal.
The rest of this introduction presents our motivations and main results in more detail.

\subsection{Classical background}
The \defn{complete flag variety} $\Flag$ is the space of all strictly increasing chains of vector spaces
$
\{0\} = V_0 \subsetneq V_1 \subsetneq V_2 \subsetneq \cdots \subsetneq V_n = \CC^n
$. 
This space
can be realized as the quotient $\GL_n/ B$, where $\GL_n$ is defined over $\CC$ and $B$ is the Borel subgroup of upper triangular matrices.
The group $B$ acts on $\Flag$ and the orbits $X_w^{\circ}$ of this section are parametrized by permutations $w \in S_n$. These orbits are affine spaces, and their closures $X_w := \overline{X_w^{\circ}} = \bigsqcup_{v \leq w} X_v^{\circ}$ are the well-studied \defn{Schubert varieties}, which give rise to a nice filtration on $\Flag$ under Bruhat order.

Each $X_w$ determines a class $[X_w]$ in the cohomology ring $H^{\bullet}(\Flag; \ZZ)$, 
and the set of these classes provide a $\ZZ$-basis for the ring.
By a classical theorem of Borel, $H^{\bullet}(\Flag; \ZZ)$ is isomorphic to the coinvariant algebra $\CC[x_1, \dotsc, x_n] / \CC[x_1, \dotsc, x_n]_+^{S_n}$, where $\CC[x_1, \dotsc, x_n]_+^{S_n}$ is the ideal generated by all symmetric polynomials $f$ with $f(0,\dotsc,0) = 0$.
Lascoux and Sch\"utzenberger~\cite{LS82,LS83} introduced a remarkable family of \defn{Schubert polynomials} $\fkS_w$ that represent the classes $[X_w]$ under the Borel isomorphism.
When restricted to Grassmannian permutations,
these polynomials recover the \defn{Schur functions} $s_{\lambda}(x)$.

Schubert calculus is the study of these objects, and it is intimately related with the representation theory of Lie groups and algebras.
A particularly important result in this area is the Borel--Weil(--Bott) theorem (see, e.g.,~\cite[\S23.3]{FH91}), which constructs the highest weight representation $V(\lambda)$ of $\GL_n$ from the global sections of the line bundle $G \times_B W(-\lambda)$, where $W(-\lambda)$ is a $1$-dimensional $B$-module.
We can also consider the restriction to global sections on the fiber product $X_w \times_B W(-\lambda)$, which is naturally a $B$-module $V(\lambda)_w$ known as a \defn{Demazure module}.

Since $\Flag = X_{w_0}$, where $w_0 = [n, \dotsc, 2, 1] \in S_n$ is the unique permutation with the most inversions, we have a filtration of $V(\lambda)$ by Demazure modules.
Using properties of parabolic subgroups of $\GL_n$, Demazure (see~\cite{Andersen85} or \cite{LMS79}) gave a character formula for $V(\lambda)_w$ 
that was analogous to the construction of Schubert polynomials.
This \defn{Demazure character formula} refines the Weyl character formula for Schur functions $s_{\lambda}(x)$, which are the characters of the $\GL_n$-representations $V(\lambda)$ and equivalent to the bi-alternant formula.

The categories of highest weight representations $V(\lambda)$ for $\GL_n$,  its Lie algebra $\gl_n$, and the associated (Drinfel'd--Jimbo) quantum group $U_q(\gl_n)$ are essentially equivalent~\cite{Bourbaki98,Joseph95}.
By using the quantum group, Kashiwara's theory of crystal bases~\cite{Kashiwara90,Kashiwara91} (which are equivalent to Lusztig's canonical bases~\cite{Lusztig90,Lusztig91,GL93}) constructs for $V(\lambda)$ a combinatorial realization $\cB(\lambda)$, which we refer to as a \defn{$\gl_n$-crystal}.
The object $\cB(\lambda)$ consists of a directed graph with edges labeled by integers $i\in\{1,2,\dots,n-1\}$ and vertices weighted by tuples in $\ZZ^n$.
Subsequent work~\cite{Kashiwara93,Littelmann95} showed that $\cB(\lambda)$ naturally restricts to describe $V(\lambda)_w$ 
and further abstractions of $\gl_n$-crystals led to important constructions such as the direct limit $\cB(\infty)$ of $\cB(\lambda)$.
These developments also yield crystals for Verma modules, which are essentially the free objects in the Bernstein--Gelfand--Gelfand category $\mathcal{O}$ (see, e.g.,~\cite{Humphreys08} for more information).

\subsection{$K$-theoretic generalizations}

A recent topic of interest has been $K$-theoretic Schubert calculus, where cohomology is replaced by (connective) $K$-theory. 
A common theme in this subject is to find  combinatorial constructions that can recover the classical case while also extending to the $K$-theoretic setting.
For instance, the $K$-theoretic analog of Schubert polynomials are the Grothendieck polynomials $\mathfrak{G}_w(x;\beta)$~\cite{LS82,LS83,LS90,FK1994}, which represent the connective $K$-theory classes of the varieties $X_w$ and which restrict in the Grassmannian case to the symmetric Grothendieck polynomials $G_{\lambda}(x; \beta)$.

The functions $G_{\lambda}(x; \beta)$ have been well-studied, with two key results being their Schur positivity~\cite{Lenart00} and a \defn{semistandard set-valued tableau (SSVT)} generating function formula~\cite{Buch2002}.
There is a $K$-theoretic analog of Demazure characters for $G_{\lambda}(x; \beta)$ known as \defn{Lascoux polynomials}~\cite{Kirillov16,Lascoux01,Monical16,RY15}, which have combinatorial formulas given by~\cite{BSW20} and~\cite{SY23}.

A natural problem is to find a way to encode the Lascoux polynomials using a $\gl_n$-crystal structure (see~\cite[Prob.~7.13]{MPS21}).
A $\gl_n$-crystal on SSVT was constructed for this in~\cite{MPS21}. This crystal identifies the Schur decomposition of $G_{\lambda}(x; \beta)$,
and when $\lambda$ is a rectangular partition, it can be extended to encode $L_{w\lambda}(x; \beta)$~\cite{MPS21,PS22}. However, there is no natural way to obtain such an encoding for general $\lambda$; see~\cite[Prop.~7.11]{MPS21}.

Yu~\cite{Yu23} found a solution to this problem by using the same set of tableaux but relaxing the axioms of a $\gl_n$-crystal.
More specifically, Yu defined a digraph structure on SSVT, which is reminiscent of a $\gl_n$-crystal and was later given the name \defn{square root crystal ($\sqgln$-crystal)} in \cite{MT2023}.
He then showed that this structure recovers the $K$-theoretic analog of the Demazure filtration on $V(\lambda)$ for all partitions $\lambda$~\cite[Thm.~5.1]{Yu23}.

We denote the $\sqgln$-crystal on SSVT of shape $\lambda$ and max entry $n$ by $\SetTab_n(\lambda)$.
Yu's original construction depended on a new, more complicated, signature rule.
This suggested a tensor product rule, where $\SetTab_n(\lambda)$ is a certain connected subcrystal built from a tensor power of a crystal $\SS_n$ of nonempty subsets of $\{1,2,\dots,n\}$.
This approach was explored in~\cite{MT2023,MTY}, where it turns out that the tensor product is the usual tensor product rule for Kashiwara crystals (see Definition~\ref{def:tensor_product} and Remark~\ref{no-loss-rem}).

While $\SetTab_n(\lambda) \subseteq \SS_n^{\otimes \abs{\lambda}}$, the crystals obtained in general are much more intriguing. They can have multiple highest weights and their isomorphism classes are not determined by the highest weights (see  Examples~\ref{ex:not_monoidal} and~\ref{ex:weight_not_isomorphism}).
Moreover, the Demazure crystals obtained are distinct from those in~\cite{PS22} (see Example~\ref{ex:PS_nondem}).
Despite this, the characters of these crystals are still very behaved~\cite{MTY} (see  Theorem~\ref{thm:standard_characters}).

The goal of this paper is to develop a more general theory of square root crystals, mimicking that of abstract (Kashiwara) crystals from~\cite{Kashiwara93}.
We focus on the analog of the crystal $\cB(\infty)$ as it provides additional benefits through the Kashiwara embedding~\cite{Kashiwara93}. This embedding is fully encoded in the polyhedral realization of Nakashima and Zelevinsky~\cite{Nakashima99,NZ97} as the elements are given by integral points in a rational cone.
It realizes $\cB(\infty)$ as a tensor product of elementary crystals $\cE_i$ introduced~\cite[Ex.~1.2.6]{Kashiwara93} that correspond to an infinite $\mathfrak{sl}_2$ string for the $i$-th simple root. These elementary crystals satisfy the Coxeter braid relations for the corresponding Weyl group (see, e.g.,~\cite[\S12.1]{BumpSchilling}).

Our intent was to realize a similar construction for square root crystals as an alternative way to prove Yu's results and generalize to other types. However, some subtleties arise in our way of  achieving this.
We will now describe our results in more detail and highlight some of the obstructions we encountered.

\subsection{Main results}

We begin in a very general setting by introducing a notion of $n_i$-th roots of the crystal operators $e_i$ and $f_i$ (Definition~\ref{abstract-def}).
This depends on some positive integer parameters $\bfN=(n_i)_{i \in I}$ and leads to a definition of a monoidal category of
\defn{abstract $\bfN$-root crystals}.
Notably, the tensor product for this category (Definition~\ref{def:tensor_product}) has no dependence on the choice of parameters $\bfN$. 

Because of an important integrality condition required for tensor products, 
we show that one can restrict to the case $n_i \in\{ 1,2\}$ with no loss of generality (Proposition~\ref{prop:trivial_large_root}).
Taking all parameters $n_i=1$ recovers the classical definition of abstract Kashiwara crystals,
while the square root crystals mentioned in the previous section correspond to taking all $n_i=2$.
Many general properties and definitions for abstract Kashiwara crystals lift to $\bfN$-root crystals, allowing us to build a fairly well-behaved category with a natural functor to Kashiwara crystals.

After these preliminaries, we focus on the special case of  $\sqgln$-crystals and its subcategory of \defn{polynomial $\sqgln$-crystals}, which were previously studied in~\cite{MT2023,MTY,Yu23} (there called \defn{normal $\sqgln$-crystals}). 
Each polynomial $\sqgln$-crystal is generated from a standard crystal $\SS_n$ by taking iterated tensor products.
This lets us define a monoidal functor to $\gl_n$-crystals that recovers $\cB(\lambda)$ from $\SetTab_n(\lambda)$ (Proposition~\ref{defectF-prop}).

One of our main results is to construct a version of $\cB(\infty)$ in the category of $\sqgln$-crystals. 
We do this first by taking a certain direct limit 
of weight-shifted versions of the crystals
$\SetTab_n(\lambda)$ from \cite{Yu23} (Theorem~\ref{thm:directed_system}). 
We denote this object as a $\SetTab_n(\infty)$.
The direct limit approach 
allows us to lift several  interesting properties
of $\SetTab_n(\lambda)$ to $\SetTab_n(\infty)$,
such as a Demazure filtration (Theorem~\ref{dem-thm}).
We also provide $\SetTab_n(\infty)$ with a set-valued analog (Proposition~\ref{mala-lem}) of the marginally large tableaux model  for $\cB(\infty)$ due to Cliff~\cite{Cliff98} (see also~\cite{HL08}).
Figure~\ref{fig:gl3mlt} shows the crystal graph of $\SetTab_n(\infty)$ using this model.

A curious feature of $\sqgln$-crystals
is that $2$-cycles of the form $b \xrightarrow[\hspace{10pt}]{i} b' \xrightarrow[\hspace{10pt}]{i+1} b$  may occur in the 
corresponding crystal graphs. 
This motivates us to define a square root version of the infinite $i$-string crystals $\cE_i$ by adding analogous $2$-cycles.
We call this a \defn{looped path crystal} and denote it by $\sqrtE_i$ (Definition~\ref{def:looped_path}).
Two other mains results are a square root analog of the polyhedral realization of $\cB(\infty)$ (Theorem~\ref{bzl-thm}) and the Kashiwara embedding (Theorem~\ref{thm:Kashiwara_embedding}).
We also explain how to recover the finite crystal $\SetTab_n(\lambda)$ 
by tensoring $\SetTab_n(\lambda)$ with a 1-element crystal $\cR_\lambda$
and restricting to one connected component (Corollary~\ref{cor:finite_recovery}). This is completely
analogous to the classical situation described in~\cite[Thm.~5]{Kashiwara91}.

In more detail, Theorem~\ref{bzl-thm} shows that $\SetTab_n(\infty)$ arises from the following alternate construction.
First take a reduced word $\ii=(i_1,i_2,\dots,i_N)$ for the reverse permutation $w_0 \in S_n$, then form the tensor product
$\sqrtE_{i_1}\otimes \sqrtE_{i_2} \otimes \cdots \otimes \sqrtE_{i_N},
$
and finally take the subcrystal \defn{lower generated} (see Section~\ref{basic-sect}) by the element $ 0\otimes 0 \otimes \cdots \otimes 0$. When we make an appropriate choice for $\ii$, the resulting crystal turns out to be isomorphic to the direct limit $\SetTab_n(\infty)$.

The same recipe results in the $\gl_n$-crystal $\cB(\infty)$ if we tensor together the simple path crystals  $\cE_i$ instead of the loop-paths $\sqrtE_i$ \cite[\S12]{BumpSchilling}.
In fact, this classical construction works for any reduced word $\ii$ for $w_0$.
By contrast, we only obtain $\SetTab_n(\infty)$ when $\ii$ is in a restricted commutation class of the so-called \defn{Berenstein--Zelevinsky--Littelmann (BZL) word} 
(named for~\cite{BZ01,Littelmann98})
\[
\BZL = (n-1, n-2, n-1,  \dotsc, 2,3,4,\dots,n-1,  1,2,3, \dotsc, n-1),
\] which is the ``nicest'' possible choice of reduced word for $w_0$. 

Even there,  our model for $\SetTab_n(\infty)$ is not given by integral points in a cone (Example~\ref{ex:not_polyhedral}).
Section~\ref{sec:nonBZL} describes explicitly some obstructions that prevent us from taking $\ii$ to be any reduced word for $w_0$; see  Proposition~\ref{prop:no_kashiwara_here} and Remark~\ref{rem:non_BZL_failure}.
We also observe a number of other differences between $\sqgln$- and $\gl_n$-crystals (e.g.,  Remark~\ref{rem:braid_rels}, Remark~\ref{rem:square_func_elem}, Example~\ref{ex:non_twisted_auto}, and Section~\ref{sec:dualities})
These are essentially shadows of an asymmetry in how simple roots are split in the definition
of a $\sqgln$-crystal. This asymmetric is what allows the $2$-cycles to appear, but also seems to be necessary to have a Demazure-type filtration on $\SetTab_n(\lambda)$. This filtration is as essential motivation for Yu's original definition 
of $\SetTab_n(\lambda)$ in \cite{Yu23}.


On the way to Theorem~\ref{bzl-thm}, we prove an analog of the Lusztig parameterization (Theorem~\ref{BZL-cor}), which comes from the PBW-type bases of $U_q(\g)^-$.
We show this result by splitting the crystal up row-by-row, as opposed to the column-by-column splitting of~\cite{Yu23}, 
and explicitly describing the crystal structure on the parameterization for each row (Proposition~\ref{vcrys-prop}).
One application of this  is to derive a surprising character formula (Corollary~\ref{cor:infchar}) for 
$\SetTab_n(\infty)$, namely:
\be
\ch\bigl( \SetTab_n(\infty) \bigr) = \prod_{1\leq i < j\leq n} \frac{1+\beta x_j}{1 - x_j x_i^{-1}}
= \ch\bigl(\cB(\infty)\bigr)  \prod_{i=1}^n (1 - \beta x_i)^{i-1}.
\ee
The factorization on the right is not at all obvious from the direct limit construction of $\SetTab_n(\infty)$.

\subsection{Outline}
This paper is organized as follows.
In Section~\ref{sec:abstract}, we give our general definition of a monoidal category of \defn{$\bfN$-root crystals} and describe some of its properties.
In Section~\ref{sec:sqrt_crystals}, we study a subcategory of \defn{polynomial $\sqgln$-crystals} and construct our square root version of $\cB(\infty)$ as a direct limit. 
Section~\ref{sec:parameterizations} gives a Lusztig-type parameterization of this crystal and contains our main results concerning a Kashiwara embedding and polyhedral-type parameterization. 
We also derive some results on the string data associated to our crystal.
Section~\ref{sec:oddities} presents some additional remarks about how $\sqgln$-crystals behave compared to their classical Kashiwara counterparts.

\subsection*{Acknowledgements}

This article is based upon work supported by the National Science Foundation under grant DMS-1929284 while the authors were in residence at the Institute for Computational and Experimental Research in Mathematics in Providence, RI, during the Categorification and Computation in Algebraic Combinatorics semester program.
T.S.\ thanks Hong Kong University of Science and Technology for its hospitality during his visit in February, 2026.

E.M.\ was partially supported by Hong Kong RGC grants 16304122 and 16304625. 
T.S.\ was partially supported by JSPS KAKENHI Grant Numbers JP23K12983 and JP26K06735.

\section{Abstract crystals}
\label{sec:abstract}

We begin with a general notion of an abstract $\bfN$-root crystal, which generalizes the notion of an abstract Kashiwara crystal (that is, one not necessarily associated to a Kac--Moody Lie algebra).
The main definitions in this section extend earlier constructions in \cite{MT2023,MTY,Yu23}.

\subsection{Basic definitions}\label{basic-sect}

Throughout this section, we fix a finite set $I$, and let $\bfN := (N_i \mid i \in I)$ be a sequence of positive integers indexed by $I$.

Let $V$ be a finite-dimensional real vector space with a positive definite symmetric bilinear form $\langle\cdot,\cdot\rangle \colon V\times V \to \RR$.
For any nonzero vector $0\neq \alpha \in V$, define $\alpha^{\vee} := \frac{2}{\langle \alpha,\alpha\rangle} \alpha \in V$.

\begin{definition}
A \defn{$\bfN$-root type} is a tuple $(\Lambda, \Phi, \bal)$ consisting of:
\begin{itemize}

\item a free $\ZZ$-module $\Lambda\subset V$ called the \defn{weight lattice}, whose elements we call \defn{weights};

\item a \defn{root system} $\Phi \subset \Lambda$ with a fixed choice of \defn{simple roots} $\{\alpha_i \mid i \in i\}\subset \Phi$ 
and a fixed choice of \defn{fundamental weights} $\{\Lambda_i \mid i \in I\} \subset \Lambda$ satisfying 
\[
\langle \lambda, \alpha_i^{\vee} \rangle \in \ZZ\quand \langle \Lambda_i, \alpha_j^{\vee} \rangle = \delta_{ij}\quad\text{for all $\lambda \in \Lambda$ and $i,j \in I$};
\]

\item a family of weights $\bal := (\alpha_i^{(m)} \in \Lambda \mid i \in I, \; m \in \ZZ/N_i\ZZ)$ such that if $i \in I$ then
\be \label{eq:kth_weight_cond}
\sum_{m \in \ZZ/N_i\ZZ} \alpha_i^{(m)} = \alpha_i.
\ee
\end{itemize}
\end{definition}

Both the choice of simple roots and the choice of fundamental weights are part of the data of a $\bfN$-root type.
As the root system $\Phi$ is not required to span $V$, the fundamental weights $\Lambda_i$ are not uniquely determined by the simple roots $\alpha_i$.

Now we fix a $\bfN$-root type.
We refer to the set $\{\alpha_i^{\vee} \mid i \in I\}$ as the \defn{simple coroots}.
The \defn{Cartan matrix} of the root system $\Phi$ is the matrix $[\langle \alpha_i, \alpha_j^{\vee} \rangle]_{i,j \in I}$, which has $\langle \alpha_i, \alpha_i^{\vee} \rangle = 2$ and $\langle \alpha_i, \alpha_j^{\vee} \rangle \leq 0$ for all $i \neq j$.
The Cartan matrix determines the type of the root system and is independent of the choices made (up to reordering).
For more on root systems, we refer the reader to~\cite[\S2.1]{BumpSchilling}.

For each $\alpha \in \Phi$, define the reflection $s_{\alpha} \colon V \to V$ by $s_{\alpha}(v) = v - \langle v,\alpha^{\vee} \rangle \alpha$, and notice that $s_{\alpha}^2(v) = v$.
The \defn{Weyl group} of $\Phi$ is $W = \langle s_\alpha \mid \alpha \in \Phi \rangle \subseteq \GL(V).$
This group is also generated by the \defn{simple reflections} $s_i := s_{\alpha_i} $ for $i \in I$.
The larger generating set of reflections $\{  s_\alpha \mid \alpha \in \Phi\}$ consists of all elements conjugate in $W$ to a simple reflection. 

For any $w \in W$, let $\ell(w) \in \NN$ denote the \defn{length} of $w$, which is the minimal number such that $w = s_{i_1} \cdots s_{i_{\ell(w)}}$.
\defn{Bruhat order} on $W$ is the partial order defined as the transitive closure of $w < w'$ if $\ell(w) < \ell(w')$ and there exists a reflection 
$s_\alpha$ for $\alpha \in \Phi$ such that $s_\alpha w = w'$.
If $W$ is finite, then there exists a unique element $w_0$ of longest length.
For more on Weyl groups, we refer the reader to~\cite{Humphreys90}.

When there is no danger of confusion, we will simply denote a $\bfN$-root type $(\Lambda, \Phi, \bal)$ by $\bal$.

\begin{definition}[Abstract crystal] 
\label{abstract-def}
Fix a  $\bfN$-root type $\bal$, and suppose $\cB$ is a set with maps
\[
\weight \colon  \cB\to \Lambda,
\quad
e_i,f_i \colon  \cB \sqcup\{\zero\}\to \cB \sqcup \{\zero\},
\quand
\varepsilon_i, \varphi_i \colon \cB \to \tfrac{1}{N_i}\ZZ \sqcup \{-\infty\}
\]
for each $i \in I$, where $\zero \notin \cB$ is an auxiliary element.
We say that $\cB$ is an \defn{abstract $\bfN$-root crystal}
of type $(\Lambda, \Phi, \bal)$
if the following axioms hold
for all $i \in I$ and $b,c \in \cB$:
\begin{enumerate}
\item[(A0)] $e_i\zero = f_i\zero = \zero$;
\item[(A1)] $e_i b = c$ if and only if $b = f_i c$;
\item[(A2)] $\varphi_i(b) = \varepsilon_i(b) + \langle \weight(b), \alpha_i^{\vee} \rangle$
under the convention that $-\infty + m = -\infty$ for any $m \in \QQ$;
   \label{axiom:phi_ep_wt}
\item[(A3)] if $\varepsilon_i(b) = -\infty$ then $e_i b = f_i b = \zero$;
\item[(A4)] if $e_i b \neq \zero$ then $\varepsilon_i(e_i b) = \varepsilon_i(b) - \frac{1}{N_i}$ and $\varphi_i(e_i b) = \varphi_i(b) + \frac{1}{N_i}$;
\item[(A5)] if $e_i b \neq \zero$ then $\weight(e_i b) = \weight(b) + \alpha_i^{(m)}$ for $m = N_i \varepsilon_i(b) \in \ZZ/N_i\ZZ$.
\end{enumerate}
\end{definition}

\begin{remark}\label{abstract-def-rmk}
Since  $\langle\cdot,\cdot \rangle$  takes values in $\ZZ$, axiom (A2) implies that either 
\[
\text{$\varepsilon_i(b)=\varphi_i(b)=-\infty$}
\qquad\text{or}\qquad
 \text{$\varepsilon_i(b)$ and $\varphi_i(b)$ are both finite with $\varphi_i(b) - \varepsilon_i(b) \in \ZZ$.}
 \]
Additionally, axioms  (A1), (A4), and (A5) imply that if
 $e_i^{N_i} b \neq \zero$ 
then  $\weight(e_i^{N_i} b) = \weight(b) + \alpha_i$.
\end{remark}

We have given an ``asymmetric'' formulation of the axioms in Definition~\ref{abstract-def}, but we can also deduce the following dual properties.

\begin{proposition}\label{prime-prop}
Suppose $\cB$ is an abstract $\bfN$-root crystal.
Let $i \in I$ and $c \in \cB$.
 Then 
\begin{enumerate}
\item[(A3$'$)] if $\varphi_i(c) = -\infty$ then $e_i c = f_i c = \zero$;
\item[(A4$'$)] if $f_i c \neq \zero$ then $\varepsilon_i(f_i c) = \varepsilon_i(c) + \frac{1}{N_i}$ and $\varphi_i(f_i c) = \varphi_i(c) - \frac{1}{N_i}$;
\item[(A5$'$)] if $f_i c \neq \zero$ then $\weight(f_i c) = \weight(c) - \alpha_i^{(m)}$ for $m = N_i \varphi_i(c) +1 \in \ZZ/N_i\ZZ$.
\end{enumerate}
\end{proposition}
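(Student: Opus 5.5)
The plan is to derive each primed property from its unprimed counterpart by using (A1) to convert a statement about $f_i c$ into a statement about $e_i b$ with $b = f_i c$. Axiom (A2) then handles infinities and shifts between $\varepsilon_i$ and $\varphi_i$.

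For (A3$'$), suppose $\varphi_i(c) = -\infty$. By (A2), and as observed in Remark~\ref{abstract-def-rmk}, $\varphi_i(c)$ is $-\infty$ exactly when $\varepsilon_i(c)$ is. Indeed, if $\varepsilon_i(c)$ were finite, then $\varepsilon_i(c) + \langle \weight(c),\alpha_i^\vee\rangle$ would be finite, since the pairing is an integer. Hence $\varepsilon_i(c) = -\infty$, and (A3) gives $e_i c = f_i c = \zero$.

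For (A4$'$), suppose $f_i c = b \neq \zero$. By (A1) we have $e_i b = c \neq \zero$. Applying (A4) to $b$ gives $\varepsilon_i(c) = \varepsilon_i(b) - \frac{1}{N_i}$ and $\varphi_i(c) = \varphi_i(b) + \frac{1}{N_i}$. Rearranging yields exactly $\varepsilon_i(f_i c) = \varepsilon_i(c) + \frac1{N_i}$ and $\varphi_i(f_i c) = \varphi_i(c) - \frac1{N_i}$. We should check that these values are finite so that the arithmetic is legitimate. Since $e_i b \ne \zero$, (A3) forces $\varepsilon_i(b) \ne -\infty$, and then $\varphi_i(b)$ is finite as well.

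For (A5$'$), with $b = f_i c$ as before, (A5) applied to $b$ gives $\weight(c) = \weight(b) + \alpha_i^{(m)}$ with $m = N_i\varepsilon_i(b)$ modulo $N_i$. Equivalently, $\weight(f_i c) = \weight(c) - \alpha_i^{(m)}$. The main point, and the only mildly subtle step, is to re-express $m$ in terms of $\varphi_i(c)$. By (A4$'$) we have $\varepsilon_i(b) = \varepsilon_i(c) + \frac1{N_i}$, so $m = N_i\varepsilon_i(c) + 1$. By (A2), $N_i\varphi_i(c) = N_i\varepsilon_i(c) + N_i\langle \weight(c),\alpha_i^\vee\rangle$, and the last term is an integer multiple of $N_i$, hence zero in $\ZZ/N_i\ZZ$. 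Therefore $m \equiv N_i\varphi_i(c) + 1 \pmod{N_i}$. Here we also use that $N_i\varepsilon_i(c) \in \ZZ$, which holds because $\varepsilon_i$ takes values in $\frac1{N_i}\ZZ$, so the residue is well defined. This completes all three parts.
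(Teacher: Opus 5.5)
Your proposal is correct and follows essentially the same route as the paper. Both set $b = f_i c$, use (A1) to get $e_i b = c$, transfer (A3)--(A5) from $b$ to $c$, and use the integrality of $\varphi_i(c) - \varepsilon_i(c)$ from (A2) to rewrite the residue $N_i\varepsilon_i(b)$ as $N_i\varphi_i(c)+1$ modulo $N_i$.
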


\begin{proof}
Property (A3$'$) follows from (A3) since we observed in Remark~\ref{abstract-def-rmk} that if $\varphi_i(c) = -\infty$ then $\varepsilon_i(c)=-\infty$.
Now suppose $ f_i c \neq \zero$, and let $b = f_ic \in\cB$. 
Then (A1) states $\zero \neq e_i b = c \in \cB$, and so $\varepsilon_i(b)$, $\varepsilon_i(c)$, $\varphi_i(b)$, and $\varphi_i(c)$ are all finite
and (A4$'$) is equivalent to (A4).

To deduce (A5$'$), again let $b = f_i c \in \cB$, and set $j = \varepsilon_i(b)N_i$.
By (A5), we have
\[
\weight(e_i b) = \weight(b) + \alpha_i^{(m)} = \weight(f_i c) + \alpha_i^{(m)} = \weight(c),
\]
where the last equality is from $f_i e_i b = b$, and so $\weight(f_i c) = \weight(c) - \alpha_i^{(m)}$.
This is equivalent to (A5$'$) since  (A4$'$) implies that
\[
\varepsilon_i(b)=\varepsilon_i(f_ic) = \varepsilon_i(c) + \tfrac{1}{N_i} = \varphi_i(c) + \tfrac{1}{N_i} - \underbrace{(\varphi_i(c) - \varepsilon_i(c))}_{\in\ZZ\text{ by Remark~\ref{abstract-def-rmk}}}
\]
so $N_i\varphi_i(c) + 1 \equiv N_i\varepsilon_i(b) \modu N_i)$, which is $j$ as claimed.
\end{proof}

For convenience, we sometimes refer to an abstract $\bfN$-root crystal as simply an abstract crystal.
We call $e_i$ and $f_i$ the (\defn{raising} and \defn{lowering}, respectively) \defn{crystal operators} of $\cB$.
An element $b$ of an abstract crystal $\cB$ is \defn{highest weight} (respectively, \defn{lowest weight}) if $e_ib=\zero$ (respectively, $f_ib=\zero$) for all $i \in I$.
The \defn{crystal graph} of $\cB$ is the directed graph with vertex set $\cB$ that has labeled edges $b \xrightarrow{i} c$ for each $b,c \in \cB$ and $i \in I$ such that $f_i b = c$.

An abstract crystal $\cB$ is \defn{connected} if there is an element $b \in \cB$ such that for  each $b' \in \cB$, there exists a sequence $(g_j \in \{e_i, f_i \mid i \in I\})_{j=1}^{\ell}$ such that $b' = g_{\ell} \cdots g_2g_1 b$.
Equivalently, $\cB$ is connected if its crystal graph is weakly connected as a directed graph.
An \defn{$i$-string} of $\cB$ is a connected subcrystal $\cS$ such that for any $b, c \in \cS$,
there exists an integer $k\in\NN$ such that either $e_i^k b = c$ or $f_i^k b = c$. 
Notice that it is sufficient to check this  condition for a single fixed element $b \in \cS$.
We say that $\cB$ is \defn{generated} by a set $X \subseteq \cB$
if every weakly connected component of the crystal graph of $\cB$ contains at least one element of $X$.
We say that $\cB$ is \defn{lower generated} by $X \subseteq \cB$ if for any $b \in \cB$, there exists a sequence $(i_j \in I)_{j=1}^{\ell}$ and an element $x \in X$ such that $b = f_{i_{\ell}} \cdots f_{i_1} x$. 

Let $\ZZ\llbracket\Lambda\rrbracket$ be the free $\ZZ$-module of infinite $\ZZ$-linear combinations of the symbols $ x^{\lambda} $ for $ \lambda \in \Lambda $.
The \defn{character} of an abstract crystal $\cB$ whose weight map has $\abs{\weight^{-1}(\lambda)} < \infty$ for all $\lambda \in \Lambda$ is 
\[
\ch(\cB) = \sum_{b \in \cB} x^{\weight(b)} \in \ZZ\llbracket\Lambda\rrbracket.
\]
The module $\ZZ\llbracket\Lambda\rrbracket$ 
is too large to have a well-defined ring structure,
but some of its submodules do form rings.
Let $\ZZ\llbracket\Lambda\rrbracket_<$ denote the $\ZZ$-submodule consisting of finite $\ZZ$-linear combinations of vectors of the form $x^{\lambda} + \ZZ\llbracket Q^- \rrbracket$ for some $\lambda \in \Lambda$, where
$
Q^- := \bigoplus_{i \in I} -\NN \alpha_i
$
is the negative root cone.
This is a ring with multiplication extending the natural product $x^{\lambda} x^{\mu} = x^{\lambda + \mu}$.
Let $\ZZ[\Lambda] \subseteq \ZZ\llbracket\Lambda\rrbracket_<$ be the subring of finite linear combinations of $x^{\lambda}$ for $\lambda \in \Lambda$.

\begin{remark}
\label{rem:special_roots}
We write $\bfN=1$ to indicate that $N_i=1$ for all $i \in I$.
When this property holds, our definition of an abstract $\bfN$-root crystal reduces to the classical notion of an (abstract) \defn{Kashiwara crystal}~\cite{Kashiwara93}. 
This follows as if $N_i=1$ then we must have $\alpha_i^{(m)} = \alpha_i$ for all $m \in \ZZ/N_i\ZZ$.

Kashiwara crystals are  a combinatorial abstraction for the Kashiwara crystal bases of representations of the (Drinfel'd--Jimbo) quantum group $U_q(\g)$ associated to a (symmetrizable) Kac--Moody Lie algebra $\g$ (with root system $\Phi$)~\cite{Kashiwara90,Kashiwara91}.
We do not currently have such an interpretation of abstract $\bfN$-root crystals in general.
\end{remark}

\begin{definition}
An abstract $\bfN$-root crystal $\cB$ is \defn{upper regular}
 if for each $i \in I$ and $b \in \cB$ one has
\be\label{upper-reg-eq}
 \varepsilon_i(b) =\tfrac{1}{N_i} \max\{ k \in \NN \mid e_i^k b \neq \zero\} \in \tfrac{1}{N_i}\NN.
\ee
Similarly, an abstract $\bfN$-root crystal 
$\cB$ is \defn{lower regular}
 if for each $i \in I$ and $b \in \cB$ one has
\be\label{lower-reg-eq}
 \varphi_i(b) = \tfrac{1}{N_i} \max\{ k \in \NN \mid f_i^k b \neq \zero\}\in \tfrac{1}{N_i}\NN.
\ee
We call $\cB$ \defn{regular} if it is both upper regular and lower regular.
\end{definition}

We have an alternative way to characterize when a crystal is upper or lower regular.

\begin{proposition}\label{regular-prop}
Suppose $\cB$ is an abstract $\bfN$-root crystal.
\ben
\item[(a)] The crystal $\cB$ is upper regular if and only if the following holds:
\begin{equation}
\label{eq:URalt}
\tag{U}
\text{$\varepsilon_i(b) \geq 0$ for all $i \in I$ and $b \in \cB$, with equality if and only if $e_ib=\zero$.}
\end{equation}
\item[(b)] The crystal $\cB$ is lower regular if and only if the following holds:
\begin{equation}
\tag{L}
\text{$\varphi_i(b) \geq 0$
for all $i \in I$ and $b \in \cB$,
 with equality if and only if $f_ib=\zero$.}
\end{equation}
\een
\end{proposition}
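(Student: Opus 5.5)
For part (a), the forward direction is immediate from \eqref{upper-reg-eq}. If $\cB$ is upper regular, then $\varepsilon_i(b) = \frac{1}{N_i}\max\{k \mid e_i^k b\neq\zero\}$, which lies in $\frac{1}{N_i}\NN$, so $\varepsilon_i(b)\ge 0$. This maximum is zero exactly when $e_i b=\zero$, since $k=0$ always qualifies. Hence \eqref{eq:URalt} holds.

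For the converse, assume \eqref{eq:URalt}. Fix $i$ and $b$. Since $\varepsilon_i(b)\ge 0$, it is finite. Consider the sequence $b_0=b$, $b_1=e_ib$, $b_2=e_i^2 b,\dots$ for as long as the terms are nonzero. By (A4) we have $\varepsilon_i(b_k)=\varepsilon_i(b)-\frac{k}{N_i}$ whenever $b_k\ne\zero$. Condition \eqref{eq:URalt} forces $\varepsilon_i(b_k)\ge 0$, so $k\le N_i\varepsilon_i(b)$ and the sequence terminates. Let $K$ be the largest index with $b_K\neq\zero$. Then $e_i b_K=\zero$, so the equality clause of \eqref{eq:URalt} gives $\varepsilon_i(b_K)=0$, that is, $\varepsilon_i(b)=\frac{K}{N_i}$. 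This is exactly \eqref{upper-reg-eq}. The only point needing care is that the string is finite, and the decrement from (A4) together with nonnegativity handles it.

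Part (b) follows by the same argument with $f_i$ and $\varphi_i$ in place of $e_i$ and $\varepsilon_i$. The role of (A4) is now played by (A4$'$) from Proposition~\ref{prime-prop}, which gives $\varphi_i(f_i^k b)=\varphi_i(b)-\frac{k}{N_i}$ while the terms are nonzero. The lower regular condition then gives (L) immediately, and conversely (L) gives termination of the $f_i$-string and the equality $\varphi_i(b)=\frac{K}{N_i}$. I expect no real obstacle; the main thing is to invoke (A4$'$) rather than (A4) in the dual direction.
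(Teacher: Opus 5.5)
Your proof is correct and is essentially the paper's argument: (A4) together with nonnegativity of $\varepsilon_i$ bounds the length of the $e_i$-string, and the equality clause of (U) applied at the top of the string pins down $\varepsilon_i(b)$. Part (b) is handled dually via (A4$'$). The only cosmetic difference is that the paper phrases the last step as an induction on $\tilde\varepsilon_i(b) := \tfrac{1}{N_i}\max\{k \mid e_i^k b\neq\zero\}$, whereas you telescope directly from $b$ to $e_i^K b$.
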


\begin{proof}
We will only show (a) as the proof of (b) is similar.
Clearly if $\cB$ is upper regular then~\eqref{eq:URalt} holds.
Now assume that \eqref{eq:URalt} holds.
Choose any $i \in I$.
If $b \in \cB$ and $k \in \NN$ are such that $e_i^kb\neq \zero$, then 
axiom (A4) in Definition~\ref{abstract-def}
implies  that $\varepsilon_i(e_i^kb) = \varepsilon_i(b) - \frac{k}{N_i}$.
As \eqref{eq:URalt} asserts that $\varepsilon_i$ takes only nonnegative values on $\cB$,
we conclude that   $e_i^kb= \zero$ for all integers $k > \varepsilon_i(b) N_i$.
Hence, the formula $\tilde\varepsilon_i(b) := \tfrac{1}{N_i} \max\{ k \in \NN \mid e_i^k b \neq \zero\}$ gives a well-defined element of $\frac{1}{N_i}\NN$ for all $b\in \cB$.
To deduce that $\cB$ is upper regular, we must check that $\varepsilon_i(b)=\tilde\varepsilon_i(b)$
for all  $b\in \cB$.

We prove this by induction on $\tilde\varepsilon_i(b)$.
If $\tilde\varepsilon_i(b)=0$ then 
 $e_i b = \zero$ so by assumption $\varepsilon_i(b) = 0=\tilde\varepsilon_i(b)$.
If $\tilde\varepsilon_i(b)>0$ then clearly
$e_ib \neq \zero$
and  $\tilde\varepsilon_i(b) = \tfrac{1}{N_i} +\tilde\varepsilon_i(e_ib)$,
so
$\varepsilon_i(b) = \tfrac{1}{N_i} +\varepsilon_i(e_ib)$
by axiom (A4) in Definition~\ref{abstract-def}.
In this case we have $\varepsilon_i(e_ib)=\tilde\varepsilon_i(e_ib)$ by induction,
so $\varepsilon_i(b)=\tilde\varepsilon_i(b)$ again follows.
\end{proof}

Below are some examples of abstract $\bfN$-root crystals 
  for an arbitrary   $\bfN$-root type.

\begin{example}
\label{ex:Tla_crystal}
For each weight $\lambda \in \Lambda$, there is a one-element abstract $\bfN$-root crystal
$
\cT_{\lambda} := \{t_{\lambda}\}
$
with crystal structure  defined  by
$
e_i t_{\lambda} = f_i t_{\lambda} = \zero
$,
$
\varepsilon_i(t_{\lambda}) = \varphi_i(t_{\lambda}) = -\infty
$,
and
$\weight(t_{\lambda}) = \lambda.
$
Note that $\cT_{\lambda}$ is neither upper regular nor lower regular. 
\end{example}

\begin{example}
\label{ex:Rla_crystal}
For each $\lambda \in \Lambda$, we have another one-element abstract $\bfN$-root crystal
$
\cR_{\lambda} := \{r_{\lambda}\}
$
with crystal structure given by
$e_i r_{\lambda} = f_i r_{\lambda} = \zero$,
$\varepsilon_i(r_{\lambda}) = - \langle \lambda, \alpha_i^{\vee} \rangle$,
$\varphi_i(r_{\lambda}) = 0$,
and
$\weight(r_{\lambda}) = \lambda$.
Note that $\cR_{\lambda}$ is always lower regular but only upper regular when $ \langle \lambda, \alpha_i^{\vee} \rangle=0$ for all $i \in I$.
\end{example}

Our definition of $\bfN$-root types allows the parameters  $N_i$ for $i \in I$ to have any positive integer values. However, the following proposition shows that in an abstract $\bfN$-crystal, taking $N_i>2$ forces the crystal operators $e_i$ and $f_i$ to act trivially.
This degeneracy is a consequence of the integrality of the form $\langle\cdot, \cdot\rangle$, which is an important structural property of abstract crystals. 

\begin{proposition}
\label{prop:trivial_large_root}
Suppose $\cB$ is an abstract $\bfN$-root crystal. 
For any $i \in I$ such that $N_i > 2$, we have $e_i b = f_i b = \zero$ for all $b \in \cB$.
\end{proposition}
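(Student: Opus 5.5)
The plan is to show that a single nontrivial $i$-edge already forces $N_i \in \{1,2\}$, by comparing axioms (A2), (A4) and (A5) against the integrality $\langle \lambda, \alpha_i^{\vee}\rangle \in \ZZ$ for $\lambda \in \Lambda$. Fix $i \in I$ with $N_i > 2$.

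First suppose, for contradiction, that $e_i b = c \neq \zero$ for some $b \in \cB$.
\begin{itemize}
\item By (A3), $\varepsilon_i(b)$ is finite, and by Remark~\ref{abstract-def-rmk} so are $\varphi_i(b)$, $\varepsilon_i(c)$ and $\varphi_i(c)$.
\item Axiom (A4) gives $\varphi_i(c) - \varepsilon_i(c) = \varphi_i(b) - \varepsilon_i(b) + \tfrac{2}{N_i}$.
\item Applying (A2) to both $b$ and $c$ turns this into
\[
\langle \weight(c), \alpha_i^{\vee}\rangle = \langle \weight(b), \alpha_i^{\vee}\rangle + \tfrac{2}{N_i}.
\]
\item On the other hand, (A5) says $\weight(c) = \weight(b) + \alpha_i^{(m)}$ for $m = N_i\varepsilon_i(b)$.
\end{itemize}
Comparing the last two facts gives $\langle \alpha_i^{(m)}, \alpha_i^{\vee}\rangle = \tfrac{2}{N_i}$. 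Since $\alpha_i^{(m)} \in \Lambda$, the pairing $\langle \alpha_i^{(m)}, \alpha_i^{\vee}\rangle$ must be an integer. But $N_i > 2$ gives $0 < \tfrac{2}{N_i} < 1$, which is a contradiction. Hence $e_i b = \zero$ for every $b \in \cB$.

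For $f_i$, suppose $f_i b = c \neq \zero$. Then (A1) gives $e_i c = b \neq \zero$, which contradicts the previous step. (Alternatively, one can repeat the computation using Proposition~\ref{prime-prop}.) So $f_i b = \zero$ for all $b$ as well.

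There is no real obstacle in this argument. The one point needing care is checking that all the quantities involved are finite, so that the subtraction in (A2) is legitimate. Finiteness follows from (A3) together with Remark~\ref{abstract-def-rmk}.
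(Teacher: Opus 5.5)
Your argument is correct and matches the paper's proof: both combine (A4) with (A2) to get $\langle \weight(e_ib)-\weight(b),\alpha_i^{\vee}\rangle = \tfrac{2}{N_i}$ and then use integrality of the pairing on $\Lambda$. Your detour through (A5) is harmless but unnecessary, since $\weight(e_ib)-\weight(b)$ already lies in $\Lambda$; your reduction of the $f_i$ case to the $e_i$ case via (A1) makes the paper's ``similarly'' precise.
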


\begin{proof}
Suppose   $i \in I$  and  $b \in \cB$ have $e_i b \neq \zero$. Then   (A4) in Definition~\ref{abstract-def} implies that
$
 \varphi_i(e_ib)-\varphi_i(b)  = \varepsilon_i(b) - \varepsilon_i(e_ib) = \tfrac{1}{N_i}
$
so by (A2) in  Definition~\ref{abstract-def}  we have 
\[
\tfrac{2}{N_i} =  \varphi_i(e_ib)-\varphi_i(b)  + \varepsilon_i(b) - \varepsilon_i(e_ib) = \langle \weight(e_ib)- \weight(b),\alpha_i^\vee\rangle \in \ZZ.
\]
As $N_i\in \PP$ we must have $N_i \in \{1,2\}$.
It follow similarly that if $f_i b \neq \zero$ then $N_i \in \{1,2\}$.
\end{proof}

In view of this proposition, we shall usually assume that $N_i \in \{1,2\}$ for all $i \in I$.
Notice that this loses no generality, as an arbitrary abstract $\bfN$-root crystal has the same data as one with all $N_i \in \{1,2\}$ but with the root system $\Phi$ replaced by the parabolic subsystem whose set of simple roots is $\{\alpha_i \mid i \in I \text{ with }N_i \leq 2\}$.
 
\begin{example}
\label{ex:elem_crystal}
Fix some $i \in I$ such that $N_i \in \{1,2\}$, and assume that $\langle \alpha_i^{(1)}, \alpha_i^{\vee}\rangle = 1$ when $N_i = 2$.
Then we may define the \defn{$i$-th elementary crystal} $\cE_i$ as the abstract $\bfN$-root crystal given by the set of symbols $\left\{ \elem{i}{m} \mid m \in \frac{1}{N_i}\ZZ\right\}$ with weight function
\[
\weight(\elem{i}{m}) = \lfloor m \rfloor \alpha_i + 2(m - \lfloor m \rfloor) \alpha_i^{(1)} \in \Lambda,
\]
and with crystal operators and statistics given by
\begin{subequations} \label{eq:elem_crystal_def}
\begin{align}
\varepsilon_j(\elem{i}{m}) &= \begin{cases} -m & \text{if } i=j, \\ -\infty & \text{otherwise}, \end{cases}
\qquad&
e_j \elem{i}{m} &= \begin{cases} \elem{i}{m+\frac{1}{N_i}} & \text{if } i = j, \\ \zero & \text{otherwise}, \end{cases}
\\
\varphi_j(\elem{i}{m}) &= \begin{cases} m & \text{if } i=j, \\ -\infty & \text{otherwise}, \end{cases}
\qquad&
f_j \elem{i}{m} &= \begin{cases} \elem{i}{m-\frac{1}{N_i}} & \text{if } i = j, \\ \zero & \text{otherwise}. \end{cases}
\end{align}
\end{subequations}
\end{example}

\subsection{Categories of crystals}\label{cat-sect}

We continue to fix an arbitrary $\bfN$-root type $(\Lambda, \Phi, \bal)$,
which we abbreviate as simply $\bal$.
We now define a category $\sC_{\bal}$ whose objects consist of  all abstract $\bfN$-root crystals of 
type $\bal$. The morphisms in this category are defined below.

Unless otherwise stated, all abstract $\bfN$-root crystals and morphisms considered in this section will be  in the category $\sC_{\bal}$.
For simplicity, 
when $N_i = 1$ for all $i \in I$ we denote this category by $\sC_1$.

\begin{definition}[Crystal morphism]
\label{morphism-def}
A \defn{morphism} of abstract 
crystals $\cB, \cC \in \sC_{\bal}$ is a map $\Psi \colon \cB \sqcup\{\zero\}\to \cC\sqcup\{\zero\}$ of sets that satisfies the following conditions for all $i \in I$ and $b \in \cB$:
\ben
\item[(M0)] $\Psi(\zero) = \zero$;
\item[(M1)] if $\Psi(b) \neq \zero$ then $\weight(b) = \weight\bigl(\Psi(b)\bigr)$, $ \varepsilon_i(b) = \varepsilon_i\bigl(\Psi(b)\bigr)$, and $\varphi_i(b) = \varphi_i\bigl(\Psi(b)\bigr)$;
\item[(M2)] if $\Psi(e_ib)\neq \zero$ and $e_i \Psi(b)\neq \zero$ then $e_i \Psi(b)  =\Psi(e_i b)$;
\item[(M3)] if $\Psi(f_ib)\neq \zero$ and $f_i \Psi(b)\neq \zero$ then $f_i \Psi(b) =\Psi(f_i b)$.
\een
A morphism is \defn{strict} if $e_i \circ \Psi = \Psi \circ e_i$ and $f_i\circ \Psi = \Psi \circ f_i$ for all $i \in I$.
A morphism is an \defn{embedding} (respectively, an \defn{isomorphism}) if it is injective (respectively, bijective) as a map of sets.
\end{definition}

The direct sum $\cB \oplus \cC$ is the obvious crystal formed from the disjoint union of the functions $e_i$, $f_i$, $\varepsilon_i$, $\varphi_i$, and $\weight$ (for all $i \in I$).
The crystal graph of $\cB \oplus \cC$ is the disjoint union of the crystal graphs of $\cB$ and $\cC$.
Additionally, $\ch(\cB \oplus \cC) = \ch(\cB) + \ch(\cC)$ when $\ch(\cB)$ and $ \ch(\cC)$ are both defined.

The notion of an embedding allows us to define subobjects in $\sC_{\bal}$ in the usual way, which we call \defn{subcrystals}. More precisely, for abstract crystals $\cB$ and $\cC$, we say $\cB$ is a subcrystal of $\cC$ if $\cB$ is a subset of $\cC$ and the natural set inclusion map is a crystal embedding.
To indicate that $\cB$ is a subcrystal of $\cC$, we write $\cB \subseteq \cC$.

A \defn{full subcrystal} $\cB$ of an abstract crystal $\cC$ is a subcrystal such that the natural embedding is strict.
This is equivalent to the crystal graph of $\cB$ being a disjoint union of connected components of the crystal graph of $\cC$.

A strict morphism $\Psi \colon \cB \to \cC$ must send any connected component (i.e., a connected full subcrystal) of $\cB$ to an isomorphic connected component of $\cC$ or $\zero$.
Thus, if $\Psi$ is a strict embedding then $\Psi$ is an isomorphism from $\cB$ to some union of connected components of $\cC$.

\begin{proposition}
\label{prop:basic_cat_props}
The category of abstract $\bfN$-root crystals $\sC_{\bal}$ has the following properties:
\begin{itemize}
\item The empty crystal $\emptyset$ is the zero object in $\sC_{\bal}$.
\item Direct sum of crystals gives biproducts in $\sC_{\bal}$.
\item All kernels and cokernels are in $\sC_{\bal}$.
\item All monomorphisms and epimorphisms are normal.
\end{itemize}
\end{proposition}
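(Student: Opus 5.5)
We check the four properties directly from Definition~\ref{morphism-def}, using throughout that a morphism is a pointed set map $\cB\sqcup\{\zero\}\to\cC\sqcup\{\zero\}$ and that elements sent to $\zero$ are subject to no conditions in (M1)--(M3).

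\emph{Zero object.} For any $\cB$, the only map $\emptyset\sqcup\{\zero\}\to\cB\sqcup\{\zero\}$ satisfying (M0) sends $\zero\mapsto\zero$, and the only map $\cB\sqcup\{\zero\}\to\{\zero\}$ is constant. Both trivially satisfy (M1)--(M3), so $\emptyset$ is initial and terminal. Consequently every hom-set contains a zero morphism, namely the map sending everything to $\zero$.

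\emph{Biproducts.} For $\cB\oplus\cC$ we take the inclusions $\iota_\cB,\iota_\cC$ and the projections $\pi_\cB,\pi_\cC$, where $\pi_\cB$ is the identity on $\cB$ and sends $\cC$ to $\zero$. The projection $\pi_\cB$ is a morphism: for $c\in\cC$ the value is $\zero$, so nothing is required, and for $b\in\cB$ the crystal data and the operators $e_i,f_i$ are unchanged. Given morphisms $\Psi_1\colon\cB\to\cD$ and $\Psi_2\colon\cC\to\cD$, the copairing defined piecewise is a morphism, because (M1)--(M3) are conditions on a single element $b$ together with $e_ib$ and $f_ib$, which lie in the same summand. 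Dually, given $\Psi_1\colon\cD\to\cB$ and $\Psi_2\colon\cD\to\cC$, the pairing must be $d\mapsto\Psi_1(d)$ or $\Psi_2(d)$. This is where I expect the main obstacle: for the product property there is no canonical choice when both values are nonzero, and a product with summands requires $\pi_\cB\circ h=\Psi_1$ and $\pi_\cC\circ h=\Psi_2$ at once, which fails when both are nonzero. I would therefore check whether the intended reading is that $\oplus$ is a coproduct, together with the biproduct identities $\pi_\cB\iota_\cB=\id$, $\pi_\cC\iota_\cB=0$ and $\iota_\cB\pi_\cB+\iota_\cC\pi_\cC$ understood via the disjoint union. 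I would present these identities as the content of the second bullet.

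\emph{Kernels and cokernels.} For $\Psi\colon\cB\to\cC$, I take $\ker\Psi:=\{b\in\cB\mid\Psi(b)=\zero\}$ with the restricted structure: operators landing outside the set are declared $\zero$, and the statistics are kept. Axioms (A0)--(A5) hold because each is either preserved by restriction or only requires something when $e_ib\neq\zero$ inside the subset. The inclusion is an embedding, and any $\Theta\colon\cD\to\cB$ with $\Psi\Theta=0$ factors through it uniquely, since (M2) and (M3) only constrain pairs where both sides are nonzero. For the cokernel I take $\cC\setminus\Psi(\cB)$ with the same restricted structure and the projection sending $\Psi(\cB)$ to $\zero$; the universal property is checked in the same way.

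\emph{Normality.} First, monomorphisms are injective off $\zero$, as can be seen by testing against one-element crystals $\cT_\lambda$. Second, an injective $\Psi\colon\cB\to\cC$ identifies $\cB$ with the subcrystal $\Psi(\cB)$. That subcrystal is the kernel of the cokernel projection $\cC\to\cC\setminus\Psi(\cB)$, since that projection kills exactly $\Psi(\cB)$. Dually, epimorphisms are surjective onto $\cC$ (otherwise two maps differing on a missed element are both equalized), and a surjection is the cokernel of the inclusion of the subcrystal $\Psi^{-1}(\zero)\setminus\{\zero\}$ of $\cB$.
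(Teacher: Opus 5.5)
Your approach matches the paper's: restricted structures on complements and on preimages of $\zero$, plus disjoint unions. Your worry about the second bullet is justified. $\oplus$ is a coproduct but not a product. For $\id,\id\colon\cT_0\to\cT_0$ there is no $h\colon\cT_0\to\cT_0\oplus\cT_0$ whose two projections both equal $\id$, because $h(t_0)$ lies in only one summand. The paper's proof just asserts the biproduct property, but its own later remark that $\iota_1+\iota_2$ cannot be formed points the same way. So only your weakened reading of ``biproduct'' is defensible. Your kernel and cokernel constructions, and your checks of their universal properties, are correct.

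The genuine gap is your last sentence, the claim that every epimorphism is normal. A surjective morphism need not be injective on the elements it does not send to $\zero$, and then it is not a cokernel. Take $\Psi\colon\cT_0\oplus\cT_0\to\cT_0$ sending both elements to $t_0$. It is a (strict) morphism and surjective, hence an epimorphism. Suppose $\Psi$ were the cokernel of some $\chi$. Since $\Psi$ sends nothing to $\zero$, $\Psi\chi=0$ forces $\chi=0$. The cokernel of the zero map is an isomorphism, but $\Psi$ is not injective, a contradiction. In general, the cokernel of the inclusion of $\Psi^{-1}(\zero)\setminus\{\zero\}$ is the projection onto $\cB\setminus\Psi^{-1}(\zero)$. $\Psi$ factors through this projection via a map that is an isomorphism exactly when $\Psi$ is injective there. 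So this step cannot be repaired: the normal-epimorphism part of the statement is false as written. What your argument does show is that an epimorphism is normal if and only if it is injective away from its kernel.

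There is also a smaller, fixable error in the monomorphism half: $\cT_\lambda$ is the wrong test object. By (M1), a morphism out of $\cT_\lambda$ can only hit elements with all $\varepsilon_i=-\infty$. Instead, for an element $b$, use the one-element crystal carrying the weight and statistics of $b$ with all operators $\zero$; it satisfies the axioms because $b$ does.
\begin{itemize}
\item If $\Psi(b_1)=\Psi(b_2)\neq\zero$, then $b_1,b_2$ share weight and statistics. The two maps from this one-element crystal to $b_1$ and to $b_2$ are then morphisms equalized by $\Psi$, which gives injectivity.
\item Comparing the inclusion of $\{b\}$ with the zero map shows a monomorphism sends no element to $\zero$.
\end{itemize}
You need both facts before identifying $\cB$ with the subcrystal $\Psi(\cB)$; the inverse bijection then satisfies (M2) and (M3) by injectivity. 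With that repair, normality of monomorphisms goes through.
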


\begin{proof}
For $\cB \subseteq \cC$, the quotient object is formed by $\cC \setminus \cB$ with the maps $e_i, f_i, \varepsilon_i, \varphi_i, \weight$ given by restriction.
Using this and that disjoint union gives biproducts in the category of sets, all of these properties are easy to check.
\end{proof}

For $\cB, \cC \in \sC_{\bal}$, let $\Hom(\cB, \cC)$ denote the set of crystal morphisms $\cB \to \cC$.
Proposition~\ref{prop:basic_cat_props} says that the category of abstract crystals is \emph{almost} an abelian category (as it models bases of vector spaces), but there does not seem to be a way to give $\Hom(\cB, \cC)$ the structure of an abelian group.
Indeed, consider maps $\iota_k \colon \cB \to \cB \oplus \cB$ for $k = 1,2$, where $\iota_k(b)$ sends $b$ to the $k$-th summand.
It is not clear how to construct $\iota_1 + \iota_2$ as we cannot define a map that sends $b$ to both summands, and this also holds if we restrict to strict morphisms (consider $\cB = \{b\}$).
On the other hand, there is a natural zero morphism $\zero_{\cB,\cC} \in \Hom(\cB,\cC)$, which is defined by $\zero_{\cB,\cC}(b) = \zero$ for all $b \in \cB$.

We can define a ``squaring'' functor $\powF{\bfN} \colon \sC_{\bal} \to \sC_1$ in the following way.
For each object $\cB \in \sC_{\bal}$ we define an abstract Kashiwara crystal $\powF{\bfN}(\cB)$ with the same set of elements and weight function as $\cB$, but with the crystal operators $\overline{e}_i, \overline{f}_i$ and statistics $\overline{\varepsilon}_i, \overline{\varphi}_i$ given by
\be\label{powF-eq}
\overline{e}_i := e_i^{N_i},
\qquad\quad
\overline{f}_i := f_i^{N_i},
\qquad\quad
\overline{\varepsilon}_i(b) := \lfloor \varepsilon_i(b) \rfloor,
\qquad\quad
\overline{\varphi}_i(b) := \lfloor \varphi_i(b) \rfloor.
\ee
The functor is trivial on morphisms $\Psi$; that is,   $\powF{\bfN}(\Psi)$ is the  same function on the underlying sets.

The Weyl group $W \subseteq \GL(V)$ acts on $ \ZZ\llbracket \Lambda\rrbracket $ by the formula $w \cdot x^{\lambda} = x^{w\lambda}$, extended by linearity.
It is well-known~\cite[Props.~2.36 and 2.37]{BumpSchilling} that if a regular Kashiwara crystal has a well-defined character, then this character is invariant under the action of the Weyl group.
This result extends to abstract $\bfN$-root crystals:

\begin{proposition}\label{sym-prop}
Suppose $\cB$ is an abstract $\bfN$-root crystal of root type $\bal$.
If $\cB$ is upper (respectively, lower) regular, 
then $\powF{\bfN}(\cB)$ is an upper (respectively, lower) regular Kashiwara crystal.
Consequently, if $\cB$ is regular with
 a well-defined character,
then $\ch(\cB)$ is $W$-invariant. 
\end{proposition}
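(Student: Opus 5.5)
We first have to know that $\powF{\bfN}(\cB)$ is an abstract Kashiwara crystal at all; then we check the regularity conditions \eqref{upper-reg-eq} and \eqref{lower-reg-eq} for it, and finally we invoke the classical result. Throughout we may assume $N_i\in\{1,2\}$ by Proposition~\ref{prop:trivial_large_root}.

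\emph{Axioms.} Axioms (A0) and (A1) for $\overline{e}_i=e_i^{N_i}$ and $\overline{f}_i=f_i^{N_i}$ follow by iterating (A0) and (A1) for $e_i,f_i$. For (A2), recall from Remark~\ref{abstract-def-rmk} that either $\varepsilon_i(b)=\varphi_i(b)=-\infty$, or $\varphi_i(b)-\varepsilon_i(b)=\langle\weight(b),\alpha_i^\vee\rangle\in\ZZ$. In the second case the two statistics have the same fractional part, so taking floors preserves their integer difference. Axiom (A3) carries over, since $\lfloor-\infty\rfloor=-\infty$ and $e_i b=\zero$ forces $e_i^{N_i}b=\zero$. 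For (A4), suppose $e_i^{N_i}b\ne\zero$. Applying the original (A4) $N_i$ times shows that $\varepsilon_i$ drops by exactly $1$ and $\varphi_i$ rises by exactly $1$, and the floors shift the same way. Axiom (A5) is the final sentence of Remark~\ref{abstract-def-rmk}: $\weight(e_i^{N_i}b)=\weight(b)+\alpha_i$. So $\powF{\bfN}(\cB)$ lies in $\sC_1$.

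\emph{Regularity.} Assume $\cB$ is upper regular, and write $k=N_i\varepsilon_i(b)=\max\{k' : e_i^{k'}b\ne\zero\}$. The maximal $m$ with $\overline{e}_i^{\,m}b=e_i^{mN_i}b\ne\zero$ is then $\lfloor k/N_i\rfloor=\lfloor\varepsilon_i(b)\rfloor=\overline{\varepsilon}_i(b)$. This uses that $e_i^{j}b\ne\zero$ exactly for $0\le j\le k$, which holds because once $e_i^{j}b=\zero$ all higher powers vanish by (A0). This is precisely \eqref{upper-reg-eq} for $\powF{\bfN}(\cB)$. The lower regular case is identical, using $\varphi_i$ and $f_i$.

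\emph{Characters.} The weight function of $\powF{\bfN}(\cB)$ is that of $\cB$, so the two crystals have the same character. If $\cB$ is regular, then $\powF{\bfN}(\cB)$ is a regular Kashiwara crystal with a well-defined character. By \cite[Props.~2.36 and 2.37]{BumpSchilling}, that character is $W$-invariant: the $i$-strings are symmetric under $s_i$. None of the steps is a real obstacle. The only delicate point is the compatibility of floors with (A2) and (A4), and this rests on the integrality of $\varphi_i-\varepsilon_i$ from Remark~\ref{abstract-def-rmk}.
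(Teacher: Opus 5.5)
Your proof is correct and is essentially the paper's argument. The paper obtains regularity of $\powF{\bfN}(\cB)$ from the characterization in Proposition~\ref{regular-prop}, namely $\lfloor\varepsilon_i(b)\rfloor\ge 0$ with equality iff $e_i^{N_i}b=\zero$; you instead verify the defining max formula directly, and you also check the Kashiwara axioms for $\powF{\bfN}(\cB)$, which the paper leaves implicit. The character claim is handled exactly as in the paper, via $\ch(\cB)=\ch(\powF{\bfN}(\cB))$ and the classical result. Your initial reduction to $N_i\in\{1,2\}$ is never used and can simply be dropped.
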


\begin{proof}
The first claim is immediate from the characterization of regularity in Proposition~\ref{regular-prop}.
The observation about characters then follows as $\ch(\cB) = \ch(\powF{\bfN}(\cB))$.
\end{proof}

For upper/lower regular crystals, we do not need to check all of the conditions for a map of sets to be a crystal morphism.

\begin{proposition}
\label{prop:morphism_by_ops}
Suppose $\cB, \cC \in \sC_{\bal}$ are both upper regular or both lower regular.
Let $\Psi \colon \cB \sqcup \{\zero\} \to \cC \sqcup \{\zero\}$ be a map of sets that commutes with all of the crystal operators, such that each connected component of $\cB$ contains at least one element $b$  with $\weight(b) = \weight\bigl(\Psi(b)\bigr)$.
Then $\Psi$ is a crystal morphism.
\end{proposition}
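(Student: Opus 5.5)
Since $\Psi$ commutes with all $e_i$ and $f_i$, conditions (M2) and (M3) of Definition~\ref{morphism-def} hold automatically. We also take $\Psi(\zero)=\zero$, which is (M0); it follows from commuting with $e_i$ via (A0), and in any case is the convention for maps $\cB\sqcup\{\zero\}\to\cC\sqcup\{\zero\}$. So the whole task is (M1): if $\Psi(b)\neq\zero$, then $\Psi(b)$ has the same weight and the same $\varepsilon_i$, $\varphi_i$ as $b$. We treat the upper regular case; the lower regular case is symmetric, with $\varphi_i$ and (L) in place of $\varepsilon_i$ and (U).

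\textbf{Step 1: $\varepsilon_i$ is preserved.} Suppose $\Psi(b)\neq\zero$. For every $k$ we have $\Psi(e_i^k b)=e_i^k\Psi(b)$. If $e_i^k b=\zero$, then $e_i^k\Psi(b)=\Psi(\zero)=\zero$. Conversely, if $e_i^k\Psi(b)=\zero$ but $e_i^k b\neq\zero$, apply $f_i^k$: by (A1), $b=f_i^k e_i^k b$, so
\[
\Psi(b)=f_i^k\Psi(e_i^k b)=f_i^k e_i^k\Psi(b)=\zero,
\]
a contradiction. Hence $e_i^k b\neq\zero$ if and only if $e_i^k\Psi(b)\neq\zero$. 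By the upper regularity formula~\eqref{upper-reg-eq}, this gives $\varepsilon_i(b)=\varepsilon_i(\Psi(b))$.

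\textbf{Step 2: weight is preserved.} This is the main point, and it needs the hypothesis on connected components. Let $\cB_0$ be a component containing $b_0$ with $\weight(b_0)=\weight(\Psi(b_0))$. In particular $\Psi(b_0)\neq\zero$. By the same $f_i^k e_i^k$ trick, if $\Psi(b)\neq\zero$ and $e_ib\neq\zero$, then $\Psi(e_ib)=e_i\Psi(b)\neq\zero$, and likewise for $f_i$. Walking along a path in the crystal graph from $b_0$, we conclude that $\Psi$ is nonzero on all of $\cB_0$ and sends edges to edges.

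Now we induct along the path. Suppose $\weight(b)=\weight(\Psi(b))$ and $c=e_ib$. By (A5), $\weight(c)=\weight(b)+\alpha_i^{(m)}$ with $m=N_i\varepsilon_i(b)$. Also $\weight(\Psi(c))=\weight(\Psi(b))+\alpha_i^{(m')}$ with $m'=N_i\varepsilon_i(\Psi(b))$. Step 1 gives $m=m'$, so $\weight(c)=\weight(\Psi(c))$. For $c=f_ib$ we use (A5$'$) from Proposition~\ref{prime-prop}, where $m=N_i\varphi_i(b)+1$. Here $\varphi_i$ is controlled by (A2): $\varphi_i=\varepsilon_i+\langle\weight,\alpha_i^\vee\rangle$, and both terms agree for $b$ and $\Psi(b)$ by Step 1 and the inductive hypothesis. (Alternatively, write $b=e_ic$ and apply (A5) with $\varepsilon_i(c)$, which is also preserved by Step 1.) So weight agreement propagates from $b_0$ to the entire component.

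\textbf{Step 3: $\varphi_i$ is preserved.} This now follows from (A2) together with Steps 1 and 2.

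Finally, if $\Psi(b)\neq\zero$ for some $b$, then (M1) must be checked at $b$, and $b$ lies in some component containing such a $b_0$. By Step 2, $\Psi$ is nonzero on that component and preserves weight there, so (M1) holds at every $b$ with $\Psi(b)\neq\zero$. The only subtle step is the propagation of weight in Step 2. It relies on $\varepsilon_i$ being determined by operator data alone, which is exactly where regularity enters, so that the index $m$ in (A5) is the same for $b$ and $\Psi(b)$.
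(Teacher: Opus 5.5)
Your proof is correct and follows the same route as the paper: $\varepsilon_i$ is preserved by upper regularity, weights propagate through each connected component from the given base point via (A5) and (A5$'$), and $\varphi_i$ then follows from (A2), with the lower regular case handled symmetrically. Your $f_i^k e_i^k$ argument makes explicit two steps the paper leaves tacit. It shows that $e_i^k b\neq\zero$ if and only if $e_i^k\Psi(b)\neq\zero$, and that $\Psi$ is nonzero on every component containing a weight-matching element.
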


\begin{proof}
Assume $\cB, \cC \in \sC_{\bal}$ are both upper regular.
We have $\varepsilon_i(b) = \varepsilon_i\bigl(\Psi(b)\bigr)$ for all $b \in \cB$ by  the definition of upper regular as $\Psi$ commutes with the crystal operators.
Since each connected component has one element where the weights agree, 
it follows 
from axioms (A5) and (A5$'$) that
 that $\weight(b) = \weight\bigl(\Psi(b)\bigr)$ for all $b \in \cB$.
Finally, we have $\varphi_i(b) = \varphi_i\bigl(\Psi(b)\bigr)$ for all $b \in \cB$ by axiom (A2) in Definition~\ref{abstract-def}, so $\Psi$ is a crystal morphism.
The proof in the lower regular case is similar.
\end{proof}

We will also use the following technical variation of Proposition~\ref{prop:morphism_by_ops}.

\begin{lemma}
\label{lemma:lower_embed_check}
Suppose $\cB, \cC \in \sC_{\bal}$ are both upper regular and $\cB$ is lower generated by $X$.
Let $\Psi \colon \cB \sqcup \{\zero\} \to \cC \sqcup \{\zero\}$ be an embedding of sets that commutes with all of the $e_i$ crystal operators and $\weight(x) = \weight\bigl(\Psi(x)\bigr)$ for all $x \in X$.
Then $\Psi$ is a crystal embedding.
\end{lemma}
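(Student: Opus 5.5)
We must check (M0)–(M3) for $\Psi$. (M0) is part of the setup ($\Psi(\zero)=\zero$), and (M2) holds because $\Psi$ commutes with every $e_i$. Since $\Psi$ is injective, it remains to verify (M1) and (M3). In outline: (M1) follows from matching weights along $f$-paths from $X$; (M3) follows by inverting $e_i$ with (A1).

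\emph{Step 1: $\varepsilon_i$ agrees.} For $b\in\cB$ we have $\Psi(b)\neq\zero$, since $\Psi$ is an injective map of sets with $\Psi(\zero)=\zero$. Because $\Psi$ commutes with $e_i$ and is injective, $e_i^k b\neq\zero$ if and only if $e_i^k\Psi(b)=\Psi(e_i^kb)\neq\zero$. Both crystals are upper regular, so \eqref{upper-reg-eq} gives $\varepsilon_i(b)=\varepsilon_i(\Psi(b))$.

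\emph{Step 2: weights agree.} We induct on the length $\ell$ of a lowering path $b=f_{i_\ell}\cdots f_{i_1}x$ with $x\in X$; the base case $\ell=0$ is the hypothesis on $X$. For $\ell>0$, write $b=f_ic$ with $c$ reachable by a shorter path, so $\weight(\Psi(c))=\weight(c)$. By (A1), $e_ib=c$, hence $e_i\Psi(b)=\Psi(c)\neq\zero$. Applying (A5) in both crystals gives
\[\weight(c)=\weight(b)+\alpha_i^{(m)}\quad\text{and}\quad \weight(\Psi(c))=\weight(\Psi(b))+\alpha_i^{(m')},\]
where $m=N_i\varepsilon_i(b)$ and $m'=N_i\varepsilon_i(\Psi(b))$. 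Step 1 gives $m=m'$, so $\weight(\Psi(b))=\weight(b)$. Then (A2) gives $\varphi_i(b)=\varphi_i(\Psi(b))$, since $\varepsilon_i$ and the weight both agree. This completes (M1).

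\emph{Step 3: (M3).} Suppose $\Psi(f_ib)\neq\zero$ and $f_i\Psi(b)\neq\zero$. Then $f_ib\neq\zero$, since otherwise $\Psi(f_ib)=\Psi(\zero)=\zero$. By (A1), $e_if_ib=b$, so $e_i\Psi(f_ib)=\Psi(e_if_ib)=\Psi(b)$. Applying (A1) again in $\cC$ gives $f_i\Psi(b)=\Psi(f_ib)$, as required.

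The only delicate point is Step 2. Weight transport along an $f$-edge must use the correct fractional root $\alpha_i^{(m)}$, whose index depends on $\varepsilon_i$. This is why Step 1 has to come first, and why the lower generation by $X$ is needed.
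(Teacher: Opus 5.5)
Your proof is correct and follows essentially the paper's argument: the paper first uses (A1) to see that $\Psi$ commutes with each $f_i$ (your Step~3), then repeats the proof of Proposition~\ref{prop:morphism_by_ops}, getting $\varepsilon_i$ from upper regularity, the weights by propagating from the generators via (A5)/(A5$'$), and $\varphi_i$ from (A2), which is exactly your Steps~1--2 written out in more detail. (Minor aside: you need not take $\Psi(\zero)=\zero$ as part of the setup, since $\Psi(\zero)=\Psi(e_i\zero)=e_i\Psi(\zero)$ and (A4) forbids $e_ic=c$ for $c\in\cC$.)
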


\begin{proof}
Since $\Psi$ commutes with the $e_i$ crystal operators and is an embedding, it commutes with the $f_i$ crystal operators for all $ b \in \cB$ such that $f_i b \neq \zero$ by axiom (A1).
Therefore, $\Psi$ is a crystal morphism by an argument similar to the proof of Proposition~\ref{prop:morphism_by_ops}.
\end{proof}

\subsection{Tensor products}

We continue to abbreviate an arbitrary $\bfN$-root type $(\Lambda, \Phi, \bal)$ as $\bal$.
In this section, we define a tensor product that makes $\sC_{\bal}$ into a monoidal category.
An appealing feature of this tensor product is that its definition is essentially identical to the classical case~\cite[\S1.3]{Kashiwara93}
Our convention for ordering tensor factors follows~\cite[\S2.3]{BumpSchilling}, which is opposite to the convention in \cite{Kashiwara93}.

\begin{definition}[Tensor product]
\label{def:tensor_product}
For $\cB, \cC \in \sC_{\bal}$, define $\cB \otimes \cC$ to be the set $\cB \times \cC$, where we write element $(b,c)$ as the formal tensor $b\otimes c$, with the following crystal structure.
For each $i \in I$, define
\[
e_i,f_i \colon (\cB \otimes \cC) \sqcup \{\zero\} \to (\cB \otimes \cC) \sqcup \{\zero\}
\]
by setting $e_i \zero = f_i \zero = \zero$ along with 
\begin{subequations}
\label{ef-tensor-eq}
\begin{align}
e_i(b\otimes c) &= \begin{cases}
b \otimes (e_ic) &\text{if }\varepsilon_i(b) \leq \varphi_i(c), \\
(e_ib) \otimes c &\text{if }\varepsilon_i(b) > \varphi_i(c),
\end{cases}
\\
f_i(b\otimes c) &= \begin{cases}
b \otimes (f_ic) &\text{if }\varepsilon_i(b) < \varphi_i(c), \\
(f_ib) \otimes c &\text{if }\varepsilon_i(b) \geq \varphi_i(c),
\end{cases}
\end{align}
for each $b \otimes c \in \cB \otimes \cC$, 
with the convention that $b\otimes \zero= \zero\otimes c = \zero$.
Then set
\begin{align}
\varepsilon_i(b\otimes c) &= \max\Bigl\{\varepsilon_i(c), \varepsilon_i(b) - \langle \weight(c), \alpha_i^{\vee} \rangle \Bigr\}, \label{eq:ep_tensor}
\\
\varphi_i(b\otimes c) &= \max\Bigl\{\varphi_i(b), \varphi_i(c) + \langle \weight(b), \alpha_i^{\vee} \rangle \Bigr\},
\\
\weight(b\otimes c) & = \weight(b) + \weight(c).
\end{align}
\end{subequations}
\end{definition}

\begin{remark} 
Axiom (A2) in Definition~\ref{abstract-def} implies that $\varepsilon_i(c)$ and $ \varepsilon_i(b)$ are both finite precisely when $\varphi_i(c)$ and $ \varphi_i(b)$ are both finite.
When this occurs, we have the simpler formulas
\begin{subequations}
\label{eq:simpler_ephi}
\begin{align}
\varepsilon_i(b\otimes c) &=
 \varepsilon_i(c) + \max\Bigl\{0, \varepsilon_i(b) -\varphi_i(c)
\Bigr\},\\
 \varphi_i(b\otimes c) &= 
\varphi_i(b)+\max\Bigl\{0, \varphi_i(c)-\varepsilon_i(b)
 \Bigr\}.
\end{align}
\end{subequations}
\end{remark}

\begin{proposition}
\label{prop:tensor_containment}
For any $\cB, \cC \in \sC_{\bal}$, we have $\cB \otimes \cC \in \sC_{\bal}$.
\end{proposition}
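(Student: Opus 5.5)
We verify axioms (A0)--(A5) of Definition~\ref{abstract-def} for $\cB\otimes\cC$ directly. First we check that the statistics take values in $\tfrac{1}{N_i}\ZZ\sqcup\{-\infty\}$. The quantities $\varepsilon_i(c)$, $\varepsilon_i(b)$ and $\langle\weight(c),\alpha_i^\vee\rangle\in\ZZ$ do, so their maximum does as well, with the convention $-\infty+m=-\infty$. Axiom (A0) holds by definition. For (A2), if any of the four statistics is $-\infty$, then by Remark~\ref{abstract-def-rmk} one of $b,c$ has both of its $i$-statistics equal to $-\infty$, and both sides become consistent; one checks the cases by hand. In the finite case we use \eqref{eq:simpler_ephi}. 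The difference $\varphi_i(b\otimes c)-\varepsilon_i(b\otimes c)$ equals
\[
\varphi_i(b)-\varepsilon_i(c)+\max\{0,\varphi_i(c)-\varepsilon_i(b)\}-\max\{0,\varepsilon_i(b)-\varphi_i(c)\}=\varphi_i(b)-\varepsilon_i(b)+\varphi_i(c)-\varepsilon_i(c),
\]
which is $\langle\weight(b)+\weight(c),\alpha_i^\vee\rangle$, as required. For (A3), if $\varepsilon_i(b\otimes c)=-\infty$ then both $\varepsilon_i(b)$ and $\varepsilon_i(c)$ are $-\infty$, or one is $-\infty$ and the maximum collapses. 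We must be careful here. The tensor formula for $e_i$ compares $\varepsilon_i(b)$ with $\varphi_i(c)$. If, say, $\varepsilon_i(c)=-\infty$ but $\varepsilon_i(b)$ is finite, then $\varepsilon_i(b\otimes c)=\varepsilon_i(b)-\langle\weight(c),\alpha_i^\vee\rangle$ is finite, so (A3) is vacuous there. When it is $-\infty$, the operators act on a factor whose statistic is $-\infty$ and hence give $\zero$ by (A3) and (A3$'$) for that factor.

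The main work is (A1). Suppose $e_i(b\otimes c)=b\otimes e_ic\neq\zero$, with $\varepsilon_i(b)\le\varphi_i(c)$. By (A4) for $\cC$ we have $\varphi_i(e_ic)=\varphi_i(c)+\tfrac1{N_i}>\varepsilon_i(b)$, so $f_i(b\otimes e_ic)=b\otimes f_ie_ic=b\otimes c$. Suppose instead $e_i(b\otimes c)=e_ib\otimes c$ with $\varepsilon_i(b)>\varphi_i(c)$. Since values lie in $\tfrac1{N_i}\ZZ$, we get $\varepsilon_i(e_ib)=\varepsilon_i(b)-\tfrac1{N_i}\ge\varphi_i(c)$, so $f_i$ acts on the left factor and returns $b\otimes c$. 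This discreteness is the key point, and it is exactly why the strict and non-strict inequalities in Definition~\ref{def:tensor_product} are placed as they are. The converse direction ($f_i(b\otimes c)=b'\otimes c'$ implies $e_i(b'\otimes c')=b\otimes c$) is symmetric, using (A4$'$) from Proposition~\ref{prime-prop}. The infinite cases must also be handled. If $\varphi_i(c)=-\infty$, then $e_i$ and $f_i$ always act on $b$, and $c$ is inert. If $\varepsilon_i(b)=-\infty$ (and $\varphi_i(c)$ is finite), they act on $c$. If both are $-\infty$, both options give $\zero$. These are the cases I expect to need the most care, but no real difficulty.

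For (A4) and (A5), take $e_i(b\otimes c)\neq\zero$. The weight statement (A5) is where $N_i$ matters. If $e_i$ acts on $c$, the weight changes by $\alpha_i^{(m)}$ with $m=N_i\varepsilon_i(c)$. In this case $\varepsilon_i(b)\le\varphi_i(c)$ gives $\varepsilon_i(b\otimes c)=\varepsilon_i(c)$, which matches. If $e_i$ acts on $b$, the weight changes by $\alpha_i^{(m)}$ with $m=N_i\varepsilon_i(b)$, while $\varepsilon_i(b\otimes c)=\varepsilon_i(b)-\langle\weight(c),\alpha_i^\vee\rangle$. These agree modulo $N_i$ because the pairing is an integer, so $m$ is well defined in $\ZZ/N_i\ZZ$. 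For (A4), recompute $\varepsilon_i$ of $b\otimes e_ic$ or of $e_ib\otimes c$ with \eqref{eq:simpler_ephi}. The inequality that selected the acting factor still holds weakly after the move, again by discreteness. So the relevant maximum is attained by the same branch and drops by exactly $\tfrac1{N_i}$. The statement for $\varphi_i$ then follows from (A2), which we have already established.
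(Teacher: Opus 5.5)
Your proof is correct and follows the same route as the paper: both verify the axioms directly. Your (A5) argument is the paper's congruence $N_i\varepsilon_i(b\otimes c)\equiv N_i\varepsilon_i(b)$ modulo $N_i$, and your (A2) computation is the same identity, done via \eqref{eq:simpler_ephi} plus a case check rather than by manipulating the general maxima. The only real difference is that you write out (A1), (A3) and (A4), which the paper defers to the Kashiwara case in Bump--Schilling, and you correctly identify the discreteness of $\tfrac{1}{N_i}\ZZ$ as the fact that justifies that deferral.
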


\begin{proof}
The proof that $\cB \otimes \cC$ satisfies (A0), (A1), (A3), and (A4) is the same as for (abstract) Kashiwara crystals (i.e., when $\bfN = 1$)~\cite[\S2.3]{BumpSchilling}.

It remains to show that $\cB\otimes \cC$ satisfy axioms (A2) and~(A5) in Definition~\ref{abstract-def}.
To prove this, notice that 
we have
\begin{align*}
\varphi_i(b \otimes c)
& = \max\Bigl\{\varphi_i(b), \varphi_i(c) + \langle \weight(b), \alpha_i^{\vee} \rangle \Bigr\}
\\ & = \max\Bigl\{\varepsilon_i(b) + \langle \weight(b), \alpha_i^{\vee} \rangle, \varepsilon_i(c) + \langle \weight(c), \alpha_i^{\vee} \rangle + \langle \weight(b), \alpha_i^{\vee} \rangle \Bigr\}
\\ & = \max\Bigl\{\varepsilon_i(b) - \langle \weight(c), \alpha_i^{\vee} \rangle, \varepsilon_i(c) \Bigr\} + \langle \weight(b), \alpha_i^{\vee} \rangle + \langle \weight(c), \alpha_i^{\vee} \rangle
\\ & = \varepsilon_i(b \otimes c) + \langle \weight(b \otimes c), \alpha_i^{\vee} \rangle
\end{align*}
since $\max(a+c, b+c) = \max(a,b) + c$.
Hence, axiom (A2) is satisfied by $\cB \otimes \cC$.

Next, for each element $b \otimes c \in \cB \otimes \cC$ we note that the following are equivalent:
\begin{enumerate}
\item[(1)] $\varepsilon_i(b) \leq \varphi_i(c) = \varepsilon_i(c) + \langle \weight(c), \alpha_i^{\vee} \rangle$;
\item[(2)] $\varepsilon_i(b \otimes c) = \varepsilon_i(c)$;
\item[(3)] $e_i(b \otimes c) = b \otimes (e_i c)$.
\end{enumerate}
Thus if $e_i(b \otimes c) = b \otimes (e_i c)$ then axiom (A5) holds.
Alternatively, if $e_i(b \otimes c) = (e_i b) \otimes c$ then
\[
N_i \varepsilon_i(b \otimes c) = N_i \varepsilon_i(b) - N_i \langle \weight(c), \alpha_i^{\vee} \rangle \equiv N_i \varepsilon_i(b) \modu N_i),
\]
and so axiom (A5) holds as well.
\end{proof}

In the following result, $\cT_0$ denotes the crystal $\cT_\lambda$ from Example~\ref{ex:Tla_crystal} with $\lambda =0 \in \Lambda$.

\begin{proposition}
\label{prop:monoidal_cat}
The category of $\bfN$-root crystals $\sC_{\bal}$ is a monoidal category under $\otimes$ with identity object $\cT_0$ and satisfying
\be\label{eq:tensor_sum_rel}
\cB \otimes (\cC \oplus \cC') \iso (\cB \otimes \cC) \oplus (\cB \otimes \cC')
\quand
(\cB \oplus \cB') \otimes \cC \iso (\cB \otimes \cC) \oplus (\cB' \otimes \cC).
\ee
\end{proposition}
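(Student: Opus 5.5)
The plan is to verify the monoidal axioms directly at the level of underlying sets, using the obvious set maps as the structure isomorphisms. By Proposition~\ref{prop:tensor_containment}, $\otimes$ is well defined on objects. On morphisms, for $\Psi\colon\cB\to\cB'$ and $\Theta\colon\cC\to\cC'$ we set $(\Psi\otimes\Theta)(b\otimes c)=\Psi(b)\otimes\Theta(c)$, with the convention $x\otimes\zero=\zero\otimes y=\zero$.

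First I check that $\Psi\otimes\Theta$ is a morphism.
\begin{itemize}
\item (M0) is immediate.
\item For (M1), suppose the image is nonzero. Then $\Psi$ and $\Theta$ preserve $\weight$, $\varepsilon_i$ and $\varphi_i$. The tensor formulas \eqref{eq:ep_tensor} and their companions depend only on these values, so (M1) follows.
\item For (M2), the branch of $e_i$ chosen on $b\otimes c$ (acting on the left or the right factor) is decided by comparing $\varepsilon_i(b)$ and $\varphi_i(c)$. These values are unchanged under $\Psi,\Theta$, so the same branch is chosen for $\Psi(b)\otimes\Theta(c)$. Say $e_i$ acts on the right. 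The hypotheses $\Psi\otimes\Theta(e_i(b\otimes c))\ne\zero$ and $e_i(\Psi b\otimes\Theta c)\neq\zero$ give $\Theta(e_ic)\ne\zero$ and $e_i\Theta(c)\ne\zero$. Applying (M2) for $\Theta$ then yields the claim.
\item (M3) is handled the same way.
\end{itemize}
Functoriality (identities and composition) is clear because everything is defined elementwise.

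Next come the unitors and the associator. The unitors are $t_0\otimes b\mapsto b$ and $b\otimes t_0\mapsto b$. Since $\varepsilon_i(t_0)=\varphi_i(t_0)=-\infty$, the comparisons in Definition~\ref{def:tensor_product} route $e_i$ and $f_i$ onto $b$. For example, $\varepsilon_i(t_0)=-\infty\le\varphi_i(b)$ sends $e_i(t_0\otimes b)$ to $t_0\otimes e_ib$, and $\varepsilon_i(b)\ge-\infty=\varphi_i(t_0)$ sends $f_i(b\otimes t_0)$ to $f_ib\otimes t_0$. The remaining two cases are the boundary ones: $e_i(b\otimes t_0)$ with $\varepsilon_i(b)=-\infty$, and $f_i(t_0\otimes b)$ with $\varphi_i(b)=-\infty$. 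In each of these, both sides are $\zero$ by (A3) and (A3$'$). The weight is unchanged because $\weight(t_0)=0$, and the $\max$ formulas return $\varepsilon_i(b)$ and $\varphi_i(b)$ under the convention $-\infty+m=-\infty$. Hence the unitors are strict isomorphisms.

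The associator is $(b\otimes c)\otimes d\mapsto b\otimes(c\otimes d)$, and this is the main step. The statistics $\varepsilon_i,\varphi_i$ agree on both sides by a direct $\max$ computation, which is identical to the classical one. For the crystal operators I will follow the classical proof \cite[\S2.3]{BumpSchilling}. When all statistics are finite, the formulas are the same as in the Kashiwara case except that the values lie in $\frac1{N_i}\ZZ$. The classical case analysis on the relative order of $\varepsilon_i(b),\varphi_i(c),\varepsilon_i(c),\varphi_i(d)$ only uses order comparisons and the identity \eqref{eq:simpler_ephi}, so it goes through verbatim. When some statistic is $-\infty$, (A3) forces the corresponding factor to be inert, and I will check separately that both sides send $e_i$ and $f_i$ to the same factor or to $\zero$.

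I expect the main obstacle to be exactly these degenerate $-\infty$ cases, since they do not appear in the usual regular setting and must be checked by hand. The pentagon and triangle axioms are automatic, because all structure maps are the canonical set bijections. Finally, the distributivity isomorphisms \eqref{eq:tensor_sum_rel} are the identity maps of sets $\cB\times(\cC\sqcup\cC')=(\cB\times\cC)\sqcup(\cB\times\cC')$. The tensor formulas are computed factorwise, so these maps visibly preserve all the data.
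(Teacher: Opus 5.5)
Your proposal is correct and follows essentially the same route as the paper. Both arguments define the tensor product of morphisms factorwise, take the canonical set maps as unitors and associator, and reduce associativity to the classical argument of Bump--Schilling. The paper phrases that last step through the quantities $a_{im}$ and the rule that $e_i$ acts on the last factor attaining their maximum, which is equivalent to your order-comparison case analysis. Your explicit handling of the $-\infty$ cases in the unitors fills in what the paper calls a straightforward computation.
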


\begin{proof}
Our argument that the tensor product is associative is analogous to the proof when $\bfN = 1$ given in~\cite[Prop.~2.32]{BumpSchilling} (see also~\cite[Prop.~1.3.1, Ex.~1.3.4]{Kashiwara93}).

More precisely, the associativity of the tensor product is essentially a consequence of the equivalent conditions used in the proof of Proposition~\ref{prop:tensor_containment} above.
In more detail, consider $\cB = \cB_1 \otimes \cdots \otimes \cB_{\ell}$ (we only consider this as a set as we will show next this is unambiguous), and for a fixed element $b = b_1 \otimes \cdots \otimes b_{\ell} \in \cB$ define values
\[
a_{im} := \varepsilon_i(b_m) - \sum_{j=m+1}^{\ell} \langle \weight(b_j), \alpha_i^{\vee} \rangle
\quand
M = \max\{ a_{i1}, \dotsc, a_{i\ell} \}.
\]
We claim that $e_i$ acts on the $j$-th factor, where $j = \max \{m \mid a_{im} = M\}$, and $\varepsilon_i(b) = M$.
This can be explicitly checked for the case $\ell = 3$, and this shows that the natural map $\cB_1 \otimes (\cB_2 \otimes \cB_3) \to (\cB_1\otimes \cB_2)\otimes \cB_3$ commutes with the raising crystal operators $e_i$ while preserving $\varepsilon_i$. 

The proof of the analogous property for $f_i$ and $\varphi_i$ is similar, just using
\[
a'_{im} := \varphi_i(b_m) + \sum_{j=1}^{m-1} \langle \weight(b_j), \alpha_i^{\vee} \rangle
\quand
M = \max\{ a'_{i1}, \dotsc, a'_{i\ell} \}.
\]
However, in this case $f_i$ acts on the $j$-th factor with $j = \min \{m \mid a'_{im} = M\}$.
We conclude that
$
(\cB_1 \otimes \cB_2) \otimes \cB_3 \cong \cB_1 \otimes (\cB_2 \otimes \cB_3)
$
as the natural map between the two objects is clearly weight-preserving.

To see that $\otimes$ is functorial, for each $k \in [m] = \{1, \dotsc, m\}$, consider crystals $\cB_k, \cC_k \in \sC_{\bal}$ with morphisms $\Psi_k \colon \cB_k \to \cC_k$.
Then we construct the morphism by
\[
\Psi_{[m]} := \Psi_1 \otimes \cdots \otimes \Psi_m \colon \cB_1 \otimes \cdots \otimes \cB_m \to \cC_1 \otimes \cdots \otimes \cC_m
\]
by setting
$\Psi_{[m]}(b_1 \otimes \cdots \otimes b_m) = \Psi_1(b_1) \otimes \cdots \otimes \Psi_m(b_m)$.
This is a crystal morphism by the fact that the statistics and weight are preserved by each $\Psi_k$.

The claim that $\cT_0 \otimes \cB \iso \cB \otimes \cT_0 \iso \cB$ is a straightforward computation using the tensor product rule to show the obvious maps
$t_0 \otimes b \mapsto b$,
$b \otimes t_0 \mapsto b$,
and
$t_0 \otimes b \mapsto b \otimes t_0$
are isomorphisms (see also~\cite[Ex.~1.3.4]{Kashiwara93}).
Constructing the remaining isomorphisms in~\eqref{eq:tensor_sum_rel} is straightforward.
\end{proof}

From the definitions, we have
$
\ch(\cB \otimes \cC) = \ch(\cB) \cdot \ch(\cC),
$
whenever the characters of $\cB$, and $\cC$, and $\cB \otimes \cC$ are well-defined.

It is clear that the branching rule functor $\branchF{J}$ (for $J \subseteq I$) is monoidal and sends the subcategory of (upper/lower) regular crystals in $\sC_{\bal}$ to the subcategory of (upper/lower) regular crystals in $\sC_{\bal_J}$. 
However, we will see later (Example~\ref{ex:not_monoidal} below) that the functor $\powF{\bfN} \colon \sC_{\bal} \to \sC_1$ is not monoidal, as we can have $
\powF{\bfN}(\cB \otimes \cC) \not\iso \powF{\bfN}(\cB) \otimes \powF{\bfN}(\cC)$
for objects $\cB, \cC \in \sC_{\bal}$.

The following result recovers~\cite[Prop.~2.29]{BumpSchilling} (first noted in~\cite[\S1.3]{Kashiwara93}) and~\cite[Thm.~4.26]{MT2023} as special cases.
 
\begin{proposition}
\label{prop:regular_tensor_closed}
The full subcategory of $\sC_{\bal}$ consisting of (upper/lower) regular crystals is closed under tensor product, direct sums, and taking full subcrystals.
That is, for (upper/lower) regular crystals $\cB, \cC \in \sC_{\bal}$, both $\cB\oplus \cC$ and $\cB \otimes \cC$ are (upper/lower) regular, as are all full subcrystals of $\cB$.
\end{proposition}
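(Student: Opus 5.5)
The plan is to work entirely with the characterizations in Proposition~\ref{regular-prop}, treating the upper regular case via condition~\eqref{eq:URalt}; the lower regular case is the mirror image using (L), (A3$'$)--(A5$'$) and the $f_i$ half of Definition~\ref{def:tensor_product}. Fully regular crystals are then handled by combining the two cases.

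\emph{Direct sums and full subcrystals.} For $\cB \oplus \cC$, the operators and statistics are disjoint unions. Condition~\eqref{eq:URalt} is a pointwise condition on each element together with its own $e_i$, so it transfers immediately. For a full subcrystal $\cB' \subseteq \cB$, the inclusion is strict, so $e_i$ and $\varepsilon_i$ on $\cB'$ are just restrictions of those on $\cB$. Again \eqref{eq:URalt} is inherited. By Proposition~\ref{regular-prop}, both are upper regular.

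\emph{Tensor products.} Let $b \otimes c \in \cB \otimes \cC$ with $\cB,\cC$ upper regular.
\begin{itemize}
\item Finiteness: \eqref{eq:URalt} forces $\varepsilon_i(b),\varepsilon_i(c) \geq 0$. Hence all statistics are finite by Remark~\ref{abstract-def-rmk}, and the simpler formula \eqref{eq:simpler_ephi} applies:
\[
\varepsilon_i(b\otimes c)=\varepsilon_i(c)+\max\{0,\varepsilon_i(b)-\varphi_i(c)\}\geq \varepsilon_i(c)\geq 0.
\]
This gives the inequality in~\eqref{eq:URalt}.
\item The ``if'' direction of the equality clause: suppose $\varepsilon_i(b\otimes c)=0$. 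Then $\varepsilon_i(c)=0$ and $\varepsilon_i(b)\leq\varphi_i(c)$. So $e_i(b\otimes c)=b\otimes e_ic$, and $e_ic=\zero$ by \eqref{eq:URalt} for $\cC$.
\item The ``only if'' direction, first case: suppose $e_i(b\otimes c)=\zero$ and $\varepsilon_i(b)\le\varphi_i(c)$. Then $e_ic=\zero$, so $\varepsilon_i(c)=0$ and $\varepsilon_i(b\otimes c)=0$.
\item The ``only if'' direction, remaining case: $\varepsilon_i(b)>\varphi_i(c)$ and $e_ib=\zero$. Here $\varepsilon_i(b)=0$, so $\varphi_i(c)<0$ and
\[
\varepsilon_i(b\otimes c)=\varepsilon_i(c)-\varphi_i(c)>0 .
\]
\end{itemize}

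\emph{The main obstacle.} This last case is where I expect the real difficulty, and it may in fact be a genuine gap. Upper regularity alone does not prevent $\varphi_i(c)<0$. For example, in a Kashiwara-type $\cB(\infty)$ the element $f_i u_\infty$ has $\varphi_i=-1$. This suggests the one-sided statement for $\otimes$ needs an extra input.

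My first attempt would be to use axiom (A2) together with the structure of $\cB$ to show this configuration cannot arise. If it can arise, I would record that the tensor-closure claim must be read for crystals satisfying both (U) and (L). In that setting $\varphi_i(c)\geq 0$, and the last case reduces to $\varepsilon_i(b)>\varphi_i(c)\ge 0$ with $e_ib=\zero$. That contradicts~\eqref{eq:URalt} for $\cB$, so the case is vacuous. The lower regular argument is symmetric: the dangerous case is $f_ic=\zero$ with $\varepsilon_i(b)<\varphi_i(c)$ and $\varepsilon_i(b)<0$, and it is excluded once (U) also holds.
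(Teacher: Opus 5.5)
\paragraph{Verdict.} Your case analysis is correct, and the obstacle you isolated is not a gap in your argument. It is a genuine failure of the statement: the one-sided tensor claims are false. So do not try to rule the configuration out via (A2).

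\paragraph{Counterexamples.} Your own $\cB(\infty)$ hint already produces one for upper regularity. Take $n\geq 2$, $\cB=\one$ and $\cC=\SetTab_n(\infty)$; both are upper regular (Proposition~\ref{prop:trivial_crystal}, Corollary~\ref{cor:inf_upper_reg}). Let $c=f_iu_\infty$, so $\varepsilon_i(c)=\tfrac12$, $\varphi_i(c)=-\tfrac12$ and $\langle\weight(c),\alpha_i^\vee\rangle=-1$.
\begin{itemize}
\item Since $\varepsilon_i(\varnothing)=0>\varphi_i(c)$, we get $e_i(\varnothing\otimes c)=(e_i\varnothing)\otimes c=\zero$.
\item But $\varepsilon_i(\varnothing\otimes c)=\max\{\tfrac12,\,0-(-1)\}=1$.
\end{itemize}
A one-element version works in any root type: $\one\otimes(\cT_\lambda\otimes\one)$ with $\langle\lambda,\alpha_i^\vee\rangle<0$, where $\cT_\lambda\otimes\one$ is upper regular by Lemma~\ref{lemma:T_upper_prod}.

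For lower regularity, $\cR_\lambda$ and $\one$ are both lower regular (Example~\ref{ex:Rla_crystal}). If $\langle\lambda,\alpha_i^\vee\rangle>0$, then $f_i(r_\lambda\otimes\varnothing)=r_\lambda\otimes f_i\varnothing=\zero$. Yet $\varphi_i(r_\lambda\otimes\varnothing)=\langle\lambda,\alpha_i^\vee\rangle>0$.

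\paragraph{What you have proved.} State your result firmly rather than as a fallback.
\begin{itemize}
\item Direct sums and full subcrystals preserve upper, lower, and two-sided regularity. As in the paper, this is immediate from the definitions.
\item Tensor products preserve two-sided regularity.
\item More precisely, your analysis shows $\cB\otimes\cC$ is upper regular when $\cB$ is upper regular and $\cC$ is regular. Likewise, $\cB\otimes\cC$ is lower regular when $\cB$ is regular and $\cC$ is lower regular.
\end{itemize}
Your route is the one the paper alludes to with \eqref{eq:simpler_ephi}.

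\paragraph{The paper's argument.} The paper's main one-line argument has the same unacknowledged hole. It says $e_i$ acts on a factor attaining $M=\max_m a_{im}=\varepsilon_i$, but in the example above $a_{i1}=1>a_{i2}=\tfrac12$ while $e_i\varnothing=\zero$. It is repaired exactly by your observation. For the largest maximizing index $j<\ell$ one has $\varepsilon_i(b_j)>\varphi_i(b_{j+1})$. This yields $\varepsilon_i(b_j)>0$, hence $e_ib_j\neq\zero$, only once $\varphi_i(b_{j+1})\geq 0$ is known, i.e., from lower regularity of the later factors.

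The paper's later uses of this proposition, such as regularity of polynomial $\sqgln$-crystals, only need the two-sided version.
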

 
\begin{proof}
That this subcategory is closed under tensor products follows from the fact that $e_i$ (respectively, $f_i$) acts on a factor that obtains the maximum of $\{a_{i1}, \dotsc, a_{i\ell}\}$, which is also $\varepsilon_i$ (respectively, maximum of $\{a'_{i1}, \dotsc, a'_{i\ell}\}$, which is $\varphi_i$).
Alternatively, this could be seen from~\eqref{eq:simpler_ephi}.
That it is closed under direct sums and taking full subcrystals is immediate from the definitions.
\end{proof}

The $\bfN$-root crystal $\cT_0$ is not regular, but it turns out that a different unit object makes the subcategory of regular $\bfN$-root crystals into a monoidal category.

\begin{proposition}
\label{prop:trivial_crystal}
The category of upper (respectively, lower) regular $\bfN$-root crystals has a right (respectively, left) unit object provided by the \defn{trivial crystal} $\one = \{\varnothing\}$ whose structure maps satisfy
\[
e_i \varnothing = f_i \varnothing = \zero,
\qquad
\varepsilon_i(\varnothing) = \varphi_i(\varnothing) = 0,
\quand 
\weight(\varnothing) = 0.
\]
Moreover, the category of regular $\bfN$-root crystals is a monoidal category with unit object $\one$.
\end{proposition}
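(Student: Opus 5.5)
We will verify directly that the natural maps $b \otimes \varnothing \mapsto b$ (for upper regular $\cB$) and $\varnothing \otimes b \mapsto b$ (for lower regular $\cB$) are crystal isomorphisms, using the tensor rule of Definition~\ref{def:tensor_product}. We then combine the two statements to get the monoidal structure on regular crystals. First, $\one$ is itself regular: $\varepsilon_i(\varnothing)=\varphi_i(\varnothing)=0$ with $e_i\varnothing=f_i\varnothing=\zero$, so conditions (U) and (L) of Proposition~\ref{regular-prop} hold. Consequently, by Proposition~\ref{prop:regular_tensor_closed}, every tensor product appearing here stays in the relevant regular subcategory.

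For the right unit, let $\cB$ be upper regular and $b\in\cB$. Weight is clearly preserved, since $\weight(\varnothing)=0$. By \eqref{eq:ep_tensor} we get $\varepsilon_i(b\otimes\varnothing)=\max\{0,\varepsilon_i(b)\}=\varepsilon_i(b)$, using $\varepsilon_i(b)\ge 0$ from (U). In particular $\varepsilon_i(b)$ is finite, so (A2) forces $\varphi_i(b\otimes\varnothing)=\varphi_i(b)$. For the operators, compare $\varepsilon_i(b)$ with $\varphi_i(\varnothing)=0$. If $\varepsilon_i(b)>0$, then $e_i$ acts on $b$. If $\varepsilon_i(b)=0$, then $e_i$ acts on $\varnothing$ and gives $\zero$; this agrees with $e_ib=\zero$ by (U). For $f_i$, the condition $\varepsilon_i(b)\ge 0=\varphi_i(\varnothing)$ always holds, so $f_i$ always acts on $b$. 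Hence $b\otimes\varnothing\mapsto b$ is a strict isomorphism.

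For the left unit, let $\cB$ be lower regular. The argument is dual: $\varphi_i(\varnothing\otimes b)=\max\{0,\varphi_i(b)\}=\varphi_i(b)$ by (L). For $e_i$, the condition $\varepsilon_i(\varnothing)=0\le\varphi_i(b)$ always holds, so $e_i$ acts on $b$. For $f_i$, it acts on $b$ exactly when $\varphi_i(b)>0$. Otherwise it acts on $\varnothing$ and yields $\zero$, which agrees with $f_ib=\zero$ by (L). Then (A2) recovers $\varepsilon_i$.

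For the final claim, regular crystals are both upper and lower regular, so $\one$ is a two-sided unit. Associativity and the remaining coherence isomorphisms are inherited from Proposition~\ref{prop:monoidal_cat}, because the associator there is the identity on underlying sets. The unitors above are likewise identity-like maps, so the triangle and pentagon axioms hold trivially at the level of sets. There is no real obstacle. The only point needing care is the boundary case $\varepsilon_i(b)=0$ (respectively $\varphi_i(b)=0$), where the tensor rule hands the operator to the $\varnothing$ factor; this is precisely where the regularity hypothesis is needed. It also explains why each one-sided regularity yields only a one-sided unit.
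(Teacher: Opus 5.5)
Your proof is correct and takes the same approach as the paper. The paper only asserts that $b\mapsto b\otimes\varnothing$ (respectively $b\mapsto\varnothing\otimes b$) is an isomorphism for upper (respectively lower) regular $\cB$ and calls this straightforward, while you carry out that check via the tensor product rule, correctly handling the boundary case $\varepsilon_i(b)=0$ (respectively $\varphi_i(b)=0$) where regularity is needed; your observation that the associator and unitors are the canonical set maps, so coherence and naturality hold at the level of sets, settles the monoidal claim.
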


\begin{proof}
For an upper (respectively, lower) regular $\bfN$-root crystal $\cB$, it suffices to check that map $b\mapsto  b\otimes \varnothing$ (respectively, $b\mapsto \varnothing \otimes b$) is an isomorphism $\cB\iso \cB \otimes \one$ (respectively, $\cB\iso \one \otimes \cB$), and this is straightforward.
\end{proof}

\begin{example}
\label{ex:R=CT}
For the crystals $\cT_{\lambda}$ and $\cR_{\lambda}$ in Examples~\ref{ex:Tla_crystal} and \ref{ex:Rla_crystal}, we have $\cR_{\lambda} \iso \one \otimes \cT_{\lambda}$.
Moreover, if $\lambda = 0$ then $\one \iso \cR_\lambda$. 
\end{example}

We note the following fact that we will utilize later.

\begin{lemma}
\label{lemma:T_upper_prod}
Let $\cB$ be an upper regular crystal, and let $\lambda \in \Lambda$.
Then
$
\cT_{\lambda} \otimes \cB
$
is upper regular.
\end{lemma}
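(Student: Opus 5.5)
We will verify condition \eqref{eq:URalt} from Proposition~\ref{regular-prop}(a) directly for $\cT_\lambda \otimes \cB$, using the explicit tensor product rule of Definition~\ref{def:tensor_product}. Fix $i \in I$ and an element $t_\lambda \otimes b$. The key input is that $\varepsilon_i(t_\lambda) = \varphi_i(t_\lambda) = -\infty$, while $\varepsilon_i(b)$ and $\varphi_i(b)$ are finite because $\cB$ is upper regular (Proposition~\ref{regular-prop}(a) gives $\varepsilon_i(b) \geq 0$, and finiteness of $\varphi_i(b)$ then follows from (A2)).

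First we compute the statistic. By \eqref{eq:ep_tensor},
\[
\varepsilon_i(t_\lambda \otimes b) = \max\bigl\{\varepsilon_i(b), \, -\infty - \langle \weight(b), \alpha_i^{\vee} \rangle\bigr\} = \varepsilon_i(b),
\]
using the convention $-\infty + m = -\infty$ from (A2). Next we compute the operator. Since $\varepsilon_i(t_\lambda) = -\infty \leq \varphi_i(b)$, the first case of the rule for $e_i$ applies, so $e_i(t_\lambda \otimes b) = t_\lambda \otimes e_i b$. This is $\zero$ exactly when $e_i b = \zero$, by the convention $t_\lambda \otimes \zero = \zero$.

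Combining these two computations, $\varepsilon_i(t_\lambda \otimes b) = \varepsilon_i(b) \geq 0$, with equality if and only if $e_i b = \zero$, if and only if $e_i(t_\lambda \otimes b) = \zero$. Hence \eqref{eq:URalt} holds for $\cT_\lambda \otimes \cB$, and Proposition~\ref{regular-prop}(a) shows that it is upper regular.

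We expect no genuine obstacle here. The only point needing care is the handling of $-\infty$ in the max and in the comparison $\varepsilon_i(t_\lambda) \leq \varphi_i(b)$, which is why we first record that $\varphi_i(b)$ is finite. Alternatively, one could argue directly from \eqref{upper-reg-eq}: $e_i^k(t_\lambda \otimes b) = t_\lambda \otimes e_i^k b$ for all $k$, by iterating the same case of the tensor rule.
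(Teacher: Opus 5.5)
Your proof is correct and follows the same route as the paper, whose proof just cites the tensor product rule together with $\varepsilon_i(t_\lambda)=\varphi_i(t_\lambda)=-\infty$. You have written out the details: $\varepsilon_i(t_\lambda\otimes b)=\varepsilon_i(b)$ and $e_i(t_\lambda\otimes b)=t_\lambda\otimes e_ib$, which verify condition (U) of Proposition~\ref{regular-prop}(a).
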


\begin{proof}
This follows from the tensor product rule and the fact that $\varepsilon_i(t_{\lambda})  = \varphi_i(t_{\lambda}) = -\infty$.
\end{proof}

\section{Square root crystals}
\label{sec:sqrt_crystals}

In this section we study abstract $\bfN$-root crystals for one particular choice of $\bfN$-root data.
These objects generalize the crystals $\SetTab_n(\lambda)$ of \defn{set-valued tableaux}  considered in \cite{MT2023,MTY,Yu23}.
Our main application of this more general theory is to construct 
the direct limit crystal $\SetTab_n(\infty)$ (see Theorem~\ref{thm:directed_system})
and its Demazure filtration (see Theorem~\ref{dem-thm}).

\subsection{Conventions}
For nonnegative integers $k \leq m$, define $[k, m] := \{k, k+1, \dotsc, m\}$ and $[m] := [1, m] = \{1,2, \dotsc, m\}$.
For the remainder of this paper, unless otherwise noted, we adopt the following conventions.

First, fix some positive integer $n$.
Define the index set to be $I := [n-1]$ and set $\bfN := 2$ (meaning  $N_i = 2$ for all $i \in I$).
Let $V$ be the Euclidean space $\RR \oplus \RR^n$ with standard basis $\{\e_0, \e_1, \dotsc, \e_n\}$ and standard inner product $\langle \e_i, \e_j \rangle = \delta_{ij}$.
Define $\Phi \subset \Lambda$ to be the root system and weight lattice
\[
\Phi := \{ \e_i - \e_j \mid i,j \in [n]\text{ with }i\neq j\} \subset \Lambda := \ZZ\{\e_0, \e_1, \dotsc, \e_n\} = \ZZ \oplus \ZZ^n
\]
and set our fixed choice of simple roots and fundamental weights to be
\[
\{ \alpha_i := \e_i - \e_{i+1} \mid i \in I\}
\quand
\{ \Lambda_i := \e_1 + \e_2 + \cdots + \e_i \mid i \in I \},
\]
respectively.
For convenience, we also define $\Lambda_n := \e_1 + \e_2 + \cdots +  \e_n$, and we sometimes abuse terminology by referring to $\Lambda_n$ as a fundamental weight.

Notice that $\Phi$ is the usual root system of type $\gl_n$
but $\Lambda$ is slightly larger than the usual weight lattice for this type.
We consider the Weyl group of $\Phi$ to be the symmetric group $S_n$ of permutations of $[n]$, whose   longest element $w_0 = [n, \dotsc, 2, 1]$ is the reverse permutation of length $\ell(w_0) = \binom{n}{2}$ sending $i \mapsto n+1-i$.

Next, define  $\bal = (\alpha_i^{(m)} \in \Lambda \mid i \in I, \; m \in \ZZ/2\ZZ)$ where
\be\label{eq:sqrt_roots}
\alpha_i^{(0)} := -\e_0 + \e_i
\quand
\alpha_i^{(1)} := \e_0  - \e_{i+1}
\ee
for each $i \in I$.
Finally, fix the $\bfN$-root type $(\Phi, \Lambda, \bal)$.

We let  $\sqrt{\sC} := \sC_{\bal}$ denote the corresponding (monoidal) category of abstract $\bfN$-root crystals.
For the above choices of $\Phi$ and $\Lambda$, the category $\sC_1$ is the usual category of abstract Kashiwara $\gl_n$-crystals.
For this reason, we refer to abstract crystals in $\sqrt{\sC}$ as \defn{$\sqgln$-crystals} or \defn{square root crystals} following~\cite{MT2023,MTY} (which, more precisely, studied an important subcategory that we will define below).

For convenience, we 
include a self-contained definition of $\sqrt{\sC}$ that
is equivalent to Definition~\ref{abstract-def} and Proposition~\ref{prime-prop}
specialized to
 the $\bfN$-root type described above.

\begin{definition}[Square root crystal] 
\label{square-root-def}
A set $\cB$ with maps
\[
\weight \colon  \cB\to \ZZ \oplus \ZZ^n,
\quad
e_i,f_i \colon  \cB \sqcup\{\zero\}\to \cB \sqcup \{\zero\},
\quand
\varepsilon_i, \varphi_i \colon \cB \to \tfrac{1}{2}\ZZ \sqcup \{-\infty\}
\quad\text{for $i \in [n-1]$} 
\]
satisfying $e_i\zero=f_i\zero=\zero$
is a \defn{$\sqgln$-crystal}
if the following holds
for all $i \in [n-1]$ and $b,c \in \cB$:
\begin{enumerate}
\item[(1)] $\varepsilon_i(b) = -\infty$ if and only if $\varphi_i(b) = -\infty$ in which case $e_i b = f_i b = \zero$; 

\item[(2)] if $\varepsilon_i(b)$ and $\varphi_i(b) $ are finite then $\varphi_i(b) - \varepsilon_i(b) = \langle \weight(b), \e_i - \e_{i+1} \rangle \in \ZZ$; 

\item[(3)] $e_i b = c$ if and only if $b = f_i c$ in which case $\varepsilon_i(b)$, $\varepsilon_i(c)$, $\varphi_i(b)$, $\varphi_i(c)$ are all finite with
\[
\varepsilon_i(b) - \varepsilon_i(c) = \varphi_i(c) - \varphi_i(b) = \tfrac{1}{2}
\quand
\weight(c) - \weight(b) = \begin{cases}
 \e_i -\e_0 &\text{if }\varepsilon_i(b) \in \ZZ \\
\e_0 -\e_{i+1} &\text{if }\varepsilon_i(b)\notin\ZZ.
\end{cases}
\]

\end{enumerate}
\end{definition}

To represent the characters of square root crystals, we identify $\ZZ\llbracket\Lambda\rrbracket$ with the set of formal Laurent series $\ZZ\llbracket\beta^{\pm1}, x_1^{\pm1}, \dotsc, x_n^{\pm 1}\rrbracket$ by setting $\beta := x^{-\e_0}$ and $x_i := x^{\e_i}$.
Note the nonstandard mapping of $\e_0$ to the \emph{inverse} of $\beta$, which is done to match the literature on set-valued tableaux and symmetric $\beta$-Grothendieck functions.
This means that if $\lambda= \sum_{i=0}^n \lambda_i \e_i \in \Lambda$ then
\[
\lambda \mapsto x^{\lambda} = \beta^{-\lambda_0} x_1^{\lambda_1} \cdots x_n^{\lambda_n} \in \ZZ\llbracket\Lambda\rrbracket.
\]
For use later on, let $\ZZ[\beta][x_1, \dotsc, x_n]^{S_n}$ denote the subring of polynomials invariant under the natural $S_n$-action on the $x$-variables. We refer to the elements of this subring as \defn{symmetric fucntions}.

\subsection{Polynomial crystals}

We now present some examples of $\sqgln$-crystals and introduce a subcategory whose objects are analogous to the Kashiwara crystals arising from highest weight representations of $U_q(\gl_n)$.

\begin{definition}[{\cite{MT2023,Yu23}}]
The \defn{standard $\sqgln$-crystal} is 
the set
$
\SS_n := \{ S \mid \emptyset \neq S \subseteq [n]\}
$
with weight map $\weight(S) = (1 - \abs{S})\e_0 +\sum_{i \in S} \e_i \in \Lambda$, with crystal operators
\[ \ba e_i S &= \begin{cases}
S\sqcup \{i\}&\text{if }S\cap \{i,i+1\} = \{i+1\}, \\
S\setminus \{i+1\}&\text{if }S\cap \{i,i+1\} = \{i,i+1\}, \\
\zero&\text{otherwise},
\end{cases}
\allowdisplaybreaks \\
f_iS &= \begin{cases}
S\sqcup \{i+1\}&\text{if }S\cap \{i,i+1\} = \{i\}, \\
S\setminus \{i\}&\text{if }S\cap \{i,i+1\} = \{i,i+1\}, \\
\zero&\text{otherwise},
\end{cases}
\ea
\]
and with statistics
\[
\ba \varepsilon_i(S) &= \begin{cases}
1&\text{if }S\cap \{i,i+1\} = \{i+1\}, \\
\frac{1}{2}&\text{if }S\cap \{i,i+1\} = \{i,i+1\}, \\
0&\text{otherwise},
\end{cases}
& \quad
\varphi_i(S) &= \begin{cases}
1&\text{if }S\cap \{i,i+1\} = \{i\}, \\
\frac{1}{2}&\text{if }S\cap \{i,i+1\} = \{i,i+1\}, \\
0&\text{otherwise}.
\end{cases}
\ea
\]
\end{definition}

It is easy to see that the standard $\sqgln$-crystal $\SS_n$ is regular.

\begin{example}
The crystal graph of $\SS_3$ is 
\[
    \begin{tikzpicture}[xscale=2.5, yscale=1.5,>=latex,baseline=(z.base)]
    \node at (0,0.0) (z) {};
      \node at (0,0) (T0) {${\{1\}}$};
      \node at (2,0) (T1) {${\{2\}}$};
      \node at (2,-2) (T2) {${\{3\}}$};
      \node at (1,0) (U0) {${\{1,2\}}$};
      \node at (2,-1) (U1) {${\{2,3\}}$};
      \node at (1,-1) (V0) {${\{1,2,3\}}$};
      \node at (1,-2) (V1) {${\{1,3\}}$};
      \draw[->,thick,color=blue]  (T0) -- (U0) node[midway,above,scale=0.75] {$1$};
      \draw[->,thick,color=blue]  (U0) -- (T1) node[midway,above,scale=0.75] {$1$};
      \draw[->,thick,color=red]  (T1) -- (U1) node[midway,right,scale=0.75] {$2$};
      \draw[->,thick,color=red]  (U1) -- (T2) node[midway,right,scale=0.75] {$2$};
      \draw[->,thick,color=red]  (U0) -- (V0) node[midway,right,scale=0.75] {$2$};
      \draw[->,thick,color=red]  ([xshift=1pt]V0.south) -- ([xshift=1pt]V1.north) node[midway,right,scale=0.75] {$2$};
      \draw[->,thick,color=blue]  ([xshift=-1pt]V1.north) -- ([xshift=-1pt]V0.south) node[midway,left,scale=0.75] {$1$};
      \draw[->,thick,color=blue]  (V0) -- (U1) node[midway,above,scale=0.75] {$1$};
     \end{tikzpicture}
\]
and its character is
$
\ch(\SS_3) = x_1 + x_2 + x_3 + \beta ( x_1 x_2 + x_1 x_3 + x_2 x_3) + \beta^2 x_1 x_2 x_3.
$
\end{example}

More generally,  the character of the standard $\sqgln$-crystal is
\[
\ch(\SS_n) = \esf_1(x) +\beta \esf_2(x) + \beta^2 \esf_3(x) + \cdots +\beta^{n-1} \esf_n(x),
\]
where $\esf_k(x)$ is the 
\defn{elementary symmetric polynomial}
$
\esf_k(x) = 
 \sum_{1\leq i_1<i_2<\dots<i_k\leq n} x_{i_1} x_{i_2} \cdots x_{i_k}.
$

\begin{definition}[Polynomial category]
Define the category of \defn{polynomial $\sqgln$-crystals} $\sqrt{\sC}_{\poly}$ as the smallest full subcategory of $\sqrt{\sC}$ that contains $\SS_n$ and that is closed under taking {full} subcrystals, finite direct sums, and finite tensor products, including $\cB_n^{\otimes 0} = \one$ for any $\cB \in \sqrt{\sC}_{\poly}$.
\end{definition}

For any connected $\cB \in \sqrt{\sC}_{\poly}$, there exists a $k \in \ZZ_{\geq 0}$ such that $\cB$ strictly embeds in $\SS^{\otimes k}$.
Therefore, all polynomial $\sqgln$-crystals $\cB$ are finite and regular by Proposition~\ref{prop:regular_tensor_closed}.
Thus, by Proposition~\ref{sym-prop} their characters are in the  ring  $\ZZ[\beta][x_1, \dotsc, x_n]^{S_n}$.

\begin{remark}\label{no-loss-rem}
Polynomial $\sqgln$-crystals are essentially the same as the objects called \defn{normal $\sqgln$-crystals} in~\cite{MT2023,MTY}.
More precisely, each normal $\sqgln$-crystal is just a polynomial $\sqgln$-crystal whose weight map is modified by projecting $\e_0 \mapsto 0$ in $\Lambda$ and whose statistics $\varepsilon_i$ and $\varphi_i$ are rescaled by $2$.
On characters, these operations correspond to setting $\beta = 1$.
This substitution loses no information, since if $\cB \in \sqrt{\sC}_{\poly}$ is connected then the quantity
\[
D := \langle \weight(b), \e_0 + \e_1 + \cdots + \e_n \rangle \in \ZZ
\]
is constant for all $b \in \cB$, and we have the previously known identity
$
\beta^{D} \ch(\cB) \, \bigr\rvert_{x_i \mapsto \beta^{-1} x_i} = \ch(\cB) \, \bigr\rvert_{\beta = 1}.
$
\end{remark}

Polynomial $\sqgln$-crystals are not as well-behaved as their classical analogs.
In the equivalent category of \defn{polynomial Kashiwara crystals} (called \defn{normal crystals} in \cite{BumpSchilling}), a connected component is uniquely determined (up to isomorphism) by its unique highest weight element of weight $\lambda$ or by its character.
Following \cite{Kashiwara90,Kashiwara91}, we denote an arbitrary crystal in this isomorphism class by $\cB(\lambda)$; then the standard $\gl_n$-crystal is $\cB(\Lambda_1)$.

Connected objects in $ \sqrt{\sC}_{\poly}$ may have multiple highest weight elements, and these do not uniquely determine the crystal's isomorphism class \cite{MT2023,MTY}.
Yet, the characters of polynomial $\sqgln$-crystals exhibit a striking positivity phenomenon, which we discuss in the next section.

\begin{example}
\label{ex:not_monoidal}
We now demonstrate that the functor $\powF{2} \colon \sqrt{\sC} \to \sC_1$ is not monoidal.
Consider the tensor product $\SS_3 \otimes \SS_3$. The following is one connected component of $\powF{2}(\SS_3 \otimes \SS_3)$:
\[
    \begin{tikzpicture}[xscale=3.5, yscale=1.5,>=latex,baseline=(z.base)]
      \node at (-3,1) (T0) {${\{1,2\} \otimes \{1,2\}}$};
      \node at (-3,0) (T1) {${\{1,2\} \otimes \{1,3\}}$};
      \node at (-2,0) (T2) {${\{1,3\} \otimes \{1,3\}}$};
      \node at (-3,-1) (T3) {${\{2\} \otimes \{1,2,3\}}$};
      \node at (-1,0) (T4) {${\{1,3\} \otimes \{2,3\}}$};
      \node at (-1,-1) (T5) {${\{2,3\} \otimes \{2,3\}}$};
      \node at (-2,-1) (T6) {${\{2,3\} \otimes \{1,3\}}$};
      \node at (-1,1) (T7) {${\{1,2,3\} \otimes \{2\}}$};
      \node at (-2,1) (T8) {${\{1,3\} \otimes \{1,2\}}$};
      \draw[->,thick,color=blue]  (T1) -- (T3) node[midway,right,scale=0.75] {$1$};
      \draw[->,thick,color=blue]  (T2) -- (T4) node[midway,above,scale=0.75] {$1$};
      \draw[->,thick,color=blue]  (T4) -- (T5) node[midway,right,scale=0.75] {$1$};
      \draw[->,thick,color=blue]  (T8) -- (T7) node[midway,above,scale=0.75] {$1$};
      \draw[->,thick,color=red]  (T0) -- (T1) node[midway,right,scale=0.75] {$2$};
      \draw[->,thick,color=red]  (T1) -- (T2) node[midway,above,scale=0.75] {$2$};
      \draw[->,thick,color=red]  (T3) -- (T6) node[midway,above,scale=0.75] {$2$};
      \draw[->,thick,color=red]  (T7) -- (T4) node[midway,right,scale=0.75] {$2$};
      \end{tikzpicture}
\]
This crystal has multiple highest (and lowest) weight elements.

On the other hand, $\powF{2}(\SS_3) \iso \cB(\Lambda_1) \oplus \cB(\Lambda_2) \oplus \cB(\Lambda_3)$ is the direct sum of three connected crystals with distinct highest weights, and so every connected component in its tensor square will have a unique highest weight element.
(This could also be verified by a direct computation.)
Hence, we must have $\powF{2}(\SS_3 \otimes \SS_3) \not\iso \powF{2}(\SS_3) \otimes \powF{2}(\SS_3)$.
\end{example}

Next, we define a monoidal functor on square root crystals that corresponds on characters to setting $\beta = 0$,
or more generally to taking the coefficient of $\beta^0$.
Such a functor cannot be defined on all objects in $\sqrt{\sC}$, so we instead will consider a subcategory.

Given $\cB \in \sqrt{\sC}$, define the \defn{defect} of any $b \in \cB$ by negating the coefficient of $\e_0$ in $\weight(b)$:
\begin{equation}
\label{eq:defect}
\defect(b) := \langle \weight(b), -\e_0 \rangle.
\end{equation}
Let $\sqrt{\sC}_+$ be the full subcategory of $\sqrt{\sC}$ consisting of all objects $\cB$ with $\defect(b) \geq 0$ for all $b \in \cB$.
Then define $\sK_+$ to be the full subcategory of $\sqrt{\sC}_+$ consisting of all objects $\cB$ such that if $b \in \cB$ has $\defect(b) = 0$ then $\varepsilon_i(b) \in \ZZ\sqcup \{-\infty\}$ for all $i \in I$.
The following is obvious from the definitions.

\begin{proposition}
\label{prop:plus_monoidal_cat}
Both $\sqrt{\sC}_+$ and $\sK_+$ are monoidal subcategories of $\sqrt{\sC}$ that are closed under taking (not necessarily full) subcrystals.
\end{proposition}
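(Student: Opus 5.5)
We have to check three things: that $\sqrt{\sC}_+$ and $\sK_+$ contain the unit object $\cT_0$, that they are closed under $\otimes$, and that they are closed under subcrystals. Since both are defined as full subcategories of $\sqrt{\sC}$ by conditions on elements, the associativity and unit isomorphisms of Proposition~\ref{prop:monoidal_cat} restrict automatically once these three closure properties hold.

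\emph{Unit object.} The crystal $\cT_0$ has one element $t_0$ with $\weight(t_0)=0$, so $\defect(t_0)=0$. Also $\varepsilon_i(t_0)=-\infty\in \ZZ\sqcup\{-\infty\}$, so $\cT_0$ lies in both subcategories.

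\emph{Tensor products.} Take $b\otimes c\in\cB\otimes\cC$. Since $\weight(b\otimes c)=\weight(b)+\weight(c)$ and $\defect$ is a linear functional of the weight,
\[
\defect(b\otimes c)=\defect(b)+\defect(c).
\]
This is nonnegative whenever both summands are, so $\sqrt{\sC}_+$ is closed under $\otimes$. For $\sK_+$, suppose $\defect(b\otimes c)=0$. Nonnegativity then forces $\defect(b)=\defect(c)=0$, so $\varepsilon_i(b),\varepsilon_i(c)\in\ZZ\sqcup\{-\infty\}$. We then read off $\varepsilon_i(b\otimes c)$ from \eqref{eq:ep_tensor}.
\begin{itemize}
\item If some statistic is $-\infty$, then by the convention $-\infty+m=-\infty$ the maximum in \eqref{eq:ep_tensor} is either $-\infty$ or one of $\varepsilon_i(c)$, $\varepsilon_i(b)-\langle\weight(c),\alpha_i^\vee\rangle$. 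The latter is an integer because the pairing is integral.
\item If everything is finite, the value is the maximum of two integers, hence an integer.
\end{itemize}
In every case $\varepsilon_i(b\otimes c)\in\ZZ\sqcup\{-\infty\}$, so $b\otimes c$ satisfies the $\sK_+$ condition.

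\emph{Subcrystals.} A subcrystal inclusion is a morphism, so by (M1) it preserves $\weight$ and $\varepsilon_i$. Hence both the defect condition and the integrality condition pass from $\cC$ to any subset $\cB\subseteq\cC$ carrying the induced structure. Fullness is never used.

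The only step needing care is the case analysis with $-\infty$ in the tensor product. Both conditions are pointwise, so nothing deeper than integrality of $\langle\cdot,\alpha_i^\vee\rangle$ is involved.
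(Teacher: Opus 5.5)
Your proof is correct. It is the same direct check from the definitions that the paper leaves implicit, since the paper only says the result is ``obvious from the definitions'': defect is additive under $\otimes$, the pairing $\langle \weight(c),\alpha_i^\vee\rangle$ is an integer, and (M1) carries weights and $\varepsilon_i$ over to subcrystals. One wording point: a non-full subcrystal need not carry the induced crystal operators, so ``carrying the induced structure'' is imprecise, but this does not matter because your argument uses only (M1).
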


The following condition is sometimes useful for checking that a crystal is in $\sK_+$.

 \begin{proposition}\label{shortcut-prop}
 If $\cB \in\sqrt{\sC}_+$ is upper or lower regular then $\cB \in \sK_+$.
 \end{proposition}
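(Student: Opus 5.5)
We must show: if $\cB \in \sqrt{\sC}_+$ is upper regular (or lower regular), $b\in\cB$ has $\defect(b)=0$, and $\varepsilon_i(b)$ is finite, then $\varepsilon_i(b)\in\ZZ$. The plan is a contradiction argument: assume $\varepsilon_i(b)\notin\ZZ$ and use regularity to produce a neighbor of $b$ whose defect is $-1$. This is forbidden in $\sqrt{\sC}_+$.

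\emph{Upper regular case.} Suppose $\varepsilon_i(b)=k+\tfrac12$ with $k\in\ZZ$. Proposition~\ref{regular-prop}(a) gives $\varepsilon_i(b)\geq 0$, with equality exactly when $e_ib=\zero$. Since $\varepsilon_i(b)\geq\tfrac12>0$, we get $c:=e_ib\neq\zero$. Apply Definition~\ref{square-root-def}(3) to the pair $(b,c)$. Because $\varepsilon_i(b)\notin\ZZ$, we have $\weight(c)-\weight(b)=\e_0-\e_{i+1}$. Then
\[
\defect(c)=\langle \weight(c),-\e_0\rangle=\defect(b)-1=-1<0,
\]
which contradicts $\cB\in\sqrt{\sC}_+$.

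\emph{Lower regular case.} By axiom (2), $\varphi_i(b)-\varepsilon_i(b)\in\ZZ$, so $\varphi_i(b)=k'+\tfrac12$ for some $k'\in\ZZ$. Proposition~\ref{regular-prop}(b) then gives $\varphi_i(b)\geq\tfrac12>0$, so $c:=f_ib\neq\zero$. Now apply (A5$'$) from Proposition~\ref{prime-prop}: $\weight(c)=\weight(b)-\alpha_i^{(m)}$ with $m=2\varphi_i(b)+1\equiv 0 \pmod 2$. Hence $\weight(c)=\weight(b)-(-\e_0+\e_i)$, so the $\e_0$-coefficient rises by one and $\defect(c)=-1$, again a contradiction.

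Equivalently, in the lower regular case one can use $e_ic=b$ in axiom (3). There $\varepsilon_i(c)=\varepsilon_i(b)+\tfrac12\in\ZZ$, so $\weight(b)-\weight(c)=\e_i-\e_0$, giving the same conclusion.

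The only real care needed is tracking the parity conventions in (A5)/(A5$'$), namely which of $\alpha_i^{(0)}$ and $\alpha_i^{(1)}$ carries $-\e_0$. Beyond that the argument is routine.
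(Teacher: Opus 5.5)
Your proof is correct and follows the paper's argument. In the upper regular case, a half-integral $\varepsilon_i(b)$ forces $e_ib\neq\zero$ with weight shift $\alpha_i^{(1)}=\e_0-\e_{i+1}$, which drops the defect to $-1$. The lower regular case is the dual argument using $f_i$ and $\alpha_i^{(0)}$, together with the integrality of $\varphi_i(b)-\varepsilon_i(b)$; the only difference is that you apply this integrality at the start, whereas the paper applies it at the end.
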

 
 \begin{proof}
 Suppose $\cB \in \sqrt{\sC}_+$ is upper regular and $b \in \cB$
 has $\defect(b)=0$. If $\varepsilon_i(b) \in \frac{1}{2}+\ZZ$ then   $\varepsilon_i(b) \in \{\frac{1}{2}, \frac{3}{2}, \frac{5}{2},\dots\}$, and then the element $\zero \neq e_i b \in \cB$ has
 $\weight(e_ib)=\weight(b)+\alpha_i^{(1)}$ by axiom (A5) in Definition~\ref{abstract-def} so
  $\defect(e_ib) = \defect(b) - 1 =-1$. 
 Since all elements of $\cB$ must have nonnegative defect,   we must have $\varepsilon_i(b) = \ZZ\sqcup\{-\infty\}$
so $\cB \in \sK^+$.

 If $\cB \in \sqrt{\sC}_+$ is lower regular and $b\in \cB$ has $\defect(b)=0$,
then it follows similarly that $\varphi_i(b) \notin \frac{1}{2}+\ZZ$,
in which case axiom (A2) implies that we again have  $\varepsilon_i(b) = \ZZ\sqcup\{-\infty\}$
so $\cB \in \sK_+$.
 \end{proof}
 
Since polynomial $\sqgln$-crystals are regular and clearly $ \SS^{\otimes k} \in \sqrt{\sC}_+$, we obtain the following. 

\begin{corollary}
The category $\sqrt{\sC}_{\poly}$ is a full subcategory of $\sK_+$. 
\end{corollary}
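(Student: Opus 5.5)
The corollary asserts two things: every polynomial $\sqgln$-crystal lies in $\sK_+$, and the inclusion is full. Fullness is automatic. Both $\sqrt{\sC}_{\poly}$ and $\sK_+$ are defined as full subcategories of $\sqrt{\sC}$. So once we know the objects of $\sqrt{\sC}_{\poly}$ are objects of $\sK_+$, the morphism sets agree, since both equal the morphism sets in $\sqrt{\sC}$. The plan therefore reduces to showing that every $\cB \in \sqrt{\sC}_{\poly}$ lies in $\sK_+$. For this we will use Proposition~\ref{shortcut-prop}: an upper or lower regular object of $\sqrt{\sC}_+$ belongs to $\sK_+$.

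\emph{Regularity.} First we note that $\SS_n$ is regular. This is a direct check of condition (U) and condition (L) from Proposition~\ref{regular-prop} against the explicit formulas for $\varepsilon_i$ and $\varphi_i$. For instance, $\varepsilon_i(S)=0$ exactly when $S\cap\{i,i+1\}$ is neither $\{i+1\}$ nor $\{i,i+1\}$, which is exactly when $e_iS=\zero$. The trivial crystal $\one$ is also regular. Proposition~\ref{prop:regular_tensor_closed} says regular crystals are closed under tensor products, direct sums and full subcrystals. Hence the full subcategory of regular crystals contains $\SS_n$ and is closed under the operations generating $\sqrt{\sC}_{\poly}$. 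By minimality, every polynomial crystal is regular.

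\emph{Nonnegative defect.} Next we show that $\sqrt{\sC}_{\poly}\subseteq \sqrt{\sC}_+$. Each $S\in\SS_n$ has $\defect(S) = \abs{S}-1 \ge 0$. Defect is additive under $\otimes$, because weights add. It is preserved under direct sums and under passing to subcrystals, since morphisms preserve weight. So $\sqrt{\sC}_+$ is likewise closed under the generating operations, and it contains $\one$, whose defect is $0$. Alternatively, this is the monoidality and closure statement of Proposition~\ref{prop:plus_monoidal_cat}. Minimality again gives the inclusion.

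Combining the two steps, each $\cB\in\sqrt{\sC}_{\poly}$ is a regular object of $\sqrt{\sC}_+$, so $\cB\in\sK_+$ by Proposition~\ref{shortcut-prop}. No step is a real obstacle. The only point needing care is to phrase the argument as a minimality argument: show that the class of regular objects of $\sqrt{\sC}_+$ contains $\SS_n$ and $\one$ and is closed under full subcrystals, finite direct sums and finite tensor products. This avoids relying on the informal remark that every connected polynomial crystal embeds in some $\SS_n^{\otimes k}$.
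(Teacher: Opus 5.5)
Your proof is correct and follows the paper's argument: polynomial $\sqgln$-crystals are regular and have nonnegative defect, so Proposition~\ref{shortcut-prop} puts them in $\sK_+$, and fullness is automatic because both categories are full subcategories of $\sqrt{\sC}$. The only cosmetic difference is how regularity and nonnegative defect are obtained: the paper cites the fact that connected polynomial crystals strictly embed in some $\SS_n^{\otimes k}$, whereas you run the equivalent closure/minimality argument directly.
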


Define the functor $\defectF{2} \colon \sK_+ \to \sC_1$ by applying $\powF{2}$ and then restricting to the defect $0$ elements.
In other words, $\defectF{2}(\cB)= \{b \in \cB \mid \defect(b) = 0 \}$ as a set, with the same weight map as $\cB$, but with the crystal operators and statistics of $\powF{2}(\cB)$ from \eqref{powF-eq} restricted to $\defectF{2}(\cB)$.
On morphisms, we define $\defectF{2}$ to act by restriction.

\begin{proposition}
\label{defectF-prop}
The functor $\defectF{2}$ is monoidal; that is, $\defectF{2}(\cB \otimes \cC) \iso \defectF{2}(\cB) \otimes \defectF{2}(\cC)$ for all $\cB,\cC\in \sK_+$.
\end{proposition}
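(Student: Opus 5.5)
The plan is to show that the identity map on underlying sets realizes the isomorphism $\defectF{2}(\cB \otimes \cC) \iso \defectF{2}(\cB) \otimes \defectF{2}(\cC)$. The argument rests on three observations:
\begin{enumerate}
\item[(i)] $\defectF{2}$ is a well-defined functor $\sK_+ \to \sC_1$;
\item[(ii)] the defect-zero elements of $\cB\otimes\cC$ are exactly the pairs of defect-zero elements;
\item[(iii)] on such pairs, the square-root tensor rule, iterated twice, reproduces the classical Kashiwara tensor rule.
\end{enumerate}

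\emph{Step (i).} First I check that $\defectF{2}(\cB)$ is closed in the appropriate sense, interpreting $\overline e_i b$ as $\zero$ if it leaves the set. Let $b \in \cB$ with $\defect(b)=0$ and $e_ib\neq\zero$. Since $\cB\in\sK_+$, we have $\varepsilon_i(b)\in\ZZ$. By (A5), $\weight(e_ib)=\weight(b)+\e_i-\e_0$, so $\defect(e_ib)=1$ and $\varepsilon_i(e_ib)\in\frac12+\ZZ$. Hence if $e_i^2b\neq\zero$, it gains $\e_0-\e_{i+1}$, and so $\defect(e_i^2b)=0$. The analogous argument using (A5$'$) handles $f_i^2$. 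Since $\varepsilon_i,\varphi_i\in\ZZ\sqcup\{-\infty\}$ on defect-zero elements (using (A2) for $\varphi_i$), the floors in \eqref{powF-eq} do nothing there. So $\defectF{2}(\cB)$ is the restriction of the Kashiwara crystal $\powF{2}(\cB)$ to a subset that is stable under $\overline e_i,\overline f_i$ (up to $\zero$). It is therefore an abstract Kashiwara crystal. Morphisms restrict because they preserve weight, hence defect.

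\emph{Step (ii).} Defect is additive under tensor products, $\defect(b\otimes c)=\defect(b)+\defect(c)$, and both terms are nonnegative because $\cB,\cC\in\sqrt{\sC}_+$. Therefore $\defect(b\otimes c)=0$ if and only if $\defect(b)=\defect(c)=0$. So both sides of the claimed isomorphism have the same underlying set and the same weights.

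\emph{Step (iii).} This is the main computation. For $b,c$ of defect $0$, all of $\varepsilon_i(b)$, $\varphi_i(c)$, and so on, are integers or $-\infty$. The $\varepsilon_i,\varphi_i$ formulas \eqref{eq:ep_tensor} therefore coincide with the classical ones, and no floors are needed.

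For the operators, suppose first that $\varepsilon_i(b)\le\varphi_i(c)$. Then $e_i(b\otimes c)=b\otimes e_ic$. Now $\varphi_i(e_ic)=\varphi_i(c)+\tfrac12 \ge \varepsilon_i(b)$, so the second $e_i$ again acts on the right factor, giving $b\otimes e_i^2c$.

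Suppose instead that $\varepsilon_i(b)>\varphi_i(c)$. By integrality, $\varepsilon_i(b)\ge \varphi_i(c)+1$. Hence $\varepsilon_i(e_ib)=\varepsilon_i(b)-\tfrac12>\varphi_i(c)$, and the second application also acts on the left, giving $e_i^2b\otimes c$.

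In both cases this is exactly the Kashiwara rule for $\overline e_i$ on $\defectF{2}(\cB)\otimes\defectF{2}(\cC)$. The $f_i$ case is symmetric: the boundary inequality $\varepsilon_i(b)\ge\varphi_i(c)$ is stable under one half-step because of integrality. The degenerate cases, where some statistic is $-\infty$ or an intermediate result is $\zero$, give $\zero$ on both sides.

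\emph{Coherence.} Because the isomorphisms are identities on underlying sets, the associativity and unit constraints are compatible; note that $\defectF{2}(\cT_0)=\cT_0$. Step (iii) is the only place where the hypothesis $\sK_+$ genuinely matters, since integrality is what prevents the two half-steps from splitting across the factors. I expect the careful bookkeeping of that case analysis to be the main obstacle.
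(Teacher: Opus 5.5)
Your proof is correct and follows the paper's argument. Both identify the underlying sets using additivity and nonnegativity of defect, note that the statistics are integral on defect-zero elements, and check in the same two cases that $e_i^2$ (and $f_i^2$) act on a single tensor factor, with integrality giving $\varepsilon_i(b)\ge\varphi_i(c)+1$ in the critical case. One small wording fix for $f_i$: integrality is what keeps the strict inequality $\varepsilon_i(b)<\varphi_i(c)$ after the first half-step on the right factor, since it upgrades to $\varphi_i(c)\ge\varepsilon_i(b)+1$; by contrast, $\varepsilon_i(b)\ge\varphi_i(c)$ persists trivially because $f_i$ on the left factor raises $\varepsilon_i$.
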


\begin{proof}
Consider objects $\cB,\cC\in \sK_+$.
Since $\weight(b\otimes c) = \weight(b) + \weight(c)$ for all $b\otimes c \in \cB\otimes \cC$, the defect condition defining $\sqrt{\sC}_+$ implies that there is an equality of sets $ \defectF{2}(\cB) \otimes \defectF{2}(\cC) = \defectF{2}(\cB\otimes \cC)$.
We claim that this is actually an equality of $\gl_n$-crystals.

Suppose $b \in \cB$ and $c\in \cC$. If $\varepsilon_i(b)$ and $\varepsilon_i(c)$ are both contained in $\ZZ \sqcup\{-\infty\}$, then it is clear from the definitions that
$
\{\varepsilon_i(b),\ \varphi_i(b),\ \varepsilon_i(c),\ \varphi_i(c),\ \varepsilon_i(b\otimes c),\ \varphi_i(b\otimes c) \} \subset \ZZ \sqcup \{-\infty\}.
$ 
Furthermore, both $e_i^2$ and $f_i^2$ preserve the defect as they change the weight by $\pm \alpha_i$.
Hence, the statistics $\varepsilon_i$ and $\varphi_i$ agree on the two sets.
Therefore, if we can show that
\[
\ba
e_i^2(b\otimes c) &= \begin{cases}
b \otimes (e_i^2c) &\text{if } \varepsilon_i(b) \leq \varphi_i(c), \\
(e_i^2b) \otimes c &\text{if } \varepsilon_i(b) > \varphi_i(c),
\end{cases}
& \qquad
f_i^2(b\otimes c) &= \begin{cases}
b \otimes (f_i^2c) &\text{if } \varepsilon_i(b) < \varphi_i(c), \\
(f_i^2b) \otimes c &\text{if } \varepsilon_i(b) \geq \varphi_i(c),
\end{cases}
\ea
\]
then the claim will follow.

We show the desired identity for $e_i^2(b \otimes c)$ as the proof for $f_i^2(b \otimes c)$ is similar.
Suppose $\varepsilon_i(b) \leq \varphi_i(c)$. Then $e_i(b \otimes c) = b \otimes (e_i c)$, and so we have $e_i^2(b \otimes c) = b \otimes (e_i^2 c)$ as $\varepsilon_i(b) \leq \varphi_i(c) < \varphi_i(e_i c) = \varphi_i(c) + \frac{1}{2}$ by axiom~(A4) in Definition~\ref{abstract-def}.
Now suppose instead that $\varepsilon_i(b) > \varphi_i(c)$, so that $e_i(b \otimes c) = (e_i b) \otimes c$.
Note that $\varepsilon_i(b) \geq  \varphi_i(c) + 1$ since $\varepsilon_i(b)$ and $\varphi_i(c)$ are both in $\ZZ\sqcup\{-\infty\}$, and so $\varepsilon_i(e_i b) = \varepsilon_i(b) - \frac{1}{2} > \varphi_i(c)$.
Thus, we have $e_i^2(b \otimes c) = (e_i^2 b) \otimes c$ as desired.
\end{proof}

\begin{remark}
\label{rem:functor_genalization}
The preceding argument only requires that $\varepsilon_i(b) - \varphi_i(c) \in \ZZ$ for all $b \in \cB$ and $c \in \cC$ with $\varepsilon_i(b), \varphi_i(c)\neq -\infty$.
This gives a sufficient condition for the function $\powF{2}$ to be monoidal.
In another direction, one could also replace the defect condition $\defect(b)\geq 0$ with the opposite inequality $\defect(b)\leq 0$; then our functor would correspond on characters to taking the limit $\beta\to -\infty$.
Using these observations, one could define several variants of the subcategory $\sK_+$ and the functor $\defectF{2} \colon \sK_+ \to \sC_1$.
However, our applications only involve the version presented above.
\end{remark}

We also require another functor that is similar in nature to $\defectF{2}$.

\begin{definition}
For $J \subseteq [n]$, define the \defn{restriction functor} $\restrictF{J} \colon  \sqrt{\sC} \to \sqrt{\sC}$ on objects by setting 
\[
\restrictF{J}(\cB) = \{b \in \cB \mid \langle \weight(b), \e_k \rangle = 0\text{ for all $k \in [n] \setminus J$}\}.
\]
We view this set as a $\sqgln$-crystal in the following way.
The definitions of $e_j$, $f_j$, $\varepsilon_j$, and $\varphi_j$ for $ j \in I \cap J$ are the same as for $\cB$ restricted to the subcrystal $\restrictF{J}(\cB)$, but for all $i \in I \setminus J$ and $b \in \restrictF{J}(\cB)$, we take $\varepsilon_i(b)= \varphi_i(b) = -\infty$ and $e_i b = f_i b = \zero$.
On morphisms, $\restrictF{J}$ acts as restriction.
\end{definition}

We will primarily use $\restrictF{J}$ for sets of the form $J = [k,n]$.
In terms of the character of $\cB$, the restriction functor corresponds to setting $x_k = 0$ for all $k \in [n] \setminus J$.
The functor $\restrictF{J}$ is monoidal on $\sqrt{\sC}_{\poly}$ since the elements of any crystal in this category satisfy $\langle \weight(b),\e_k\rangle\geq 0$ for all $k \in [n]$.

\subsection{Set-valued tableaux}

Let $P^+ := \bigoplus_{i \in [n]} \ZZ_{\geq 0} \Lambda_i$ be the set of vectors
$
\lambda := \lambda_1 \e_1 +  \cdots + \lambda_n \e_n \in \Lambda
$
with $\lambda_1\geq  \dots \geq \lambda_n \geq0$.
We refer to the elements $\lambda \in P^+$ as \defn{partitions}.
The  \defn{Young diagram} of  $\lambda \in P^+$ is the set
\[
\D_{\lambda} = \{ (i,j) \in \ZZ_{>0} \times \ZZ_{>0} \mid j\leq\lambda_i\},
\]
which we draw in English convention.
If we express $\lambda = \sum_{i=1}^n m'_i \Lambda_i$ then $m'_i$ counts the number of columns of height $i$ in $\D_\lambda$ (that is, the multiplicity of $i$ in the conjugate partition $\lambda'$ whose Young diagram is the transpose of $\D_\lambda$).

Given a partition $\lambda=\sum_{i=1}^n \lambda_i \e_i= \sum_{i=1}^n m'_i \Lambda_i \in P^+$, we define its \defn{size} by $\abs{\lambda} = \sum_{i=1}^n \lambda_i$ and its \defn{length} by $\ell(\lambda) := \max \{ i \mid m'_i \neq 0\} = \min \{i \mid \lambda_i = 0\}$.
When $\lambda=0$ we interpret the second two formulas to mean $ \ell(0) =0$.

Let $\D$ be a subset of $\ZZ_{>0} \otimes \ZZ_{>0}$.
A \defn{set-valued tableau} of shape $\D$ is a map $T \colon \D \to \SS_n$.
If $T$ is such a tableau then we write $T_{ij}$ for the set in $\SS_n$ that $T$ assigns to each position $(i,j) \in \D$.
We define $T$ to be \defn{semistandard} if $\max T_{i,i} \leq \min T_{i,j+1}$ (respectively, $\max T_{i,j} < \min T_{i+1,j}$) whenever $(i, j) \in \D$ and $(i,j+1) \in \D$ (respectively, $(i+1,j) \in \D$).
Pictorially, we have the local rules
\[
\ytableaushort{AB,C}
\qquad \Longleftrightarrow \qquad
\begin{array}{c@{\;\;}c@{\;\;}c}
\max A & \leq & \min B \\[-8pt] \rotatebox{-90}{$<$} \\[5pt] \min C
\end{array}
\]
A \defn{semistandard Young tableau}  is a semistandard set-valued tableau $T$ whose  entries all have size $1$; that is, with $\abs{T_{ij}}=1$ for all $(i,j) \in \D$. 

For each $\lambda \in P^+$
let $\SetTab_n(\lambda)$ be the set of semistandard set-valued tableaux of shape $\D_\lambda$.
Denote $u_{\lambda}$ to be the unique set-valued tableau with all entries in row $i$ filled with $\{i\}$; that is, with $T_{ij} = \{i\}$ for all $(i,j) \in \D_{\lambda}$.
When there is no risk of confusion (mainly when $n < 10$), we   write sets $\{a_1, a_2,\dotsc, a_k\} \subseteq [n]$ as words $a_1a_2 \cdots a_k$.
For example, if $\lambda=4\e_1+ 2\e_2 + \e_3 \in P^+$ then
\[
u_\lambda = 
\ytabc{0.5cm}{1 & 1  & 1 & 1 \\
2& 2  \\
3 }
\qquand
\ytabc{0.5cm}{23 & 34  & 5 & 57 \\
4& 57  \\
7 } \in \SetTab_n(\lambda)\text{ if }n\geq 7.
\]

The \defn{column reading word} $R_{\col} \colon \SetTab_n(\lambda) \to \SS_n^{\otimes \abs{\lambda}}$ is the embedding defined by reading the columns of a tableau from bottom-to-top, while iterating over the columns from left-to-right.
By convention, the reading word of the zero partition is the unique map $\SetTab_n(0) \to \SS_n^{\otimes 0} = \one$. By abuse of notation, we identify the empty tableau with $\varnothing \in \one$.

\begin{example}
\label{ex:column_word}
For set-valued tableaux of shape $\lambda = (2,2,1)$, the column reading word sends
\[
\ytableaushort{AB,CD,E}
\quad \mapsto \quad
E \otimes C \otimes A \otimes D \otimes B.
\]
\end{example}

\begin{figure}[t]
\[
    \begin{tikzpicture}[xscale=3.5, yscale=1.5,>=latex,baseline=(z.base)]
      \node at (0,4) (T0) {${\{1\} \otimes \{1\}}$};
      \node at (0,3) (T1) {${\{1\} \otimes \{1,2\}}$};
      \node at (0,2) (T2a) {${\{1\} \otimes \{2\}}$};
      \node at (1,3) (T2b) {${\{1\} \otimes \{1,2,3\}}$};
      \node at (0,1) (T3a) {${\{1,2\} \otimes \{2\}}$};
      \node at (1,2) (T3b) {${\{1\} \otimes \{2,3\}}$};
      \node at (2,3) (T3c) {${\{1\} \otimes \{1,3\}}$};
      \node at (0,0) (T4a) {${\{2\} \otimes \{2\}}$};
      \node at (1,1) (T4b) {${\{1,2\} \otimes \{2,3\}}$};
      \node at (2,2) (T4c) {${\{1\} \otimes \{3\}}$};
      \node at (1,0) (T5a) {${\{2\} \otimes \{2,3\}}$};
      \node at (2,1) (T5b) {${\{1,2\} \otimes \{3\}}$};
      \node at (2,0) (T6a) {${\{2\} \otimes \{3\}}$};
      \node at (3,1) (T6b) {${\{1,2,3\} \otimes \{3\}}$};
      \node at (3,0) (T7a) {${\{2,3\} \otimes \{3\}}$};
      \node at (4,1) (T7b) {${\{1,3\} \otimes \{3\}}$};
      \node at (4,0) (T8) {${\{3\} \otimes \{3\}}$};
      \draw[->,thick,color=blue]  (T0) -- (T1) node[midway,right,scale=0.75] {$1$};
      \draw[->,thick,color=blue]  (T1) -- (T2a) node[midway,right,scale=0.75] {$1$};
      \draw[->,thick,color=blue]  (T2a) -- (T3a) node[midway,right,scale=0.75] {$1$};
      \draw[->,thick,color=blue]  (T2b) -- (T3b) node[midway,right,scale=0.75] {$1$};
      \draw[->,thick,color=blue]  (T3a) -- (T4a) node[midway,right,scale=0.75] {$1$};
      \draw[->,thick,color=blue]  (T3b) -- (T4b) node[midway,right,scale=0.75] {$1$};
      \draw[->,thick,color=blue]  (T4b) -- (T5a) node[midway,right,scale=0.75] {$1$};
      \draw[->,thick,color=blue]  (T4c) -- (T5b) node[midway,right,scale=0.75] {$1$};
      \draw[->,thick,color=blue]  (T5b) -- (T6a) node[midway,right,scale=0.75] {$1$};
      \draw[->,thick,color=blue]  (T6b) -- (T7a) node[midway,right,scale=0.75] {$1$};
      \draw[->,thick,color=red]  (T1) -- (T2b) node[midway,above,scale=0.75] {$2$};
      \draw[->,thick,color=red]  (T2a) -- (T3b) node[midway,above,scale=0.75] {$2$};
      \draw[->,thick,color=red]  (T3b) -- (T4c) node[midway,above,scale=0.75] {$2$};
      \draw[->,thick,color=red]  (T3a) -- (T4b) node[midway,above,scale=0.75] {$2$};
      \draw[->,thick,color=red]  (T4b) -- (T5b) node[midway,above,scale=0.75] {$2$};
      \draw[->,thick,color=red]  (T5b) -- (T6b) node[midway,above,scale=0.75] {$2$};
      \draw[->,thick,color=red]  (T4a) -- (T5a) node[midway,above,scale=0.75] {$2$};
      \draw[->,thick,color=red]  (T5a) -- (T6a) node[midway,above,scale=0.75] {$2$};
      \draw[->,thick,color=red]  (T6a) -- (T7a) node[midway,above,scale=0.75] {$2$};
      \draw[->,thick,color=red]  (T7a) -- (T8) node[midway,above,scale=0.75] {$2$};
      \draw[->,thick,color=red]  ([yshift=1pt]T2b.east) -- ([yshift=1pt]T3c.west) node[midway,above,scale=0.75] {$2$};
      \draw[->,thick,color=blue]  ([yshift=-1pt]T3c.west) -- ([yshift=-1pt]T2b.east) node[midway,below,scale=0.75] {$1$};
      \draw[->,thick,color=red]  ([yshift=1pt]T6b.east) -- ([yshift=1pt]T7b.west) node[midway,above,scale=0.75] {$2$};
      \draw[->,thick,color=blue]  ([yshift=-1pt]T7b.west) -- ([yshift=-1pt]T6b.east) node[midway,below,scale=0.75] {$1$};
     \end{tikzpicture}
\]
\caption{A connected component of $\SS_3 \otimes \SS_3$ isomorphic to $\SetTab_3(2\Lambda_1)$.}
\label{fig:S3square_cc}
\end{figure}

Any semistandard set-valued tableau $T \in \SetTab_n(\lambda)$ can be recovered from its column reading word $R_{\col} (T)$.
On the other hand, results in \cite{Yu23,MT2023} show that the image of $\SetTab_n(\lambda)$ under $R_{\col} $ is a full subcrystal of $\SS_n^{\otimes \abs{\lambda}}$.
We may therefore consider $\SetTab_n(\lambda)$ as a $\sqgln$-crystal with the structure induced by the column reading word embedding.

\begin{remark}
\label{rem:sigrule}
Yu~\cite{Yu23} defined this  crystal 
structure on  $\SetTab_n(\lambda)$
by introducing an explicit \defn{signature rule}  to compute $e_i$ and $f_i$.
There are also combinatorial formulas for the   statistics $\varepsilon_i$ and $\varphi_i$; see \cite[Cor.~2.14]{MTY}.
The weight map for  $\SetTab_n(\lambda)$ is
\be
\label{eq:explicit_tableau_weight}
\weight(T) = (\abs{\lambda}-\abs{T})\e_0 + \sum_{(i,j) \in \D_\lambda} \sum_{k \in T_{ij}} \e_k  \in \Lambda,
\qquad\text{where } \abs{T} := \sum_{(i,j) \in \D_\lambda} \abs{T_{ij}}.
\ee
\end{remark}

To make our next statement about $\SetTab_n(\lambda)$, we need to introduce a few additional definitions.
Write $s_i=(i\; i+1) \in S_n$ for the simple reflections generating the symmetric group, and recall that we write $w_0 =[n\dots,3,2,1]\in S_n$ is the longest permutation.
The \defn{Demazure product} on $S_n$ is the unique associative operation $\circ \colon S_n \times S_n \to S_n$ with
\[
w \circ s_i = \begin{cases} w & \text{if } w(i) > w(i+1) \\ w s_i & \text{if }w(i) < w(i+1) \end{cases}
\qquad\text{for all $w \in S_n$ and $i \in [n-1]$.}
\]
The monoid $(S_n, \circ)$ is called the \defn{$0$-Hecke monoid} of $S_n$ (see, e.g.,~\cite[\S29.3 Lemma A]{Humphreys75}).
A \defn{Hecke word} for $w \in S_n$ is a $0$-Hecke monoid representative of $w$, or more precisely, a finite sequence $(i_1, i_2, \dotsc, i_N)$ with each $i_j \in [n-1]$ such that $w = s_{i_1} \circ s_{i_2} \circ \cdots \circ s_{i_N}$.

There is an $S_n$-action on polynomials given by restricting the $S_n$-action on $\Lambda$.
Relative to this action, if  $f \in \ZZ[\beta][x_1,\dotsc,x_n]$ the $s_i f$ is the polynomial obtained by swapping $x_i$ and $x_{i+1}$.
The \defn{Lascoux operators} $\varpi_i \colon \ZZ[\beta][x_1,\dotsc,x_n]\to \ZZ[\beta][x_1,\dotsc,x_n]$, 
for $i \in [n-1]$ are defined by the formula
\be\label{eq:lascoux_ops}
\varpi_i f := \frac{ (x_i + \beta x_i x_{i+1}) f - (x_{i+1} + \beta x_i x_{i+1}) \, s_i   f }{x_i - x_{i+1}}.
\ee
The Lascoux operators satisfy the braid relations for the symmetric group and have $\varpi_i^2 = \beta \varpi_i$
\cite[Thm.~5.1]{Monical16},
 so for any reduced expression $w = s_{i_1} \cdots s_{i_{\ell}} \in S_n$ the operator $\varpi_w := \varpi_{i_1} \cdots \varpi_{i_{\ell}}$ is well-defined.
Given $w \in S_n$ and $\lambda \in P^+$, the corresponding \defn{Lascoux polynomial} is 
\be  L_{w\lambda}(x; \beta) := \varpi_w x^{\lambda} \in   \ZZ[\beta][x_1,\dotsc,x_n].\ee It is well-known 
that $L_{w\lambda}(x; \beta) = L_{w'\mu}(x;\beta)$ if and only if $w \lambda = w' \mu$.

Now, following \cite[\S13.2]{BumpSchilling}, for each $i \in I$ and $X \subseteq \cB \in \sqrt{\sC}$ we define the \defn{crystal Demazure operator} $\fkD_i$ by the formula
\begin{subequations}
\begin{align}
\fkD_i X & := \{ b \in \cB \mid e_i^k b\in X\text{ for some }k \geq 0\}
\\ & = \{ f_i^k x \mid k\geq 0 \text{ and } x \in X\} \setminus\{\zero\}.
\end{align}
\end{subequations}
We always have $\fkD_i \fkD_i X = \fkD_i X$, but the
$\fkD_i$ operators do not  satisfy any other braid relations 
when applied to general subsets of crystals $\cB \in \sqrt{\sC}_{\poly}$ (consider Example~\ref{ex:not_monoidal} with $X = \{ \{1,2\} \otimes \{1,2\}\}$).
However, there is an action of the $0$-Hecke monoid on $\SetTab_n(\lambda)$ when we start with the highest weight element.

\begin{theorem}[{\cite[Thm.~5.1]{Yu23}; see \cite{MT2023}}]
\label{thm:crystal_SVT}
Let $\lambda \in P^+$.
Then $\SetTab_n(\lambda)$ is a connected polynomial $\sqgln$-crystal. This crystal has a filtration indexed by permutations $w \in S_n$ with respect to Bruhat order given by the \defn{Demazure crystals}
defined by
$
\SetTab_n(\lambda)_w := \fkD_{i_1} \fkD_{i_2} \cdots \fkD_{i_{\ell}} \{ u_{\lambda} \}
$
where $(i_1,i_2,\dots,i_\ell)$ is any Hecke word  for $w$.
Moreover, the 
character of each Demazure crystal is the Lascoux polynomial
$
\ch \bigl( \SetTab_n(\lambda)_w \bigr) = L_{w\lambda}(x; \beta) .
$
\end{theorem}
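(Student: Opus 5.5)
This theorem is cited from \cite{Yu23,MT2023}, so the plan follows the structure of Yu's argument, translated into the tensor-product language of this paper. The proof has three parts, taken in order: polynomiality and connectedness, well-definedness and monotonicity of the Demazure crystals, and the character identity.

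\emph{Polynomiality and connectedness.} I first show that the image of $R_{\col}$ is a full subcrystal of $\SS_n^{\otimes\abs{\lambda}}$. The plan is to verify directly from the tensor product rule (via the $a_{im}$ description in the proof of Proposition~\ref{prop:monoidal_cat}) that $e_i$ and $f_i$ send column reading words of semistandard set-valued tableaux to column reading words of semistandard set-valued tableaux, or to $\zero$. By that description, only the letters $i$ and $i+1$ matter, and the check reduces to a local analysis of the pattern of $i$'s and $i+1$'s in adjacent columns. This makes $\SetTab_n(\lambda)$ a full subcrystal of a tensor power of $\SS_n$, hence polynomial.

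For connectedness, I argue by induction on $\abs{T}$ and on the weight: every $T \neq u_\lambda$ admits some $e_i$ or $f_i$ that moves it closer to $u_\lambda$. Concretely, one either removes an extra entry through a $\frac12$-step or raises a single-valued entry, until $u_\lambda$ is reached.

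\emph{Demazure crystals.} The key lemma I would establish is a string property of $\SetTab_n(\lambda)_w$, proved by induction on $\ell(w)$. The claim is that for each $i$ with $ws_i$ or $s_iw$ relevant, and for each $i$-string $\cS$, the intersection $\cS \cap \SetTab_n(\lambda)_w$ is one of three things: empty, all of $\cS$, or the top element of $\cS$ alone. For $\sqgln$-strings I must also allow the head of the string up to the first half-step, together with the looping $2$-cycles.

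From this string property, $\fkD_i$ applied to a union of such pieces depends only on the Demazure product. This gives both well-definedness for any Hecke word and the relation $\fkD_i\fkD_i=\fkD_i$. Bruhat monotonicity then follows from the subword property together with $X\subseteq \fkD_iX$.

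\emph{Character identity.} For each string-compatible set $X$, I would show $\ch(\fkD_iX)=\varpi_i\,\ch(X)$ by summing over $i$-strings. A full string is $s_i$-symmetric, and $\varpi_i$ fixes symmetric pieces. For a partial string (its head only), a direct computation with \eqref{eq:lascoux_ops} shows that $\varpi_i$ applied to the weights of the head yields the full string character. The half-steps $\pm\alpha_i^{(m)}$ are what produce the $\beta x_ix_{i+1}$ terms. The base case is $\ch\{u_\lambda\}=x^\lambda$.

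\emph{Main obstacle.} I expect the string property to be the hardest step. Unlike the classical case, $i$-strings here interact with $(i\pm1)$-edges through $2$-cycles, so the classical inductive argument via $\mathfrak{sl}_2$-restriction and highest weights does not transfer directly. I would first try to reduce to rank two, meaning the $\{i,i+1\}$ branching, and check the string property for the rank-two Levi pieces of $\SetTab_n(\lambda)$ by direct analysis of the signature rule.
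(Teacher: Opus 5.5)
The paper does not reprove this result. It takes the reduced-word version from \cite[Thm.~5.1]{Yu23} and adds only the upgrade to Hecke words, using $\fkD_i\fkD_i=\fkD_i$ and the fact that $s_i\circ w=w$ forces a reduced word for $w$ beginning with $s_i$. Your polynomiality step is fine. So is your computation that $\varpi_i x^{\mu}$ is the character of the full $i$-string with top weight $\mu$, since the first step below the top subtracts $\alpha_i^{(1)}$. The central step of your outline, however, has a genuine gap: the claim that the string property makes ``$\fkD_i$ applied to a union of such pieces depend only on the Demazure product'' does not follow.

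The string property governs one operator $\fkD_i$ on one set. Combined with your character computation, it shows only that every reduced word for $w$ yields a subset of $\SetTab_n(\lambda)$ with character $\varpi_w x^\lambda$, and equal characters do not give equal subsets. Word-independence is the braid relation $\fkD_i\fkD_{i+1}\fkD_i Y=\fkD_{i+1}\fkD_i\fkD_{i+1}Y$ on the sets $Y=\SetTab_n(\lambda)_v$. (The far commutations are automatic, since $f_i$ and $f_j$ commute for $|i-j|>1$.) This relation couples two indices, so it cannot be extracted from a condition checked one index at a time. As the paper remarks, it fails for general subsets of polynomial $\sqgln$-crystals. You need an extra ingredient here. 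One option is a word-independent description of $\SetTab_n(\lambda)_w$, a set-valued analogue of the right-key criterion, together with a proof that $\fkD_i$ sends the set for $w$ to the set for $s_i\circ w$. Another is a direct proof of the rank-two braid relation on these particular sets.

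The same issue undermines your induction on $\ell(w)$ for the string property itself. Knowing it for $Y$ and every $i$ does not give it for $\fkD_jY$ with respect to $i\neq j$; this is exactly where Kashiwara's classical argument needs $\cB(\infty)$ and the quantum group. A rank-two reduction is not a routine check either, because connected polynomial $\sqrt{\gl_3}$-crystals can have several highest weight elements (Example~\ref{ex:weight_not_isomorphism}).

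Separately, your proposed enlargement of the string property, allowing the head of a string ``up to the first half-step,'' is unnecessary and would break your own character argument. The correct statement, Corollary~\ref{cor:string_decomp} (\cite[Lemma~4.19]{Yu23}), has only the three classical cases. Suppose some $\cS\cap Y$ were $\{b,f_ib\}$ with $b$ the top of $\cS$. Then $\fkD_iY\cap\cS=\cS$, while $\varpi_i$ sends $x^{\weight(b)}+x^{\weight(f_ib)}$ to $\ch(\cS)+\varpi_i\bigl(x^{\weight(f_ib)}\bigr)$. Here $x^{\weight(f_ib)}=\beta x_{i+1}x^{\weight(b)}$ has nonzero image under $\varpi_i$, so $\ch(\fkD_iY)\neq\varpi_i\ch(Y)$.

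Finally, your connectedness sketch is only heuristic: it must use semistandardness, since $\SS_n^{\otimes k}$ is disconnected and its components can have several highest weights. It is cleaner to obtain connectedness at the end. The set $\SetTab_n(\lambda)_{w_0}\subseteq\SetTab_n(\lambda)$ has character $L_{w_0\lambda}=G_\lambda=\ch\bigl(\SetTab_n(\lambda)\bigr)$, so the two sets coincide and $u_\lambda$ lower generates everything.
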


\begin{remark}
A central part of the preceding theorem is the assertion that
$\SetTab_n(\lambda)_w$ depends only on $w$ and not on the choice of Hecke word.
While~\cite[Thm.~5.1]{Yu23} is stated in terms of reduced expression for $w$, it is relatively easy to upgrade this to  Theorem~\ref{thm:crystal_SVT}.

Indeed, by basic properties of Coxeter groups~\cite[\S5.8]{Humphreys90}, for any $w \in S_n$ with $s_i \circ w = w$, there exists a reduced expression of the form $w = s_i s_{a_1} \cdots s_{a_{\ell}}  $.
Since $\fkD_i^2 = \fkD_i$, we have 
\[\fkD_i \SetTab_n(\lambda)_w = \fkD_i \fkD_i\fkD_{a_1}\cdots \fkD_{a_\ell} \{u_\lambda\} = \fkD_i\fkD_{a_1}\cdots \fkD_{a_\ell} \{u_\lambda\} = \SetTab_n(\lambda)_w,\]
using \cite[Thm.~5.1]{Yu23} for the first and last equalities.
Thus  $\fkD_i \circ \SetTab_n(\lambda)_w = \SetTab_n(\lambda)_{s_i\circ w}$
for any $w \in S_n$,
so for any Hecke word $(i_1, i_2,\dotsc, i_N)$ for $w$, it follows by induction on $N$ that
\[
\SetTab_n(\lambda)_w=
\fkD_{i_1} \fkD_{i_2} \cdots \fkD_{i_N}  \SetTab_n(\lambda)_1 = \fkD_{i_1}\fkD_{i_2}  \cdots \fkD_{i_N}  \{u_\lambda\}.
\]
\end{remark}

By construction, we have $\SetTab_n(\lambda) \in \sqrt{\cC}_{\poly} \subseteq \sK_+$.
Applying the functor $\defectF{2}$ to $\SetTab_n(\lambda)$ recovers the usual Kashiwara crystal structure on semistandard Young tableaux~\cite{KN94}. 

The  \defn{symmetric Grothendieck polynomial} $G_{\lambda}(x; \beta)$
of $\lambda \in P^+$ is the character 
\be\label{G-def1-eq}
G_{\lambda}(x; \beta) := \ch \bigl( \SetTab_n(\lambda) \bigr) = \sum_{T \in \SetTab_n(\lambda)} x^{\weight(T)} = L_{w_0 \lambda}(x; \beta).
\ee
Symmetric Grothendieck polynomials first appeared in work of Lascoux--Sch\"uzten\-berger~\cite{LS82,LS83} (with $\beta = -1$) and Fomin--Kirillov~\cite{FK1994} (with generic $\beta$).
The specializations $G_{\lambda}(x;-1)$ are representatives for the $K$-theory classes of Schubert varieties in the complex Grassmannian~\cite[Thm.~8.1]{Buch2002}, whereas the more familiar \defn{Schur functions} are given by $s_{\lambda}(x) = G_{\lambda}(x; 0)$.

Because $\SetTab_n(\lambda) \in \sqrt{\cC}_{\poly}$, 
we know that $G_{\lambda}(x; \beta) \in \ZZ[\beta][x_1,\dots,x_n]^{S_n}$.
Furthermore, 
the symmetric Grothendieck polynomials are a $\ZZ[\beta]$-basis of $\ZZ[\beta][x_1, \dotsc, x_n]^{S_n}$~\cite{Lenart00}.

Slightly generalizing \eqref{G-def1-eq}, for any weight of the form
\be\label{gen-weight}
k\e_0 + \lambda \in \Lambda \quad \text{with $ k\in \ZZ$ and $\lambda \in P^+$,}
\ee
we define
\be
G_{k\e_0+\lambda}(x; \beta) := \beta^{-k} G_{\lambda}(x; \beta).
\ee

\begin{example}
Fix a positive integer $k$, and let $\lambda = k \e_1 =  k\Lambda_1 \in P^+$ be the partition whose Young diagram is a single row of length $k$.
Then from the discussion in~\cite[\S6]{Buch2002} (see also~\cite[Rem.~3.7]{MPS21} and~\cite{Lenart00}), we have
$
\ch\bigl( \SetTab_n(\lambda) \bigr) = G_{\lambda}(x; \beta) = \sum_{m=0}^{n-1} \beta^m s_{k1^m}(x),
$
where $k1^m$ means the hook shaped partition $(k-1)\Lambda_1 + \Lambda_{m+1} = k\e_1 + \e_2 + \e_3 + \cdots + \e_{m+1}.$ 
\end{example}

\begin{example}
The tensor product $\SS_3 \otimes \SS_3$ has 3 connected components.
One connected component is $\SetTab_3(2\Lambda_1)$ (which is given by Figure~\ref{fig:S3square_cc}), and another is $\SetTab_3(\Lambda_2)$ (see Example~\ref{ex:column_not_subsets} below).
The remaining connected component is generated by $\{1,2\} \otimes \{1\}$, which is not isomorphic to $\cT_{-\e_0} \otimes \SetTab_3(\Lambda_1+\Lambda_2)$, even though they have the same character.
This decomposition reflects the expansion
$
G_{1}(x; \beta) \cdot G_{1}(x; \beta) 
= G_{2}(x; \beta) + G_{11}(x; \beta) + \beta G_{21}(x;\beta). 
$
\end{example}

\begin{example}
\label{ex:PS_nondem}
Another filtration of $\SetTab_n(\lambda)$ is considered in \cite{PS22}. 
The filtration of the $\sqgln$-crystals for rectangular shapes does not in general agree with the the decomposition given in~\cite{PS22}, which can be seen at the level of $i$-strings (called $i$-K-strings in~\cite{PS22} as they were composed of two different types of crystal operators).
Although these decompositions do agree when  $\lambda = 2\Lambda_2$ and $n = 3$, as can be seen by comparing~\cite[Fig.~2]{PS22} and Figure~\ref{settab-fig},  they diverge when $n = 4$ since the $2$-string
\[
\begin{tikzpicture}[xscale=1.5,baseline=0]
\node (1) at (0,0) {$\ytableaushort{{1}{12},{2}{4}}$};
\node (2) at (2,0) {$\ytableaushort{{1}{\scalebox{.8}{123}},{2}{4}}$};
\node (3) at (4,0) {$\ytableaushort{{1}{13},{2}{4}}$};
\node (4) at (6,0) {$\ytableaushort{{1}{13},{23}{4}}$};
\node (5) at (8,0) {$\ytableaushort{{1}{13},{3}{4}}$};
\draw[->,thick,red] (1) -- node[midway,scale=0.75,above] {$2$} (2);
\draw[->,thick,red] (2) -- node[midway,scale=0.75,above] {$2$} (3);
\draw[->,thick,red] (3) -- node[midway,scale=0.75,above] {$2$} (4);
\draw[->,thick,red] (4) -- node[midway,scale=0.75,above] {$2$} (5);
\end{tikzpicture},
\]
does not match the $i$-K-string in~\cite{PS22}.
\end{example}

\begin{figure}[t]
\[
    \begin{tikzpicture}[xscale=2.35, yscale=1.75,>=latex,baseline=(z.base)]
    \node at (5,0.0) (z) {};
	\node at (3,2) (A1) {$\ytabc{0.45cm}{1 & 1 \\ 2 & 2}$};
	\node at (3,1) (A2) {$\ytabc{0.45cm}{1 & 1 \\ 2 & 23}$};
	\node at (3,0) (A3) {$\ytabc{0.45cm}{1 & 1 \\ 2 & 3}$};
	\node at (3,-1) (A4) {$\ytabc{0.45cm}{1 & 1 \\ 23 & 3}$};
	\node at (4,0) (B4) {$\ytabc{0.45cm}{1 & 12 \\ 2 & 3}$};
	\node at (3,-2) (A5) {$\ytabc{0.45cm}{1 & 1 \\ 3 & 3}$};
	\node at (4,-1) (B5) {$\ytabc{0.45cm}{1 & 12 \\ 23 & 3}$};
	\node at (5,0) (C5) {$\ytabc{0.45cm}{1 & 2 \\ 2 & 3}$};
	\node at (4,-2) (A6) {$\ytabc{0.45cm}{1 & 12 \\ 3 & 3}$};
	\node at (5,-1) (B6) {$\ytabc{0.45cm}{1 & 2 \\ 2,\!3 & 3}$};
	\node at (5,-2) (A7) {$\ytabc{0.45cm}{1 & 2 \\ 3 & 3}$};
	\node at (6,-2) (A8) {$\ytabc{0.45cm}{12 & 2 \\ 3 & 3}$};
	\node at (7,-2) (A9) {$\ytabc{0.45cm}{2 & 2 \\ 3 & 3}$};
      \draw[->,thick,red]  (A1) -- (A2) node[midway,right,scale=0.75] {$2$};
      \draw[->,thick,red]  (A2) -- (A3) node[midway,right,scale=0.75] {$2$};
      \draw[->,thick,red]  (A3) -- (A4) node[midway,right,scale=0.75] {$2$};
      \draw[->,thick,red]  (A4) -- (A5) node[midway,right,scale=0.75] {$2$};
      \draw[->,thick,red]  (B4) -- (B5) node[midway,right,scale=0.75] {$2$};
      \draw[->,thick,red]  (B5) -- (A6) node[midway,right,scale=0.75] {$2$};
      \draw[->,thick,red]  (C5) -- (B6) node[midway,right,scale=0.75] {$2$};
      \draw[->,thick,red]  (B6) -- (A7) node[midway,right,scale=0.75] {$2$};
      \draw[->,thick,blue]  (A3) -- (B4) node[midway,above,scale=0.75] {$1$};
      \draw[->,thick,blue]  (B4) -- (C5) node[midway,above,scale=0.75] {$1$};
      \draw[->,thick,blue]  (A4) -- (B5) node[midway,above,scale=0.75] {$1$};
      \draw[->,thick,blue]  (B5) -- (B6) node[midway,above,scale=0.75] {$1$};
      \draw[->,thick,blue]  (A5) -- (A6) node[midway,above,scale=0.75] {$1$};
      \draw[->,thick,blue]  (A6) -- (A7) node[midway,above,scale=0.75] {$1$};
      \draw[->,thick,blue]  (A7) -- (A8) node[midway,above,scale=0.75] {$1$};
      \draw[->,thick,blue]  (A8) -- (A9) node[midway,above,scale=0.75] {$1$};
     \end{tikzpicture}
\]
\caption{The crystal $\SetTab_3(2\Lambda_2)$.}
\label{settab-fig}
\end{figure}

In spite of all this, a result from \cite{MTY} shows the characters of objects in  $\sqrt{\sC}_{\poly}$ are still well behaved.
As our conventions here are slightly different than in \cite{MTY}, the statement below does not exactly match the cited reference, but is equivalent in view of the observations in Remark~\ref{no-loss-rem}.

\begin{theorem}[{\cite[Thm.~1.1]{MTY}}]
\label{thm:standard_characters}
Let $\cB \in \sqrt{\sC}_{\poly}$ be a polynomial $\sqgln$-crystal with highest weight elements of weight $\lambda^{(1)}, \dotsc, \lambda^{(m)}$.
Then each $\lambda^{(i)}$ has the form \eqref{gen-weight} with $k\leq 0$ and
\[
\ch(\cB) =  G_{\lambda^{(1)}}(x; \beta) + \cdots + G_{\lambda^{(m)}}(x; \beta) \in \NN[\beta]\spanning\Bigl\{ G_{\mu}(x; \beta)  \mid \mu \in P^+\Bigr\}.
\]
\end{theorem}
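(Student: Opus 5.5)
Since this is the main theorem of \cite{MTY} restated in the present conventions, the plan is to deduce it from the normalized ($\beta=1$) version in \cite{MTY} via the dictionary of Remark~\ref{no-loss-rem}, while indicating how one would argue directly. First, reduce to the connected case. Characters are additive under $\oplus$, highest weight elements of a direct sum are the union of those of the summands, and every polynomial $\sqgln$-crystal is a finite direct sum of connected components. Each such component strictly embeds as a full subcrystal of some $\SS_n^{\otimes k}$. So it suffices to treat connected full subcrystals $\cB\subseteq \SS_n^{\otimes k}$.

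Next, verify the form of the highest weights. Let $b=S_1\otimes\cdots\otimes S_k$ be highest weight. By the explicit weight of $\SS_n$, the $\e_0$-coefficient of $\weight(b)$ is $\sum_j(1-\abs{S_j})\le 0$, so $k\le 0$ in \eqref{gen-weight}. The $x$-part is dominant: upper regularity (Proposition~\ref{regular-prop}) gives $\varepsilon_i(b)=0$, hence $\varphi_i(b)=\langle\weight(b),\alpha_i^\vee\rangle\ge 0$ by lower regularity and (A2). All $\e_i$-coefficients are nonnegative, so $\lambda\in P^+$.

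The character identity is the heart of the matter. I would prove it by induction on $k$ using a ``Demazure/Lascoux-operator'' characterization. Concretely, show that for any connected $\cB$ in $\sqrt{\sC}_{\poly}$ the character is the sum over highest weight elements $u$ of $\beta^{-k_u}\varpi_{w_0}x^{\lambda_u}$. One attempt is to construct, as in Yu's proof of Theorem~\ref{thm:crystal_SVT}, a decomposition of $\cB$ into disjoint pieces $\fkD_{w_0}$-generated from each highest weight element, and check on $i$-strings that $\fkD_i$ realizes $\varpi_i$ on characters. The key local fact is that an $i$-string in a regular $\sqgln$-crystal, starting at an element with $\varepsilon_i=0$, has character $x^{\mu}(1+\beta x_{i+1}x_i^{-1}\cdot\ldots)$ matching $\varpi_i x^\mu$ with the $\e_0$ alternation from (A5). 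Alternatively, and more robustly, one can use $\defectF{2}$. By Proposition~\ref{defectF-prop}, $\defectF{2}(\cB)$ is a normal $\gl_n$-crystal whose highest weights are precisely the defect-zero highest weights of $\cB$. Combined with Lenart's result that the $G_\mu$ form a $\ZZ[\beta]$-basis, and a triangularity argument in the $\beta$-degree (peeling off highest weight elements of minimal defect and inducting), this would yield the expansion.

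The main obstacle is the last step. Showing that the multiset of highest weights exactly accounts for the coefficients, with no cancellation, requires a structural statement. Either every connected polynomial crystal admits a ``Grothendieck filtration'' by $\SetTab$-like pieces, or the Demazure operators on $\cB$ commute appropriately with highest weight decompositions. Neither follows formally from the axioms here, which is why I would ultimately cite \cite[Thm.~1.1]{MTY} for it. The remaining work is then the translation: setting $\beta=1$ and rescaling $\varepsilon_i,\varphi_i$ by $2$. By the constancy of $D$ on connected components, this recovers the $\beta$-graded statement, each $G_{\lambda^{(i)}}$ picking up the factor $\beta^{-k}$ consistent with the definition of $G_{k\e_0+\lambda}$.
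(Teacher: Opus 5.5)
Your proposal is correct and matches the paper: the paper gives no independent proof, simply citing \cite[Thm.~1.1]{MTY} and noting that the statement there is equivalent to this one via the $\beta=1$/rescaling dictionary and the constancy of $D$ on connected components from Remark~\ref{no-loss-rem}, which is exactly where your argument ends up. Your direct check that each highest weight has the form \eqref{gen-weight} with $k\le 0$ (via the $\e_0$-coefficient $\sum_j(1-\abs{S_j})$ and regularity) is a correct small supplement; the direct routes you sketch for the character identity are, as you yourself say, incomplete, but they are not needed.
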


Theorem~\ref{thm:standard_characters} generalizes a similar property of the characters of polynomial Kashiwara crystals, which are known to be sums of Schur functions in type $\gl_n$~\cite{KN94}.
One important difference between these two statements is that in the Kashiwara case, one can deduce Schur positivity of characters from quite rigid properties of the relevant crystals (like connected objects having unique highest weight elements), which no longer hold for polynomial $\sqgln$-crystals.
The examples below illustrate a few of these pathologies.

\begin{example}
\label{ex:weight_not_isomorphism}
Consider the highest weight element $b = \{1\} \otimes \{2\} \otimes \{1\} \otimes \{1\} \in \SS_3^{\otimes 4}$.
Then $b' = f_1^2 f_2^6 f_1^4 b$ is a lowest weight element, but
$
b'' = e_2^2 e_1^4 e_2^4 e_1 b' = \{1,2\} \otimes \{2\} \otimes \{1\} \otimes \{1\}
$
is another highest weight element.
One can check that $b$ and $b''$ have the respective weights 
\[
3\e_1 + \e_2 = \Lambda_2 + 2\Lambda_1
\qquand
-\e_0 + 3\e_1 + 2\e_2=-\e_0+ 2\Lambda_2 + \Lambda_1.
\]
On the other hand, a direct computation shows that the character of the connected component containing $b$ is $G_{\Lambda_2 + 2\Lambda_1}(x; \beta) + \beta G_{2\Lambda_2 + \Lambda_1}(x; \beta)$, as claimed by Theorem~\ref{thm:standard_characters}.
\end{example}

\begin{example}
Assume $n\geq 3$. Then  
$
b = \{2\} \otimes \{1\} \otimes \{1\} \otimes \{1\}
$
and
$
b' = \{1\} \otimes \{1\} \otimes \{2\} \otimes \{1\}
$
are the unique highest elements of two distinct connected components of $\SS_n^{\otimes 4}$.
A direct computation shows that these components both have the character of $G_{\Lambda_2+2\Lambda_1}(x; \beta)$ but are not isomorphic crystals.
One way to see this non-isomorphism is to check that
\begin{gather*}
f_2 f_1 b = \{2\} \otimes \{1\} \otimes \{1\} \otimes \{1,3\}
\neq 
f_1 f_2 b = \{2,3\} \otimes \{1\} \otimes \{1\} \otimes \{1,2\},
\end{gather*}
whereas
$
f_1 f_2 b' = f_2 f_1 b' = \{1\} \otimes \{1,2\} \otimes \{2,3\} \otimes \{1\}.
$
\end{example}

Recall that we have identified $\Lambda_i$ with a column of height $i$.
In view of the definition of the column reading word and Theorem~\ref{thm:crystal_SVT}, we can consider a semistandard set-valued tableau as a tensor product of single-column tableau crystals of the form $\SetTab_n(\Lambda_i)$.

\begin{corollary}
\label{cor:column_description}
Let $\lambda = \sum_{i=1}^n m'_i \Lambda_i \in P^+$.
Then $R_{\col}$ is the unique crystal embedding
\[
\SetTab_n(\lambda) \hookrightarrow \SetTab_n(\Lambda_n)^{\otimes m'_n} \otimes \SetTab_n(\Lambda_{n-1})^{\otimes m'_{n-1}} \otimes \cdots \otimes \SetTab_n(\Lambda_1)^{\otimes m'_1}
\]
that maps $u_{\lambda} \mapsto u_{\Lambda_n}^{\otimes m'_n} \otimes \cdots \otimes u_{\Lambda_1}^{\otimes m'_1}$.
\end{corollary}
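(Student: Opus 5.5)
Here the target crystal $\SetTab_n(\Lambda_n)^{\otimes m'_n} \otimes \cdots \otimes \SetTab_n(\Lambda_1)^{\otimes m'_1}$ is regarded as a full subcrystal of $\SS_n^{\otimes \abs{\lambda}}$. Each factor $\SetTab_n(\Lambda_i)$ embeds into $\SS_n^{\otimes i}$ by its own column reading word, with image a full subcrystal. Concatenating these reading words gives an embedding into $\SS_n^{\otimes \abs{\lambda}}$. Since $\otimes$ is associative (Proposition~\ref{prop:monoidal_cat}) and functorial on strict embeddings, this embedding is again a strict embedding onto a union of connected components.

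\textbf{Existence.} The column reading word of $T \in \SetTab_n(\lambda)$ reads columns left to right, and columns of $\D_\lambda$ appear in weakly decreasing height, so the first $m'_n$ columns have height $n$, and so on. Hence $R_{\col}(T)$ is exactly the concatenation $R_{\col}(C_1)\otimes R_{\col}(C_2)\otimes\cdots$, where $C_j$ is the $j$-th column of $T$, viewed as an element of $\SetTab_n(\Lambda_{h_j})$ with $h_j$ the height of that column. Each column of a semistandard set-valued tableau is semistandard of its own shape. Therefore $R_{\col}$ factors set-theoretically through the product of column crystals. Both $\SetTab_n(\lambda)$ and the product carry the crystal structure induced from $\SS_n^{\otimes\abs\lambda}$, and both are full subcrystals there. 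So the inclusion of the image of $\SetTab_n(\lambda)$ into the product is a strict embedding, and hence a crystal embedding. Clearly $u_\lambda$ maps to $u_{\Lambda_n}^{\otimes m'_n}\otimes\cdots\otimes u_{\Lambda_1}^{\otimes m'_1}$, since column $j$ of $u_\lambda$ is $\{1\},\dots,\{h_j\}$.

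\textbf{Uniqueness.} Let $\Psi$ be any crystal embedding with the stated value on $u_\lambda$. By Theorem~\ref{thm:crystal_SVT}, $\SetTab_n(\lambda)$ is connected, and in fact $\SetTab_n(\lambda) = \SetTab_n(\lambda)_{w_0}$ is obtained from $u_\lambda$ by applying the Demazure operators $\fkD_i$. So every element is $f_{i_\ell}\cdots f_{i_1}u_\lambda$ for some sequence, i.e.\ the crystal is lower generated by $u_\lambda$.

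The key step, and the main obstacle, is that an arbitrary (non-strict) morphism need not commute with $f_i$ unless both sides are nonzero. I would argue inductively along such a sequence. Suppose $b=f_i c\neq\zero$ and $\Psi(c)=R_{\col}(c)$. Axiom (M1) gives $\varphi_i(\Psi(c))=\varphi_i(c)>0$, by lower regularity, since $f_ic\ne\zero$. Lower regularity of the target (Proposition~\ref{prop:regular_tensor_closed}) then gives $f_i\Psi(c)\neq\zero$. Since $\Psi$ is an embedding, $\Psi(b)\ne\zero$. Condition (M3) now yields $\Psi(b)=f_i\Psi(c)=f_iR_{\col}(c)=R_{\col}(b)$, the last equality by strictness of $R_{\col}$. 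Induction on $\ell$ gives $\Psi=R_{\col}$.
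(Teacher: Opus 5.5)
Your proposal is correct and follows essentially the same route as the paper. The paper obtains this corollary directly from the definition of the column reading word, which splits $R_{\col}(T)$ column by column with columns in weakly decreasing height, together with Theorem~\ref{thm:crystal_SVT} (connectedness and lower generation by $u_\lambda$) for uniqueness. You supply the details the paper leaves implicit, notably the lower-regularity argument that makes (M3) applicable to a possibly non-strict embedding.
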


We can be more explicit about the embedding in Corollary~\ref{cor:column_description}: it is given by splitting the columns in a semistandard set-valued tableaux $T$.
For example, we have
\[
\ytableaushort{ABCD,EFG,H}
\quad \mapsto \quad
\ytableaushort{A,E,H} \otimes \ytableaushort{B,F} \otimes \ytableaushort{CG} \otimes \ytableaushort{D}.
\]

Semistandard Young tableaux whose shape is a single column are in bijection with finite nonempty subsets of positive integers, and therefore they are uniquely identified by their weights.
These properties no longer hold in the set-valued case.
More precisely, we cannot identify elements in $\SetTab_n(\Lambda_i)$ for $i > 1$ with   subsets of $[n]$ of size at least $i$,  as indicated by the following example.

\begin{example}
\label{ex:column_not_subsets}
Consider the crystal $\SetTab_3(\Lambda_2)$
\[
\ytableausetup{boxsize=1.3em}
\begin{tikzpicture}[>=latex,xscale=2,baseline=0]
\node (12) at (0,0) {$\ytableaushort{1,2}$};
\node (13) at (2,0) {$\ytableaushort{1,3}$};
\node (23) at (4,0) {$\ytableaushort{2,3}$};
\node (123b) at (3,0) {$\ytableaushort{{12},3}$};
\node (123a) at (1,0) {$\ytableaushort{1,{23}}$};
\draw[->,thick,color=red] (12) -- node[midway,above,scale=.7] {2} (123a);
\draw[->,thick,color=red]  (123a) -- node[midway,above,scale=.7] {2} (13);
\draw[->,thick,color=blue] (13) -- node[midway,above,scale=.7] {1} (123b);
\draw[->,thick,color=blue](123b) -- node[midway,above,scale=.7] {1} (23);
\end{tikzpicture},
\]
and there are two elements with weight $(1,1,1)$ that would correspond to the set $\{1,2,3\}$.
\end{example}

We can given an analog of the preceding result using the \defn{row reading word} $\R_{\row} \colon \SetTab_n(\lambda) \to \SS_n^{\abs{\lambda}}$, which is defined by reading 
the rows of a set-valued tableau
 left-to-right, but iterating over the rows bottom-to-top.
For the tableau in Example~\ref{ex:column_word}, the row reading word maps to $E \otimes C \otimes D \otimes A \otimes B$.
We will show that the crystal structure on set-valued tableaux induced from the row reading word is exactly the same as from the column reading word.
More generally, we make the following definition.

\begin{definition}
Let $\D \subset \ZZ_{>0} \times \ZZ_{>0}$ be a finite set with size $k := \abs{\D} < \infty$.
An \defn{admissible reading order} is a bijection $\psi \colon \D \to [k]$ such that $\psi(i, j) \leq \psi(i', j')$ whenever $i \geq i'$ and $j \leq j'$.
Let $\cS$ be some subset of set-valued tableaux of shape $\D$.
The \defn{admissible reading} of $\cS$ associated to the admissible reading order $\psi$ is the map $R_{\psi} \colon \cS \to \SS_n^{\otimes k}$  given by
\[
R_{\psi}(T) = T\bigl( \psi^{-1}(1) \bigr) \otimes T\bigl( \psi^{-1}(2) \bigr) \otimes \cdots \otimes T\bigl( \psi^{-1}(k) \bigr).
\]
\end{definition}

Equivalently, an admissible reading of a set-valued tableau of shape $\D$ is such that for any two entries $A$ and $B$ with $A$ is weakly to the northwest of $B$ in $\D$, it must hold that $A$ comes before $B$ in the reading word (i.e., we have $\cdots \otimes A \otimes \cdots \otimes B \otimes \cdots$).
It is easy to see that the row and column reading words are both admissible readings.

\begin{lemma}
\label{lemma:admissible_reading}
Every admissible reading is an embedding of $\sqgln$-crystals $\SetTab_n(\lambda)\to \SS_n^{\otimes \abs{\lambda}}$.
\end{lemma}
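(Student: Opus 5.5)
The plan is to reduce to the column reading word $R_{\col}$, which by the results of \cite{Yu23,MT2023} quoted above (see Corollary~\ref{cor:column_description}) embeds $\SetTab_n(\lambda)$ as a full subcrystal of $\SS_n^{\otimes\abs{\lambda}}$. The key observation is that any two admissible reading orders on $\D_\lambda$ differ by a sequence of swaps of cells that are adjacent in the order and incomparable in the northwest partial order. Since $\psi$ is a linear extension of the poset on $\D_\lambda$ with $(i,j)\preceq(i',j')$ when $i\geq i'$ and $j\leq j'$, this is the standard fact that the linear extensions of a finite poset are connected under adjacent transpositions of incomparable elements. Incomparable cells $A,B$ are strictly northwest/southeast of each other: one lies strictly north \emph{and} strictly east of the other. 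By semistandardness, the entry in the strictly southwest cell has all its values $\geq$ those of the northeast cell. More precisely, if $A$ is at $(i,j)$ and $B$ at $(i',j')$ with $i<i'$ and $j>j'$, then the column/row chain gives $\min B_{\text{entry}}>\max A_{\text{entry}}$ via the cell $(i',j)$ or $(i,j')$. If $(i',j)\notin\D_\lambda$, we instead use the path through $(i,j')$, which lies in $\D_\lambda$.

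It therefore suffices to prove a local lemma. Let $X,Y\in\SS_n$ with $\max X<\min Y$. Then for any $\sqgln$-crystals $\cB,\cC$, the map $\cdots\otimes X\otimes Y\otimes\cdots\mapsto\cdots\otimes Y\otimes X\otimes\cdots$ commutes with all $e_i,f_i$ and preserves $\varepsilon_i,\varphi_i$ and weight, whenever every intermediate element along the $i$-action stays in the set of such pairs. By associativity (Proposition~\ref{prop:monoidal_cat}) and the formulas with $a_{im},a'_{im}$, the action on a tensor product depends on a factor $X\otimes Y$ only through $\varepsilon_i,\varphi_i,\weight$ of that two-fold factor and how $e_i,f_i$ act within it. So it is enough to check that $X\otimes Y$ and $Y\otimes X$ have equal statistics and equal weight, and that $e_i,f_i$ correspond under the swap when restricted to pairs with $\max X<\min Y$. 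Since $\max X<\min Y$, at most one of $X,Y$ meets $\{i,i+1\}$ unless $X\ni i$ and $Y\ni i+1$. In the first case the other factor is $i$-inert with $\varepsilon_i=\varphi_i=0$ and weight pairing $0$, so both orders act on the same factor. The remaining case, $i\in X$ and $i+1\in Y$, gives $\varepsilon_i=\varphi_i=0$ for both factors; both tensors then have $e_i=f_i=\zero$ and $\varepsilon_i=\varphi_i=0$. This case check via the definition of $\SS_n$ is routine.

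The main obstacle is the closure issue. The swap identification must be applied on the image of $\SetTab_n(\lambda)$, so one needs that applying $e_i$ or $f_i$ in the tensor product to a reading word of a semistandard tableau yields the reading word of a semistandard tableau. For $R_{\col}$ this is supplied by the fullness result of \cite{Yu23,MT2023}. I would proceed by induction along the chain of swaps from $R_{\col}$ to $R_\psi$. At each step the previous reading gives a full subcrystal of $\SetTab_n(\lambda)$-reading words. The swapped map is a set bijection onto its image that commutes with $e_i,f_i$, because the local lemma applies to every element, all of which come from semistandard tableaux and thus satisfy $\max X<\min Y$ for the swapped pair. By Proposition~\ref{prop:morphism_by_ops}, using regularity of $\SS_n^{\otimes k}$ and equal weights, it is a crystal embedding. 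Its image is closed under $e_i,f_i$, hence full. Finally, compose with the identification of $\SetTab_n(\lambda)$ with its $R_{\col}$ image to conclude.
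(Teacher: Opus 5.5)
Your reduction rests on a local swap lemma that is false, and it fails exactly in the case that actually arises.

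Your first case, where only one of $X,Y$ meets $\{i,i+1\}$, is fine. In the second case, $X\cap\{i,i+1\}=\{i\}$ and $Y\cap\{i,i+1\}=\{i+1\}$, you have $\varphi_i(X)=\varepsilon_i(Y)=1$, not $0$. Take $i=1$, $X=\{1\}$, $Y=\{2\}$. The tensor product rule gives:
\begin{itemize}
\item for $X\otimes Y$: $\varepsilon_1=\varphi_1=1$, with $e_1(X\otimes Y)=\{1\}\otimes\{1,2\}$ and $f_1(X\otimes Y)=\{1,2\}\otimes\{2\}$;
\item for $Y\otimes X$: $\varepsilon_1=\varphi_1=0$, with $e_1(Y\otimes X)=f_1(Y\otimes X)=\zero$.
\end{itemize}
This configuration does occur for incomparable cells. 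For $T=u_{2\Lambda_2}$, the words $R_{\col}(T)=\{2\}\otimes\{1\}\otimes\{2\}\otimes\{1\}$ and $R_{\row}(T)=\{2\}\otimes\{2\}\otimes\{1\}\otimes\{1\}$ differ precisely by swapping the middle pair $\{1\}\otimes\{2\}$, coming from the cells $(1,1)$ and $(2,2)$. So no statement about the pair $X\otimes Y$ alone can work; agreement of the two readings is a property of the whole word.

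Your geometry also needs fixing. Cells with $A$ strictly north and strictly east of $B$ are comparable, and their entries satisfy no inequality. For example, with $T_{21}=\{2\}$ the entry $T_{12}$ may be $\{1\}$ or $\{3\}$. The incomparable pairs are those with one cell strictly northwest of the other. For these, $\max A<\min B$ does hold via the corner cell in $A$'s row and $B$'s column, which always lies in $\D_\lambda$.

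What is missing is a global argument. Use the $a_{im}$, $a'_{im}$ description from the proof of Proposition~\ref{prop:monoidal_cat}, with $A$ the northwest cell, $A\cap\{i,i+1\}=\{i\}$ and $B\cap\{i,i+1\}=\{i+1\}$. Replacing $A\otimes B$ by $B\otimes A$ lowers both values at the swapped positions by exactly $1$ and leaves every other value unchanged. Hence the swap commutes with $e_i,f_i$ and preserves $\varepsilon_i,\varphi_i$ at a given word if and only if, in the word with $A$ before $B$, neither swapped position is the rightmost maximizer of $a_{im}$ or the leftmost maximizer of $a'_{im}$.

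Proving this requires the rest of the tableau. Semistandardness forces $A=(r,c)$ and $B=(r+1,c')$ with $c<c'$, $\max A=i$ and $\min B=i+1$. Every other cell of the $2\times(c'-c+1)$ rectangle they span equals $\{i\}$ in row $r$ and $\{i+1\}$ in row $r+1$. In particular, in every admissible reading $T_{r+1,c}=\{i+1\}$ is read before both $A$ and $B$, and $T_{r,c'}=\{i\}$ is read after both. These cells, together with the rest of the word, are what you must exploit to rule out the bad maximizers. In the square-root setting this must also handle cells containing both $i$ and $i+1$, which have $\varepsilon_i=\varphi_i=\frac12$.

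This is the actual content of the classical argument the paper invokes from \cite[Thm.~7.3.6]{HK02} and \cite[Thm.~4.4]{BKK00}. Without it, your induction along the chain of swaps, and with it your closure step, is unsupported.
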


\begin{proof}
The result follows by the same proof as~\cite[Thm.~7.3.6]{HK02} or~\cite[Thm.~4.4]{BKK00}.
\end{proof}

Lemma~\ref{lemma:admissible_reading} means that we could replace the column reading word in our definition of the crystal structure for $\SetTab_n(\lambda)$ with any admissible reading and we would obtain the \emph{same} $\sqgln$-crystal.
The following analog of Corollary~\ref{cor:column_description} 
is a special case of this general fact.

\begin{corollary}
\label{cor:row_description}
Let $\lambda =\sum_{i=1}^n \lambda_i\e_i \in P^+$.
Then $R_{\row} $ is the unique embedding
\[
\SetTab_n(\lambda) \hookrightarrow \SetTab_n(\lambda_n \Lambda_1) \otimes \SetTab_n(\lambda_{n-1} \Lambda_1) \otimes \cdots \otimes \SetTab_n(\lambda_1 \Lambda_1)
\]
that maps
$
u_{\lambda} \mapsto (f_{n-1}^{2\lambda_n} \cdots f_1^{2\lambda_n} u_{\lambda_n \Lambda_1}) \otimes \cdots \otimes (f_1^{2\lambda_2} u_{\lambda_2 \Lambda_1}) \otimes u_{\lambda_1 \Lambda_1}.
$
\end{corollary}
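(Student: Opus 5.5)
Apply Lemma~\ref{lemma:admissible_reading} to the row reading order, then account for the one subtlety: a single row, viewed as a factor of the target, does not start at $u_{\lambda_i\Lambda_1}$.

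\textbf{Existence of the embedding.} The row reading order is admissible, so by Lemma~\ref{lemma:admissible_reading} the map $R_{\row}$ is a crystal embedding $\SetTab_n(\lambda)\hookrightarrow \SS_n^{\otimes\abs{\lambda}}$. Group the tensor factors of $\SS_n^{\otimes \abs{\lambda}}$ into consecutive blocks of sizes $\lambda_n,\lambda_{n-1},\dots,\lambda_1$. Row $i$ of a semistandard set-valued tableau $T$ is itself a one-row semistandard set-valued tableau of shape $\lambda_i\Lambda_1$. Its row reading word is exactly block $i$. Since $R_{\row}$ on the one-row crystal $\SetTab_n(\lambda_i\Lambda_1)$ is again an admissible reading, each block lies in the image of $\SetTab_n(\lambda_i\Lambda_1)$. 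So $R_{\row}$ factors through
\[
\SetTab_n(\lambda_n\Lambda_1)\otimes\cdots\otimes\SetTab_n(\lambda_1\Lambda_1)\subseteq \SS_n^{\otimes\abs{\lambda}}.
\]
This product is a full subcrystal of $\SS_n^{\otimes\abs\lambda}$, because a tensor product of full subcrystals is full. Hence the factored map is still a crystal embedding, by restriction of the embedding into $\SS_n^{\otimes\abs\lambda}$.

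\textbf{Image of $u_\lambda$.} Row $i$ of $u_\lambda$ is the row filled with $\{i\}$. We check that this equals $f_{i-1}^{2\lambda_i}\cdots f_1^{2\lambda_i}u_{\lambda_i\Lambda_1}$. In $\SS_n$, applying $f_j^2$ sends $\{j\}\mapsto\{j,j+1\}\mapsto\{j+1\}$. On a one-row tableau of $\lambda_i$ copies of $\{j\}$, the $i$-string is regular with $\varphi_j=\lambda_i$ (the character is $G_{\lambda_i\Lambda_1}$-type). So $f_j^{2\lambda_i}$ carries the all-$\{j\}$ row to the all-$\{j+1\}$ row, which is the lowest element of that string in weight. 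This can be verified directly via the tensor rule: the rightmost factor moves first, through the set $\{j,j+1\}$, and semistandardness is preserved at each step. Iterating over $j=1,\dots,i-1$ gives the claim.

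\textbf{Uniqueness.} $\SetTab_n(\lambda)$ is connected by Theorem~\ref{thm:crystal_SVT}. It is also regular, as are the tensor products of one-row crystals. An embedding is injective, and any morphism between these crystals that commutes with the crystal operators is determined by the image of one element. So it suffices to see that any embedding commutes with the operators on the image. This is where I expect the only real friction. Morphisms in Definition~\ref{morphism-def} need not be strict, so I would argue as in Lemma~\ref{lemma:lower_embed_check}. Because $\varepsilon_i$ and $\varphi_i$ are preserved and both crystals are regular, $e_ib\ne\zero$ if and only if $e_i\Psi(b)\neq\zero$, and similarly for $f_i$. Then (M2) and (M3) force $\Psi(e_ib)=e_i\Psi(b)$ whenever $\Psi(e_ib)\neq\zero$, which holds for an embedding defined on all of $\SetTab_n(\lambda)$. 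Propagating from $u_\lambda$ along the connected crystal graph then determines $\Psi$ on every element, so the embedding is unique.
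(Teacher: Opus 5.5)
Your proposal is correct and follows the paper's route: the paper obtains this corollary as a special case of Lemma~\ref{lemma:admissible_reading} applied to the row reading order, and notes right afterward that $f_{i-1}^{2k}\cdots f_1^{2k}u_{k\Lambda_1}$ is the all-$\{i\}$ row by an easy induction. Your extra details are sound and fill in what the paper leaves implicit: a tensor product of full subcrystals is full, and regularity together with injectivity forces any embedding to be strict, so connectedness gives uniqueness (where you wrote ``the $i$-string'' you mean the $j$-string).
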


Note that $u_k^{(i)} := f_{i-1}^{2k} \cdots f_1^{2k} u_{k\Lambda_1}$ is a single row of length $k$ with all entries $\{i\}$, which is easy to see by induction.
Another way to realize this is to note that the $i$-th row is equal to the image of the restriction functor $\restrictF{[i,n]}\bigl(\SetTab_n(\lambda_i \Lambda_1)\bigr)$, with $u_{\lambda_i}^{(i)}$ being its unique highest weight element.
In fact, we have a stronger statement.

\begin{corollary}
\label{cor:row_restriction}
The embedding from Corollary~\ref{cor:row_description} is the embedding
\[
\SetTab_n(\lambda) \hookrightarrow \restrictF{[n,n]}\bigl( \SetTab_n(\lambda_n \Lambda_1) \bigr) \otimes \cdots \otimes \restrictF{[2,n]}\bigl( \SetTab_n(\lambda_2 \Lambda_1) \bigr) \otimes \restrictF{[1,n]}\bigl( \SetTab_n(\lambda_1 \Lambda_1) \bigr).
\]
\end{corollary}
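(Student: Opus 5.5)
The plan is to show two things. First, the image of $R_{\row}$ lies in the tensor product of the restricted crystals $\restrictF{[i,n]}\bigl(\SetTab_n(\lambda_i\Lambda_1)\bigr)$. Second, on this image the crystal structure of that tensor product agrees with the one coming from $\SetTab_n(\lambda_n\Lambda_1)\otimes\cdots\otimes\SetTab_n(\lambda_1\Lambda_1)$.

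For the first point I would use semistandardness. Columns are strictly increasing, so every entry in row $i$ of a tableau $T\in\SetTab_n(\lambda)$ is a subset of $[i,n]$. Hence the $i$-th row, viewed as an element of $\SetTab_n(\lambda_i\Lambda_1)$, has weight with zero $\e_k$-coefficient for all $k<i$. It therefore lies in the underlying set of $\restrictF{[i,n]}\bigl(\SetTab_n(\lambda_i\Lambda_1)\bigr)$. So $R_{\row}$ lands, as a map of sets, in the displayed tensor product. By Corollary~\ref{cor:row_description} it is already a crystal embedding into the unrestricted tensor product.

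For the second point, recall that the restriction functor changes only the data for indices $j\in I\setminus[i,n]$, that is, $j<i$. For these it sets $\varepsilon_j=\varphi_j=-\infty$ and $e_j=f_j=\zero$. I claim that on row $i$ the unrestricted crystal already has $e_j r=f_j r=\zero$ for $j<i-1$. Indeed, neither $j$ nor $j+1$ occurs in $r$, so the operators act trivially, and $\varepsilon_j(r)=\varphi_j(r)=0$. The main obstacle is that these values are $0$, not $-\infty$, so the tensor product rule could in principle differ. It also needs care at the boundary index $j=i-1$, where $j+1=i$ can occur in $r$ but $j$ cannot.

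To handle this I would compare both structures directly using the signature description from the proof of Proposition~\ref{prop:monoidal_cat}, with the numbers $a_{jm}$ and $a'_{jm}$.
\begin{itemize}
\item If a factor has $\varepsilon_j=-\infty$, it never attains the maximum and is skipped.
\item A factor with $\varepsilon_j=\varphi_j=0$ and $\langle\weight,\alpha_j^\vee\rangle=0$ contributes $a_{jm}=a'_{jm}$ equal to the corresponding value of its neighbouring term. Since $e_j$ and $f_j$ are $\zero$ on that factor, it can only change which factor is selected, not the outcome. One must check that such a factor is never the selected one when the true answer is nonzero.
\end{itemize}

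Rather than grind through this, I would instead invoke Lemma~\ref{lemma:lower_embed_check}, or the uniqueness in Corollary~\ref{cor:row_description}. Both crystals are upper regular, since each restricted factor is a full-row subcrystal to which Lemma~\ref{lemma:T_upper_prod}-type reasoning applies. The set map $R_{\row}$ sends $u_\lambda$ to $u^{(n)}_{\lambda_n}\otimes\cdots\otimes u^{(1)}_{\lambda_1}$, and these have the same weight. The remaining check is that $R_{\row}$ commutes with each $e_j$ on the image. This reduces to the local observation above: in the original tensor product the factors of rows $i>j+1$ are inert for $e_j$ and carry zero $\alpha_j^\vee$-weight. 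Deleting them, which is what the $-\infty$ convention effectively does, leaves both the value of $\varepsilon_j$ and the selected factor unchanged. Handling the boundary index $j=i-1$ for row $i$ is the step I expect to require the most care. There $\varphi_{i-1}$ of the row can be positive while $\varepsilon_{i-1}=0$, so I would verify directly that setting it to $-\infty$ does not change the choice of factor, using that all earlier rows also avoid $i-1$.
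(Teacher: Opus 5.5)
The paper gives no separate argument for this corollary. It treats it as immediate from the observation that the $i$-th row lies in $\restrictF{[i,n]}\bigl(\SetTab_n(\lambda_i\Lambda_1)\bigr)$.

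Your reduction is a sound way to make this rigorous. You get set-level containment from column strictness, then use Lemma~\ref{lemma:lower_embed_check}, so you only have to show that $e_j$ and $\varepsilon_j$ computed in the restricted tensor product agree with those of the unrestricted product on the image of $R_{\row}$. However, the one non-formal step, the boundary row, is left as ``verify directly.'' Your description of that step is reversed, and the fact you propose to use does not address it.

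Write $\rho_p$ for the $p$-th row, so the image consists of elements $\rho_n\otimes\cdots\otimes\rho_1$, and fix $j$.
\begin{itemize}
\item Since $\rho_{j+1}$ has entries in $[j+1,n]$, each of its boxes containing $j+1$ has $\varepsilon_j=1$ and $\varphi_j=0$. So $\varepsilon_j(\rho_{j+1})=c$, the number of such boxes, while $\varphi_j(\rho_{j+1})=0$. This is the opposite of what you wrote.
\item Set $W:=\sum_{p\le j}\langle\weight(\rho_p),\alpha_j^\vee\rangle$. The signature value (in the sense of the proof of Proposition~\ref{prop:monoidal_cat}) of $\rho_{j+1}$ equals $c-W$, and so does that of every inert row $\rho_p$ with $p\ge j+2$.
\item Restriction replaces all of these values by $-\infty$ and leaves the values $a_p$ of the rows $p\le j$ unchanged.
\end{itemize}
So what must be shown is $\max_{p\le j}a_p\ge c-W$. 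This compares row $j+1$ with the rows \emph{above} it. The fact you cite, that the rows read earlier (below row $j+1$) avoid $j$, does not help.

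This inequality genuinely needs semistandardness across rows. For $n=2$, the element $\{2\}\otimes\{2\}$ has $\varepsilon_1=1$ in $\restrictF{[2,2]}\bigl(\SetTab_2(\Lambda_1)\bigr)\otimes\SetTab_2(\Lambda_1)$, but $\varepsilon_1=2$ in $\SetTab_2(\Lambda_1)\otimes\SetTab_2(\Lambda_1)$.

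The missing idea is column strictness between rows $j$ and $j+1$.
\begin{itemize}
\item The boxes of $\rho_{j+1}$ containing $j+1$ form an initial segment, and each lies directly below a box of $\rho_j$ that must equal $\{j\}$.
\item Hence $\rho_j$ begins with at least $c$ boxes $\{j\}$, so $\varphi_j(\rho_j)\ge c$; the $c$-th such box already has $\varphi$-signature value $c$.
\item By (A2), $a_j=\varphi_j(\rho_j)-W\ge c-W$.
\item Because $\rho_j$ lies to the right of $\rho_{j+1}$, and ties for $e_j$ go to the rightmost factor, the unrestricted rule never selects a row $p\ge j+1$. Its maximum is already attained among the rows $p\le j$.
\end{itemize}
Thus $\varepsilon_j$ and $e_j$ coincide, and Lemma~\ref{lemma:lower_embed_check} finishes the proof.

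One smaller correction: the restricted factors are not themselves upper regular. The target is upper regular because, for each $j$, all factors with $\varepsilon_j=-\infty$ (the rows $p>j$) sit to the left of the regular ones. That is exactly the situation of Lemma~\ref{lemma:T_upper_prod}, and this ordering is what you should cite.
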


Subsequently, we can also define the mapping in Corollary~\ref{cor:row_description} explicitly by splitting any semistandard set-valued tableau along rows.
For example, we have
\[
\ytableaushort{ABCD,EFG,H}
\quad \mapsto \quad
\ytableaushort{H} \otimes \ytableaushort{EFG} \otimes \ytableaushort{ABCD}.
\]

\subsection{Direct limits}

Our goal here is to construct a {directed system} of $\sqgln$-crystal embeddings
\be\label{Psi-eq}
\{\Psi_{\lambda,\mu} \colon \cT_{-\lambda} \otimes \SetTab_n(\lambda) \hookrightarrow \cT_{-\lambda-\mu} \otimes \SetTab_n(\lambda+\mu) \mid \mu, \lambda \in P^+\}
\ee
such that the diagram
\be
\label{eq:directed_system_diagram}
\begin{tikzpicture}[>=latex,baseline=-30]
\node (1) at (0, 0) {$\cT_{-\lambda} \otimes \SetTab_n(\lambda)$};
\node (2) at (6, 0) {$\cT_{-\lambda-\mu} \otimes \SetTab_n(\lambda+\mu)$};
\node (3) at (6, -2) {$\cT_{-\lambda-\mu-\nu} \otimes \SetTab_n(\lambda+\mu+\nu)$};
\draw[->] (1) -- node[midway,above] {$\Psi_{\lambda,\mu}$} (2);
\draw[->] (2) -- node[midway,right] {$\Psi_{\lambda+\mu,\nu}$} (3);
\draw[->] (1) -- node[midway,below left] {$\Psi_{\lambda,\mu+\nu}$} (3);
\end{tikzpicture}
\ee
commutes for all partitions $\lambda,\mu,\nu \in P^+$.
The direct limit of this system will yield a  $\sqgln$-crystal
that we denote as $\SetTab_n(\infty)$.

Since each crystal $\cT_{-\lambda} \otimes \SetTab_n(\lambda)$ is lower generated by a unique highest weight element and upper regular, we can uniquely define the maps $\Psi_{\lambda,\mu}$ by $t_{-\lambda} \otimes u_{\lambda} \mapsto t_{-\lambda-\mu} \otimes u_{\lambda+\mu}$.
To show $\Psi_{\lambda,\mu}$ is a crystal embedding, it is sufficient to show it commutes with $e_i$, and we give a much more explicit formula in~\eqref{eq:insertion_map} below.
We will show our result by building up from the case when $\lambda$ and $\mu$ are both single rows (i.e., length $1$).

Suppose $\lambda,\mu \in P^+$ are partitions.
Given a set-valued tableau $T$ of shape $\D_\lambda$, let $\insmap_{\mu}(T)$ denote the set-valued tableau of shape $\D_{\lambda+\mu}$ with value $\{i\}$ in each box $(i,j) \in \D_\mu$ and value $T_{i,j-\mu_i}$ in each box $(i,j) \in \D_{\lambda+\mu}\setminus\D_\mu$.
For example, if $\mu = 3\e_1 + 3\e_2 + 2\e_3$ then 
\[ \insmap_\mu\(\ \ytabc{0.55cm}{1 & 125&5 & 5 & 57 \\
25& 6 & 67 \\
6 & 8 \\
78 & 9 }\ \)
= \ytableaushort{1111{125}55{57},222{25}6{67},3368,{78}9}*[*(blue!30)]{3,3,2}.
\]
We omit the proof of the following lemma, which can be derived as a straightforward exercise.

\begin{lemma} \label{iota-lem}
Let $\lambda,\mu\in P^+$.
Then $\insmap_{\mu} $ is an injective map of sets $\SetTab_n(\lambda) \to \SetTab_n(\lambda+\mu)$, which is a bijection if and only if $\mu$ is a scalar multiple of $\Lambda_n=\e_1+\e_2+\cdots+\e_n$.
\end{lemma}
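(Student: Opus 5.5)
The plan is to verify three things directly from the definitions: that $\insmap_\mu(T)$ is semistandard, that $\insmap_\mu$ is injective, and that it is surjective exactly when $\mu \in \ZZ_{\geq 0}\Lambda_n$.

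\emph{Semistandardness.} Fix $T \in \SetTab_n(\lambda)$ and write $U := \insmap_\mu(T)$. First we check that $U$ has shape $\D_{\lambda+\mu}$: row $i$ has length $\mu_i + \lambda_i$, and it is filled by $\{i\}$ in columns $1,\dots,\mu_i$ followed by row $i$ of $T$, shifted right by $\mu_i$. All entries lie in $\SS_n$ because $\mu_i>0$ forces $i \leq \ell(\mu) \leq n$. Row weakness then holds for three reasons:
\begin{itemize}
\item inside the $\mu$-block every entry of row $i$ is $\{i\}$;
\item inside the shifted copy of $T$, the condition is inherited from $T$;
\item at the junction we need $i \leq \min T_{i,1}$, and this holds because column strictness in $T$ gives $\min T_{i,1} \geq i$.
\end{itemize}
For column strictness, compare boxes $(i,j)$ and $(i+1,j)$ of $U$. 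There are four cases.
\begin{itemize}
\item Both boxes lie in $\D_\mu$: the entries are $\{i\}$ and $\{i+1\}$, which is fine.
\item The box $(i,j)$ is in $\D_\mu$ and $(i+1,j)$ is not: we need $i < \min U_{i+1,j}$. Every entry of row $i+1$ of $U$ has minimum at least $i+1$, both in the $\mu$-part and in the $T$-part, since $\min T_{i+1,k} \geq i+1$.
\item The box $(i,j)$ is not in $\D_\mu$ but $(i+1,j)$ is: this is impossible, since $\mu_{i+1} \leq \mu_i$.
\item Neither box is in $\D_\mu$: here $U_{i,j} = T_{i,j-\mu_i}$ and $U_{i+1,j} = T_{i+1,j-\mu_{i+1}}$. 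Since $j - \mu_{i+1} \geq j - \mu_i$, we get $\max T_{i,j-\mu_i} \leq \max T_{i,j-\mu_{i+1}} < \min T_{i+1,j-\mu_{i+1}}$, using row weakness and then column strictness in $T$. This step needs the box $(i,j-\mu_{i+1})$ to lie in $\D_\lambda$, which follows from $j-\mu_{i+1} \leq \lambda_{i+1} \leq \lambda_i$.
\end{itemize}
This last case is the main step requiring care.

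\emph{Injectivity.} $T$ is recovered from $U$ by deleting the first $\mu_i$ boxes of each row and shifting left.

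\emph{Bijectivity.} If $\mu = k\Lambda_n$, then every $U \in \SetTab_n(\lambda+\mu)$ satisfies $U_{i,j} = \{i\}$ for $j \leq k$. Indeed, column $j$ has height $n$, so strictness with entries in $[n]$ forces the singleton $\{i\}$ in row $i$. Deleting the first $k$ columns of such a $U$ gives a semistandard tableau of shape $\D_\lambda$, so $U$ lies in the image. Conversely, suppose $\mu$ is not a multiple of $\Lambda_n$. Choose $i$ with $\mu_i > \mu_{i+1}$, or $i=n$ with $\mu_n < \mu_1$ (so that $\mu_n < \mu_{n-1}$ for some adjacent pair). In general pick $i$ with $\mu_i > \mu_{i+1}$ and $i<n$. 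Then box $(i,\mu_i)$ lies in $\D_\mu$, and the box $(i+1,\mu_i)$, if present, is not in $\D_\mu$.

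We exhibit a tableau in $\SetTab_n(\lambda+\mu)$ outside the image. Take the tableau with row $r$ filled by $\{r\}$, except that rows $i'>i$ are filled by $\{r+1\}$ in the columns $j \leq \mu_i$ beyond the $\mu$-block. To keep things simple, the cleanest choice is instead to put $\{i,i+1\}$ in box $(i,\mu_i)$ when the box $(i+1,\mu_i)$ is absent, and otherwise to increase the entries of rows $i+1,\dots,\ell(\lambda+\mu)$ appropriately (possible since $i<n$ leaves room). In either case box $(i,\mu_i)$ or box $(i+1,\cdot)$ violates the requirement that $U$ be constant $\{r\}$ on $\D_\mu$. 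I expect only this construction of a non-image element to need a little bookkeeping, and it can be done with the explicit choice $\{i,i+1\}$ in box $(i,\mu_i)$, which is legal whenever $\mu_i > \mu_{i+1}$ since the box below is not constrained to be $\{i+1\}$.
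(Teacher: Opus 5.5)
Your semistandardness check, injectivity, and surjectivity for $\mu\in\NN\Lambda_n$ are all fine. The non-surjectivity direction for $\mu\notin\NN\Lambda_n$, however, has a genuine gap.

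Your strategy is to exhibit some $U\in\SetTab_n(\lambda+\mu)$ that is not the constant filling $\{r\}$ somewhere on $\D_\mu$. When the box $(i+1,\mu_i)$ is present, you propose to push rows $i+1,\dots$ downward "since $i<n$ leaves room". That claim is false. Suppose every column of $\D_\mu$ has full height $n$ in $\D_{\lambda+\mu}$, i.e.\ $\lambda_n+\mu_n\ge\mu_1$. Then column strictness with entries in $[n]$ forces $U_{r,j}=\{r\}$ on all of $\D_\mu$ for \emph{every} $U\in\SetTab_n(\lambda+\mu)$, so no witness of your type exists.

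A concrete case is $n=2$, $\lambda=\Lambda_2$, $\mu=\Lambda_1$ (here $i=1$). Every $U$ of shape $\D_{\lambda+\mu}$, with rows of lengths $2,1$, has first column $\{1\},\{2\}$. Yet the $U$ with $U_{1,2}=\{2\}$ is not in the image: its would-be preimage would have $T_{1,1}=U_{1,2}=\{2\}$ and $T_{2,1}=U_{2,1}=\{2\}$, violating column strictness.

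The missing idea is that the image is cut out by two conditions. One is constancy on $\D_\mu$. The other is column strictness of the tableau obtained by un-shifting row $r$ by $\mu_r$. Because rows $i$ and $i+1$ are shifted by different amounts, this second condition is a real constraint, and your argument never uses it. Your first case also has a smaller defect: placing $\{i,i+1\}$ in box $(i,\mu_i)$ breaks row weakness against the entry $\{i\}$ in box $(i,\mu_i+1)$ whenever $\lambda_i>0$.

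Here is a witness that works in all cases. Take $i<n$ with $\mu_i>\mu_{i+1}$, set $c:=\lambda_{i+1}+\mu_i$, and let $U$ agree with $u_{\lambda+\mu}$ except that boxes $(i,c),\dots,(i,\lambda_i+\mu_i)$ contain $\{i+1\}$.
\begin{itemize}
\item $U$ is semistandard: since $c>\lambda_{i+1}+\mu_{i+1}$, no box of $\D_{\lambda+\mu}$ lies below these boxes, and $i+1\le n$.
\item If $\lambda_{i+1}=0$, then $c=\mu_i$, and $U_{i,\mu_i}\neq\{i\}$ with $(i,\mu_i)\in\D_\mu$.
\item If $\lambda_{i+1}\ge 1$, a preimage $T$ would satisfy $T_{i,\lambda_{i+1}}=U_{i,c}=\{i+1\}$ and $T_{i+1,\lambda_{i+1}}=U_{i+1,\lambda_{i+1}+\mu_{i+1}}=\{i+1\}$, contradicting column strictness.
\end{itemize}
Either way $U\notin\insmap_\mu\bigl(\SetTab_n(\lambda)\bigr)$.
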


We now claim that the maps $\Psi_{\lambda,\mu}$ can be explicitly described by
\be\label{eq:insertion_map}
\Psi_{\lambda,\mu}(t_{-\lambda} \otimes T)  := t_{-\lambda-\mu} \otimes \insmap_\mu(T)
\qquad\text{for }T \in \SetTab_n(\lambda).
\ee
From Lemma~\ref{iota-lem}, we know that $\Psi_{\lambda,\mu}$ is an embedding as sets, but it remains to show this map is a crystal morphism.

In view of Theorem~\ref{thm:crystal_SVT} and Lemma~\ref{lemma:T_upper_prod},
  $\cT_{-\lambda} \otimes \SetTab_n(\lambda)$ is a connected regular crystal lower generated by $\{t_{-\lambda}\otimes u_{\lambda}\}$. Thus, by Lemma~\ref{lemma:lower_embed_check}, it is sufficient to show $\Psi_{\lambda,\mu}$ commutes with all crystal operators $e_i$.
We first check this directly when $\lambda$ and $\mu$ have length at most one.

\begin{lemma}
\label{lemma:single_row_embedding}
Fix $k, m \in \NN$. 
The map
\[
\Psi_{k\Lambda_1,m\Lambda_1} \colon \cT_{-k\Lambda_1} \otimes  \SetTab_n(k\Lambda_1)  \to \cT_{-(k+m)\Lambda_1} \otimes \SetTab_n((k+m) \Lambda_1)
\]
defined by~\eqref{eq:insertion_map} is a $\sqgln$-crystal embedding.
\end{lemma}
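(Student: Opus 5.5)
By Lemma~\ref{lemma:lower_embed_check}, together with Lemma~\ref{iota-lem} and the fact that $\cT_{-k\Lambda_1}\otimes\SetTab_n(k\Lambda_1)$ is upper regular (Lemma~\ref{lemma:T_upper_prod}) and lower generated by $t_{-k\Lambda_1}\otimes u_{k\Lambda_1}$, it suffices to show that $\Psi:=\Psi_{k\Lambda_1,m\Lambda_1}$ commutes with every $e_i$. The weight condition on the generator is immediate: $\insmap_{m\Lambda_1}$ prepends $m$ boxes $\{1\}$, which adds $m\e_1=m\Lambda_1$ to the weight, and the $\cT$ factor subtracts it.

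The first reduction is to remove the $\cT$ factors. Since $\varepsilon_i(t_\lambda)=\varphi_i(t_\lambda)=-\infty$, the tensor rule gives $e_i(t_\lambda\otimes b)=t_\lambda\otimes e_ib$, because $-\infty\le\varphi_i(b)$ always. So $e_i$ acts only on the second factor, and it suffices to prove $e_i\,\insmap(T)=\insmap(e_iT)$ in $\SetTab_n((k+m)\Lambda_1)$, with the convention $\insmap(\zero)=\zero$. Here the $e_i$ on each side is the operator of the respective tableau crystal. Using the row reading word (admissible, so Lemma~\ref{lemma:admissible_reading} applies), $\insmap(T)$ corresponds to $\{1\}^{\otimes m}\otimes R_{\row}(T)$ in $\SS_n^{\otimes(k+m)}$. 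The claim is thus that for $b=R_{\row}(T)$ we have $e_i(\{1\}^{\otimes m}\otimes b)=\{1\}^{\otimes m}\otimes e_ib$, where an $e_i b=\zero$ must correspond to $\zero$ on the left.

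For $i\ge 2$ this is immediate: $\{1\}$ has $\varepsilon_i=\varphi_i=0$ and weight orthogonal to $\alpha_i^\vee$, so $\{1\}^{\otimes m}$ behaves like the unit $\one$. Upper regularity of $b$ gives $\varepsilon_i(b)\ge 0=\varepsilon_i(\{1\}^{\otimes m})$, so by the tensor rule $e_i$ always acts on the right factor. For $i=1$ we set $c=\{1\}^{\otimes m}$. Then $\varepsilon_1(c)=0$ and $\varphi_1(c)=m\ge 0$, so the rule $e_1(c\otimes b)=c\otimes e_1b$ when $\varepsilon_1(c)\le\varphi_1(b)$ applies, because $\varphi_1(b)\ge 0$ by lower regularity (Proposition~\ref{regular-prop}). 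Hence $e_1$ also acts on the right factor. In the case $e_1b=\zero$, the result is $\zero$ on both sides, as required.

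The main subtlety is the input we rely on: that the set $\{1\}^{\otimes m}\otimes R_{\row}(\SetTab_n(k\Lambda_1))$ is exactly the image of $R_{\row}$ on the tableaux of shape $(k+m)\Lambda_1$ whose first $m$ entries are $\{1\}$. This is Lemma~\ref{iota-lem}. The other input is that the crystal structure on $\SetTab_n$ is the one induced from the full subcrystal $R_{\row}(\SetTab_n)\subseteq\SS_n^{\otimes}$, so computing $e_i$ inside the tensor power is legitimate. Both hold, and with them the argument is the short tensor-rule check above. The only case that needs attention is $i=1$, where the left factor has $\varphi_1(c)=m>0$, but the raising operator only compares $\varepsilon_1(c)=0$ with $\varphi_1(b)$, so no difficulty arises.
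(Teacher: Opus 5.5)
Your argument is correct and is essentially the paper's proof: reduce via Lemma~\ref{lemma:lower_embed_check} to commuting with the $e_i$, identify $\insmap_{m\Lambda_1}(T)$ with $u_{m\Lambda_1}\otimes T$ inside $\SS_n^{\otimes(k+m)}$, and apply the tensor rule using $\varepsilon_i(u_{m\Lambda_1}) = 0 \le \varphi_i(T)$, which holds by lower regularity. One small slip: in your $i\ge 2$ case the tensor rule compares $\varepsilon_i(\{1\}^{\otimes m})$ with $\varphi_i(b)$, which needs lower regularity, not $\varepsilon_i(b)\ge 0$ from upper regularity; your $i=1$ argument applies verbatim to every $i$, so the case split is unnecessary.
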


\begin{proof}
As noted above,
the crystal $\cT_{-k\Lambda_1} \otimes \SetTab_n(k\Lambda_1)$ is upper regular and lower generated by $\{t_{-k\Lambda_1} \otimes u_{k\Lambda_1}\}$.
We know that $\Psi_{k\Lambda_1,\Lambda_1}$ commutes with the $e_i$ operators from the tensor product rule.
Indeed, we can identify $\insmap_{m\Lambda_1}(T) = u_{m\Lambda_1} \otimes T$ by the embedding into $\SS_n^{\otimes (k+m)}$, and then
\[
e_i(u_{m\Lambda_1} \otimes T) = u_{m\Lambda_1} \otimes (e_i T)
\qquad\text{for all $i \in [n-1]$ and $T \in \SetTab_n(k\Lambda_1)$}
\]
as $\varepsilon_i(u_{m\Lambda_1}) = 0 \leq \varphi_i(T)$ because $\SetTab_n(k\Lambda_1)$ is a lower regular crystal.
Therefore, the claim follows from Lemma~\ref{lemma:lower_embed_check}.
\end{proof}

\begin{theorem}
\label{thm:directed_system}
The maps $\{\Psi_{\lambda,\mu} \mid \lambda,\mu \in P^+\}$ defined by~\eqref{eq:insertion_map} are $\sqgln$-crystal embeddings and form a directed system; that is, the diagram \eqref{eq:directed_system_diagram} commutes.
\end{theorem}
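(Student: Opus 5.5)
The plan has three parts: handle commutativity of the diagram set-theoretically, reduce the embedding property to single rows via the row reading word, and then build up one row at a time.

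\emph{Commutativity.} This is purely set-theoretic. By definition, $\insmap_{\nu}\circ\insmap_\mu = \insmap_{\mu+\nu}$ as maps $\SetTab_n(\lambda)\to\SetTab_n(\lambda+\mu+\nu)$. In row $i$, both sides prepend $\mu_i+\nu_i$ copies of $\{i\}$ to the row of $T$. Hence $\Psi_{\lambda+\mu,\nu}\circ\Psi_{\lambda,\mu}=\Psi_{\lambda,\mu+\nu}$ once each map is known to be a crystal morphism. Moreover, any of these maps is determined by the image of the generator $t_{-\lambda}\otimes u_\lambda$, since $\cT_{-\lambda}\otimes\SetTab_n(\lambda)$ is lower generated by it.

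\emph{Reduction to rows.} By Lemma~\ref{lemma:lower_embed_check}, together with Theorem~\ref{thm:crystal_SVT} and Lemma~\ref{lemma:T_upper_prod}, it suffices to show that $\Psi_{\lambda,\mu}$ commutes with every $e_i$. The weights match on the generator, since $\weight(t_{-\lambda-\mu}\otimes u_{\lambda+\mu})=\weight(u_\lambda)-\lambda=0=\weight(t_{-\lambda}\otimes u_\lambda)$.

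To check commutation with $e_i$, I would use the row reading word of Corollaries~\ref{cor:row_description} and~\ref{cor:row_restriction}. This realizes $\SetTab_n(\lambda)$ as a full subcrystal of $\restrictF{[n,n]}(\SetTab_n(\lambda_n\Lambda_1))\otimes\cdots\otimes\restrictF{[1,n]}(\SetTab_n(\lambda_1\Lambda_1))$. Under this realization, $\insmap_\mu$ acts factorwise: it prepends $\mu_r$ copies of $\{r\}$ to the $r$-th row factor.

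\emph{Row-by-row argument.} Tensor everything with the appropriate $\cT$ factors and write $\cT_{-\lambda}=\cT_{-\lambda_n\e_n}\otimes\cdots\otimes\cT_{-\lambda_1\e_1}$. The aim is to show that the factorwise map
\[
\bigotimes_r \bigl(t_{-\lambda_r\e_r}\otimes R_r\bigr)\mapsto \bigotimes_r\bigl(t_{-(\lambda_r+\mu_r)\e_r}\otimes \{r\}^{\otimes\mu_r}\otimes R_r\bigr)
\]
commutes with $e_i$. The $\cT$ factors should be distributed so that each row factor carries its own shift. This is legitimate because $\cT$'s are one-element crystals and the tensor product is associative. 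One must check that the weight shifts are compatible with the $\varepsilon,\varphi$ bookkeeping, since $\cT$ has $\varepsilon=-\infty$ and only shifts the weight.

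By functoriality of $\otimes$ from Proposition~\ref{prop:monoidal_cat}, it then suffices to show that each row map $t_{-k\e_r}\otimes R\mapsto t_{-(k+m)\e_r}\otimes \{r\}^{\otimes m}\otimes R$ on $\cT_{-k\e_r}\otimes\restrictF{[r,n]}(\SetTab_n(k\Lambda_1))$ is a strict morphism preserving $\varepsilon_i,\varphi_i$ and weight. Strictness (rather than mere morphism) is needed so that the tensor rule, which compares $\varepsilon_i$ of left factors with $\varphi_i$ of right factors, selects the same factor before and after the map.

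For $r=1$ this is exactly Lemma~\ref{lemma:single_row_embedding}. For general $r$, the same argument applies. On the restricted crystal, $\varepsilon_i(\{r\}^{\otimes m})=0$ whenever $i\ge r$, while the operators with $i<r$ are trivial on $\restrictF{[r,n]}$ and on $\cT$. For $i\ge r$, the tensor rule gives $e_i(\{r\}^{\otimes m}\otimes R)=\{r\}^{\otimes m}\otimes e_iR$ and likewise for $f_i$, using lower regularity of $R$ so that $\varphi_i(R)\ge 0$. The statistics are preserved because $\langle m\e_r,\alpha_i^\vee\rangle$ is cancelled by the $\cT$ shift, and $\{r\}$ has $\varphi_i=0$ for $i>r$ but $\varphi_r=1$.

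\emph{Main obstacle.} I expect the hardest step to be the case $i=r$. There, $\{r\}^{\otimes m}$ has $\varphi_r=m$, so the claim that $e_r$ and $f_r$ act on $R$ inside $\{r\}^{\otimes m}\otimes R$ is no longer automatic. After tensoring with $\cT$, this factor has $\varepsilon=-\infty$, so the question becomes whether the tensor rule sends $e_r$ and $f_r$ to $R$ on the full tensor product, with the inserted $r$'s never being touched. This must be compared with the $\varphi_r$ of the preceding row factor (row $r+1$).

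To handle it, I would first try using semistandardness of the whole tableau: the entries $r$ from row $r$ sit northwest of everything in rows $>r$. If that fails, I would instead check commutation with $e_i$ directly via the column reading word. Prepending columns $u_{\Lambda_j}$ on the left has $\varepsilon_i=0$, and the $\cT$ factor absorbs the weight, exactly as in the proof of Lemma~\ref{lemma:single_row_embedding}.
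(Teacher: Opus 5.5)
Your skeleton matches the paper's proof: commutativity on sets, reduction to the $e_i$ via Lemma~\ref{lemma:lower_embed_check}, the restricted row factorization of Corollary~\ref{cor:row_restriction}, and a shifted Lemma~\ref{lemma:single_row_embedding}. However, the per-row reduction you commit to is false, and the case you flag as the main obstacle is left open.

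\textbf{Strictness fails.} You require each row map $t_{-k\e_r}\otimes R\mapsto t_{-(k+m)\e_r}\otimes\{r\}^{\otimes m}\otimes R$ to be a \emph{strict} morphism, with $f_i$ ``likewise'' passing the prefix. For $i=r$ this fails. The operator $f_r$ acts on $R$ only if $0=\varepsilon_r(\{r\}^{\otimes m})<\varphi_r(R)$. When $\varphi_r(R)=0$ it instead turns the last inserted $\{r\}$ into $\{r,r+1\}$, while $f_rR=\zero$. Already $\Psi_{0,\Lambda_1}$ is not strict: $f_1(t_0\otimes\varnothing)=\zero$ but $f_1(t_{-\Lambda_1}\otimes\{1\})=t_{-\Lambda_1}\otimes\{1,2\}$. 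So the $\Psi_{\lambda,\mu}$ are genuinely non-strict embeddings, and you must work with the $e_i$ alone. That is also all you need: the product map commutes with $e_i$ provided each factor map commutes with $e_i$ and the quantities deciding which factor $e_i$ hits are unchanged (the $a_{im}$ in the proof of Proposition~\ref{prop:monoidal_cat}).

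\textbf{Moving the $\cT$ factors.} Redistributing the $\cT$ factors is not justified by associativity, because $\cT_\nu\otimes\cB\not\cong\cB\otimes\cT_\nu$ in general. The first shifts $\varphi_i$ by $\langle\nu,\alpha_i^\vee\rangle$; the second shifts $\varepsilon_i$ by $-\langle\nu,\alpha_i^\vee\rangle$.

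\textbf{The missing idea is the restriction itself.} It fixes both points and dissolves your obstacle at $i=r$. Write the row word as $F_n\otimes\cdots\otimes F_1$ with $F_j=R_j\in\restrictF{[j,n]}(\SetTab_n(\lambda_j\Lambda_1))$.
\begin{itemize}
\item Since $\varepsilon_i(F_j)=-\infty$ whenever $j>i$, the operator $e_i$ chooses only among rows $j\le i$. It does so using $a_{ij}=\varepsilon_i(F_j)-\sum_{k<j}\langle\weight(F_k),\alpha_i^\vee\rangle$.
\item Replacing $F_k$ by $\{k\}^{\otimes\mu_k}\otimes F_k$ leaves $\varepsilon_i(F_k)$ unchanged for $i\ge k$, because $\varepsilon_i(\{k\}^{\otimes\mu_k})=0\le\varphi_i(R_k)$.
\item The same replacement changes $\langle\weight(F_k),\alpha_i^\vee\rangle$ only when $k\in\{i,i+1\}$. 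This never occurs in the sum, since there $k<j\le i$.
\end{itemize}
Hence every $a_{ij}$ is unchanged, the same row is selected, and inside it $e_i$ acts on $R_j$, not on the prefix. In particular, row $r+1$ has $\varepsilon_r=-\infty$ and can never be selected, so the comparison with row $r+1$ that you worry about is vacuous. The same computation shows that attaching each $\cT$ shift to its own row alters no $a_{ij}$.

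\textbf{The column-word fallback does not work.} $\insmap_\mu(T)$ is obtained by prepending columns only when $\mu_1=\cdots=\mu_{\ell(\lambda)}$. For $\lambda=\Lambda_2$ and $\mu=\Lambda_1$, the column word of $\insmap_\mu(T)$ is $T_{21}\otimes\{1\}\otimes T_{11}$, not $\{1\}\otimes T_{21}\otimes T_{11}$.
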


\begin{proof}
It is clear that the directed system diagram~\eqref{eq:directed_system_diagram} commutes as functions on sets, and so it remains to show that $\Psi_{\lambda,\mu}$ is a crystal morphism.
By Corollary~\ref{cor:row_restriction}, we can consider any semistandard set-valued tableaux as a tensor product of rows with the $j$-th row given by $\restrictF{[j,n]}\bigl(\SetTab_n(\lambda_j \Lambda_1) \bigr)$.
By the $j$-shifted version of Lemma~\ref{lemma:single_row_embedding}, the crystal operators $f_i$ commute with inserting a $\{j\}$ into row $j$, yielding the claim.
\end{proof}

\begin{remark}
We could also prove this theorem using the signature rule mentioned in Remark~\ref{rem:sigrule}.
In the terminology of \cite{Yu23}, the $i$-signature of $\insmap_\mu(T)$ only differs from that of $T$ by prepending certain \defn{null forms} and \defn{right forms}.
This observation makes it possible deduce (e.g., using the summary of \cite{Yu23} in~\cite[\S2.2]{MTY}) that $\Psi_{\lambda,\mu}$ is a crystal morphism as no cancellations are introduced.
\end{remark}

Denote the direct limit of the system in Theorem~\ref{thm:directed_system} by
\begin{equation}
\label{eq:direct_limit_def}
\SetTab_n(\infty) := \varinjlim_{\lambda \in P^+} \Bigl(\cT_{-\lambda} \otimes \SetTab_n(\lambda)\Bigr).
\end{equation}
This is a $\sqgln$-crystal whose elements are equivalence classes on
\[
\bigsqcup_{\lambda \in P^+} \cT_{-\lambda} \otimes \SetTab_n(\lambda)
\]
given by the equivalence relation $t_{-\lambda} \otimes T \sim t_{-\mu} \otimes T'$ if and only if there exists a $\nu \in P^+$ such that $\Psi_{\lambda,\nu-\lambda}(t_{-\lambda} \otimes T) = \Psi_{\mu,\nu-\mu}(t_{-\mu} \otimes T')$, and from our directed system it is sufficient to take $\nu = \lambda + \mu$.
Therefore, by our construction, this holds if and only if $\insmap_{\mu}(T) = \insmap_{\lambda}(T')$.
For simplicity, we write equivalence classes as $[T] := [t_{-\lambda} \otimes T]$ as $\lambda$ is the shape of $T$.

In terms of this notation, the $\sqgln$-crystal structure on $\SetTab_n(\infty)$ has the following explicit description.
Assume $T \in\SetTab_n(\lambda)$.
Then the weight map is
\be
\weight([T]) = \weight(T)-\lambda \in \ZZ^{1\oplus n}
\ee
and for each $i\in [n-1]$, we have
\begin{subequations}
\label{raising-limit-eq0}
\begin{align}
e_i[T] &= \begin{cases} 
[e_iU]&\text{if there exists some $t_{-\mu}\otimes U \sim t_{-\lambda}\otimes T$ with $e_i U\neq \zero$}, \\
\zero&\text{otherwise},
\end{cases}
\\
f_i[T] &= \begin{cases} 
[f_iU]&\text{if there exists some 
$t_{-\mu}\otimes U \sim t_{-\lambda}\otimes T$ with $f_i U\neq \zero$}, \\
\zero&\text{otherwise}.
\end{cases}
\end{align}
\end{subequations}
Since the crystals are upper regular, the crystal operator formulas just given simplify to
\begin{subequations}
\label{raising-limit-eq}
\begin{align}
e_i[T] &= [e_i T] \quad\text{for all $T \in \SetTab_n(\lambda)$, setting } [\zero] = \zero,
\\
f_i[T] &= [f_i T] \quad \text{for those $T \in \SetTab_n(\lambda)$ with } f_i T \neq \zero.
\end{align}
\end{subequations}
Finally, we have the statistics
\be\label{string-Tcol-eq}
\varepsilon_i([T]) = \varepsilon_i(T)
\qquand
\varphi_i([T]) = \varphi_i(T) +\lambda_{i+1}-\lambda_i.
\ee

The $\sqgln$-crystal $\SetTab_n(\infty)$ is an analog of the $\gl_n$-crystal $\cB(\infty)$ corresponding to the lower half of the quantum group $U_q^-(\gl_n)$~\cite{Kashiwara90,Kashiwara91}.
More directly, for type $\gl_n$, we have
\be\label{cB-SetTab-eq}
\cB(\infty) \iso \defectF{2}\bigl(\SetTab_n(\infty)\bigr),
\ee
where $\defectF{2}$ is the functor from Proposition~\ref{defectF-prop}.
It is easy to see that $\defectF{2}\bigl(\SetTab_n(\infty)\bigr)$ is isomorphic to the direct limit of the system of $\gl_n$-crystals obtained by applying $\defectF{2}$ to \eqref{Psi-eq}, and this limit gives $\cB(\infty)$ by results of Cliff~\cite{Cliff98} and Hong--Lee~\cite{HL08} (cf.~\cite[Exs.~12.1 and 12.4]{BumpSchilling}).
The weight shifted crystals $\cT_{\lambda} \otimes \SetTab_n(\infty)$ are therefore variants of the $\gl_n$-crystal bases encoding Verma modules of highest weight $\lambda$.

The unique highest weight element of $\SetTab_n(\infty)$, which has weight $0$, is given by the class
\be\label{unifty-eq}
u_{\infty} := \{ t_{-\lambda} \otimes u_{\lambda} \mid \lambda \in P^+\}
\ee
(under the relation $\sim$).
Indeed, $u_\infty = [u_\lambda]$ for all $\lambda \in P^+$
so we have $\varepsilon_i(u_\infty) = \varphi_i(u_\infty) = 0$ for all $ i \in [n-1]$.
Furthermore, $\SetTab_n(\infty)$ is lower generated by $\{u_{\infty}\}$.
In fact, we can extend the Demazure filtration of Theorem~\ref{thm:crystal_SVT} to the case $\lambda = \infty$.

\begin{theorem}\label{dem-thm}
The Demazure crystal defined by 
\[
\SetTab_n(\infty)_w := \fkD_{i_1} \cdots \fkD_{i_N} \{ u_{\infty} \}
\qquad \text{ for any } w = s_{i_1} \circ \cdots \circ s_{i_N} \in S_n
\]
does not depend on the choice of Hecke word for $w \in S_n$. 
These subcrystals
give a filtration of $\SetTab_n(\infty)$ indexed by $S_n$ with respect to Bruhat order.
Consequently, $\SetTab_n(\infty) = \SetTab_n(\infty)_{w_0}$ is connected with a unique highest weight element $u_{\infty}$.
\end{theorem}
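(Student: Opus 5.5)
The plan is to deduce everything from Theorem~\ref{thm:crystal_SVT} by passing to the direct limit. For each $\lambda \in P^+$ let $\iota_\lambda \colon \cT_{-\lambda}\otimes\SetTab_n(\lambda) \to \SetTab_n(\infty)$, $t_{-\lambda}\otimes T\mapsto [T]$, be the canonical map. By Theorem~\ref{thm:directed_system} it is a crystal embedding, and by \eqref{raising-limit-eq} it commutes with every $e_i$, and with $f_i$ whenever $f_iT\neq\zero$. Fix a Hecke word $(i_1,\dots,i_N)$ for $w$. The key claim is
\[
\fkD_{i_1}\cdots\fkD_{i_N}\{u_\infty\} \;=\; \bigcup_{\lambda\in P^+} \iota_\lambda\bigl(\SetTab_n(\lambda)_w\bigr).
\]
The right side depends only on $w$ by Theorem~\ref{thm:crystal_SVT}, so this gives independence of the Hecke word.

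To prove the claim, show by induction on $N$ that $X_N := \fkD_{i_1}\cdots\fkD_{i_N}\{u_\infty\}$ equals $\bigcup_\lambda \iota_\lambda(X_N^\lambda)$, where $X_N^\lambda := \fkD_{i_1}\cdots\fkD_{i_N}\{u_\lambda\}$.

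For the inclusion $\supseteq$: if $b = f_i^k x$ is a nonzero element of $\SetTab_n(\lambda)$, then every intermediate $f_i^j x$ is nonzero. Hence $\iota_\lambda(f_i^k x) = f_i^k\iota_\lambda(x)$.

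For the inclusion $\subseteq$: if $[T]\in X_{N-1}$ and $f_i^k[T]\neq\zero$, enlarge $\lambda$ so that $f_i^kT\ne\zero$ already in $\SetTab_n(\lambda)$. This is possible because each $f_i$ step on the limit is realized at some finite level, the levels are compatible by the directed system, and only finitely many steps occur. We also need $T\in X_{N-1}^{\lambda}$ to persist when $\lambda$ grows. For that, note that $\Psi_{\lambda,\mu}$ maps $u_\lambda$ to the representative of $u_{\lambda+\mu}$ and commutes with those $f_i$ that are defined, so $\Psi_{\lambda,\mu}$ carries $X^\lambda_{N-1}$ into $X^{\lambda+\mu}_{N-1}$, by the same argument.

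The main obstacle is this bookkeeping: $f_i$ on $\SetTab_n(\infty)$ may be defined only after passing to a larger $\lambda$. The nonzero-ness condition in \eqref{raising-limit-eq} is exactly what lets the argument go through.

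The filtration property then follows directly. If $v\le w$ in Bruhat order, then $\SetTab_n(\lambda)_v\subseteq\SetTab_n(\lambda)_w$ for every $\lambda$ by Theorem~\ref{thm:crystal_SVT}, and taking unions gives $\SetTab_n(\infty)_v\subseteq\SetTab_n(\infty)_w$.

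For the final statement, every $[T]$ with $T\in\SetTab_n(\lambda)$ lies in $\iota_\lambda(\SetTab_n(\lambda)) = \iota_\lambda(\SetTab_n(\lambda)_{w_0})$, again by Theorem~\ref{thm:crystal_SVT}. Hence $\SetTab_n(\infty)=\SetTab_n(\infty)_{w_0}$, and in particular it is connected and lower generated by $u_\infty$.

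For uniqueness of the highest weight element, suppose $b\neq u_\infty$. Then $b=f_{i}b'$ for some $i$ and some $b'$ lying in the lower-generated set, so $e_ib=b'\neq\zero$ by (A1). Thus $b$ is not highest weight, and $u_\infty$ is the unique highest weight element. It is highest weight since $e_i[u_\lambda]=[e_iu_\lambda]=\zero$ for all $i$.
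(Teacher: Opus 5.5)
Your proof is correct and follows the paper's route: the paper also reduces everything to the identity $\fkD_{i_1}\cdots\fkD_{i_N}\{u_\infty\} = \bigcup_{\lambda} \{[T] \mid T \in \SetTab_n(\lambda)_w\}$ and then appeals to Theorem~\ref{thm:crystal_SVT}. The paper states that identity without proof, while you verify it by induction, noting that $f_i$ on the limit may only act nontrivially after passing to a larger $\lambda$, and you also spell out the short argument for uniqueness of the highest weight element.
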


We will discuss the characters of $\SetTab_n(\lambda)$ and $\SetTab_n(\lambda)_w$  in Section~\ref{vec-param-sect}.

\begin{proof}
This follows from Theorem~\ref{thm:crystal_SVT} and the direct limit construction.
In more detail, we have
\be\label{dem-union-eq}
  \fkD_{i_1}\fkD_{i_2}\cdots \fkD_{i_N} \{ u_{\infty}\}
=\bigcup_{\lambda \in P^+} \Bigl\{ [T] \mid T \in \SetTab_n(\lambda)_w = \fkD_{i_1}\fkD_{i_2}\cdots \fkD_{i_N} \{ u_{\lambda}\}\Bigr\}.
\ee
(Note that in the union, by minor abuse of notation, we are taking $\fkD_i$ relative to $\SetTab_n(\lambda)$.)
The sets in the union do not depend on the choice of Hecke word for $w$, since they are each equal to $\{ [T] \mid T \in \SetTab_n(\lambda)_w\}$.
Therefore neither does $\fkD_{i_1}\fkD_{i_2}\cdots \fkD_{i_N} \{ u_{\infty}\}$, as claimed.
Furthermore, the fact $\SetTab_n(\lambda)_{w_0}= \SetTab_n(\lambda)$ for all $\lambda \in P^+$ yields $\SetTab_n(\infty)_{w_0} =  \SetTab_n(\infty)$ by~\eqref{dem-union-eq}.
\end{proof}

Since $\SetTab_n(\lambda)$ is a regular crystal, the following is immediate from \eqref{raising-limit-eq} and \eqref{string-Tcol-eq}.

\begin{corollary}
\label{cor:inf_upper_reg}
The crystal $\SetTab_n(\infty)$ is upper regular.
\end{corollary}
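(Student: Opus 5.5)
The plan is to verify condition \eqref{eq:URalt} of Proposition~\ref{regular-prop}(a) for $\SetTab_n(\infty)$. That proposition then gives upper regularity. Fix $i \in [n-1]$ and an element $[T]$ of $\SetTab_n(\infty)$, represented by some $T \in \SetTab_n(\lambda)$.

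The first step is to identify the statistic. By \eqref{string-Tcol-eq}, $\varepsilon_i([T]) = \varepsilon_i(T)$. This statistic must be well defined on equivalence classes. That holds because each map $\Psi_{\lambda,\mu}$ is a crystal embedding (Theorem~\ref{thm:directed_system}), and a crystal embedding preserves $\varepsilon_i$ by axiom (M1). Note that $\Psi_{\lambda,\mu}$ also preserves the $\varepsilon_i$ of the tensor factor $t_{-\lambda}\otimes T$. Since $\varepsilon_i(t_{-\lambda}) = -\infty$, the tensor rule \eqref{eq:ep_tensor} gives $\varepsilon_i(t_{-\lambda}\otimes T) = \varepsilon_i(T)$.

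The second step uses the regularity of the finite crystal. $\SetTab_n(\lambda)$ is a polynomial $\sqgln$-crystal by Theorem~\ref{thm:crystal_SVT}, so it is regular. Proposition~\ref{regular-prop}(a) then gives $\varepsilon_i(T)\ge 0$, with equality if and only if $e_iT=\zero$. Combining this with \eqref{raising-limit-eq}, which says $e_i[T]=[e_iT]$ with $[\zero]=\zero$, we get $e_i[T]=\zero$ exactly when $e_iT=\zero$. Hence $\varepsilon_i([T]) \ge 0$, with equality exactly when $e_i[T]=\zero$. This is condition \eqref{eq:URalt}.

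The only point needing care is the formula $e_i[T]=[e_iT]$. In particular, we need that $e_iT=\zero$ forces $e_i[T]=\zero$, and not that some other representative $U$ has $e_iU\neq\zero$. This follows because every $\Psi_{\lambda,\mu}$ commutes with the $e_i$ operators. Since $t_{-\lambda}\otimes\SetTab_n(\lambda)$ is upper regular (Lemma~\ref{lemma:T_upper_prod}), we can apply the Lemma~\ref{lemma:lower_embed_check} argument: $\Psi_{\lambda,\mu}$ preserves $\varepsilon_i$, and upper regularity then forces $e_i$ to vanish on the image exactly when it vanishes on the source. So the vanishing of $e_i$ is independent of the representative, and the claim follows. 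I expect no serious obstacle here.
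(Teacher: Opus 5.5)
Your proposal is correct and matches the paper's argument: the paper also deduces upper regularity directly from the regularity of each $\SetTab_n(\lambda)$ together with the formulas $e_i[T]=[e_iT]$ and $\varepsilon_i([T])=\varepsilon_i(T)$. Your extra check that these formulas do not depend on the representative, because each $\Psi_{\lambda,\mu}$ preserves $\varepsilon_i$ between upper regular crystals, makes explicit a step the paper leaves implicit.
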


One can recover $\SetTab_n(\lambda)$ from $\SetTab_n(\infty)$ by tensoring with $\cR_{\lambda}$ from Example~\ref{ex:Rla_crystal}.
An analogous property holds
 for classical $\gl_n$-crystals using $\cB(\infty)$~\cite[Thm.~5]{Kashiwara91} (see also~\cite{Joseph95}).

\begin{corollary}
\label{cor:finite_recovery}
Fix $\lambda \in P^+$.
Then the map   $T \mapsto r_{\lambda} \otimes [T]$ for $T \in \SetTab_n(\lambda)$ is a $\sqgln$-crystal isomorphism
from $\SetTab_n(\lambda)$ to 
the connected component of $r_{\lambda} \otimes u_{\infty}$
in $\cR_{\lambda} \otimes \SetTab_n(\infty)$.
\end{corollary}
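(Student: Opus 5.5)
We will compare the tensor product rule on $\cR_\lambda \otimes \SetTab_n(\infty)$ directly with the crystal structure on $\SetTab_n(\lambda)$, using the explicit formulas \eqref{raising-limit-eq} and \eqref{string-Tcol-eq}. Write $\Theta(T) := r_\lambda \otimes [T]$. The plan is to show that $\Theta$ is an injective map of sets, commutes with all $e_i$ and $f_i$, and preserves weights and statistics. Its image is then a full subcrystal containing $r_\lambda\otimes u_\infty = \Theta(u_\lambda)$. Connectedness of $\SetTab_n(\lambda)$ (Theorem~\ref{thm:crystal_SVT}) then identifies the image with the connected component of $r_\lambda\otimes u_\infty$.

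The first step checks injectivity and weights. If $[T]=[T']$ with $T,T'\in\SetTab_n(\lambda)$, then $\insmap_\lambda(T)=\insmap_\lambda(T')$, so $T=T'$ by Lemma~\ref{iota-lem}. For weights, $\weight(r_\lambda\otimes[T]) = \lambda + \weight(T)-\lambda = \weight(T)$.

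The second step computes the statistics. We have $\varepsilon_i(r_\lambda) = -\langle\lambda,\alpha_i^\vee\rangle = \lambda_{i+1}-\lambda_i$ and $\varphi_i(r_\lambda)=0$. By \eqref{string-Tcol-eq}, $\varphi_i([T]) = \varphi_i(T)+\lambda_{i+1}-\lambda_i$. Since $\SetTab_n(\lambda)$ is lower regular, $\varphi_i(T)\ge 0$, so
\[
\varepsilon_i(r_\lambda) - \varphi_i([T]) = -\varphi_i(T) \le 0 .
\]
Hence \eqref{eq:simpler_ephi} gives
\[
\varepsilon_i(r_\lambda\otimes[T]) = \varepsilon_i([T]) = \varepsilon_i(T)
\]
and
\[
\varphi_i(r_\lambda\otimes[T]) = 0 + \max\{0,\varphi_i(T)\} = \varphi_i(T).
\]
So the statistics agree.

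The third step treats the operators, which is where the key case analysis sits. Because $\varepsilon_i(r_\lambda)\le\varphi_i([T])$ always holds, the tensor rule gives $e_i(r_\lambda\otimes[T]) = r_\lambda\otimes e_i[T] = r_\lambda\otimes[e_iT]$ by \eqref{raising-limit-eq}; in particular it is $\zero$ exactly when $e_iT=\zero$. For $f_i$ there are two cases.
\begin{enumerate}
\item[(a)] If $\varphi_i(T)>0$, the inequality $\varepsilon_i(r_\lambda)<\varphi_i([T])$ is strict. So $f_i$ acts on the second factor, giving $r_\lambda\otimes f_i[T] = r_\lambda\otimes[f_iT]$, using $f_iT\ne\zero$ by lower regularity.
\item[(b)] If $\varphi_i(T)=0$, then $f_iT=\zero$, and the tensor rule sends $f_i$ to the first factor, giving $f_ir_\lambda\otimes[T]=\zero$.
\end{enumerate}
Thus $\Theta$ commutes with $f_i$ as well, including sending $\zero$ to $\zero$.

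The main subtlety is case (b) of the third step. There $[T]$ itself may have $f_i[T]\neq\zero$ in $\SetTab_n(\infty)$, and the factor $r_\lambda$ is exactly what cuts this edge off. This is handled by the inequality $\varepsilon_i(r_\lambda)\ge\varphi_i([T])$ computed above. Once the operators commute, the image of $\Theta$ is closed under all $e_i,f_i$, so it is a union of components. It is connected because $\SetTab_n(\lambda)$ is connected, so it equals the component of $r_\lambda\otimes u_\infty$. Combined with the preservation of weights and statistics, $\Theta$ is a (strict) crystal isomorphism onto that component.
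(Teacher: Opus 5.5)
Your proof is correct and is the direct verification the paper intends: the paper only says to check the map against \eqref{raising-limit-eq} and \eqref{string-Tcol-eq} and omits the details, as in Joseph's Cor.~5.3.13. Your computation $\varepsilon_i(r_\lambda)-\varphi_i([T]) = -\varphi_i(T)\le 0$, together with lower regularity of $\SetTab_n(\lambda)$ (which also decides the $f_i$ case split) and connectedness of $\SetTab_n(\lambda)$, fills in exactly those omitted details.
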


\begin{proof}
One can check directly that the given map is an isomorphism using \eqref{raising-limit-eq} and \eqref{string-Tcol-eq}.
We omit the details as the argument is the same as in the proof of~\cite[Cor.~5.3.13]{Joseph95}.
\end{proof}

We also extend~\cite[Lemma~4.19]{Yu23} (the $\sqgln$-analog of~\cite[Prop.~3.3.5]{Kashiwara93}) 
to the case when $\lambda = \infty$ (and hence the $\sqgln$-analog of~\cite[Prop.~3.3.4]{Kashiwara93}). 
This will be useful in our character computations below.

\begin{corollary}
\label{cor:string_decomp}
Let $\lambda \in P^+ \sqcup \{\infty\}$ and $i \in [n-1]$.
Fix some $w \in S_n$.
Suppose $\cS \subseteq \SetTab_n(\lambda)$ is an $i$-string.
Then the intersection $\cS \cap \SetTab_n(\lambda)_w$ is either empty, all of $\cS$, or just the highest weight element of $\cS$.
\end{corollary}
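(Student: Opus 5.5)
The finite case $\lambda \in P^+$ is \cite[Lemma~4.19]{Yu23}, which we take as given. So the plan is to reduce the case $\lambda=\infty$ to it using the direct limit description \eqref{dem-union-eq} of the Demazure crystals $\SetTab_n(\infty)_w$, together with the explicit crystal operators \eqref{raising-limit-eq}.

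First fix an $i$-string $\cS \subseteq \SetTab_n(\infty)$. By Corollary~\ref{cor:inf_upper_reg}, $\SetTab_n(\infty)$ is upper regular, so $\cS$ has a unique highest weight element $h$ with $e_i h = \zero$. Since $\cS$ need not be finite (it is infinite in the $f_i$ direction when $\varphi_i = +\infty$ in the limit), I will work with finite pieces of it. For any two elements $[T],[T'] \in \cS$ with $[T'] = f_i^k[T]$, choose $\mu \in P^+$ large enough that $[T] = [U]$ and $[T'] = [U']$ with $U,U' \in \SetTab_n(\mu)$ and $U' = f_i^k U$. This is possible because each $\Psi_{\lambda,\mu}$ is a crystal embedding (Theorem~\ref{thm:directed_system}), and $f_i$ on classes is computed by \eqref{raising-limit-eq0}. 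Enlarging $\mu$ further, we may also take $h = [H]$ with $H$ the highest element of the $i$-string $\cS_\mu$ of $U$ in $\SetTab_n(\mu)$. Here we use $e_i[H] = [e_iH]$ from \eqref{raising-limit-eq}, so $e_iH=\zero$.

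Next, fix a Hecke word for $w$. The key observation is that by \eqref{dem-union-eq}, together with the fact that the embeddings $\Psi$ carry $\SetTab_n(\mu)_w$ into $\SetTab_n(\mu+\nu)_w$ (they send $u_\mu\mapsto u_{\mu+\nu}$ and commute with the $f_i$ that are defined), we get the following. For $U \in \SetTab_n(\mu)$, we have $[U] \in \SetTab_n(\infty)_w$ if and only if $U' \in \SetTab_n(\mu')_w$ for some, equivalently all sufficiently large, representative $U'$. Checking the equivalence of ``some'' and ``all large'' is where care is needed. It requires that $\Psi_{\mu,\nu}^{-1}(\SetTab_n(\mu+\nu)_w) = \SetTab_n(\mu)_w$. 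The inclusion $\supseteq$ is clear. For the reverse, I plan to argue by induction along the Hecke word, using the definition of $\fkD_j$ and the fact that $e_j$ commutes with $\Psi$. An element is in $\fkD_j X$ iff some $e_j^k$ of it lies in $X$, and applying $e_j^k$ commutes with $\Psi$ on the nose. The only delicate point is that $X=\{u_{\mu+\nu}\}$ pulls back to $\{u_\mu\}$, which holds by injectivity. This step is the main obstacle, and I would write it out carefully.

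Finally, suppose $\cS\cap \SetTab_n(\infty)_w$ contains some element other than $h$, say $[T']=f_i^k h$ with $k\ge1$. Let $[T'']$ be any element of $\cS$. Choose $\mu$ so that $h,[T'],[T'']$ all have representatives in one $i$-string $\cS_\mu$ of $\SetTab_n(\mu)$, as above. By the finite case, $\cS_\mu\cap\SetTab_n(\mu)_w$ is empty, all of $\cS_\mu$, or $\{H\}$. It contains the representative of $[T']$, which is not $H$, so it equals $\cS_\mu$. Hence $[T'']\in\SetTab_n(\infty)_w$, and therefore $\cS\subseteq \SetTab_n(\infty)_w$. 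Otherwise the intersection is contained in $\{h\}$, which gives the three cases.
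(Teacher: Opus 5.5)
Your proposal is correct and takes essentially the same route as the paper, which simply cites \cite[Lemma~4.19]{Yu23} for $\lambda \in P^+$ and says the case $\lambda=\infty$ follows from the direct limit construction. You have written out that reduction: you represent a finite piece of the $i$-string at a single level $\mu$ and transport membership in the Demazure crystals through \eqref{dem-union-eq}. The step you flag as the main obstacle is routine. Since $\Psi_{\mu,\nu}$ commutes with every $e_j$ exactly, induction along the Hecke word starting from $\Psi_{\mu,\nu}^{-1}(\{u_{\mu+\nu}\})=\{u_\mu\}$ shows that the pullback of each set $\fkD_{i_k}\cdots\fkD_{i_N}\{u_{\mu+\nu}\}$ is the corresponding set at level $\mu$. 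You could even skip the pullback direction by choosing $\mu$ large enough to dominate a level at which a representative of $[T']$ already lies in the finite Demazure crystal.
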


\begin{proof}
As mentioned above, this is~\cite[Lemma~4.19]{Yu23} when $\lambda \neq \infty$.
The case when $\lambda = \infty$ follows from the direct limit construction.
\end{proof}

To conclude this section, we introduce a set-valued variation of the \defn{marginally large tableau} model for $\cB(\infty)$ (in type $\mathfrak{gl}_n$) from~\cite{Cliff98,HL08}.
This gives another way to describe the elements of $\SetTab_n(\infty)$.

\begin{definition}[Marginally large tableau]
\label{def:marginally_large}
Fix a partition $\lambda = \sum_{i=1}^n \lambda_i \e_i \in P^+$.
A semistandard set-valued tableau $T$ of shape $\lambda$ is \defn{marginally large} if $\lambda_n = 0$ and we have
\[
\abs{ \{T_{ij} = \{i\} \mid 1 \leq j \leq \lambda_i\} } = \lambda_{i+1} + 1
\qquad \text{ for each $i \in [n-1]$}.
\]
\end{definition}

In other words, $T$ is marginally large if it has exactly $n-1$ rows and the number of boxes whose entry is $\{i\}$ in row $i$ is exactly $\lambda_{i+1} + 1$ for each $i \in [n-1]$ with $\lambda_n = 0$.
In any such tableau, the first column must be the sets $\{1\}, \{2\}, \dotsc, \{n-1\}$, in that order from top to bottom.

\begin{example}
When $n=8$,  the semistandard set-valued tableau
\[
\ytableausetup{boxsize=0.5cm}
\ytableaushort{111111111111111{\scalebox{.8}{124}}55{57},222222222335{\scalebox{.8}{678}}8,33333333,44444{48}8,5555,666,78} *[*(blue!30)]{15,9,8,5,4,3,1}
\]
is marginally large; here we have shaded all boxes with entry $\{i\}$ in row $i$.
\end{example}

We omit the proof of the following result, whose first part is a straightforward combinatorial exercise and whose second claim is easy to deduce from the tensor product rule or Yu's signature rules (see \cite[\S2.2]{MTY}).

\begin{proposition}\label{mala-lem}
Each element of $\SetTab_n(\infty)$ may be expressed as $[T]$ for a unique marginally large semistandard set-valued tableau $T$, and for this tableau, it holds that $f_iT \neq \zero$ for all $i \in I$.
\end{proposition}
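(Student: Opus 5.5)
We prove the two claims separately: first the combinatorial normal form, then the non-vanishing of $f_i$.

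\textbf{Existence and uniqueness.} The equivalence relation is $t_{-\lambda}\otimes T\sim t_{-\mu}\otimes T'$ if and only if $\insmap_\mu(T)=\insmap_\lambda(T')$. So we study how $\insmap$ acts row by row. For $\nu\in P^+$, $\insmap_\nu$ prepends $\nu_i$ boxes filled with $\{i\}$ to row $i$. Also, in any $T\in\SetTab_n(\lambda)$ the boxes of row $i$ containing $\{i\}$ form an initial segment of that row, since column strictness forces $\min T_{ij}\geq i$.

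Define a removal operation, inverse to $\insmap$. By Lemma~\ref{iota-lem}, inserting multiples of $\Lambda_n$ is a bijection, so we may first delete the full $n$-th row. We may then delete initial $\{i\}$-boxes from rows as long as semistandardness persists. Deleting $c_i$ leading $\{i\}$'s from row $i$ shifts that row left by $c_i$, and this remains semistandard exactly when, for each $i$, the shifted row $i+1$ still sits strictly below entries of row $i$.

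Let $a_i$ be the number of leading $\{i\}$-boxes in row $i$, and $r_i=\lambda_i-a_i$ the number of remaining boxes. The shape condition is $\lambda_i\geq\lambda_{i+1}$. The column condition between row $i$ and row $i+1$ is automatic above the leading $\{i\}$'s of row $i$, since entries of row $i+1$ are at least $i+1$. It is also unaffected for the non-$\{i\}$ boxes, provided the shifted rows keep their relative alignment.

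Here is the key point. Every class has a representative in which each row $i<n$ has at least $\lambda_{i+1}+1$ leading $\{i\}$'s, obtained by inserting a large $\nu$. Conversely, we can remove leading $\{i\}$'s (a single block from rows $1,\dots,i$ simultaneously, working bottom-up) until the count is exactly $\lambda_{i+1}+1$. That count is minimal for the following reason: with only $\lambda_{i+1}$ leading $\{i\}$'s, the box directly above the last box of row $i+1$ would be a non-$\{i\}$ set compared against row $i+1$. I would verify this is the precise threshold for being in the image of a single $\insmap$ of the form $\insmap_{\e_1+\dots+\e_i}$. Proceeding from row $n-1$ upward gives a canonical representative, and that representative is marginally large. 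Uniqueness follows because two marginally large tableaux with $\insmap_\mu(T)=\insmap_\lambda(T')$ must have the same non-$\{i\}$ suffixes in each row and the same minimal counts $\lambda_{i+1}+1$. Inducting from the bottom row upward then forces $\lambda=\mu$ and $T=T'$.

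\textbf{$f_iT\neq\zero$.} For the second claim we use the row reading word of Corollary~\ref{cor:row_description}, written $T=R_n\otimes\cdots\otimes R_1$ with $R_j$ row $j$. Row $i$ begins with $\lambda_{i+1}+1$ copies of $\{i\}$. Each such factor has $\varphi_i=1$ and contributes an unmatched ``$f$-able'' signature. In the reading order, row $i+1$ comes before row $i$, and its elements can cancel at most $\lambda_{i+1}$ of these contributions, one per box, through $\{i+1\}$-containing entries. Each $\{i\}$-box has $\varphi_i(\{i\})=1$, whereas each box of row $i+1$ has $\varepsilon_i\leq1$. Using the maximum formula from the proof of Proposition~\ref{prop:monoidal_cat}, the leading $\{i\}$-block of row $i$ then yields $\varphi_i(T)>0$.

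Since $\SetTab_n(\lambda)$ is lower regular, $\varphi_i(T)>0$ gives $f_iT\neq\zero$. Entries $\{i+1\}$ elsewhere in row $i$ lie to the right of the block. This matters because they could only pair with the block in the other direction, which does not reduce $\varphi_i$ below $\varphi_i$ of the block part. In the general version I expect the careful signature bookkeeping to be the main obstacle, especially half-integer contributions from sets containing both $i$ and $i+1$ in row $i+1$. I would handle it by bounding $\varepsilon_i$ of the row $i+1$ factor by $\lambda_{i+1}$ using the row formula of the rows as $\SetTab_n(\lambda_{i+1}\Lambda_1)$.
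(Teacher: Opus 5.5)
Your plan is the route the paper indicates: it omits the proof, calling the first claim a combinatorial exercise and the second an easy consequence of the tensor product rule. Both parts go through, but one assertion in your first part is false and should be deleted.

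The count $\lambda_{i+1}+1$ is not minimal. It is not a threshold for semistandardness, nor for lying in the image of $\insmap_{\e_1+\cdots+\e_i}$. With only $\lambda_{i+1}$ leading $\{i\}$'s, the $\{i\}$-block still occupies columns $1,\dots,\lambda_{i+1}$, so it sits above every box of row $i+1$ and nothing fails. In fact every marginally large tableau is $\insmap_{\Lambda_i}$ of a smaller semistandard tableau. For example, when $n=3$ the class of the one-box tableau $\{2\}$ has marginally large representative with rows $(\{1\},\{1\},\{2\})$ and $(\{2\})$. Yet the tableau with rows $(\{1\},\{2\})$ and $(\{2\})$, and the one-box tableau itself, are smaller semistandard representatives. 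The $+1$ is a normalization whose only role is the second claim.

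Your argument does not actually use minimality:
\begin{itemize}
\item Existence only needs that deleting $\Lambda_i$-blocks while keeping at least $\lambda_{i+1}+1$ leading $\{i\}$'s preserves semistandardness. This holds because the block still covers row $i+1$ and rows $j<i$ shift together.
\item Uniqueness only needs that $\insmap_\mu(T)=\insmap_\lambda(T')$ forces equal non-$\{i\}$ suffixes in every row. After that, the defining condition fixes the counts from row $n-1$ upward.
\end{itemize}
It is cleaner to state once that $t_{-\lambda}\otimes T\sim t_{-\mu}\otimes T'$ if and only if $T$ and $T'$ have the same non-$\{i\}$ suffix in each row $i$. Also, the fact that the $\{i\}$-boxes form an initial segment uses weak increase along rows, not just $\min T_{ij}\ge i$.

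In the second part, the difficulty you anticipate does not arise. Entries of row $i+1$ have minimum at least $i+1$, so they never contain $i$ and produce no half-integer terms. Each such box $b$ has $\varphi_i(b)=0$ and $\langle \weight(b),\alpha_i^\vee\rangle=-\varepsilon_i(b)\in\{0,-1\}$. Rows $i+2,\dots,n-1$ involve neither $i$ nor $i+1$.

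Hence in the formula $\varphi_i(T)=\max_m a'_{im}$ from the proof of Proposition~\ref{prop:monoidal_cat}, take $m$ to be the last box of the leading $\{i\}$-block of row $i$. This gives $a'_{im}\ge 1-\lambda_{i+1}+\lambda_{i+1}=1$, so $\varphi_i(T)\ge 1$. The row reading embedding preserves $\varphi_i$ by Lemma~\ref{lemma:admissible_reading}, and $\SetTab_n(\lambda)$ is lower regular, so $f_iT\neq\zero$ follows with no further bookkeeping.
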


For example, we have $u_{\infty} = [T]$, where $T$ is the marginally large tableau of staircase shape $\lambda = \Lambda_1 + \Lambda_2 + \cdots + \Lambda_{n-1} = \sum_{i=1}^{n-1} (n-i) \e_i$ with all entries in row $i$ given by $\{i\}$ (that is, it corresponds to the highest weight element in $\SetTab_n(\lambda)$).

In view of Proposition~\ref{mala-lem}, we can specify the crystal structure on $\SetTab_n(\infty)$ by describing how the crystal operators act directly  on marginally large tableaux.
In this model,  we first have $e_i$ and $ f_i$ act by the usual formulas in \eqref{raising-limit-eq},
and then we change the result to the unique marginally large tableau in the relevant equivalence class.
(In effect, this last step simply adds or removes at most one column of some height $h$, consisting of entries $\{1\}, \dotsc, \{h\}$,  in an appropriate location.)
The exact formulation is analogous to the $\gl_n$ case (see, e.g.,~\cite{HL08}) as when $T$ is marginally large we have  $f_i T \neq \zero$ for all $i \in [n-1]$.
See Figure~\ref{fig:gl3mlt} for an example.

\begin{figure}[t]
\[
\ytableausetup{boxsize=0.5cm}
    \begin{tikzpicture}[xscale=2.1, yscale=2.3,>=latex,every node/.style={scale=0.7},baseline=0]
\node at (0,0) (A1) {$\ytableaushort{11,2} *[*(blue!30)]{2,1}$};
\node at (-1,-1) (A2) {$\ytableaushort{111,2{23}} *[*(blue!30)]{3,1}$};
\node at (0,-1) (B2) {$\ytableaushort{11{13},2} *[*(blue!30)]{2,1}$};
\node at (1,-1) (C2) {$\ytableaushort{11{12},2} *[*(blue!30)]{2,1}$};
\node at (-2,-2) (A3)  {$\ytableaushort{111,23} *[*(blue!30)]{3,1}$};
\node at (-1,-2) (B3) {$\ytableaushort{111{12},2{23}} *[*(blue!30)]{3,1}$};
\node at (0,-2) (C3) {$\ytableaushort{111{13},2{23}} *[*(blue!30)]{3,1}$};
\node at (1,-2) (D3) {$\ytableaushort{11{\scalebox{.8}{123}},2} *[*(blue!30)]{2,1}$};
\node at (2,-2) (E3) {$\ytableaushort{112,2} *[*(blue!30)]{2,1}$};
\node at (-2,-3) (A4) {$\ytableaushort{111{12},23} *[*(blue!30)]{3,1}$};
\node at (-3,-3) (B4) {$\ytableaushort{1111,2{23}3} *[*(blue!30)]{4,1}$};
\node at (-1,-3) (C4) {$\ytableaushort{1112,2{23}} *[*(blue!30)]{3,1}$};
\node at (0,-3) (D4) {$\ytableaushort{111{\scalebox{.8}{123}},2{23}} *[*(blue!30)]{3,1}$};
\node at (1,-3) (H4) {$\ytableaushort{111{13},23} *[*(blue!30)]{3,1}$};  
\node at (2,-3) (E4) {$\ytableaushort{11{23},2} *[*(blue!30)]{2,1}$};
\node at (3,-3) (F4) {$\ytableaushort{11{12}2,2} *[*(blue!30)]{2,1}$};
\node at (-3.8,-3.8) (C5) {$\vdots$};
\node at (-3,-3.8) (B5) {$\vdots$};
\node at (-2,-3.8) (A5) {$\vdots$};
\node at (-1.5,-3.8) (D5) {$\vdots$};
\node at (-0.5,-3.8) (E5) {$\vdots$};
\node at (0.5,-3.8) (F5) {$\vdots$};
\node at (1.5,-3.8) (J5) {$\vdots$};  
\node at (2,-3.8) (G5) {$\vdots$};
\node at (3,-3.8) (H5) {$\vdots$};
\node at (3.8,-3.8) (I5) {$\vdots$};
\draw[->,thick,red]  (A1) -- (A2) node[midway,above left,scale=0.75] {$2$};
\draw[->,thick,blue]  (A1) -- (C2) node[midway,above right,scale=0.75] {$1$};
\draw[->,thick,red]  (A2) -- (A3) node[midway,above left,scale=0.75] {$2$};
\draw[->,thick,blue]  (A2) -- (B3) node[midway,left,scale=0.75] {$1$};
\draw[->,thick,red]  (B2) -- (C3) node[midway,right,scale=0.75] {$2$};
\draw[->,thick,blue]  ([xshift=4pt]B2.south) -- ([xshift=-8pt]D3.north) node[midway,below left,scale=0.75] {$1$};
\draw[<-,thick,red]  ([xshift=8pt]B2.south) -- ([xshift=-4pt]D3.north) node[midway,above right,scale=0.75] {$2$};
\draw[->,thick,red]  (C2) -- (D3) node[midway,right,scale=0.75] {$2$};
\draw[->,thick,blue]  (C2) -- (E3) node[midway,above right,scale=0.75] {$1$};
\draw[->,thick,blue]  (A3) -- (A4) node[midway,above left,scale=0.75] {$1$};
\draw[->,thick,red]  (A3) -- (B4) node[midway,above left,scale=0.75] {$2$};
\draw[->,thick,blue]  (B3) -- (C4) node[midway,left,scale=0.75] {$1$};
\draw[->,thick,red]  (B3) -- (D4) node[midway,above right,scale=0.75] {$2$};
\draw[->,thick,blue]  (C3) -- (D4) node[midway,left,scale=0.75] {$1$};
\draw[->,thick,blue]  (D3) -- (E4) node[midway,above right,scale=0.75] {$1$};
\draw[->,thick,red]  (E3) -- (E4) node[midway,above left,scale=0.75] {$2$};
\draw[->,thick,blue]  (E3) -- (F4) node[midway,above right,scale=0.75] {$1$};
\draw[->,thick,red]  (C3) -- (H4) node[midway,above right,scale=0.75] {$2$};  
\draw[->,thick,blue]  (A4) -- (A5) node[midway,above left,scale=0.75] {$1$};
\draw[->,thick,blue]  (B4) -- (B5) node[midway,above left,scale=0.75] {$1$};
\draw[->,thick,blue]  (C4) -- (D5) node[midway,above left,scale=0.75] {$1$};
\draw[->,thick,blue]  (D4) -- (E5) node[midway,above left,scale=0.75] {$1$};
\draw[->,thick,blue]  (E4) -- (H5) node[midway,above right,scale=0.75] {$1$};
\draw[->,thick,blue]  (F4) -- (I5) node[midway,above right,scale=0.75] {$1$};
\draw[->,thick,blue]  (H4) -- (F5) node[midway,above left,scale=0.75] {$1$};  
\draw[->,thick,red]  (A4) -- (B5) node[midway,above left,scale=0.75] {$2$};
\draw[->,thick,red]  (B4) -- (C5) node[midway,above left,scale=0.75] {$2$};
\draw[->,thick,red]  (C4) -- (E5) node[midway,above right,scale=0.75] {$2$};
\draw[->,thick,red]  (D4) -- (F5) node[midway,above right,scale=0.75] {$2$};
\draw[->,thick,red]  (E4) -- (G5) node[midway,right,scale=0.75] {$2$};
\draw[->,thick,red]  (F4) -- (H5) node[midway,right,scale=0.75] {$2$};
\draw[->,thick,red]  (H4) -- (J5) node[midway,right,scale=0.75] {$2$};  
     \end{tikzpicture}
\]
\caption{The $\sqrt{\gl_3}$-crystal $\SetTab_3(\infty)$ up to depth at least 3 with the elements written as marginally large set-valued tableaux.}
\label{fig:gl3mlt}
\end{figure}

\begin{remark}
\label{rem:sub_direct_limits}
Write  $\lambda \preceq \mu$ if $\lambda,\mu \in P^+$ have $\mu - \lambda \in P^+$.
Under the order $\preceq$, any infinite directed set of weights $\overline{P}^+ \subseteq P^+$ (in which every finite set has an upper bound)   gives rise to a direct limit crystal, namely by replacing $P^+$ in \eqref{eq:direct_limit_def} by $\overline{P}^+$.
These limits are not necessarily isomorphic,
although they are always upper regular.
Particularly important examples include the sets
\begin{equation}
\label{eq:parabolic_directed_weights}
P_J^+ := \{\lambda \in P^+ \mid \langle \lambda, \alpha_i^{\vee} \rangle = 0 \text{ for all } i \in I \setminus J \}
\quad\text{for each $J \subseteq I = [n-1]$.}
\end{equation}
\end{remark}

In the classical case, the direct limits for these directed sets correspond to crystals describing parabolic Verma modules of highest weight $0$, and they naturally have marginally large tableau descriptions  (just restricted to columns whose heights are $J$).
For our applications, we will only need to consider the case $J = \{1\}$, which corresponds to a single row,  as it can be used to describe the general case seen in the proof of Theorem~\ref{thm:directed_system} (and in the marginally large tableau model).

\section{Lattice point models}
\label{sec:parameterizations}

In this section, we explore how different parameterizations of Kashiwara crystals in type $\gl_n$ carry over to $\sqgln$-crystals.
More specifically, the goal of this section is to construct versions of the following three parameterizations of the classical Kashiwara crystals $\cB(\lambda)$ for $\lambda \in P^+ \sqcup \{\infty\}$:
\begin{enumerate}
\item[(1)] The Lusztig parameterization~\cite{Kamnitzer10,BZ01,Lusztig90,Lusztig93}, which encodes the PBW basis of the negative half of the quantum group $U_q^-(\gl_n)$.
\item[(2)] The polyhedral model of Nakashima and Zelevinsky~\cite{Nakashima99,NZ97} that comes from the \defn{Kashiwara embedding} $\Upsilon_i \colon \cB(\infty) \hookrightarrow \cE_i \otimes \cB(\infty)$~\cite[Thm.~2.2.1]{Kashiwara93} (with the model implicitly defined in~\cite{Kashiwara93}).
\item[(3)] The string data parameterization, which was studied by Littelmann~\cite{Littelmann98} and can be considered as a particular ``dual'' version of the polyhedral model.
\end{enumerate}
The Kashiwara embedding is important for $\gl_n$ as it can be used to characterize $\cB(\infty)$~\cite[Prop.~3.2.3]{KS97}, identify $\cB(\Lambda)$ inside of $\cB(\infty)$~\cite[Prop.~8.2]{Kashiwara95}, and establish Demazure crystal properties~\cite{Kashiwara93}.

We are able to achieve a Lusztig-type parameterization (Theorem~\ref{BZL-cor}) and a polyhedral-type model (Theorem~\ref{bzl-thm}), but only for one specific reduced expression for the longest element $w_0$. Namely, our constructions require the use of the \defn{BZL word} (named after Berenstein, Zelevinsky, and Littelmann~\cite{BZ01,Littelmann98}) given by:
\begin{subequations}
\label{bzl-eq}
\begin{align}
w_0 & = s_{n-1} (s_{n-2} s_{n-1}) \cdots (s_2s_3s_4 \cdots s_{n-1}) (s_1 s_2 s_3\cdots s_{n-1}) \in S_n,
\\
\BZL & := (n-1, n-2, n-1,  \dotsc, 2,3,4,\dots,n-1,  1,2,3, \dotsc, n-1).
\end{align}
\end{subequations}
It is interesting to note that the \defn{rectification} operator for polynomial $\sqgln$-crystals defined in \cite[Thm.~2.21]{MTY} also requires this specific reduced word (or more precisely, its reversed dual version).
We discuss what goes wrong for non-BZL words in Section~\ref{sec:nonBZL}.

\subsection{Vector parameterization}\label{vec-param-sect}

Fix a positive integer $k$ and consider a one-row semistandard set-valued tableau $T \in \SetTab_n(k \Lambda_1)$.
Such a tableau is uniquely determined by the following data:
\begin{itemize}
\item the number of boxes containing only $i$, which we denote by $m_i=m_i(T) \in \ZZ_{\geq 0}$, and
\item whether or not the leftmost entry $A$ containing an $i$ has $\min A \neq i$, which we denote by 
\[
b_i  = b_i(T):= 
\begin{cases} 1 & \text{if such an entry $A$ exists and } \min A \neq i, \\ 0 & \text{otherwise.} \end{cases}
\]
\end{itemize}
For these parameters, we have $k = \sum_{i=1}^n (m_i - b_i)$ and it always holds that $b_i = 0$ when $m_i=0$ or when $i \in [n]$ is the minimal index with $m_i>0$.
This means we can define 
\[
m'_i= m'_i(T):= m_i - \tfrac{1}{2} b_i \in \HH_{\geq 0} := \tfrac{1}{2}\NN.
\]
Notice that we can recover $m_i = \lceil m_i' \rceil$ and $b_i = 2 (m'_i - \lfloor m'_i \rfloor)= \lceil m_i' \rceil - \lfloor m'_i \rfloor$.
Moreover,   
\[ 
\lfloor m'_1 \rfloor + \dots  + \lfloor m'_n \rfloor 
=k
\qquand
m'_i \in \ZZ\text{ if $i \in [n]$ is minimal with $m'_i\neq 0$.}
\]
Thus, we can parametrize $\SetTab_n(k \Lambda_1)$ by the set of vectors
\[
\VCrys{k} := \left\{ (a_1, a_2,\dotsc, a_n) \in \HH_{\geq 0}^n\ \middle\vert\ 
\ba
 &\lfloor a_1 \rfloor +\lfloor a_2 \rfloor + \cdots + \lfloor a_n \rfloor = k \text{ and} 
 \\
 &a_i \in \ZZ\text{ if $i \in [n]$ is minimal with $a_i\neq 0$} 
 \ea\right\}
\]
under the bijection $\Theta_k \colon \SetTab_n(k \Lambda)_1 \to \VCrys{k}$ defined by
\[
\Theta_k(T) = (m'_1, \dotsc, m'_n).
\]
This parameterization makes sense even when $k = 0$ since $\SetTab_n(0)$ contains just the empty tableau and $\VCrys{0}$ contains just the zero vector in $\ZZ^n$.

\begin{example}
\label{VCrys-ex1}
As in Example~\ref{ex:column_not_subsets}, we cannot characterize a tableau $T \in \SetTab_n(k \Lambda_1)$ by just the data $\weight(T) = \mm = (m_1, \dotsc, m_n)$, unlike for $\gl_n$-crystals.
Consider the following elements of $\SetTab_3(2\Lambda_1)$ and their corresponding data $\bfa$ from $\Theta_2$:
\[
\ytableaushort{{12}3} \mapsto (1,\tfrac{1}{2},1),
\qquad\qquad
\ytableaushort{1{23}} \mapsto (1,1,\tfrac{1}{2}),
\qquad\qquad
\ytableaushort{{12}{23}} \mapsto (1,\tfrac{1}{2},\tfrac{1}{2}),
\]
We see the first two elements have the same weight $\e_1 + \e_2 + \e_3$ but are distinct tableaux.
\end{example}

The map $\Theta_k$ induces a regular $\sqgln$-crystal structure on $\VCrys{k}$, which we now describe explicitly.
Note that we use the following minor abuse of notation: For a vector of the form $\bfa = (a_k, \dotsc, a_n)$ for some $k > 0$, we define $\bfa + C\e_i := (a_k,\dotsc, a_i + C,\dotsc,a_n)$ for $i \in [k, n]$ and it is undefined otherwise.

\begin{definition}
\label{def:vec_crystal} 
Define a $\sqgln$-crystal structure on $\VCrys{k}$ by
\begin{subequations}
\begin{align}
e_i \bfa & = \begin{cases}
	\bfa + \frac{1}{2} \e_i & \text{if } a_{i+1} \notin \ZZ \text{ and } a_i = 0, \\
	\bfa + \e_i - \frac{1}{2} \e_{i+1} & \text{if } a_{i+1} \in \ZZ_{>0}, \\
	\bfa - \frac{1}{2} \e_{i+1} & \text{if } a_{i+1} \notin \ZZ \text{ and } a_i > 0, \\
	\zero & \text{if } a_{i+1} = 0,
\end{cases}  \label{eq:vec_e}
\allowdisplaybreaks \\
f_i \bfa & = \begin{cases}
	\bfa - \frac{1}{2} \e_i & \text{if } a_{i+1} \notin \ZZ \text{ and } a_i = \frac{1}{2}, \\
	\bfa - \e_i + \frac{1}{2} \e_{i+1} & \text{if } a_{i+1} \notin \ZZ \text{ and } a_i > \frac{1}{2}, \\
	\bfa + \frac{1}{2} \e_{i+1} & \text{if } a_{i+1} \in \ZZ \text{ and } a_i > 0, \\
	\zero & \text{if } a_i = 0,
\end{cases}
\allowdisplaybreaks \\
\varepsilon_i(\bfa) & = \begin{cases} a_{i+1} & \text{if } a_{i+1} \notin \ZZ \text{ and } a_i > 0, \\ \lceil a_{i+1}\rceil & \text{otherwise}, \end{cases}  \label{eq:vec_ep}
\allowdisplaybreaks \\
\varphi_i(\bfa) & = \begin{cases} \lceil a_i \rceil - \frac{1}{2} & \text{if } a_{i+1} \notin \ZZ \text{ and } a_i > 0, \\ \lceil a_i \rceil & \text{otherwise}, \end{cases}
\allowdisplaybreaks \\
\weight(\bfa) & = -k \e_0 + \sum_{i=1}^n \lceil a_i \rceil (\e_i + \e_0) = \sum_{i=1}^n\Bigl( \lceil a_i \rceil \e_i + (\lceil a_i \rceil - \lfloor a_i \rfloor ) \e_0\Bigr).
\end{align}
\end{subequations}
\end{definition}

In terms of tableaux, the cases for the $f_i$ crystal operators are respectively described by
\[
\ytableausetup{boxsize = .55cm,aligntableaux=center}
\ytableaushort{{hij}} \xrightarrow[\hspace{20pt}]{i} \ytableaushort{{hj}}\,,
\qquad\quad
\ytableaushort{h{ij}} \xrightarrow[\hspace{20pt}]{i} \ytableaushort{hj}\,,
\qquad\quad
\ytableaushort{hi} \xrightarrow[\hspace{20pt}]{i} \ytableaushort{h{ij}}\,,
\]
where $h \leq i < j := i+1$ (with $h < i$ in the first case).

\begin{proposition}
\label{prop:single_row_crystal} 
Fix $k \in \NN$.
Then the map $\Theta_k \colon \SetTab_n(k\Lambda_1) \to \VCrys{k}$ is a $\sqgln$-crystal isomorphism.
Moreover, $\VCrys{k}$ is a regular $\sqgln$-crystal.
\end{proposition}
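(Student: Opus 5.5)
The plan is to check directly that $\Theta_k$ intertwines all the structure maps. The crystal structure on $\SetTab_n(k\Lambda_1)$ is the one induced by the row (equivalently column) reading word into $\SS_n^{\otimes k}$. Since $\Theta_k$ is already known to be a bijection, it suffices to verify three things: $\Theta_k(e_iT)=e_i\Theta_k(T)$, $\weight(\Theta_k(T))=\weight(T)$, and $\varepsilon_i(\Theta_k(T))=\varepsilon_i(T)$. The identity for $f_i$ then follows from axiom (A1), since $f_i$ on $\VCrys{k}$ is by inspection the inverse of $e_i$. The identity for $\varphi_i$ follows from axiom (A2). Regularity of $\VCrys{k}$ is then automatic: $\SetTab_n(k\Lambda_1)$ is polynomial, hence regular, and regularity transfers along an isomorphism. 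Alternatively, one can read it off from the formulas via Proposition~\ref{regular-prop}.

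The weight check is immediate from \eqref{eq:explicit_tableau_weight}. The number of boxes containing $i$ is $m_i=\lceil a_i\rceil$, and $|T|-k=\sum_i b_i=\sum_i(\lceil a_i\rceil-\lfloor a_i\rfloor)$.

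For the operators, fix $i$ and look only at the letters $i$ and $j:=i+1$ in the row, read left to right as a tensor product of boxes. By semistandardness, the boxes containing $i$ or $j$ form a contiguous block. Reading left to right, this block consists of:
\begin{itemize}
\item possibly a first box $\{\dots,i\}$ whose minimum is less than $i$, present exactly when $b_i=1$;
\item then some boxes equal to $\{i\}$;
\item then possibly one box containing both $i$ and $j$;
\item then boxes equal to $\{j\}$;
\item possibly with the last of these boxes containing $j$ together with larger letters.
\end{itemize}
The other letters never affect the $i$-statistics of a box in $\SS_n$ except through $S\cap\{i,j\}$. So each box contributes a factor from $\{\{i\},\{i,j\},\{j\},\text{neither}\}$ with the corresponding $\varepsilon_i,\varphi_i$ values.

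I would then apply the tensor product formula $\varepsilon_i=\max_m a_{im}$ from the proof of Proposition~\ref{prop:monoidal_cat}. The $i$-relevant word has the shape $i^{p}\,(ij)^{\delta}\,j^{q}$, where $p$ counts every box containing $i$ but not $j$. In this word the $j$'s are all to the right of the $i$'s. By the tensor rule the $e_i$ acts on the leftmost unbracketed $j$-type factor, which is the $(ij)$ box if $\delta=1$ and otherwise the first $\{j\}$ box.

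It remains to translate the cases into the vector $\bfa$:
\begin{itemize}
\item $a_{j}\notin\ZZ$ means there is a box containing $j$ whose minimum is not $j$. This box is either an $(ij)$ box, when $a_i>0$, or a box $\{\dots,j\}$ with minimum less than $i$, when $a_i=0$.
\item $a_j\in\ZZ_{>0}$ means the first $j$-box is pure $\{j\}$.
\end{itemize}
Computing $e_i$ on each of these configurations, including the situation where the leftmost $j$-box also contains a smaller letter, should reproduce the four lines of \eqref{eq:vec_e} and \eqref{eq:vec_ep}. For example, in the case $a_j\in\ZZ_{>0}$, the operator $e_i$ changes the first $\{j\}$ box into $\{i,j\}$, which raises $a_i$ by one and lowers $a_j$ by one half.

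The main obstacle is the bookkeeping in this last step. One must confirm that the leftmost-unbracketed rule lands on the claimed box, and that $\varepsilon_i$ equals $\lceil a_j\rceil$ or $a_j$ as stated. In particular, since $\varepsilon_i(\{i,j\})=\tfrac12$, the half-integer value of $\varepsilon_i$ arises exactly when an $(ij)$ box is present. Because all $j$-letters lie to the right of all $i$-letters, no bracketing between an $i$ and a later $j$ ever cancels in the ``$\varepsilon$'' direction. Hence $\varepsilon_i$ is just the $\varepsilon_i$-contribution of the $j$-part, namely $a_{i,m}$ maximized at the first $j$-box, and this is what makes the check finite and mechanical.
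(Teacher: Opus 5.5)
Your proof is correct and follows essentially the paper's route: show that $\Theta_k$ is a weight-preserving bijection, then check the operators directly with the tensor product rule on $T_{11}\otimes T_{12}\otimes\cdots\otimes T_{1k}$. The paper instead first checks that the formulas on $\VCrys{k}$ are regular and then invokes Proposition~\ref{prop:morphism_by_ops}; you check $e_i$ and $\varepsilon_i$ directly, deduce $f_i$ and $\varphi_i$ from (A1)--(A2), and transport regularity, which is only a reordering.

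One caution on the weight step you call immediate. Your correct identity $\abs{T}-k=\sum_i b_i$, combined with \eqref{eq:explicit_tableau_weight}, gives $\e_0$-coefficient $-\sum_i(\lceil a_i\rceil-\lfloor a_i\rfloor)$. The printed weight formula for $\VCrys{k}$ therefore has its $\e_0$-sign reversed (compare the weight on $\VCrysR{j}$), and $\Theta_k$ preserves the corrected formula $k\e_0+\sum_i\lceil a_i\rceil(\e_i-\e_0)$.
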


Compare Figure~\ref{fig:n3_k2_vec} with Figure~\ref{fig:S3square_cc} for an example of the isomorphism $\Theta_k$.

\begin{proof}
Inspecting the weight of a semistandard set-valued tableau~\eqref{eq:explicit_tableau_weight} alongside the remarks before Example~\ref{VCrys-ex1} shows that $\Theta_k$ is a weight-preserving bijection.
One can check directly that $\varepsilon_i$ and $\varphi_i$ have the formulas \eqref{upper-reg-eq} and \eqref{lower-reg-eq} required of a regular crystal.
Since we know that $\SetTab_n(k \Lambda_1)$ is a regular $\sqgln$-crystal, it remains only to check that $\Theta_k$ commutes with the crystal operators.
This is straightforward from the definition of the tensor product for $\sqgln$-crystals (or from the signature rule in \cite{Yu23}) since we can identify $T \in \SetTab_n(k \Lambda_1)$ with $T_{11} \otimes T_{12} \otimes \cdots \otimes T_{1k} \in \SS_n^{\otimes k}$. 
\end{proof}

\begin{figure}[t]
\[
\newcommand{\hf}[1]{\frac{#1}{2}}
    \begin{tikzpicture}[xscale=3.2, yscale=1.5,>=latex,baseline=0] 
      \node at (0,4) (T0) {$(2,0,0)$};
      \node at (0,3) (T1) {$(2,\hf1,0)$};
      \node at (0,2) (T2a) {$(1,1,0)$};
      \node at (1,3) (T2b) {$(2,\hf1,\hf1)$};
      \node at (0,1) (T3a) {$(1,\hf3,0)$};
      \node at (1,2) (T3b) {$(1,1,\hf1)$};
      \node at (2,3) (T3c) {$(2,0,\hf1)$};
      \node at (0,0) (T4a) {$(0,2,0)$};
      \node at (1,1) (T4b) {$(1,\hf3,\hf1)$};
      \node at (2,2) (T4c) {$(1,0,1)$};
      \node at (1,0) (T5a) {$(0,2,\hf1)$};
      \node at (2,1) (T5b) {$(1,\hf1,1)$};
      \node at (2,0) (T6a) {$(0,1,1)$};
      \node at (3,1) (T6b) {$(1,\hf1,\hf3)$};
      \node at (3,0) (T7a) {$(0,1,\hf3)$};
      \node at (4,1) (T7b) {$(1,0,\hf3)$};
      \node at (4,0) (T8) {$(0,0,2)$};
      \draw[->,thick,color=blue]  (T0) -- (T1) node[midway,right,scale=0.75] {$1$};
      \draw[->,thick,color=blue]  (T1) -- (T2a) node[midway,right,scale=0.75] {$1$};
      \draw[->,thick,color=blue]  (T2a) -- (T3a) node[midway,right,scale=0.75] {$1$};
      \draw[->,thick,color=blue]  (T2b) -- (T3b) node[midway,right,scale=0.75] {$1$};
      \draw[->,thick,color=blue]  (T3a) -- (T4a) node[midway,right,scale=0.75] {$1$};
      \draw[->,thick,color=blue]  (T3b) -- (T4b) node[midway,right,scale=0.75] {$1$};
      \draw[->,thick,color=blue]  (T4b) -- (T5a) node[midway,right,scale=0.75] {$1$};
      \draw[->,thick,color=blue]  (T4c) -- (T5b) node[midway,right,scale=0.75] {$1$};
      \draw[->,thick,color=blue]  (T5b) -- (T6a) node[midway,right,scale=0.75] {$1$};
      \draw[->,thick,color=blue]  (T6b) -- (T7a) node[midway,right,scale=0.75] {$1$};
      \draw[->,thick,color=red]  (T1) -- (T2b) node[midway,above,scale=0.75] {$2$};
      \draw[->,thick,color=red]  (T2a) -- (T3b) node[midway,above,scale=0.75] {$2$};
      \draw[->,thick,color=red]  (T3b) -- (T4c) node[midway,above,scale=0.75] {$2$};
      \draw[->,thick,color=red]  (T3a) -- (T4b) node[midway,above,scale=0.75] {$2$};
      \draw[->,thick,color=red]  (T4b) -- (T5b) node[midway,above,scale=0.75] {$2$};
      \draw[->,thick,color=red]  (T5b) -- (T6b) node[midway,above,scale=0.75] {$2$};
      \draw[->,thick,color=red]  (T4a) -- (T5a) node[midway,above,scale=0.75] {$2$};
      \draw[->,thick,color=red]  (T5a) -- (T6a) node[midway,above,scale=0.75] {$2$};
      \draw[->,thick,color=red]  (T6a) -- (T7a) node[midway,above,scale=0.75] {$2$};
      \draw[->,thick,color=red]  (T7a) -- (T8) node[midway,above,scale=0.75] {$2$};
      \draw[->,thick,color=red]  ([yshift=1pt]T2b.east) -- ([yshift=1pt]T3c.west) node[midway,above,scale=0.75] {$2$};
      \draw[->,thick,color=blue]  ([yshift=-1pt]T3c.west) -- ([yshift=-1pt]T2b.east) node[midway,below,scale=0.75] {$1$};
      \draw[->,thick,color=red]  ([yshift=1pt]T6b.east) -- ([yshift=1pt]T7b.west) node[midway,above,scale=0.75] {$2$};
      \draw[->,thick,color=blue]  ([yshift=-1pt]T7b.west) -- ([yshift=-1pt]T6b.east) node[midway,below,scale=0.75] {$1$};
     \end{tikzpicture}
\]
\caption{The $\sqgln$-crystal $\VCrys{2}$ for $n = 3$.}
\label{fig:n3_k2_vec}
\end{figure}

Next, we want to describe equivalence classes in the direct limit
\[
\varinjlim_{k\to\infty} \cT_{-k\Lambda_1} \otimes \SetTab_n(k\Lambda_1)
\]
using this parameterization (which exists by Remark~\ref{rem:sub_direct_limits}).
To be explicit, for each $k,m \in \NN$, the directed system is given by the $\sqgln$-crystal embeddings
\[
\Psi^{(1)}_{k,m} \colon \cT_{-k\Lambda_1}\otimes\VCrys{k}\to \cT_{-(k+m)\Lambda_1} \otimes \VCrys{k+m}
\]
defined by 
\be\label{lc-psi-eq}
\Psi^{(1)}_{k,m}(t_{-k\Lambda_1}\otimes \bfa) := t_{-(k+m)\Lambda_1} \otimes (a_1+m, a_2,a_3,\dotsc,a_n).
\ee
This can be checked directly or by observing that in terms of the maps $\Psi_{\lambda,\mu}$ from \eqref{eq:insertion_map}, we have
\[
\Psi^{(1)}_{k,\ell} = \( \id \otimes \Theta_{k+\ell}\) \circ \Psi_{k\Lambda_1,\ell\Lambda_1} \circ \( \id\otimes\Theta_k^{-1}\).
\]
Since each $\bfa \in \VCrys{k}$ must have $a_1 \in \ZZ$, we see that an equivalence class in the direct limit corresponds to a set of vectors having the same (fixed) components $(a_2, \dotsc, a_n)$.
Hence, we represent each class as a single vector $(a_2, \dotsc, a_n)$.
Consequently, we define the crystal operators acting on  
\[
\VCrys{\infty} := \{ (a_2, \dotsc, a_n) \mid  a_2, \dotsc, a_n \in \HH_{\geq 0} \}
\]
with the crystal operators ignoring all conditions on $a_1$ (as we always consider it to be ``large'' and the first case for $f_1$ cannot occur as $a_i \in \ZZ$) and applying the projection $\e_1 \mapsto 0$.
For use later on, we instead give a more general construction of the precise direct limit
\[
\VCrysR{j} := \{ \overline{\bfa}=(a_{j+1}, \dotsc, a_n) \mid a_{j+1}, \dotsc, a_n \in \HH_{\geq 0} \}.
\]
Note that $\VCrys{\infty} = \VCrysR{1}$ and that $\VCrysR{j}$ (as a set) is just the Cartesian product of $n-j$ copies of $\HH_{\geq 0}$.
The elements of $\VCrysR{j}$ are vectors indexed by $\{j+1,\dots,n\}$ rather than $\{1,\dots,n-j\}$.

\begin{definition}
\label{def:vec_crystal2}
We view the set $\VCrysR{j}$ as a $\sqgln$-crystal in the following way.
Let $\overline{\bfa} \in \VCrysR{j}$.
The crystal operators and statistics are given by the following cases:
\bei
\item[(a)] For $i \in [j-1]$, set $\varepsilon_i(\overline{\bfa}) =\varphi_i(\overline{\bfa})= -\infty$ and $e_i \overline{\bfa} = f_i \overline{\bfa} = \zero$.
\item[(b)] For $i \in [n-1] \setminus [j]$, define $e_i$, $f_i$, $\varepsilon_i$, and $\varphi_i$ by the same formulas as in Definition~\ref{def:vec_crystal}.

\item[(c)] For $i = j$ let $\varepsilon_j(\overline{\bfa}) = a_{j+1}$
and 
$\varphi_j(\overline{\bfa})  = -a_{j+1} - \sum_{m=j+2}^n \lfloor a_m \rfloor$ along with
\[
e_j \overline{\bfa}  = \begin{cases} \overline{\bfa} - \frac{1}{2} \e_{j+1} & \text{if } a_{j+1} > 0 \\
\zero & \text{if } a_{j+1} = 0
\end{cases}
\qquand
f_j \overline{\bfa}  = \overline{\bfa} + \tfrac{1}{2} \e_{j+1}.
\]
\eei
Finally, the weight map on $\VCrysR{j}$ is 
$\weight(\overline{\bfa})  
 = \sum_{i=j+1}^n \Bigl( \lfloor a_i \rfloor (\e_i - \e_j) - (\lfloor a_i \rfloor - \lceil a_i \rceil) (\e_i - \e_0) \Bigr).
$
\end{definition}

The character of this crystal has a simple product formula.

\begin{proposition}
\label{prop:infinite_row_char}
The set $\VCrysR{j}$ is a $\sqgln$-crystal whose character is
\[
\ch\left(\VCrysR{j}\right) = \prod_{i=j+1}^n \frac{1 + \beta x_i}{1 - x_i x_j^{-1}} \in \ZZ\llbracket\Lambda\rrbracket_<.
\]
\end{proposition}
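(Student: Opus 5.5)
The proof has two parts: verifying the $\sqgln$-crystal axioms for $\VCrysR{j}$, and computing the character. The character is the easy part. The weight map is additive over coordinates, and each coordinate $a_i\in\HH_{\geq 0}$ ranges independently, so I would factor the character as
\[
\ch\bigl(\VCrysR{j}\bigr)=\prod_{i=j+1}^n \sum_{a\in\HH_{\geq 0}} x^{\lfloor a\rfloor(\e_i-\e_j)+(\lceil a\rceil-\lfloor a\rfloor)(\e_i-\e_0)}.
\]
For $a=m\in\NN$ the summand is $(x_ix_j^{-1})^m$. For $a=m+\frac12$ it is $(x_ix_j^{-1})^m\beta x_i$, since $x^{-\e_0}=\beta$. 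Hence each factor equals $(1+\beta x_i)\sum_{m\geq0}(x_ix_j^{-1})^m=(1+\beta x_i)/(1-x_ix_j^{-1})$. Each weight space is finite, since the $\e_k$-coordinates of the weight determine $\lfloor a_i\rfloor$ and $\lceil a_i\rceil$ up to finitely many choices. The expansion lies in $\ZZ\llbracket\Lambda\rrbracket_<$ because $x_ix_j^{-1}=x^{-(\alpha_j+\cdots+\alpha_{i-1})}\in x^{Q^-}$ and $\beta x_i$ is a single monomial factor.

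For the crystal axioms, I would avoid checking Definition~\ref{square-root-def} case by case and instead realize $\VCrysR{j}$ as a direct limit, as the preceding discussion does for $j=1$. Concretely, $\VCrysR{j}$ should be identified with $\varinjlim_k \cT_{-k\e_j}\otimes\restrictF{[j,n]}\bigl(\VCrys{k}\bigr)$, where the vectors in $\restrictF{[j,n]}(\VCrys{k})$ have $a_1=\dots=a_{j-1}=0$ and $a_j\in\ZZ$ large. The transition maps add $m$ to $a_j$, exactly as in \eqref{lc-psi-eq}. These are crystal embeddings by the $j$-shifted version of Lemma~\ref{lemma:single_row_embedding} together with Proposition~\ref{prop:single_row_crystal}. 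A direct limit of crystal embeddings between upper regular crystals is again an abstract crystal, so the axioms are inherited.

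The main remaining work is to check that the formulas in Definition~\ref{def:vec_crystal2} agree with the limit structure. For $i<j$ this is immediate, because the restriction functor sets everything to $-\infty$. For $i>j$ the formulas of Definition~\ref{def:vec_crystal} do not involve $a_j$, so they pass to the limit unchanged. The weight is $\weight(\bfa)-k\e_j$; substituting $\lfloor a_j\rfloor=k-\sum_{m>j}\lfloor a_m\rfloor$ gives the stated formula. The delicate case is $i=j$, which I expect to be the main obstacle. Here $a_j>0$ is large and integral, so the case analysis of \eqref{eq:vec_e} collapses to $e_j\overline{\bfa}=\overline{\bfa}-\frac12\e_{j+1}$ when $a_{j+1}>0$, with $a_j$ changing by $0$ or $+1$. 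That change is absorbed into the equivalence class, and $\varepsilon_j=a_{j+1}$ in both parity cases. Then $\varphi_j$ must be recomputed from (A2) using the shifted weight, giving $\varepsilon_j+\langle\weight,\alpha_j^\vee\rangle=-a_{j+1}-\sum_{m\ge j+2}\lfloor a_m\rfloor$. This requires carefully tracking the $\e_0$ and ceiling contributions; I would verify it separately for $a_{j+1}$ integral and half-integral. Finally, $f_j$ is forced by (A1), so it must be $\overline{\bfa}+\frac12\e_{j+1}$.
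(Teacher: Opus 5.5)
Your proof is correct, and the character computation coincides with the paper's: you factor over the independent coordinates $a_{j+1},\dots,a_n\in\HH_{\geq0}$ and sum each factor to $(1+\beta x_i)\sum_{m\geq0}(x_ix_j^{-1})^m$. Your remarks on finite weight spaces and on membership in $\ZZ\llbracket\Lambda\rrbracket_<$ make explicit what the paper leaves implicit.

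The difference is in how the crystal axioms are established. The paper treats them as a direct check of Definition~\ref{square-root-def} against the formulas of Definition~\ref{def:vec_crystal2}. For $i>j$ everything is local to $(a_i,a_{i+1})$ and is inherited from $\VCrys{k}$. For $i=j$ one needs only two facts. First, $\varphi_j(\overline{\bfa})-\varepsilon_j(\overline{\bfa})=-2a_{j+1}-\sum_{m\geq j+2}\lfloor a_m\rfloor=\langle\weight(\overline{\bfa}),\alpha_j^\vee\rangle$. Second, $e_j$ shifts the weight by $\e_j-\e_0$ or $\e_0-\e_{j+1}$, according as $a_{j+1}$ is or is not an integer.

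You instead identify $\VCrysR{j}$ with $\varinjlim_k\cT_{-k\e_j}\otimes\restrictF{[j,n]}(\VCrys{k})$ and inherit the axioms from the limit. This amounts to proving Proposition~\ref{vcrys-prop} first; the paper proves it afterwards, relying on the explicit definition. Your route gives both results at once and explains where the formulas, in particular $\varphi_j$, come from. The cost is extra infrastructure. You need that a direct limit along non-strict embeddings is a crystal, and well-definedness of $f_i$ on classes uses upper regularity in the indices $\geq j$ together with (A1). Meanwhile, the $i=j$ bookkeeping you single out is roughly as much work as the direct check it replaces.

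One caution for the weight substitution. Use \eqref{eq:explicit_tableau_weight}, i.e.\ $\weight(\bfa)=k\e_0+\sum_i\lceil a_i\rceil(\e_i-\e_0)$ on $\VCrys{k}$. The $\e_0$-terms in the formula printed in Definition~\ref{def:vec_crystal} carry the wrong sign and would not reproduce Definition~\ref{def:vec_crystal2}.
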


\begin{proof}
Checking the $\sqgln$-crystal axioms is straightforward.
Since the components $a_{j+1},\dots,a_n$ of the vectors $\bfa \in \VCrysR{j}$ vary over all elements of $\HH_{\geq0}$ independently, we have 
\[
\ch\left(\VCrysR{j}\right) = \prod_{i=j+1}^n \sum_{a \in \HH_{\geq0}}  (x_ix_j^{-1})^{\lfloor a \rfloor} (\beta x_i)^{\lceil a \rceil - \lfloor a \rfloor} = \prod_{i=j+1}^n (1+\beta x_i)\sum_{a\in\NN}  (x_ix_j^{-1})^{a}.
\]
The value $\lfloor a_i \rfloor$ contributes to $x_i x_j^{-1}$ and gives the denominator (expanded as a formal power series).
Finally, we have $a_i \notin \ZZ$ if and only if $\lceil a_i \rceil - \lfloor a_i \rfloor = 1$ if and only if we take   $\beta x_i$ in the numerator.
\end{proof}

Our vector description is motivated by the direct limit.
We now formally state this connection.

\begin{proposition}\label{vcrys-prop}
For all $j \in [n]$, there are $\sqgln$-crystal isomorphisms
\[
\VCrysR{j} \iso \varinjlim_{k\to\infty}\Bigl( \cT_{-k\e_j}\otimes \restrictF{[j,n]}\bigl(\VCrys{k} \bigr) \Bigr)
\iso \varinjlim_{k\to\infty} \Bigl(\cT_{-k\e_j}\otimes \restrictF{[j,n]} \bigl(\SetTab_n(k\Lambda_1)\bigr)\Bigr).
\]
Moreover, $\VCrysR{j}$ is an upper regular crystal.
\end{proposition}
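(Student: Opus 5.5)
The second isomorphism is the easiest, so I will do it first. By Proposition~\ref{prop:single_row_crystal}, $\Theta_k$ is a crystal isomorphism $\SetTab_n(k\Lambda_1)\to\VCrys{k}$. The restriction functor $\restrictF{[j,n]}$ acts on morphisms by restriction, and so do tensoring with $\cT_{-k\e_j}$ and passing to the limit. Hence $\Theta_k$ induces isomorphisms $\cT_{-k\e_j}\otimes\restrictF{[j,n]}(\SetTab_n(k\Lambda_1))\iso\cT_{-k\e_j}\otimes\restrictF{[j,n]}(\VCrys{k})$. These commute with the transition maps: on tableaux the transition map is ``insert $m$ boxes $\{j\}$ at the left of the row,'' the $j$-shifted analogue of Lemma~\ref{lemma:single_row_embedding}, and on vectors it is $\bfa\mapsto\bfa+m\e_j$, as in \eqref{lc-psi-eq}. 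Compatible isomorphisms of directed systems give an isomorphism of the limits.

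Before that, I must check that these are directed systems of crystal embeddings. Elements of $\restrictF{[j,n]}(\VCrys{k})$ have $a_1=\dots=a_{j-1}=0$. The minimality condition in $\VCrys{k}$ then forces $a_j\in\ZZ$ whenever $a_j\neq0$, and $a_j=k-\sum_{m>j}\lfloor a_m\rfloor$. So an element is determined by $\overline{\bfa}=(a_{j+1},\dots,a_n)$ with $\sum_{m>j}\lfloor a_m\rfloor\le k$. For $i\ge j+1$ the operators do not involve $a_j$, except that $f_j$ requires $a_j>0$. The restricted crystal is upper regular (each $e_i$ only raises $a_j$), so Lemma~\ref{lemma:lower_embed_check} applies once we know the restricted crystal is lower generated by $t_{-k\e_j}\otimes k\e_j$. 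I expect to get this generation from the shifted version of Theorem~\ref{thm:crystal_SVT}, or to avoid it by checking directly that $\bfa\mapsto\bfa+m\e_j$ commutes with every $e_i$ and $f_i$ whenever the output is nonzero. In the $j$-shifted tableau picture, that direct check is exactly the argument of Lemma~\ref{lemma:single_row_embedding}, since $\varepsilon_i$ of a row of $\{j\}$'s is $0$.

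Next comes the limit. The limit set is the union over $k$ of the sets $\{\overline{\bfa}:\sum\lfloor a_m\rfloor\le k\}$, which is all of $\HH_{\geq0}^{n-j}=\VCrysR{j}$. I then compute the structure, using the analogue of \eqref{raising-limit-eq} and \eqref{string-Tcol-eq}.
\begin{itemize}
\item For $i<j$, all restricted statistics are $-\infty$, which is case (a) of Definition~\ref{def:vec_crystal2}.
\item For $i>j$, the formulas of Definition~\ref{def:vec_crystal} do not involve $a_j$, giving case (b).
\item For $i=j$, the representative has $a_j$ a positive integer for $k$ large. Definition~\ref{def:vec_crystal} then gives $e_j\overline{\bfa}=\overline{\bfa}-\tfrac12\e_{j+1}$ when $a_{j+1}>0$ (from its two cases $a_{j+1}\in\ZZ_{>0}$ and $a_{j+1}\notin\ZZ$ with $a_j>0$, after discarding the $a_j$ component, which is adjusted by the limit). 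It also gives $f_j\overline{\bfa}=\overline{\bfa}+\tfrac12\e_{j+1}$ and $\varepsilon_j=a_{j+1}$.
\item The formula for $\varphi_j$ follows by tensoring with $\cT_{-k\e_j}$: $\varphi_j$ becomes the unshifted $\varphi_j$ minus $k$, namely $\lceil a_j\rceil$ (or $\lceil a_j\rceil-\tfrac12$) minus $k$. Substituting $a_j=k-\sum_{m>j}\lfloor a_m\rfloor$ gives $-a_{j+1}-\sum_{m\ge j+2}\lfloor a_m\rfloor$, where the half-integer case accounts for $\lfloor a_{j+1}\rfloor$ versus $a_{j+1}$.
\item The weight is $\weight(\bfa)-k\e_j$. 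Combined with $a_j=k-\sum\lfloor a_m\rfloor$ and the $\e_0$ contributions, this gives the stated weight.
\end{itemize}

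Finally, upper regularity. Each term $\cT_{-k\e_j}\otimes\restrictF{[j,n]}(\VCrys{k})$ is upper regular by Lemma~\ref{lemma:T_upper_prod} and Proposition~\ref{prop:regular_tensor_closed}, since restriction preserves upper regularity. The $e_i$ and $\varepsilon_i$ are computed in any representative, so the limit inherits property \eqref{eq:URalt}. Alternatively, check it directly: $\varepsilon_j=a_{j+1}\ge0$, and it equals $0$ exactly when $e_j=\zero$. I expect the main obstacle to be the bookkeeping for $\varphi_j$ and the weight, specifically matching the half-integer cases.
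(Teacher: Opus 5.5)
Your proposal is correct and takes the same route as the paper. The paper's proof only asserts four things: the limit is in bijection with $\VCrysR{j}$; this bijection is a crystal isomorphism by the direct-limit and tensor-product rules; the second isomorphism comes from Proposition~\ref{prop:single_row_crystal}; and upper regularity follows as in Corollary~\ref{cor:inf_upper_reg}. You supply the case checks behind these, and they do work out. For example, $a_{j+1}\notin\ZZ$ forces $a_j>0$ in every representative, so $\varepsilon_j=a_{j+1}$ and $\varphi_j$ are independent of $k$.

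Two minor caveats. First, ``upper regular'' (for the restricted crystals and for $\VCrysR{j}$) can only mean with respect to the indices $i\ge j$, since $\varepsilon_i=-\infty$ for $i<j$. That property comes from the regularity of $\VCrys{k}$, not from your parenthetical about $e_i$ raising $a_j$. Second, for the weight check use \eqref{eq:explicit_tableau_weight} rather than Definition~\ref{def:vec_crystal}. The $\e_0$-term there has a sign slip and should be $-(\lceil a_i\rceil-\lfloor a_i\rfloor)\e_0$; with that correction your computation matches Definition~\ref{def:vec_crystal2}.
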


\begin{proof}
From the preceding discussion, the elements of the first direct limit are naturally in bijection with $\VCrysR{j}$.
This bijection is a crystal isomorphism by the definition of direct limits and the tensor product for $\sqgln$-crystals.
The second isomorphism then holds by Proposition~\ref{prop:single_row_crystal}.
The proof that $\VCrysR{j}$ is upper regular is similar to Corollary~\ref{cor:inf_upper_reg} (and is a special case of Remark~\ref{rem:sub_direct_limits}).
\end{proof}

Alternatively, we could derive Proposition~\ref{vcrys-prop} directly from the marginally large tableau model, where we can have any number of entries $i$ and each $i > j$ could share a box with an $i - 1$ or not.

We now combine the previous result with our technique of restricting to entries at least $i$ in row $i$ as in the proof of Theorem~\ref{thm:directed_system}.
To avoid technical issues involving empty tensor products, we assume $n>1$ for the next theorem, which is one of our main results of this section.

\begin{theorem}[BZL word Lusztig-type parameterization]
\label{BZL-cor}
There is a crystal isomorphism
\be
\label{eq:Lusztig_param}
\SetTab_n(\infty) \iso \VCrysR{n-1} \otimes \cdots \otimes \VCrysR{2} \otimes \VCrysR{1}
\ee
that maps $u_{\infty} \mapsto (0) \otimes (0, 0) \otimes \cdots \otimes (0, \dotsc, 0)$.
\end{theorem}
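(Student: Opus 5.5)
The plan is to pass the row decomposition of Corollary~\ref{cor:row_restriction} through the direct limit, one row at a time. Since marginally large tableaux have $\lambda_n = 0$, it suffices to take $\lambda$ with $\ell(\lambda) \leq n-1$ in \eqref{eq:direct_limit_def}. For such $\lambda = \sum_{j=1}^{n-1}\lambda_j\e_j$ we have $-\lambda = -\sum_j \lambda_j \e_j$, so the one-element crystal splits as $\cT_{-\lambda} \iso \cT_{-\lambda_{n-1}\e_{n-1}} \otimes \cdots \otimes \cT_{-\lambda_1\e_1}$. Each $\cT$ factor has $\varepsilon_i = \varphi_i = -\infty$, so it can be commuted past the row factors up to isomorphism. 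Concretely, I would show that
\[
\cT_{-\lambda}\otimes\SetTab_n(\lambda) \hookrightarrow \Bigl(\cT_{-\lambda_{n-1}\e_{n-1}}\otimes \restrictF{[n-1,n]}\bigl(\SetTab_n(\lambda_{n-1}\Lambda_1)\bigr)\Bigr)\otimes\cdots\otimes\Bigl(\cT_{-\lambda_1\e_1}\otimes \restrictF{[1,n]}\bigl(\SetTab_n(\lambda_1\Lambda_1)\bigr)\Bigr)
\]
is a crystal embedding. To do this, I would compare the tensor product rule on both sides. Inserting $\cT_\mu$ factors only shifts the quantities $a_{im}$ in the proof of Proposition~\ref{prop:monoidal_cat} by $\langle\mu,\alpha_i^\vee\rangle$, and it sets the factor's own $a_{im}$ to $-\infty$. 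I expect this to be harmless, but it has to be checked.

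Next I would pass to the limit. The transition maps $\Psi_{\lambda,\mu}$ act row by row: they insert $\mu_j$ copies of $\{j\}$ at the start of row $j$. This is exactly the $j$-shifted single-row map from Lemma~\ref{lemma:single_row_embedding}, which is how Theorem~\ref{thm:directed_system} was proved. So the embeddings above are compatible with the directed system and with the product of the single-row systems. Taking direct limits, and using Proposition~\ref{vcrys-prop} for each factor $\varinjlim_k \cT_{-k\e_j}\otimes \restrictF{[j,n]}(\SetTab_n(k\Lambda_1)) \iso \VCrysR{j}$, gives a crystal embedding
\[
\SetTab_n(\infty)\hookrightarrow \VCrysR{n-1}\otimes\cdots\otimes\VCrysR{1}.
\]
Here I use that direct limits commute with finite tensor products, because the crystal structure of a tensor product depends only on the factorwise data. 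The embedding sends $u_\infty$ to the zero vectors: row $j$ of $u_\lambda$ is all $\{j\}$, so its restricted vector is $(0,\dots,0)$ in coordinates $j+1,\dots,n$.

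The step I expect to be the main obstacle is surjectivity, since the image a priori consists only of row tuples that stack into a semistandard tableau. I would handle it through the marginally large model of Proposition~\ref{mala-lem}. Given arbitrary $(\overline{\bfa}^{(n-1)},\dots,\overline{\bfa}^{(1)})$, choose $\lambda$ marginally large. That is, set $\lambda_{n-1}$ large enough, and then choose each $\lambda_j$ so that row $j$ has exactly $\lambda_{j+1}+1$ boxes equal to $\{j\}$. With these padding counts, every box in row $j$ containing an entry greater than $j$ lies strictly to the right of all boxes of row $j+1$, so the column-strictness conditions reduce to comparisons against entries $\{j\}$ and hold automatically. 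This gives a bijection between marginally large tableaux and arbitrary tuples of vectors, hence a bijective embedding, i.e.\ an isomorphism.

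As a sanity check, Proposition~\ref{prop:infinite_row_char} then gives the product character $\prod_{j}\prod_{i>j}\frac{1+\beta x_i}{1-x_ix_j^{-1}}$, which matches the character formula announced in the introduction.
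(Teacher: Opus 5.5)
Your proof is correct and follows the paper's route: the paper also passes the row decomposition of Corollary~\ref{cor:row_restriction} through the direct limit and identifies each row limit with $\VCrysR{j}$ via Proposition~\ref{vcrys-prop}, and your marginally-large surjectivity argument spells out a step the paper leaves implicit. One correction: your heuristic that $\cT_\mu$ commutes past any factor is false in general ($\cT_\lambda\otimes\one\not\iso\one\otimes\cT_\lambda\iso\cR_\lambda$ whenever some $\langle\lambda,\alpha_i^\vee\rangle\neq0$), but the check you defer does go through here, because $\langle\e_{j'},\alpha_i^\vee\rangle\neq 0$ only for $i\in\{j'-1,j'\}$, and every restricted row $R_j$ that $\cT_{-\lambda_{j'}\e_{j'}}$ is moved past has $j>j'$, hence $\varepsilon_i=\varphi_i=-\infty$ for those $i$.
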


\begin{proof}
The result follows from Proposition~\ref{vcrys-prop} and the extension of Corollary~\ref{cor:row_restriction} to the case $\SetTab_n(\infty)$ by the direct limit construction.
\end{proof}

When $n=1$, the isomorphism \eqref{eq:Lusztig_param} still holds as long as we interpret the right side as $\one$.

The preceding theorem tells us that 
we can represent a marginally large tableau as a sequence
\[
[T] \mapsto (a_{n-1,n}) \otimes (a_{n-2,n-1}, a_{n-2,n}) \otimes \cdots \otimes (a_{1,2}, \dotsc, a_{1,n})
\longleftrightarrow
[a_{ij}]_{i,j=1}^n,
\]
where for each $1\leq i<j\leq n$ we define $a_{ij} := m'_{j}(R)$ where $R$ is the $i$th row of $T$.
We can convert this data to a strictly upper triangular matrix by setting the entries $a_{ij} = 0$ for all $i \geq j$.
For example, when $n = 4$, we have
\[
(f) \otimes (d,e) \otimes (a,b,c)
\longleftrightarrow
\begin{bmatrix}
0 & a & b & c \\
0 & 0 & d & e \\
0 & 0 & 0 & f \\
0 & 0 & 0 & 0
\end{bmatrix}.
\]
Compare Figure~\ref{fig:gl3mlt} with Figure~\ref{fig:gl3vec} for an example of this parameterization of $\SetTab_n(\infty)$.

Theorem~\ref{BZL-cor}   allows us to easily compute the character of $\SetTab_n(\infty)$ using Proposition~\ref{prop:infinite_row_char}.

\begin{figure}[t]
\[
\newcommand{\hf}[1]{\frac{#1}{2}}
    \begin{tikzpicture}[xscale=2.1, yscale=2.3,>=latex] 
\node at (0,0) (A1) {$(0, 0, 0)$};
\node at (-1,-1) (A2) {$(\hf1,0,0)$};
\node at (0,-1) (B2) {$(0,0,\hf1)$};
\node at (1,-1) (C2) {$(0,\hf1,0)$};
\node at (-2,-2) (A3)  {$(1,0,0)$};
\node at (-1,-2) (B3) {$(\hf1,\hf1,0)$};
\node at (0,-2) (C3) {$(\hf1,0,\hf1)$};
\node at (1,-2) (D3) {$(0,\hf1,\hf1)$};
\node at (2,-2) (E3) {$(0,1,0)$};
\node at (-2,-3) (A4) {$(1,\hf1,0)$};
\node at (-3,-3) (B4) {$(\hf3,0,0)$};
\node at (-1,-3) (C4) {$(\hf1,1,0)$};
\node at (0,-3) (D4) {$(\hf1,\hf1,\hf1)$};
\node at (1,-3) (H4) {$(1,0,\hf1)$};  
\node at (2,-3) (E4) {$(0,1,\hf1)$};
\node at (3,-3) (F4) {$(0,\hf3,0)$};
\node at (-3.8,-3.8) (C5) {$\vdots$};
\node at (-3,-3.8) (B5) {$\vdots$};
\node at (-2,-3.8) (A5) {$\vdots$};
\node at (-1.5,-3.8) (D5) {$\vdots$};
\node at (-0.5,-3.8) (E5) {$\vdots$};
\node at (0.5,-3.8) (F5) {$\vdots$};
\node at (1.5,-3.8) (J5) {$\vdots$};  
\node at (2,-3.8) (G5) {$\vdots$};
\node at (3,-3.8) (H5) {$\vdots$};
\node at (3.8,-3.8) (I5) {$\vdots$};
\draw[->,thick,red]  (A1) -- (A2) node[midway,above left,scale=0.75] {$2$};
\draw[->,thick,blue]  (A1) -- (C2) node[midway,above right,scale=0.75] {$1$};
\draw[->,thick,red]  (A2) -- (A3) node[midway,above left,scale=0.75] {$2$};
\draw[->,thick,blue]  (A2) -- (B3) node[midway,left,scale=0.75] {$1$};
\draw[->,thick,red]  (B2) -- (C3) node[midway,right,scale=0.75] {$2$};
\draw[->,thick,blue]  ([xshift=4pt]B2.south) -- ([xshift=-8pt]D3.north) node[midway,below left,scale=0.75] {$1$};
\draw[<-,thick,red]  ([xshift=8pt]B2.south) -- ([xshift=-4pt]D3.north) node[midway,above right,scale=0.75] {$2$};
\draw[->,thick,red]  (C2) -- (D3) node[midway,right,scale=0.75] {$2$};
\draw[->,thick,blue]  (C2) -- (E3) node[midway,above right,scale=0.75] {$1$};
\draw[->,thick,blue]  (A3) -- (A4) node[midway,above left,scale=0.75] {$1$};
\draw[->,thick,red]  (A3) -- (B4) node[midway,above left,scale=0.75] {$2$};
\draw[->,thick,blue]  (B3) -- (C4) node[midway,left,scale=0.75] {$1$};
\draw[->,thick,red]  (B3) -- (D4) node[midway,above right,scale=0.75] {$2$};
\draw[->,thick,blue]  (C3) -- (D4) node[midway,left,scale=0.75] {$1$};
\draw[->,thick,blue]  (D3) -- (E4) node[midway,above right,scale=0.75] {$1$};
\draw[->,thick,red]  (E3) -- (E4) node[midway,above left,scale=0.75] {$2$};
\draw[->,thick,blue]  (E3) -- (F4) node[midway,above right,scale=0.75] {$1$};
\draw[->,thick,red]  (C3) -- (H4) node[midway,above right,scale=0.75] {$2$};  
\draw[->,thick,blue]  (A4) -- (A5) node[midway,above left,scale=0.75] {$1$};
\draw[->,thick,blue]  (B4) -- (B5) node[midway,above left,scale=0.75] {$1$};
\draw[->,thick,blue]  (C4) -- (D5) node[midway,above left,scale=0.75] {$1$};
\draw[->,thick,blue]  (D4) -- (E5) node[midway,above left,scale=0.75] {$1$};
\draw[->,thick,blue]  (E4) -- (H5) node[midway,above right,scale=0.75] {$1$};
\draw[->,thick,blue]  (F4) -- (I5) node[midway,above right,scale=0.75] {$1$};
\draw[->,thick,blue]  (H4) -- (F5) node[midway,above left,scale=0.75] {$1$};  
\draw[->,thick,red]  (A4) -- (B5) node[midway,above left,scale=0.75] {$2$};
\draw[->,thick,red]  (B4) -- (C5) node[midway,above left,scale=0.75] {$2$};
\draw[->,thick,red]  (C4) -- (E5) node[midway,above right,scale=0.75] {$2$};
\draw[->,thick,red]  (D4) -- (F5) node[midway,above right,scale=0.75] {$2$};
\draw[->,thick,red]  (E4) -- (G5) node[midway,right,scale=0.75] {$2$};
\draw[->,thick,red]  (F4) -- (H5) node[midway,right,scale=0.75] {$2$};
\draw[->,thick,red]  (H4) -- (J5) node[midway,right,scale=0.75] {$2$};  
     \end{tikzpicture}
\]
\caption{The $\sqrt{\gl_3}$-crystal $\SetTab_3(\infty)$ up to depth at least 3 after applying the isomorphism~\eqref{eq:Lusztig_param} and writing $(a) \otimes (b, c) = (a,b,c)$ for simplicity.}
\label{fig:gl3vec}
\end{figure}

\begin{corollary}
\label{cor:infchar}
The character of the direct limit is
\[
\ch\bigl( \SetTab_n(\infty) \bigr) = \prod_{1\leq i < j\leq n} \frac{1}{1 - x_j x_i^{-1}} \prod_{i=1}^n (1 + \beta x_i)^{i-1} \in \ZZ\llbracket\Lambda\rrbracket_<.
\]
\end{corollary}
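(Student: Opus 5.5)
The plan is to compute the character directly from the isomorphism of Theorem~\ref{BZL-cor}, since characters are invariant under crystal isomorphism (isomorphisms preserve weights). The first step is to check that the character is multiplicative for this tensor product. By Theorem~\ref{BZL-cor},
\[
\ch\bigl(\SetTab_n(\infty)\bigr) = \ch\bigl(\VCrysR{n-1}\otimes \cdots \otimes \VCrysR{1}\bigr).
\]
Weights add in a tensor product, and each weight space is finite: every factor has character in $\ZZ\llbracket\Lambda\rrbracket_<$, and the weights of the factors lie in translates of the negative root cone $Q^-$, up to the $\e_0$ and $\e_i$ adjustments. So the character of the tensor product is the product $\prod_{j=1}^{n-1}\ch(\VCrysR{j})$, computed in the ring $\ZZ\llbracket\Lambda\rrbracket_<$.

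The one thing to justify here is finiteness of the weight fibers. I would do this directly. An element is a tuple $(a_{ij})_{i<j}$ of half-integers. The $x$-part of its weight is $\sum_{i<j}\lfloor a_{ij}\rfloor(\e_j-\e_i)$ plus $\sum_{i<j}(\lceil a_{ij}\rceil-\lfloor a_{ij}\rfloor)\e_j$. The coefficient of $\e_0$ is the number of non-integral entries. Fixing the weight therefore fixes a vector in the positive root cone, which has only finitely many decompositions into the entries $\lfloor a_{ij}\rfloor$, together with finitely many parity choices. This is the only mildly delicate point, and it is routine.

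Next, substitute Proposition~\ref{prop:infinite_row_char}, which gives
\[
\prod_{j=1}^{n-1}\prod_{i=j+1}^n \frac{1+\beta x_i}{1-x_ix_j^{-1}}.
\]
Renaming indices, this equals $\prod_{1\le i<j\le n}\frac{1+\beta x_j}{1-x_jx_i^{-1}}$. Each numerator factor $1+\beta x_j$ appears once for every $i<j$, that is, $j-1$ times. So the numerators collect to $\prod_{j=1}^n(1+\beta x_j)^{j-1}$, which is the stated formula. The $j=n$ factor $\VCrysR{n}$ would be trivial anyway, so stopping at $n-1$ factors causes no discrepancy.
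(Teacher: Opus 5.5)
Your proposal is correct and follows the paper's argument: transport the character through the isomorphism of Theorem~\ref{BZL-cor}, multiply the row characters from Proposition~\ref{prop:infinite_row_char}, and collect the numerators, where each $1+\beta x_j$ appears $j-1$ times. Your finiteness check on the weight fibers is a useful addition that the paper leaves implicit, with two small corrections: the vectors $\e_j-\e_i$ for $i<j$ span the \emph{negative} root cone, and the $\e_0$-coefficient is \emph{minus} the number of non-integral entries.
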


We know from \eqref{cB-SetTab-eq} that the character of the $\gl_n$-crystal $\cB(\infty) \iso \defectF{2}\bigl(\SetTab_n(\infty)\bigr)$ is obtained by setting $\beta = 0$ in $\ch\bigl(\SetTab_n(\infty)\bigr)$.
Thus, 
we have the simple factorization
\be
\ch\bigl(\SetTab_n(\infty)\bigr) = \ch\bigl(\cB(\infty)\bigr)  \prod_{i=1}^n (1 - \beta x_i)^{i-1}.
\ee
By contrast, the characters  $\ch\bigl(\SetTab_n(\lambda)\bigr) = G_\lambda(x;\beta)$ for $\lambda \in P^+$ are not divisible by the characters $s_{\lambda}$ of the Kashiwara crystals of semistandard Young tableaux given by $\defectF{2}(\SetTab_n(\lambda))$.

We also have a conjectural description of the Demazure crystal.

\begin{conjecture}
\label{conj:demazure_char}
For each $w \in S_n$, there exists a subset of positive roots $\Omega_w \subseteq \Phi^+$ such that the character of the direct limit Demazure crystal for $w$ is given by
\[
\ch\bigl( \SetTab_n(\infty)_w \bigr) = \prod_{\e_i - \e_j \in \Omega_w} \frac{1 + \beta x_j}{1 - x_j x_i^{-1}} \in \ZZ\llbracket\Lambda\rrbracket_<.
\]
\end{conjecture}

When $w$ is a trailing word of the BZL word (or restricted to $[k,n]$),   we can deduce Conjecture~\ref{conj:demazure_char} from Proposition~\ref{prop:infinite_row_char} and the simple Demazure crystal structure on $\VCrysR{k}$.
For the general case, we expect a proof of Conjecture~\ref{conj:demazure_char} to reduce to the $\gl_n$ case by using $\defectF{2}$ (noting that the character of an $i$-string for $\gl_n$-crystals versus $\sqgln$-crystals differs by a factor of $1 + \beta x_{i+1}$) and Corollary~\ref{cor:string_decomp}.
We believe the $\gl_n$ case of Conjecture~\ref{conj:demazure_char} (that is, $\beta = 0$) is already known, but we are unable to find an explicit statement in the literature.
A possible alternative proof of Conjecture~\ref{conj:demazure_char} is to directly use Corollary~\ref{cor:string_decomp} and a limiting argument for the Lascoux polynomials.

\begin{remark}
Our way of parameterizing a one-row set-valued tableau $T$ by  $(m'_1,\dots,m'_n)$ is related by an invertible piecewise-linear change of coordinates to the so-called \defn{marked Gelfand--Tsetlin (GT) pattern} description of set-valued tableaux from, e.g.,~\cite{MPS21}.
Namely, we build the marked GT pattern diagonal-by-diagonal by considering some $T \in \SetTab_n(k\Lambda_1)$ with associated parameters $m_i$, $b_i$, and $m'_i$ and using the (invertible) change of coordinates
\begin{equation}
\label{eq:GT_pattern_map}
s_i := \sum_{j=1}^{i-1} \(m_j - b_j\) + m_i - \tfrac{1}{2} b_i = \sum_{j=1}^{i-1} \lfloor m'_j \rfloor + m'_i,
\end{equation}
where $\lfloor s_i \rfloor$ is the length of the row of $T$ if we remove all entries $> i$, and $s_i \in \ZZ$ if and only if the corresponding entry is not marked.

Also, one can obtain alternate versions of the crystal structures on vectors described in this section by replacing the boolean values $b_i$ in our encoding of one-row set-valued tableaux by
\[
b^{\dagger}_i := \begin{cases} 1 & \text{if there is a \emph{rightmost} entry $A$ with $i \in A$ and } \max A \neq i, \\ 0 & \text{otherwise.} \end{cases}
\]
This would lead to a natural definition of ``shifted'' marked GT patterns.
\end{remark}

\subsection{Looped path crystals}

Now we construct our $\sqgln$-crystal analog of the Kashiwara embedding.
We start with a variant of the $j$-th elementary crystal $\cE_j$ from Example~\ref{ex:elem_crystal}.

\begin{definition}[Looped path crystal]
\label{def:looped_path}
Fix $j \in [n-1]$.
The \defn{looped $j$-path crystal} $\sqrtE_j$ is the set of symbols $\{ \uj{j}{m} \mid m \in \frac{1}{2}\ZZ\}$ with the following $\sqgln$-crystal structure.
The weight function is 
\[
\weight(\uj{j}{m}) = \lfloor m \rfloor \alpha_j + (\lceil m \rceil - \lfloor m \rfloor) (\e_j - \e_0) \in \Lambda = \ZZ\oplus \ZZ^n.
\]
For $i \in [n-1] \setminus \{j, j+1\}$, we have $\varepsilon_i(\uj{j}{m}) = \varphi_i(\uj{j}{m}) = -\infty$ and $e_i\uj{j}{m} = f_i\uj{j}{m} = \zero$.
We let
\[ 
\ba
e_j \uj{j}{m} &=\uj{j}{m+\tfrac{1}{2}},
\\
f_j \uj{j}{m} &=\uj{j}{m-\tfrac{1}{2}},
\ea
\qquand
\ba
\varepsilon_j(\uj{j}{m}) &= -m,  \\
\varphi_j(\uj{j}{m}) &= m.
\ea
\]
Finally, when $j < n - 1$, we define
\[
\ba
e_{j+1} \uj{j}{m} &=\begin{cases}
\uj{j}{m-\tfrac{1}{2}} &\text{if }m \in \ZZ\\
\zero &\text{if }m \notin \ZZ,
\end{cases} 
\\
f_{j+1} \uj{j}{m} &=\begin{cases}
\uj{j}{m+\tfrac{1}{2}}&\text{if }m\notin\ZZ\\
\zero &\text{if }m \in \ZZ,
\end{cases}
\ea
\qquand
\ba
\varepsilon_{j+1}(\uj{j}{m}) &= m,\\
\varphi_{j+1}(  \uj{j}{m}) &=\lceil m \rceil - m \in \left\{0,\tfrac{1}{2}\right\}.
\ea
\]
\end{definition}

When $j \in[n-2]$,
the crystal graph of $\sqrtE_j$ is the infinite ``path-with-loops'' shown as
\[
    \begin{tikzpicture}[xscale=2, yscale=1.5,>=latex,baseline=(z.base)]
    \node at (0,0.0) (z) {};
      \node at (0,0) (left) {$\cdots$};
      \node at (1,0) (B) {$\uj{j}{1}$};
      \node at (2,0) (C) {$\uj{j}{\tfrac{1}{2}}$};
      \node at (3,0) (D) {$\uj{j}{0}$};
      \node at (4,0) (E) {$\uj{j}{-\tfrac{1}{2}}$};
      \node at (5,0) (right) {$\cdots$};
      \draw[->,thick,blue]  (left) -- (B) node[midway,above,scale=0.75] {$j$};
      
      \draw[->,thick,blue]  ([yshift=1.5pt]B.east) -- ([yshift=1.5pt]C.west) node[midway,above,scale=0.75] {$j$};
      \draw[<-,thick,red]  ([yshift=-1.5pt]B.east) -- ([yshift=-1.5pt]C.west) node[midway,below,scale=0.75] {$j+1$};
      
      \draw[->,thick,blue]  (C) -- (D) node[midway,above,scale=0.75] {$j$};

      \draw[->,thick,blue]  ([yshift=1.5pt]D.east) -- ([yshift=1.5pt]E.west) node[midway,above,scale=0.75] {$j$};
      \draw[<-,thick,red]  ([yshift=-1.5pt]D.east) -- ([yshift=-1.5pt]E.west) node[midway,below,scale=0.75] {$j+1$};
      
      \draw[->,thick,blue]  (E) -- (right) node[midway,above,scale=0.75] {$j$};
      
     \end{tikzpicture}\raisebox{-3pt}{.}
\]
When $j = n-1$, the crystal graph is a simple infinite path with only $\xrightarrow {\ n-1 \ }$ arrows.
Notice that 
\be\label{path-level-eq}
\defect(\uj{j}{m}) = \langle \weight(\uj{j}{m}), -\e_0\rangle
=
\begin{cases} 0 & \text{if }m \in \ZZ, \\ 1&\text{if }m \notin \ZZ.
\end{cases}
\ee
The following result is straightforward.

\begin{proposition}
Each $\sqrtE_j$ is a $\sqgln$-crystal in $\sK_n^+$ with character
\[
\ch(\sqrtE_j) =
 (1+\beta x_{j}) \sum_{m \in \ZZ} x_j^mx_{j+1}^{-m} \in \ZZ\llbracket\Lambda\rrbracket.
\]
\end{proposition}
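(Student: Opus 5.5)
The plan is to verify the axioms of Definition~\ref{square-root-def} directly, one index $i\in[n-1]$ at a time, then check the defect conditions defining $\sK_+$, and finally sum the weights over $m\in\frac12\ZZ$.

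\textbf{Indices $i\notin\{j,j+1\}$.} Here both statistics equal $-\infty$ and both operators are $\zero$, so axioms (1)--(3) hold trivially.

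\textbf{Index $i=j$.} The $j$-arrows form a two-sided infinite path with $e_j\uj{j}{m}=\uj{j}{m+\frac12}$ and $f_j$ inverse to it. The statistics $\varepsilon_j=-m$ and $\varphi_j=m$ change by $\mp\frac12$ as required.
\begin{itemize}
\item For axiom (2), note that $\langle \weight(\uj{j}{m}),\alpha_j\rangle = 2\lfloor m\rfloor + (\lceil m\rceil-\lfloor m\rfloor) = 2m$, using $\langle \e_j-\e_0,\e_j-\e_{j+1}\rangle=1$. This equals $\varphi_j-\varepsilon_j$.
\item For the weight change in axiom (3), take $b=\uj{j}{m}$ and $c=e_jb$. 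If $\varepsilon_j(b)=-m\in\ZZ$, then $m\in\ZZ$, and passing to $m+\frac12$ adds $\e_j-\e_0$. If $m\notin\ZZ$, passing to $m+\frac12$ changes $\lfloor m\rfloor$ by $+1$ and the ceiling term from $1$ to $0$. This adds $\alpha_j-(\e_j-\e_0)=\e_0-\e_{j+1}$, as required.
\end{itemize}

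\textbf{Index $i=j+1$ (when $j<n-1$).} Now $e_{j+1}$ sends $\uj{j}{m}\mapsto\uj{j}{m-\frac12}$ only when $m\in\ZZ$, and $f_{j+1}$ is its inverse on half-integers $m\notin\ZZ$. So axiom (A1) holds.
\begin{itemize}
\item For axiom (2), compute $\langle\weight,\e_{j+1}-\e_{j+2}\rangle = -\lfloor m\rfloor$. It remains to check $\lceil m\rceil-m-m=-\lfloor m\rfloor$ in both parity cases: for $m\in\ZZ$ this is $-2m$ against $-m$, which fails at first sight.
\item So I will recheck carefully. With $\alpha_j=\e_j-\e_{j+1}$ we have $\langle\alpha_j,\e_{j+1}-\e_{j+2}\rangle=-1$, which gives $-\lfloor m\rfloor$. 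We need $\varphi_{j+1}-\varepsilon_{j+1}=\lceil m\rceil-2m$. For $m\in\ZZ$ this is $-m=-\lfloor m\rfloor$, which is fine. For $m=k+\frac12$ it gives $k+1-2k-1=-k$, which is also fine.
\item For axiom (3), if $m\in\ZZ$ then $\varepsilon_{j+1}=m\in\ZZ$, so the weight must increase by $\e_{j+1}-\e_0$. Going from $m$ to $m-\frac12$ lowers $\lfloor m\rfloor$ by one and turns on the ceiling term, giving a change of $-\alpha_j+\e_j-\e_0=\e_{j+1}-\e_0$, as needed.
\item The changes in $\varepsilon_{j+1}$ and $\varphi_{j+1}$ are $-\frac12$ and $+\frac12$, as required.
\end{itemize}

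\textbf{Membership in $\sK_+$ and the character.} By \eqref{path-level-eq}, every element has defect $0$ or $1$, so the crystal lies in $\sqrt{\sC}_+$. A defect-$0$ element has $m\in\ZZ$, and then $\varepsilon_j$ and $\varepsilon_{j+1}$ are integers while all other $\varepsilon_i$ equal $-\infty$. So the defining condition of $\sK_+$ holds. For the character, write $m=k$ or $m=k+\frac12$. The corresponding monomials are $x^{k\alpha_j}=x_j^kx_{j+1}^{-k}$ and $x^{k\alpha_j}\cdot\beta x_j$, since $x^{-\e_0}=\beta$. Summing over $k\in\ZZ$ gives the stated formula.

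I expect no real obstacle. The only delicate point is the bookkeeping of the asymmetric weight changes for $i=j+1$, carried out above.
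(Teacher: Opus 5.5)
Your direct, axiom-by-axiom verification is correct, and it is exactly the routine check the paper leaves to the reader when it calls this result straightforward. The only blemish is the false start in the first bullet for $i=j+1$: for $m\in\ZZ$ one has $\lceil m\rceil-2m=-m$, not $-2m$, so delete that bullet and keep the corrected computation that follows it.
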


\begin{remark}
\label{rem:square_func_elem}
Applying the functor $\defectF{2}$ to $\sqrtE_j$ yields a crystal that is closely related to the elementary crystal $\cE_j$ for $\gl_n$. 
The two crystals have the same crystal graph and weight map, but are only isomorphic when $j=n-1$.
When $j<n-1$,  crucially, they differ in the values of the statistics $\varepsilon_{j+1}$ and $\varphi_{j+1}$.
\end{remark}

We will use the following notation for brevity.
For any sequence $J = (j_k \in I)_{k=1}^{\ell}$, define
\be
\label{eq:path_seq_defn}
\sqrtE_J := \sqrtE_{j_1} \otimes \cdots \otimes \sqrtE_{j_{\ell}}
\qquand
\uj{J}{m_1, \dotsc, m_{\ell}} := \uj{j_1}{m_1} \otimes \cdots \otimes \uj{j_{\ell}}{m_{\ell}} \in \sqrtE_J.
\ee
We also set $\sqrtE_{j,n} := \sqrtE_{(j, j+1, \dotsc, n-1)}$
and
$\uj{j,n}{m_1, \dotsc, m_{n-j}} := \uj{(j, j+1, \dotsc, n-1)}{m_1, \dotsc, m_{n-j}}$.

\begin{remark}
\label{rem:braid_rels}
For classical $\gl_n$-crystals, one has $\cE_{i_1}\otimes\cdots \otimes \cE_{i_k}\iso \cE_{j_1}\otimes\cdots \otimes \cE_{j_k}$ whenever $s_{i_1}\cdots s_{i_k} = s_{j_1}\cdots s_{j_k}$ are reduced expressions for the same element of the Weyl group~\cite[Props.~2.2.8 and~3.2.5]{Kashiwara93} (see also, e.g.,~\cite[Prop.~12.13]{BumpSchilling}).
By contrast, the only braid relations satisfied by tensor products of the crystals $\sqrtE_j$ are
\be
\sqrtE_j\otimes \sqrtE_k = \sqrtE_{(j,k)} \iso \sqrtE_{(k,j)} = \sqrtE_k \otimes \sqrtE_j
\qquad\text{ when } \abs{j - k} > 2.
\ee
In general, one has
$
 \sqrtE_{(j,j+2)} \not\iso \sqrtE_{(j+2,j)}
$
and
$
 \sqrtE_{(j,j+1,j)} \not\iso \sqrtE_{(j+1,j,j+1)}.
 $
The first non-isomorphism occurs because of discrepancies in the values of the statistics $\varepsilon_{j+1}$ and $\varphi_{j+1}$, despite there being a weight-preserving isomorphism between the respective crystal graphs.
\end{remark}

The following lemma is the main technical result of this section.
  
\begin{lemma}
\label{lemma:root_Kashiwara_embedding}
There exists a $\sqgln$-crystal embedding
\[
\VCrysR{k} \hookrightarrow \sqrtE_{k,n}
\qquad
\text{ sending }
\qquad
(0, \dotsc, 0) \mapsto \uj{k,n}{0,\dotsc,0}.
\]
\end{lemma}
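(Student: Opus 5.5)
The plan is to write down an explicit map, check that it is a weight-preserving injection, and then verify the crystal axioms directly on the tensor product. Weights suggest the formula. Write $\overline{\bfa}=(a_{k+1},\dots,a_n)\in\VCrysR{k}$ and set
\[
m_j(\overline{\bfa}) := -\Bigl(a_{j+1} + \sum_{i=j+2}^{n} \lfloor a_i \rfloor\Bigr) \in \tfrac12\ZZ_{\le 0}\qquad\text{for } j\in[k,n-1].
\]
The proposed embedding is $\Upsilon(\overline{\bfa}) := \uj{k,n}{m_k,\dots,m_{n-1}}$. This visibly sends $(0,\dots,0)\mapsto\uj{k,n}{0,\dots,0}$. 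It is injective, since $a_n=-m_{n-1}$, and then $a_{j+1}=-m_j-\sum_{i\ge j+2}\lfloor a_i\rfloor$ recovers the entries recursively from $j=n-1$ down to $j=k$.

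For the weights, I would use the identity: if $m=-c-\tfrac12$ with $c\in\NN$, then $\weight(\uj{j}{m}) = -(c+1)\alpha_j+\e_j-\e_0 = -c\,\alpha_j + \e_{j+1}-\e_0$. Thus the fractional part of $m_j$ contributes $\e_{j+1}-\e_0$, exactly matching the contribution of a non-integral $a_{j+1}$ in Definition~\ref{def:vec_crystal2}. The integral parts sum to $-\sum_j\bigl(\sum_{i>j}\lfloor a_i\rfloor\bigr)\alpha_j = \sum_i \lfloor a_i\rfloor(\e_i-\e_k)$, using $\e_i-\e_k=-(\alpha_k+\cdots+\alpha_{i-1})$ with the paper's sign conventions. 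Hence $\weight\circ\Upsilon=\weight$. As a consistency check, $\varphi_k(\uj{k}{m_k})=m_k$ agrees with the formula for $\varphi_k(\overline{\bfa})$ in Definition~\ref{def:vec_crystal2}(c). This is encouraging, because $\varphi_k$ of the whole tensor product should reduce to its first factor.

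Next comes the verification of the axioms. Lemma~\ref{lemma:lower_embed_check} is not available, because the looped path crystals are not upper regular. I would therefore check condition (M1) and the commutation with $e_i$ directly. Commutation with $f_i$ whenever $f_i\overline{\bfa}\ne\zero$ then follows from (A1) and injectivity.

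To compute on the tensor product, fix $i$ and use the description of $\varepsilon_i$ and $e_i$ on an $\ell$-fold tensor product from the proof of Proposition~\ref{prop:monoidal_cat}: $\varepsilon_i$ is the maximum of the quantities $a_{im}$, and $e_i$ acts on the last factor attaining it. Only the factors $\sqrtE_{i-1}$ and $\sqrtE_i$ have finite $\varepsilon_i$, so the computation reduces to a two-factor one. The other factors enter only through the weight shift $\langle \weight(\text{later factors}),\alpha_i^\vee\rangle$, which is expressible in terms of $\lfloor a_{i+1}\rfloor$ and $\lfloor a_{i+2}\rfloor$. For $i=k$ only $\sqrtE_k$ contributes, and both sides of the needed identities are then read off from part (c) of Definition~\ref{def:vec_crystal2}.

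The main obstacle is the generic case $i\in[k+1,n-1]$. There the factor $\uj{i-1}{m_{i-1}}$ contributes $\varepsilon_i=m_{i-1}$ through its loop structure, and $\uj{i}{m_i}$ contributes $-m_i$. One must case-split on whether $a_i$ and $a_{i+1}$ are integral and whether $a_i=0$, matching the four cases of \eqref{eq:vec_e} and \eqref{eq:vec_ep}. In each case the task is to confirm that the maximum is attained at the factor predicted by the vector rule. For instance, when $a_{i+1}\notin\ZZ$ and $a_i=0$, the operator $e_i$ should act on the $\sqrtE_{i-1}$ factor, realizing $\bfa+\tfrac12\e_i$. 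One must also confirm that the resulting shifts of $m_{i-1}$ and $m_i$ agree with recomputing the $m_j$ from $e_i\overline{\bfa}$. The tie-breaking at $a_i=0$ and the half-integer boundary are where I expect the argument to be most delicate. The remaining equality $\varphi_i\circ\Upsilon=\varphi_i$ then follows from (A2), since the weights agree.
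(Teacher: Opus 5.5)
Your proposal is correct. Your map is the paper's: your $m_j$ is exactly the paper's $v_j$ from Lemma~\ref{lemma:PCrys_structure2}. Both arguments also rest on the same two-factor computation on $\sqrtE_{i-1}\otimes\sqrtE_i$.

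The difference is in how you deal with $\sqrtE_{k,n}$ not being upper regular.
\begin{itemize}
\item The paper first tensors with $\one_{[k,n]}$, which makes the component $\PCrysR{k}$ of $\uj{k,n}{0,\dotsc,0}\otimes\varnothing$ upper regular. It then describes that component explicitly, including the membership criterion $v_i\le\lceil v_{i+1}\rceil$. Next it proves that $\Phi\colon\VCrysR{k}\to\PCrysR{k}$ is an isomorphism via Proposition~\ref{prop:morphism_by_ops}, checking only the $f_i$. Finally it discards the $\one$ factor.
\item You instead verify (M1)--(M3) directly in $\sqrtE_{k,n}$, and this works.
\end{itemize}

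To see that your route works, take $i\in[k+1,n-1]$. The two finite candidates in the tensor rule are $a_{i+1}$ (from $\sqrtE_i$) and $2a_{i+1}-\lfloor a_{i+1}\rfloor-a_i$ (from $\sqrtE_{i-1}$). Their maximum is exactly \eqref{eq:vec_ep}, and in particular it is nonnegative. The operator $e_i$ lands on $\sqrtE_{i-1}$ precisely when $a_{i+1}\notin\ZZ$ and $a_i=0$; ties go to the rightmost factor $\sqrtE_i$. This matches \eqref{eq:vec_e}.

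A small correction: the later factors contribute $\lceil m_{i+1}\rceil=-\sum_{j\ge i+2}\lfloor a_j\rfloor$. This is not a function of $\lfloor a_{i+1}\rfloor$ and $\lfloor a_{i+2}\rfloor$ alone, but it cancels out of the comparison.

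What each route gives you:
\begin{itemize}
\item Yours is more self-contained and avoids the somewhat informal step of dropping $\one$. It also makes explicit that the embedding is not strict. For example, when $a_{i+1}=0$, $e_i$ of the image is nonzero but leaves the image, which (M2) permits.
\item The paper's route produces, as a byproduct, the inequality description of the image and the realization of $\VCrysR{k}$ as a connected component of $\sqrtE_{k,n}\otimes\one_{[k,n]}$. These are exactly what are used later in Theorem~\ref{bzl-thm} and Corollary~\ref{cor:cutting_out}.
\end{itemize}
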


To prove Lemma~\ref{lemma:root_Kashiwara_embedding}, 
we examine the $\sqgln$-crystal structure on the codomain of the desired embedding.
We identify $\uj{k,n}{v_k, \dotsc, v_{n-1}}$ with the vector $(v_k, \dotsc, v_{n-1})$ with $v_j \in \frac{1}{2}\ZZ$, so we can evaluate expressions of the form
$
\uj{k,n}{v_k, \dotsc, v_{n-1}} + C \e_i = \uj{k,n}{v_k,\dotsc, v_i + C, \dotsc, v_{n-1}}.
$

Denote by $\one_{[k,n]} := \restrictF{[k,n]}(\one)$.
Let $\PCrysR{k}$ be the connected component of
\[
\vec{0} := \uj{k,n}{0,\dotsc,0} \otimes \varnothing \in \sqrtE_{k,n} \otimes \one_{[k,n]}.
 \]
We prove Lemma~\ref{lemma:root_Kashiwara_embedding} by showing there exists a $\sqgln$-crystal isomorphism
$
\VCrysR{k} \iso \PCrysR{k}
$ 
and then noting that removing the tensor factor $\one_{[k,n]}$ results in a crystal embedding.
To this end, we now describe explicitly the $\sqgln$-crystal structure on $\PCrysR{k}$.

\begin{lemma}
\label{lemma:PCrys_structure}
Fix an element
\[
\vec{v} = \uj{k,n}{v_k, \dotsc, v_{n-1}}\otimes \varnothing \in \sqrtE_{k,n}\otimes \one_{[k,n]},
\] 
and set $v_n=0$ and $v_j = -\infty = \lfloor -\infty \rfloor = \lceil -\infty \rceil$ for all $j < k$.
Then the following properties hold:
\ben

\item[(a)] 
The weight of $\vec{v}$ is
$
\weight(\vec{v}) = \sum_{i=k}^{n-1}\Bigl( \lceil v_i \rceil (\e_i-\e_0) + \lfloor v_i \rfloor (\e_0-\e_{i+1})\Bigr).
$

\item[(b)] If $i \in [k-1]$ then
$e_i \vec{v}=f_i\vec{v}=\zero$
and $ \varepsilon_i(\vec{v})=\varphi_i(\vec{v})=-\infty$.

 \item[(c)] We have $\vec{v}\in\PCrysR{k}$ if and only if $v_i \leq \lceil v_{i+1}\rceil$ for all $i \in [k,n-1]$. 
 
\item[(d)] Assume $\vec{v}\in\PCrysR{k}$.
Then for each $i \in [k,n-1]$ we have
\begin{subequations}
\begin{align}
\label{vec-e-eq}
e_i \vec{v} & = \begin{cases}
\zero &\text{if $v_i= \lceil v_{i+1}\rceil$,} \\
\vec{v} - \frac{1}{2}\e_{i-1} &\text{if $v_{i-1} > v_i<\lceil v_{i+1}\rceil$,}  \\
\vec{v} + \frac{1}{2} \e_{i} &\text{if $v_{i-1} \leq v_i < \lceil v_{i+1}\rceil$,}
\end{cases}
\allowdisplaybreaks \\
\label{vec-f-eq}
f_i \vec{v} & = \begin{cases}
\zero  &\text{if $v_{i-1} =\lceil v_i\rceil $,} \\
\vec{v} + \frac{1}{2} \e_{i-1} &\text{if $v_{i-1} = v_i \notin \ZZ$,} \\ 
\vec{v} - \frac{1}{2} \e_{i} &\text{otherwise,} \\
\end{cases}
\allowdisplaybreaks \\
\label{vec-eps-eq}
\varepsilon_i(\vec{v}) &=  \begin{cases}
-v_i + \lceil v_{i+1}\rceil   & \text{if $v_{i-1} \leq v_i$}, \\
-v_i + \lceil v_{i+1}\rceil +  \frac{1}{2} &\text{if $v_{i-1} > v_i$}, \\
\end{cases}
\allowdisplaybreaks \\
\label{vec-phi-eq}
\varphi_i(\vec{v}) &=
 \begin{cases}
v_k & \text{if } i = k, \\
-\lfloor v_{i-1}\rfloor + v_i  & \text{if $i>k$ and $v_{i-1} \leq v_{i}$}, \\
-v_{i-1} + v_i+\tfrac{1}{2} & \text{if $i>k$ and $v_{i-1} >v_{i}$}.
\end{cases}
\end{align}
\end{subequations}

\een
\end{lemma}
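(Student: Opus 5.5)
Parts (a) and (b) come straight from the definitions, and (c)--(d) will come from the iterated tensor product rule, then a connectivity argument.

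For (a), I add the weights $\weight(\uj{i}{v_i}) = \lfloor v_i\rfloor(\e_i-\e_{i+1}) + (\lceil v_i\rceil-\lfloor v_i\rfloor)(\e_i-\e_0)$ and regroup as $\lceil v_i\rceil(\e_i-\e_0)+\lfloor v_i\rfloor(\e_0-\e_{i+1})$; $\one_{[k,n]}$ contributes nothing. For (b), take $i<k$. Each factor $\sqrtE_j$ with $j\ge k$ has $\varepsilon_i=-\infty$, except that $\sqrtE_{k}$ is finite for $i=k-1$. But $\varnothing\in\one_{[k,n]}$ has $\varepsilon_{k-1}=-\infty$ by the definition of $\restrictF{[k,n]}$. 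The tensor rule \eqref{eq:ep_tensor} propagates $-\infty$ through any factor at which the maximum is $-\infty$, so the whole element has $\varepsilon_i=\varphi_i=-\infty$ and is killed by $e_i,f_i$.

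For (d), fix $i\in[k,n-1]$. Only three factors have finite $i$-statistics: $\uj{i-1}{v_{i-1}}$ (present when $i>k$, via its $(j+1)$-structure), $\uj{i}{v_i}$, and the trailing $\varnothing$. For $\varnothing$ I use $\varepsilon_i=\varphi_i=0$ when $i\ge k$. The factor $\uj{i+1}{v_{i+1}}$ has infinite statistics for $i$, so it only shifts the weight pairings. I then apply the formulas for $a_{im}$ and $a'_{im}$ from the proof of Proposition~\ref{prop:monoidal_cat}:
\[
a_{im}=\varepsilon_i(b_m)-\sum_{p>m}\langle\weight(b_p),\alpha_i^\vee\rangle .
\]
Note that $\langle \weight(\uj{i+1}{v_{i+1}}),\alpha_i^\vee\rangle=-\lceil v_{i+1}\rceil$ and $\langle \weight(\uj{i}{v_i}),\alpha_i^\vee\rangle = 2\lfloor v_i\rfloor+(\lceil v_i\rceil-\lfloor v_i\rfloor)$. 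This gives three candidates:
\begin{itemize}
\item for $\varnothing$: $0$;
\item for $\uj{i}{v_i}$: $-v_i+\lceil v_{i+1}\rceil$;
\item for $\uj{i-1}{v_{i-1}}$: $v_{i-1}-\bigl(\lceil v_i\rceil+\lfloor v_i\rfloor\bigr)+\lceil v_{i+1}\rceil$.
\end{itemize}
Comparing these, under the standing assumption $v_i\le\lceil v_{i+1}\rceil$, yields \eqref{vec-eps-eq}. The action of $e_i$ on the rightmost maximizing factor then gives \eqref{vec-e-eq}; an action on $\varnothing$ means $\zero$. The $\varphi_i$ and $f_i$ formulas come the same way from the $a'_{im}$ with the leftmost maximizer, or alternatively from (A2). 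The delicate point is the tie-breaking between the $\uj{i-1}{}$ and $\uj{i}{}$ factors when $v_{i-1}$ and $v_i$ are equal or differ by $\tfrac12$. This is where the cases $v_{i-1}>v_i$ versus $v_{i-1}\le v_i$, and $v_{i-1}=v_i\notin\ZZ$, arise, so I will do a careful half-integer case check.

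For (c), let $X$ be the set of $\vec v$ with $v_i\le\lceil v_{i+1}\rceil$ for all $i$ (with $v_n=0$). First, $X$ is closed under all $e_i,f_i$: the explicit formulas in (d) only move a coordinate by $\tfrac12$ in a way that preserves the inequalities, which I check case by case. Second, $X$ is a union of components: the tensor-product computation, done without assuming the inequality, shows that an edge from inside $X$ never exits $X$. This is the same closure check, read now for edges in both directions. Third, every element of $X$ reaches $\vec 0$. By \eqref{vec-e-eq}, the only elements of $X$ killed by all $e_i$ have $v_i=\lceil v_{i+1}\rceil$ for all $i$, and downward induction from $v_n=0$ forces these to equal $\vec 0$. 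Moreover, $e_i$ increases $\sum_i v_i$ or decreases $v_{i-1}$ while keeping every coordinate bounded above by $0$. So I will pick the potential $\sum_i (n-i)\,(-v_i)$ or a similar weighted sum that strictly decreases under each $e_i$ move and is bounded below, which forces termination at a highest weight element, hence at $\vec 0$. Finding a potential that decreases in both $e_i$ cases is the part I expect to be the main obstacle, together with the tie cases in (d).
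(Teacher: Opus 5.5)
Your computations for (a) and (d) are the paper's. Only $\uj{i-1}{v_{i-1}}$, $\uj{i}{v_i}$ and $\varnothing$ have finite $i$-statistics, $\uj{i+1}{v_{i+1}}$ only contributes the shift $-\lceil v_{i+1}\rceil$, and your three candidates $v_{i-1}-2v_i+\lceil v_{i+1}\rceil$, $-v_i+\lceil v_{i+1}\rceil$, $0$ are correct. The ties resolve as stated once you use both $v_{i-1}\le\lceil v_i\rceil$ and $v_i\le\lceil v_{i+1}\rceil$. Together these force $v_{i-1}-v_i\le\tfrac12$, and they force $v_{i-1}\in\ZZ$ whenever $e_i$ lands on the $\uj{i-1}{}$ factor. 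Your organization of (c) is sound, and the stability check goes through: show the set $X$ cut out by the inequalities is stable under all $e_i,f_i$, then connect everything in $X$ to $\vec 0$. This is somewhat cleaner than the paper's appeal to lower generation of $\PCrysR{k}$. The gap is in the last step.

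The termination argument cannot work as proposed, because no function on $X$ strictly decreases along every nonzero $e_i$-edge: $X$ contains cycles of raising operators. Take $n=3$, $k=1$ and $\vec v=\uj{1}{-\frac12}\otimes\uj{2}{-\frac12}\otimes\varnothing$.
\begin{itemize}
\item By \eqref{vec-e-eq}, $e_1\vec v=\uj{1}{0}\otimes\uj{2}{-\frac12}\otimes\varnothing$ (case $v_0\le v_1<\lceil v_2\rceil$).
\item Applying $e_2$ to this returns $\vec v$ (case $v_1>v_2<\lceil v_3\rceil$).
\end{itemize}
Compare the $2$-cycle between $001$ and $011$ in Figure~\ref{binfty-fig}. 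So $e_2e_1\vec v=\vec v$, and arbitrary sequences of raising operators need not terminate. Concretely, your sum $\sum_i(n-i)(-v_i)$ rises from $\tfrac12$ to $\tfrac32$ along the $e_2$-edge.

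You have to choose the move. For $\vec v\in X\setminus\{\vec 0\}$, let $i$ be the largest index with $v_i\neq 0$, so $v_{i+1}=\cdots=v_{n-1}=0$ and $v_i<0=\lceil v_{i+1}\rceil$.
\begin{itemize}
\item If $v_{i-1}\le v_i$, then $e_i$ raises $v_i$ by $\tfrac12$.
\item If $v_{i-1}>v_i$, then $v_{i-1}=v_i+\tfrac12$ is forced. One application of $e_i$ lowers $v_{i-1}$ to $v_i$, and a second raises $v_i$ by $\tfrac12$.
\end{itemize}
In both cases the pair (largest nonzero index, $-v_i$) strictly drops in a well-founded lexicographic order. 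Hence you reach the unique highest weight element $\vec 0$ of $X$.

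There is also a smaller slip in (b). $\sqrtE_k$ has finite statistics only for the indices $k$ and $k+1$, not for $k-1$. Moreover, had some factor been finite, the trailing $\varnothing$ would not rescue the claim: by \eqref{eq:ep_tensor}, a tensor product has $\varepsilon_i=-\infty$ only when every factor does. The correct reason is that for $i<k$ no factor has finite $i$-statistics. The restriction built into $\one_{[k,n]}$ is exactly what guarantees this for $\varnothing$.
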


\begin{proof}
Parts (a) and (b) are immediate from the definitions.
Fix some $i \in [k,n-1]$.
To compute the crystal operators $e_i$ and $f_i$, we only need to consider the tensor product $\sqrtE_{i-1} \otimes \sqrtE_i \otimes \sqrtE_{i+1} \otimes \one_{[k,n]}$ (or just $\sqrtE_k \otimes \sqrtE_{k+1} \otimes \one_{[k,n]}$ when $i=k$), where the result follows from analyzing the tensor product rule~\eqref{ef-tensor-eq}.
We present the details below.

First, let us look at the statistics $\varepsilon_i$ and $\varphi_i$.
We can ignore the other factors as $\varepsilon_i(b) = -\infty$ for all $b \in \sqrtE_j$ with $j \neq i-1, i$ and $\langle \weight(b), \alpha_i^{\vee} \rangle = 0$ with $\abs{i - j} > 1$.
The tensor product rule gives
\begin{subequations}
\label{eq:ep_phi_pair}
\begin{align}
\varepsilon_i(\uj{i-1}{v_{i-1}} \otimes \uj{i}{v_i}) & = \max \{ -v_i, v_{i-1} - 2v_i \},  \label{eq:ep_2pair}
\allowdisplaybreaks \\
\varphi_i(\uj{i-1}{v_{i-1}} \otimes \uj{i}{v_i}) & = \max \{ \lceil v_{i-1} \rceil - v_{i-1}, v_i - \lfloor v_{i-1} \rfloor  \},
\allowdisplaybreaks \\
\varepsilon_i(\uj{i-1}{v_{i-1}} \otimes \uj{i}{v_i} \otimes \uj{i+1}{v_{i+1}}) & = \max \{ -v_i, v_{i-1} - 2v_i \} - \lfloor -v_{i+1} \rfloor ,
\allowdisplaybreaks \\
\varphi_i(\uj{i-1}{v_{i-1}} \otimes \uj{i}{v_i} \otimes \uj{i+1}{v_{i+1}}) & = \max \{ \lceil v_{i-1} \rceil - v_{i-1}, v_i - \lfloor v_{i-1} \rfloor  \}.
\end{align}
\end{subequations}
Next, let us discuss the crystal operators.
Note that $\PCrysR{k}$ is lower generated by $\vec{0}$ (as $\VCrysR{k}$ is), and hence $f_i \uj{i-1}{0} = \zero$ implies that $v_i \leq 0$ for all $i = 1, \dotsc, n-1$.
Let us restrict our attention to $\sqrtE_{i-1} \otimes \sqrtE_i$ as we have $e_i \sqrtE_j = f_i \sqrtE_j = \zero$ for all $j \neq i, i-1$.
The tensor product rule says that $e_i$ (respectively $f_i$) acts on the right factor $\sqrtE_i$ if and only if $v_{i-1} \leq v_i$ (respectively $v_{i-1} < v_i$).
Recall that the action of $e_i$ and $f_i$ on an element of $\sqrtE_i$ is always nonzero.
In order to have $f_i (\uj{i-1}{v_{i-1}} \otimes \uj{i}{v_i}) \neq \zero$, we must have $v_{i-1} < \lceil v_i \rceil$, and so the only time $f_i$ acts on the left factor is when $v_{i-1} = v_i \notin \ZZ$.
Hence, we have $v_{i-1} \leq \lceil v_i \rceil$ for all $\vec{v} \in \PCrysR{k}$.

This shows one direction of part (c). The other direction follows 
by induction using the fact that $\PCrysR{k}$ is lower generated by $\vec{0}$,
along with the  formulas for the crystal operators in part (d).
To finish the proof of these formulas, we observe from~\eqref{eq:ep_2pair} that $e_i$ acts on $\uj{i+1}{v_{i+1}} \otimes \varnothing \in \sqrtE_{i+1} \otimes \one_{[k,n]}$ if and only if
\begin{equation}
\label{eq:ei_one_action}
\max \{ -v_i, v_{i-1} - 2v_i \} = -v_i + \max\{0, v_{i-1} - v_i\} \leq 
\lfloor -v_{i+1} \rfloor.
\end{equation}
Similarly $f_i$ acts on $\sqrtE_{i+1} \otimes \one_{[k,n]}$ if and only if $-v_i + \max\{0, v_{i-1} - v_i\} < \lfloor -v_{i+1} \rfloor$.
Since $v_{i-1} \leq \lceil v_i \rceil \leq 0$, we have $\max\{0, v_{i-1}, v_i\} = 0$, and 
$
e_i (\uj{i+1}{v_{i+1}} \otimes \varnothing) = f_i (\uj{i+1}{v_{i+1}} \otimes \varnothing) = \zero.
$
Hence, we obtain~\eqref{vec-e-eq} and~\ref{vec-f-eq} as claimed.

The formulas for $\varepsilon_i$ in~\eqref{vec-eps-eq} and $\varphi_i$ in~\eqref{vec-phi-eq} follow from~\eqref{eq:ep_phi_pair}.
\end{proof}

\begin{lemma}\label{lemma:PCrys_structure2}
There is a $\sqgln$-crystal isomorphism
$
\Phi \colon \VCrysR{k} \xrightarrow{\sim} \PCrysR{k}
$
given by the map
\[
\Phi(\overline{\bfa}) = \uj{k,n}{v_k, v_{k+1}, \dotsc, v_{n-1}} \otimes \varnothing
\qquad \text{where }
v_i := -a_{i+1} - \sum_{j=i+2}^n \lfloor a_j \rfloor.
\]
\end{lemma}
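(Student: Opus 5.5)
The plan is to show that $\Phi$ is a weight-preserving bijection onto the set described in Lemma~\ref{lemma:PCrys_structure}(c), and that it intertwines the raising operators $e_i$ and the statistics $\varepsilon_i$. The lowering operators then follow automatically. Concretely, $\VCrysR{k}$ is upper regular (Proposition~\ref{vcrys-prop}) and lower generated by $(0,\dotsc,0)$, so Lemma~\ref{lemma:lower_embed_check} reduces the work to checking that $\Phi$ is injective, commutes with every $e_i$, and preserves the weight of $(0,\dotsc,0)$.

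\textbf{Step 1: $\Phi$ is a bijection onto the target set.} Write $s_i := \sum_{j=i+1}^n \lfloor a_j\rfloor$. Then
\[
v_i = -a_{i+1} - s_{i+1}, \qquad v_{n-1} = -a_n .
\]
From this we read off $\lceil v_{i+1}\rceil = -s_{i+1}$, since $s_{i+2}$ is an integer and $\lceil -a\rceil = -\lfloor a\rfloor$. Hence
\[
v_i = \lceil v_{i+1}\rceil - a_{i+1} \le \lceil v_{i+1}\rceil .
\]
Conversely, the recursion $a_{i+1} := \lceil v_{i+1}\rceil - v_i \in \HH_{\ge0}$ recovers $\overline{\bfa}$ uniquely, processing indices from $i=n-1$ downward. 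So $\Phi$ is a bijection from $\VCrysR{k}$ onto exactly the set of $\vec v$ satisfying condition (c) of Lemma~\ref{lemma:PCrys_structure}, which is $\PCrysR{k}$.

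\textbf{Step 2: weights agree.} Using Lemma~\ref{lemma:PCrys_structure}(a), I would substitute $\lceil v_i\rceil = -\lfloor a_{i+1}\rfloor - s_{i+2}$ and $\lfloor v_i\rfloor = -\lceil a_{i+1}\rceil - s_{i+2}$. After telescoping, the sum should match the weight formula of Definition~\ref{def:vec_crystal2}. This is routine bookkeeping.

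\textbf{Step 3: statistics and raising operators agree.} This is the main computation, and I would split it by the index $i$.

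\emph{Case $i>k$.} Here we have
\[
v_{i-1} - v_i = -a_i + \lceil a_{i+1}\rceil - a_{i+1} .
\]
So the condition $v_{i-1} > v_i$ holds exactly when $a_{i+1}\notin\ZZ$ and $a_i = 0$; this also uses $a_i \in \HH_{\ge 0}$. Using $\lceil v_{i+1}\rceil - v_i = a_{i+1}$, formula \eqref{vec-eps-eq} then becomes
\[
\varepsilon_i = a_{i+1} \ \text{ or } \ a_{i+1} + \tfrac12 = \lceil a_{i+1}\rceil .
\]
I would compare this case by case with \eqref{eq:vec_ep}. The two descriptions appear to be off by the boundary convention ($a_i>0$ versus $a_i=0$), so I must recheck carefully which side receives the $+\tfrac12$. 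Similarly, the three cases of \eqref{vec-e-eq} should match the cases of \eqref{eq:vec_e}:
\begin{itemize}
\item $v_i = \lceil v_{i+1}\rceil$ corresponds to $a_{i+1}=0$;
\item the move $-\tfrac12 e_{i-1}$ on $\vec v$ corresponds to $a_i \mapsto a_i + \tfrac12$;
\item the move $+\tfrac12 e_i$ corresponds to $a_{i+1} \mapsto a_{i+1} - \tfrac12$, together with the induced change in $a_i$ when $a_{i+1}$ crosses an integer. This induced change is what produces the $\bfa + \e_i - \tfrac12\e_{i+1}$ case.
\end{itemize}

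\emph{Case $i=k$.} Here $e_k$ acts by $+\tfrac12 e_k$ when $v_k < \lceil v_{k+1}\rceil$, i.e.\ when $a_{k+1}>0$. In addition, $\varepsilon_k = a_{k+1}$, which matches Definition~\ref{def:vec_crystal2}(c).

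\emph{Indices $i<k$.} Both sides are trivial by Lemma~\ref{lemma:PCrys_structure}(b).

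\textbf{Conclusion.} With Steps 1--3 in hand, Lemma~\ref{lemma:lower_embed_check}, or alternatively Proposition~\ref{prop:morphism_by_ops} applied to $\Phi$ and $\Phi^{-1}$, gives the isomorphism. The main obstacle is the case analysis in Step 3. I must track how the carry in the floor sum $s_{i+1}$ changes when $a_{i+1}$ passes through an integer, and reconcile the strict versus non-strict inequality conventions of \eqref{vec-e-eq} with those of \eqref{eq:vec_e}. My first approach would be to tabulate the four parity combinations of $(a_i, a_{i+1})$, namely integer or half-integer, and zero or positive.
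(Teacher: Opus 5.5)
Your proposal is correct and follows essentially the paper's argument. The paper also proves bijectivity via the inverse $a_{i+1}=\lceil v_{i+1}\rceil - v_i$, checks weights by telescoping, and uses upper regularity over $I_{\geq k}$ plus generation by a weight-zero element to reduce to a case check; the only difference is that it checks the $f_i$ via Proposition~\ref{prop:morphism_by_ops}, while you check the $e_i$ via Lemma~\ref{lemma:lower_embed_check}, and your three $e_i$ case correspondences are exactly right. The $\varepsilon_i$ discrepancy you suspect does not exist: \eqref{vec-eps-eq} (transported by $\Phi$) and \eqref{eq:vec_ep} both give $\lceil a_{i+1}\rceil=a_{i+1}+\tfrac12$ exactly when $a_{i+1}\notin\ZZ$ and $a_i=0$, and $a_{i+1}$ otherwise (and this check is redundant under Lemma~\ref{lemma:lower_embed_check} anyway).
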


\begin{proof}
Note that $\lfloor -x \rfloor = -\lceil x \rceil$ and $\lfloor \lfloor x \rfloor \rfloor = \lceil \lfloor x \rfloor \rceil = \lfloor x \rfloor$.
It is easy to see that $\Phi$ is a bijection since the inverse map is given by $a_{i+1} = \lceil v_{i+1} \rceil - v_i$ and we have $a_i \geq 0$ for all $i$ if and only if $v_i \leq \lceil v_{i+1} \rceil$ for all $i$. 
Next, we see that $\Phi$ preserves weights as starting with the weight map formula in Definition~\ref{def:vec_crystal2} gives
\begin{align*}
\weight\bigl(\Phi(\overline{\bfa})\bigr) 
& = \sum_{i=k}^{n-1}\left( \bigl( -\lfloor a_{i+1} \rfloor - \sum_{j=i+2}^n \lfloor a_j \rfloor \bigr) (\e_i-\e_0) + \left( -\lceil a_{i+1} \rceil - \sum_{j=i+2}^n \lfloor a_j \rfloor \right) (\e_0-\e_{i+1})\right) \\
& = - \sum_{i=k}^{n-1} \sum_{j=i+1}^n \lfloor a_j \rfloor (\e_i-\e_0) + \sum_{i=k}^{n-1} \left( \lceil a_{i+1} \rceil + \sum_{j=i+2}^n \lfloor a_j \rfloor \right) (\e_{i+1}-\e_0) \\
& = -\sum_{j=k+1}^n \lfloor a_j \rfloor (\e_k-\e_0) + \sum_{i=k+1}^{n-1} \lceil a_i \rceil (\e_i-\e_0),
\end{align*}
which equals $\weight(\overline{a})$ as given in Lemma~\ref{lemma:PCrys_structure}. 
The map $\Phi$ is clearly compatible with all crystal operators and statistics indexed by $i \in [k]$. 
On the other hand, both  objects are generated by a single element of weight $0$ and are upper regular when regarded as crystals with index set $I_{\geq k} := [k,n-1]$.
Thus, by Proposition~\ref{prop:morphism_by_ops} we just need to show $\Phi$ commutes with 
$f_i$ for each $i \in I_{\geq k}$.

Let $\vec{v} = \Phi(\overline{\bfa})$.
Clearly $f_k \overline{\bfa} = \overline{\bfa} + \frac{1}{2} \e_{k+1}$ for all $\overline{\bfa} \in \VCrysR{k}$ and $f_k \vec{v} = \vec{v} - \frac{1}{2} \e_k$ for all $\vec{v} \in \PCrysR{k}$, and so $\Phi(f_k \overline{\bfa}) = f_k \Phi(\overline{\bfa})$ as desired.
Now suppose $i > k$, and we proceed using case-by-case analysis.

Note that $f_i \overline{\bfa} = \zero$ if and only if $a_i = 0$ if and only if $\lceil v_i \rceil = v_{i-1}$ if and only if $f_i \vec{v} = \zero$ as desired.
 

Next, we have $a_{i+1} \notin \ZZ$ and $a_i = \frac{1}{2}$ if and only if $v_{i-1} = v_i \notin \ZZ$.
Thus, $f_i \overline{\bfa} = \overline{\bfa} + \frac{1}{2} \e_{i+1}$ if and only if $f_i \vec{v} = \vec{v} + \frac{1}{2} \e_{i-1}$ as desired.

If we instead assume $a_{i+1} \in \ZZ$ and $a_i > 0$, in which case we have $v_i - v_{i-1} = a_i > 0$.
Thus $v_i < v_{i-1}$ and $f_i \vec{v} = \vec{v} - \frac{1}{2} \e_i = \Phi(f_i \overline{\bfa}) = \Phi(\overline{\bfa} + \frac{1}{2}\e_{i+1})$ as desired.

Finally, we consider the case $a_{i+1} \notin \ZZ$ and $a_i > \frac{1}{2}$. Then we again have $v_i < v_{i-1}$ and $f_i \vec{v} = \vec{v} - \frac{1}{2} \e_i = \Phi(f_i \overline{\bfa}) = \Phi(\overline{\bfa} - \e_i + \frac{1}{2}\e_{i+1})$. This proves our claim that $\Phi$ is an isomorphism.
\end{proof}

\begin{proof}[Proof of Lemma~\ref{lemma:root_Kashiwara_embedding}]
One can see from the computations in the proof of Lemma~\ref{lemma:PCrys_structure} that the crystal operators of $\PCrysR{k}$ only act on the factor $\one_{[k,n]}$ in cases when $(\uj{k,n}{v_k, \dotsc, v_{n-1}})\otimes \varnothing \notin \PCrysR{k}$ for some $\vec{v} = \uj{k,n}{v_k, \dotsc, v_{n-1}}\otimes \varnothing \in \PCrysR{k}$ and $i \in [n-1]$.
Therefore, the map $ \vec{v} \mapsto  \uj{k,n}{v_k, \dotsc, v_{n-1}}$ is an embedding $\PCrysR{k} \hookrightarrow \sqrtE_{k,n}$ and the result follows from Lemma~\ref{lemma:PCrys_structure2} and Proposition~\ref{prop:trivial_crystal} (after applying the restriction functor).
\end{proof}

The characterization of the elements $\uj{k,n}{v_k, \dotsc, v_{n-1}}\otimes \varnothing \in \PCrysR{k}$ in Lemma~\ref{lemma:PCrys_structure} makes the values $(-2v_k, \dotsc, -2v_{n-1})$ a subset of the so-called pseudo-partitions that have appeared in, e.g.,~\cite{AGS20}.
Additionally, the map $\Phi$ above is a sort of ``dual'' version of the transformation~\eqref{eq:GT_pattern_map}  to marked GT patterns.

Using the embeddings from Lemma~\ref{lemma:root_Kashiwara_embedding} applied to the tensor factors from the isomorphism~\eqref{eq:Lusztig_param}, we obtain our analog of the polyhedral realization for the BZL word.

\begin{theorem}[BZL word polyhedral-type model]
\label{bzl-thm}
There exists an embedding of $\sqgln$-crystals
\begin{equation}
\label{eq:Kashiwara_BZL_embedding}
\Xi \colon \SetTab_n(\infty) \hookrightarrow \sqrtE_{BZL}
\end{equation}
defined by $u_{\infty} \mapsto \uj{BZL}{0,\dotsc,0}$.
The image $\sqrtB(\infty) := \Xi\bigl(\SetTab_n(\infty)\bigr)$ 
 is the set of elements
\[
\uj{\BZL}{v_{n-1,n-1},\  v_{n-2,n-2}, v_{n-2,n-1},\ \dotsc,\  v_{1,1},  v_{1,2},  \dotsc,  v_{1,n-1}} \in  \sqrtE_{\BZL}
\]
that have $v_{j,i} \leq \lceil v_{j,i+1} \rceil$ for all $1\leq j \leq i \leq n-1$, where we set $v_{j,n} = 0$.
\end{theorem}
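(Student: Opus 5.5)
The plan is to compose the Lusztig-type isomorphism of Theorem~\ref{BZL-cor} with the tensor product of the single-row embeddings from Lemma~\ref{lemma:root_Kashiwara_embedding}. Concretely, Theorem~\ref{BZL-cor} gives
\[
\SetTab_n(\infty) \iso \VCrysR{n-1} \otimes \cdots \otimes \VCrysR{2} \otimes \VCrysR{1},
\]
and for each $k$ Lemma~\ref{lemma:root_Kashiwara_embedding} gives an embedding $\VCrysR{k} \hookrightarrow \sqrtE_{k,n}$ sending $(0,\dotsc,0)\mapsto \uj{k,n}{0,\dotsc,0}$. Since $\sqrtE_{n-1,n}\otimes\sqrtE_{n-2,n}\otimes\cdots\otimes\sqrtE_{1,n}=\sqrtE_{\BZL}$ by the definition \eqref{eq:path_seq_defn} and the form \eqref{bzl-eq} of the BZL word, functoriality of $\otimes$ (Proposition~\ref{prop:monoidal_cat}) yields a morphism $\Xi$. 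It is injective as a map of sets, hence a crystal embedding, and it sends $u_\infty\mapsto\uj{\BZL}{0,\dotsc,0}$.

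The subtle point is that the tensor product of morphisms is only a morphism in the weak sense of Definition~\ref{morphism-def}: the individual embeddings $\VCrysR{k}\hookrightarrow \sqrtE_{k,n}$ are not strict, since in $\sqrtE_{k,n}$ one can apply $f_i$ to leave the image. To see that $\Xi$ still commutes with $e_i$, and with $f_i$ whenever the result stays in the image, I would argue as follows. The embeddings preserve $\varepsilon_i$, $\varphi_i$ and weights, so the tensor product rule selects the same factor on both sides. Once the factor is fixed, the operator commutes with that factor's embedding whenever the outcome in the source is nonzero.

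To identify the image, I would use that $\SetTab_n(\infty)$ is lower generated by $u_\infty$ and upper regular (Corollary~\ref{cor:inf_upper_reg}). The image of each $\VCrysR{k}$ is exactly the set $\{v_{k,i}\le\lceil v_{k,i+1}\rceil\}$ by Lemma~\ref{lemma:PCrys_structure}(c) combined with Lemma~\ref{lemma:PCrys_structure2}, after stripping the factor $\one_{[k,n]}$. Because $\Xi$ is a bijection of underlying sets onto the product of these factorwise images, the image of $\Xi$ is precisely the product of the factorwise conditions. This is the description stated in the theorem, with $v_{j,n}=0$.

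The main obstacle is justifying that the factorwise embeddings, tensored together, form a genuine embedding whose image is the full product of the images. In particular, one must check that when $f_i$ acts on $\Xi(b)$ inside $\sqrtE_{\BZL}$ and $f_ib\neq\zero$, the tensor rule acts on the same factor and the result lies in the image. Because the statistics agree factorwise, the same factor is chosen. Within that factor, the comparison reduces to the behaviour of $\PCrysR{k}$ versus $\sqrtE_{k,n}\otimes\one_{[k,n]}$ analysed in the proof of Lemma~\ref{lemma:root_Kashiwara_embedding}: $f_i$ on $\PCrysR{k}$ never acts on the $\one_{[k,n]}$ factor. This gives $f_i\Xi(b)=\Xi(f_ib)$. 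Finally, Lemma~\ref{lemma:lower_embed_check} packages this as a crystal embedding, after checking commutation with the $e_i$ in the same factor-by-factor way.
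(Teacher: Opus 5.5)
Your overall route is the paper's. You compose the isomorphism of Theorem~\ref{BZL-cor} with the tensor product of the embeddings $\VCrysR{k}\hookrightarrow\sqrtE_{k,n}$ from Lemma~\ref{lemma:root_Kashiwara_embedding}, then read off the image factor by factor from Lemma~\ref{lemma:PCrys_structure}(c) and Lemma~\ref{lemma:PCrys_structure2}. Your first paragraph already gives the embedding: functoriality of $\otimes$ for morphisms in the sense of Definition~\ref{morphism-def}, plus injectivity. Together with the product description of the image, that is the whole proof at the paper's level of detail.

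The ``subtle point'' discussion, however, has $e_i$ and $f_i$ reversed, and its final step fails as written. The image of $\VCrysR{k}$ in $\sqrtE_{k,n}$ is closed under every $f_i$. Your own fourth paragraph shows this, and the paper relies on it in Corollary~\ref{cor:cutting_out}. It is the $e_i$ that leave the image. For example, $e_1$ acts on the unique $\sqrtE_1$ factor of $\sqrtE_{\BZL}$, so $e_1\Xi(u_\infty)$ has $v_{1,1}=\tfrac12>\lceil v_{1,2}\rceil=0$, whereas $e_1u_\infty=\zero$.

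Hence Lemma~\ref{lemma:lower_embed_check} cannot be invoked with target $\sqrtE_{\BZL}$, for two reasons. That crystal is not upper regular, since $\varepsilon_j(\uj{j}{m})=-m$ and $e_j$ never vanishes on $\sqrtE_j$. And $\Xi$ does not commute with the $e_i$. You have two ways out:
\begin{itemize}
\item Drop that step. (M1)--(M3) already follow from preservation of the statistics plus your factorwise comparison.
\item Do as the paper does: map $b\mapsto\Xi(b)\otimes\varnothing$ into $\sqrtE_{\BZL}\otimes\one$. There $e_i(\Xi(b)\otimes\varnothing)=\zero$ exactly when $\varepsilon_i(b)=0$, so the lemma applies, and you then remove the factor $\one$.
\end{itemize}
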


\begin{proof}
The embedding $ \SetTab_n(\infty) \hookrightarrow \sqrtE_{\BZL}$ is obtained by composing the isomorphism in Theorem~\ref{eq:Lusztig_param} with the tensor product of the embeddings from Lemma~\ref{lemma:root_Kashiwara_embedding}.
The description of $\sqrtB(\infty)$ follows from Lemma~\ref{lemma:PCrys_structure}.
\end{proof}

\begin{figure}[t]
\newcommand{\tikzElem}[8]{{\scriptsize\begin{tabular}{|c|}  \hline $#1 #2 #3$ \\  \hline    $ #4$ \\[-10pt]\\  $\ba \varepsilon &= {#5} ,&\hspace{-3mm} {#7} \\ \varphi &={#6},&\hspace{-3mm} {#8}\ea$ \\[-10pt]  \\ \hline \end{tabular}}}
\[
    \begin{tikzpicture}[xscale=2.1, yscale=2.9,>=latex]
\node at (0,0) (A1) {$\tikzElem{0}{0}{0}{1}{0}{0}{0}{0}$};
\node at (-1,-1) (A2) {$\tikzElem{1}{0}{0}{\beta x_3}{0}{0}{\tfrac{1}{2}}{-\tfrac{1}{2}}$};
\node at (0,-1) (B2) {$\tikzElem{0}{0}{1}{\beta x_3}{0}{0}{1}{0}$};
\node at (1,-1) (C2) {$\tikzElem{0}{1}{0}{\beta x_2}{\phantom{-}\tfrac{1}{2}}{-\tfrac{1}{2}}{0}{1}$};
\node at (-2,-2) (A3) {$\tikzElem{2}{0}{0}{x_2^{-1}x_3}{0}{1}{1}{-1}$};
\node at (-1,-2) (B3) {$\tikzElem{1}{1}{0}{\beta^2x_2x_3}{\phantom{-}\tfrac{1}{2}}{-\tfrac{1}{2}}{0}{0}$};
\node at (0,-2) (C3) {$\tikzElem{1}{0}{1}{\beta^2x_3^2}{0}{0}{\tfrac{3}{2}}{-\tfrac{1}{2}}$};
\node at (1,-2) (D3) {$\tikzElem{0}{1}{1}{\beta^2x_2x_3}{\phantom{-}\tfrac{1}{2}}{-\tfrac{1}{2}}{\tfrac{1}{2}}{\tfrac{1}{2}}$};
\node at (2,-2) (E3) {$\tikzElem{0}{2}{0}{x_1^{-1}x_2}{\phantom{-}1}{-1}{0}{1}$};
\node at (-2,-3) (A4) {$\tikzElem{2}{1}{0}{\beta x_3}{\tfrac{1}{2}}{\tfrac{1}{2}}{0}{-1}$};
\node at (-3,-3) (B4) {$\tikzElem{3}{0}{0}{\beta x_2^{-1}x_3^2}{0}{1}{\tfrac{3}{2}}{-\tfrac{3}{2}}$};
\node at (-1,-3) (C4) {$\tikzElem{1}{2}{0}{\beta x_1^{-1}x_2x_3}{\phantom{-}1}{-1}{0}{0}$};
\node at (-0.02,-3) (D4) {$\tikzElem{1}{1}{1}{\beta^3 x_2x_3^2}{\phantom{-}\tfrac{1}{2}}{-\tfrac{1}{2}}{\tfrac{1}{2}}{-\tfrac{1}{2}}$};
\node at (1,-3) (H4) {$\tikzElem{1}{1}{1}{\beta^3 x_2x_3^2}{\phantom{-}\tfrac{1}{2}}{-\tfrac{1}{2}}{\tfrac{1}{2}}{-\tfrac{1}{2}}$};
\node at (2,-3) (E4) {$\tikzElem{0}{2}{1}{\beta x_1^{-1}x_2x_3}{\phantom{-}1}{-1}{\tfrac{1}{2}}{\tfrac{1}{2}}$};
\node at (3,-3) (F4) {$\tikzElem{0}{3}{0}{\beta x_1^{-1}x_2^2}{\phantom{-}\tfrac{3}{2}}{-\tfrac{3}{2}}{0}{2}$};
\node at (-3.8,-3.8) (C5) {$\vdots$};
\node at (-3,-3.8) (B5) {$\vdots$};
\node at (-2,-3.8) (A5) {$\vdots$};
\node at (-1.5,-3.8) (D5) {$\vdots$};
\node at (-0.5,-3.8) (E5) {$\vdots$};
\node at (0.5,-3.8) (F5) {$\vdots$};
\node at (1.5,-3.8) (J5) {$\vdots$};  
\node at (2,-3.8) (G5) {$\vdots$};
\node at (3,-3.8) (H5) {$\vdots$};
\node at (3.8,-3.8) (I5) {$\vdots$};
\draw[->,thick,red]  (A1) -- (A2) node[midway,above left,scale=0.75] {$2$};
\draw[->,thick,blue]  (A1) -- (C2) node[midway,above right,scale=0.75] {$1$};
\draw[->,thick,red]  (A2) -- (A3) node[midway,above left,scale=0.75] {$2$};
\draw[->,thick,blue]  (A2) -- (B3) node[midway,left,scale=0.75] {$1$};
\draw[->,thick,red]  (B2) -- (C3) node[midway,right,scale=0.75] {$2$};
\draw[->,thick,blue]  ([xshift=4pt]B2.south) -- ([xshift=-8pt]D3.north) node[midway,below left,scale=0.75] {$1$};
\draw[<-,thick,red]  ([xshift=8pt]B2.south) -- ([xshift=-4pt]D3.north) node[midway,above right,scale=0.75] {$2$};
\draw[->,thick,red]  (C2) -- (D3) node[midway,right,scale=0.75] {$2$};
\draw[->,thick,blue]  (C2) -- (E3) node[midway,above right,scale=0.75] {$1$};
\draw[->,thick,blue]  (A3) -- (A4) node[midway,above left,scale=0.75] {$1$};
\draw[->,thick,red]  (A3) -- (B4) node[midway,above left,scale=0.75] {$2$};
\draw[->,thick,blue]  (B3) -- (C4) node[midway,left,scale=0.75] {$1$};
\draw[->,thick,red]  (B3) -- (D4) node[midway,above right,scale=0.75] {$2$};
\draw[->,thick,blue]  (C3) -- (D4) node[midway,left,scale=0.75] {$1$};
\draw[->,thick,blue]  (D3) -- (E4) node[midway,above right,scale=0.75] {$1$};
\draw[->,thick,red]  (E3) -- (E4) node[midway,above left,scale=0.75] {$2$};
\draw[->,thick,blue]  (E3) -- (F4) node[midway,above right,scale=0.75] {$1$};
\draw[->,thick,red]  (C3) -- (H4) node[midway,above right,scale=0.75] {$2$};  
\draw[->,thick,blue]  (A4) -- (A5) node[midway,above left,scale=0.75] {$1$};
\draw[->,thick,blue]  (B4) -- (B5) node[midway,above left,scale=0.75] {$1$};
\draw[->,thick,blue]  (C4) -- (D5) node[midway,above left,scale=0.75] {$1$};
\draw[->,thick,blue]  (D4) -- (E5) node[midway,above left,scale=0.75] {$1$};
\draw[->,thick,blue]  (E4) -- (H5) node[midway,above right,scale=0.75] {$1$};
\draw[->,thick,blue]  (F4) -- (I5) node[midway,above right,scale=0.75] {$1$};
\draw[->,thick,blue]  (H4) -- (F5) node[midway,above left,scale=0.75] {$1$};  
\draw[->,thick,red]  (A4) -- (B5) node[midway,above left,scale=0.75] {$2$};
\draw[->,thick,red]  (B4) -- (C5) node[midway,above left,scale=0.75] {$2$};
\draw[->,thick,red]  (C4) -- (E5) node[midway,above right,scale=0.75] {$2$};
\draw[->,thick,red]  (D4) -- (F5) node[midway,above right,scale=0.75] {$2$};
\draw[->,thick,red]  (E4) -- (G5) node[midway,right,scale=0.75] {$2$};
\draw[->,thick,red]  (F4) -- (H5) node[midway,right,scale=0.75] {$2$};
\draw[->,thick,red]  (H4) -- (J5) node[midway,right,scale=0.75] {$2$};  
     \end{tikzpicture}
\]
\caption{Crystal graph of $\sqrtB(\infty)$ for $n=3$.
In each boxed vertex,
the word ``$ABC$'' stands for 
$b= \uj{2}{-\tfrac{A}{2}} \otimes \uj{1}{-\tfrac{B}{2}}\otimes \uj{2}{-\tfrac{C}{2}} = \uj{\BZL}{-\tfrac{A}{2},-\tfrac{B}{2},-\tfrac{C}{2}} \in \sqrtB(\infty)$,
the monomial on the second line is $x^{\weight(b)}$,
and the last two lines show the respective values of $\varepsilon_1(b)$, $\varepsilon_2(b)$ and $\varphi_1(b)$, $\varphi_2(b)$.
}
\label{binfty-fig}
\end{figure}

Figure~\ref{binfty-fig} shows part of the crystal graph of $\sqrtB(\infty)$ for $n=3$.
Compare this with Figure~\ref{fig:gl3mlt} for an example of the isomorphism $\SetTab_n(\infty) \iso \sqrtB(\infty)$ from Theorem~\ref{bzl-thm}.

Although $\sqrtB(\infty) $ is not a full subcrystal of $\sqrtE_{\BZL} $, we may now prove the following.

\begin{corollary}
\label{cor:cutting_out}
The $\sqgln$-crystal $\sqrtB(\infty)$ is upper regular and isomorphic to the connected component of $\sqrtE_{\BZL} \otimes \one$ containing $\uj{\BZL}{0}\otimes \varnothing$ via the map $b \mapsto b \otimes \varnothing$.
\end{corollary}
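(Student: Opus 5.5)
Upper regularity is immediate: $\Xi$ is an embedding, so it preserves $\varepsilon_i$ and commutes with $e_i$ wherever the image is nonzero. Since $\SetTab_n(\infty)$ is upper regular by Corollary~\ref{cor:inf_upper_reg}, $\sqrtB(\infty)$ with its induced structure is upper regular as well. The substance of the statement is the identification with a connected component of $\sqrtE_{\BZL}\otimes\one$, and the plan is to mimic the single-block argument already carried out for $\PCrysR{k}$ in Lemmas~\ref{lemma:PCrys_structure} and~\ref{lemma:PCrys_structure2}, now applied to the whole BZL word at once.

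First I would show that $\mathcal{C}:=\{b\otimes\varnothing \mid b\in\sqrtB(\infty)\}$ is closed under all $e_i$ and $f_i$ in $\sqrtE_{\BZL}\otimes\one$, and that on $\mathcal{C}$ these operators agree with those of $\sqrtB(\infty)$ transported via $b\mapsto b\otimes\varnothing$. The statistics match by the tensor rule. Indeed $\varepsilon_i(b\otimes\varnothing)=\max\{0,\varepsilon_i(b)\}$, which equals $\varepsilon_i(b)$ because $\varepsilon_i(b)\ge 0$ on the upper regular crystal $\sqrtB(\infty)$. Similarly $\varphi_i(b\otimes\varnothing)=\varphi_i(b)$, and the weights are unchanged. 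By the tensor rule, $e_i(b\otimes\varnothing)=(e_ib)\otimes\varnothing$ exactly when $\varepsilon_i(b)>0$, and otherwise $e_i$ acts on $\varnothing$, giving $\zero$. Upper regularity of $\sqrtB(\infty)$ says $\varepsilon_i(b)>0$ iff $e_ib\ne\zero$, so $e_i$ on $\mathcal{C}$ is compatible; and because $\Xi$ is an embedding, $e_ib$ computed in $\sqrtE_{\BZL}$ lies back in $\sqrtB(\infty)$.

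Next, $f_i(b\otimes\varnothing)=(f_ib)\otimes\varnothing$ whenever $\varepsilon_i(b)\ge 0$, which always holds, so $f_i$ always acts on the left factor. What must be checked is that $f_ib\in\sqrtE_{\BZL}$ lies in $\sqrtB(\infty)$ whenever the $\sqrtB(\infty)$-operator is defined. When $f_i$ is undefined on $\sqrtB(\infty)$, I must instead check that $f_ib$ in $\sqrtE_{\BZL}$ is $\zero$. This is the main obstacle, since $\sqrtB(\infty)$ is explicitly not a full subcrystal of $\sqrtE_{\BZL}$.

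For this step I would use the explicit inequality description $v_{j,i}\le\lceil v_{j,i+1}\rceil$ from Theorem~\ref{bzl-thm}, together with the blockwise computations of Lemma~\ref{lemma:PCrys_structure}. The tensor rule for the product of blocks $\sqrtE_{n-1,n}\otimes\cdots\otimes\sqrtE_{1,n}\otimes\one$ is handled by inserting $\one_{[k,n]}$ after each block. Concretely, $\one\cong\one_{[1,n]}$ absorbs on the right, and Proposition~\ref{prop:trivial_crystal} together with the restriction functor lets the factors $\one_{[k,n]}$ be moved between blocks. In the proof of Lemma~\ref{lemma:root_Kashiwara_embedding}, the only instances where operators escape a block $\PCrysR{k}$ are exactly those where they would act on the trailing $\one_{[k,n]}$. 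With the trailing $\one$, those instances instead yield $\zero$ or act on the $\varnothing$ factor.

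Finally, $\mathcal{C}$ is closed under all operators, contains $\uj{\BZL}{0}\otimes\varnothing$, and is connected because $\SetTab_n(\infty)$ is connected (Theorem~\ref{dem-thm}). It is therefore exactly the connected component of $\uj{\BZL}{0}\otimes\varnothing$, and $b\mapsto b\otimes\varnothing$ is a strict bijection onto it preserving weights and statistics, hence an isomorphism.
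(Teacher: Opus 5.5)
Your proposal is correct and is essentially the paper's proof. Both arguments observe that, because $\varepsilon_i\ge 0$ on the upper regular crystal $\sqrtB(\infty)$, the map $b\mapsto b\otimes\varnothing$ preserves weights and statistics, $e_i$ acts on $b$ exactly when $\varepsilon_i(b)>0$ (i.e.\ when $e_ib\neq\zero$) and otherwise gives $\zero$, and $f_i$ always acts on $b$. Everything then reduces to $\sqrtB(\infty)\sqcup\{\zero\}$ being closed under the $f_i$ of $\sqrtE_{\BZL}$, which the paper takes for granted.

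That closure is simpler than your shuffling of the factors $\one_{[k,n]}$ suggests, and Proposition~\ref{prop:trivial_crystal} does not by itself license such moves. Since $\varepsilon_i\ge 0$ on $\PCrysR{k}$ for $i\ge k$, no $f_i$ ever acts on its factor $\varnothing$, so each block embedding $\PCrysR{k}\to\sqrtE_{k,n}$ commutes with every $f_i$. Hence $\Xi$ does too, because the tensor rule chooses the factor using only weights and statistics, which are preserved. The agreement of the $e_i$ then follows from axiom (A1), not merely from $\Xi$ being an embedding.
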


\begin{proof}
Suppose $\cB$ is an abstract $\sqgln$-crystal such that for all $b \in \cB$ and $i \in I$ we have $\varepsilon_i(b) \geq0$.
Then the definition of $\otimes$ implies that in $\cB\otimes \one$, we have
\[
\weight(b\otimes \varnothing) = \weight(b),
\qquad
f_i(b\otimes\varnothing) = (f_i b) \otimes \varnothing,
\qquad
\varepsilon_i(b\otimes \varnothing) = \varepsilon_i(b)
\qquad
\varphi_i(b\otimes \varnothing) = \varphi_i(b).
\]
Furthermore, it holds that $e_i(b\otimes \varnothing) = (e_i b) \otimes \varnothing$ when $\varepsilon_i(b)>0$ or $e_i(b\otimes \varnothing)=\zero$ when $\varepsilon_i(b)=0$. 
Consequently, if $\cB$ is upper regular, then $\cB \otimes \one \iso \cB$ by the isomorphism $b \otimes \varnothing \mapsto b$.
(Equivalently, $\one$ is a right unit object in the category of upper regular abstract crystals.)

It follows that if $\cB\subseteq \cC$ is an upper regular (but not necessarily full) subcrystal such that $\cB\sqcup \{\zero\}$ is closed under all $f_i$ crystal operators for $\cC$, then $\cB \otimes \one$ is a full subcrystal of $\cC\otimes \one$ that is isomorphic to $\cB$ via the map $b\mapsto b\otimes \varnothing$ (Proposition~\ref{prop:trivial_crystal}).
This applies to the connected crystal  $\cB = \sqrtB(\infty) \subseteq \cC = \sqrtE_{\BZL}$ since $\sqrtB(\infty) \iso \SetTab_n(\infty) $ is upper regular by Corollary~\ref{cor:inf_upper_reg}.
\end{proof}

\begin{example}
\label{ex:not_polyhedral}
Unlike in the classical $\gl_n$ case, the set of elements in $\PCrysR{k}$, and hence the image of $\SetTab_n(\infty)$ in $\sqrtE_{\BZL}$, does not correspond to the half integer points inside some polyhedron.
Consider the $\sqrt{\gl_3}$-crystal $\mathsf{LP}_{1,3}^{\halfpower}$, which contains $\uj{1,3}{0,-\tfrac{1}{2}}$ and $\uj{1,3}{-1,-\tfrac{3}{2}}$ but not $\uj{1,3}{-\tfrac{1}{2},-1}$.
If we plot these points $\uj{1,3}{x,y}$, we obtain
\[
\begin{tikzpicture}[scale=.5]
\draw[->,gray,>=latex] (0,0) -- (-11.75,0) node[anchor=east,scale=.7] {$-x$};
\draw[->,gray,>=latex] (0,0) -- (0,-5.1) node[anchor=north,scale=.7] {$-y$};
\draw[-,red!50,thick] (-11.5,0) -- (0,0) -- (0,-1) -- (-4.5,-5.5);
\fill[red!30] (-11.5,0) -- (0,0) -- (0,-1) -- (-4.5,-5.5) -- (-11.5,-5.5) -- (-11.5,0);
\foreach \y in {-2,...,0} {
  \foreach \x in {-5,...,\y} {
    \fill (2*\x,2*\y-1) circle (.1);
    \fill (2*\x,2*\y) circle (.1);
    \fill (2*\x-1,2*\y-1) circle (.1);
    \fill (2*\x-1,2*\y) circle (.1);
  }
}
\fill[blue] ++(-1,-2) + (.07,.07) rectangle ++(-.07,-.07);
\fill[blue] ++(-3,-4) + (.07,.07) rectangle ++(-.07,-.07);
\end{tikzpicture}
\]
where the small squares are the missing lattice points from the polytope.
\end{example}

\begin{definition}
For any sequence $J = (j_k \in I)_{k=1}^{\ell}$ and $\lambda \in P^+$ (respectively $\lambda = \infty$), let $\sqrtB_J(\lambda)$ be the subcrystal of $\cR_{\lambda} \otimes \sqrtE_J$ (respectively $\sqrtE_J$) that is lower generated by $\{ r_{\lambda} \otimes \uj{J}{0, \dotsc, 0} \}$ (respectively $\{ \uj{J}{0,\dotsc,0} \}$).
As a special case, set
\be
\sqrtB(\lambda) := \sqrtB_{\BZL}(\lambda)
\ee
where $\BZL=(n-1,n-2,n-1,\dots,1,2,\dots,n-1)$ is the reduced word for $w_0 \in S_n$ from~\eqref{bzl-eq}.
\end{definition}

The following is immediate from Theorem~\ref{bzl-thm} and Corollary~\ref{cor:finite_recovery}.

\begin{corollary}\label{rb-B-cor}
For all $\lambda \in P^+$ it holds that $\cB^{\halfpower}(\lambda) \iso \SetTab_n(\lambda)$.
\end{corollary}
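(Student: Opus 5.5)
The plan is to transport the statement of Corollary~\ref{cor:finite_recovery} through the embedding $\Xi$ of Theorem~\ref{bzl-thm}. By Corollary~\ref{cor:finite_recovery}, the map $T \mapsto r_\lambda \otimes [T]$ is an isomorphism from $\SetTab_n(\lambda)$ onto the connected component $\cC_\lambda$ of $r_\lambda\otimes u_\infty$ in $\cR_\lambda \otimes \SetTab_n(\infty)$. Applying the functor $\id\otimes \Xi$ (Proposition~\ref{prop:monoidal_cat}), we get a crystal embedding
\[
\cR_\lambda \otimes \SetTab_n(\infty) \hookrightarrow \cR_\lambda \otimes \sqrtE_{\BZL},
\qquad
r_\lambda\otimes u_\infty \mapsto r_\lambda \otimes \uj{\BZL}{0,\dotsc,0}.
\]
The goal is then to show that the image of $\cC_\lambda$ is exactly the subcrystal $\sqrtB(\lambda)$ lower generated by $r_\lambda\otimes\uj{\BZL}{0,\dotsc,0}$.

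First, I will note that $\SetTab_n(\lambda)$ is connected and regular, and is lower generated by $u_\lambda$ by Theorem~\ref{thm:crystal_SVT} together with the Demazure filtration at $w_0$. Hence $\cC_\lambda$ is lower generated by $r_\lambda\otimes u_\infty$, so $\cC_\lambda$ equals the set $\{f_{i_\ell}\cdots f_{i_1}(r_\lambda\otimes u_\infty)\}\setminus\{\zero\}$. Second, I will check that the image of this set under $\id\otimes\Xi$ equals $\sqrtB(\lambda)$. Since $\Xi$ commutes with $f_i$ on $\SetTab_n(\infty)$, the key point is that $\id\otimes\Xi$ intertwines the lowering operators on all of $\cR_\lambda\otimes\SetTab_n(\infty)$. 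On that crystal, $f_i$ acts either on the factor $r_\lambda$, giving $\zero$, or on the second factor, and the choice depends only on $\varepsilon_i(r_\lambda)$ and $\varphi_i(b)$. The map $\Xi$ preserves $\varphi_i$ (axiom (M1)) and satisfies $\Xi(f_i b)=f_i\Xi(b)$ whenever $f_ib\ne\zero$.

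The step I expect to be the main obstacle is the possibility that $f_i\Xi(b)\neq\zero$ while $f_i b=\zero$, since $\Xi$ is not a strict embedding and $\sqrtB(\infty)$ is not full in $\sqrtE_{\BZL}$. Here Proposition~\ref{mala-lem} resolves the issue: $f_i$ never vanishes on $\SetTab_n(\infty)$, and $\Xi$ is a bijection onto $\sqrtB(\infty)$, which is closed under all $f_i$ of $\sqrtE_{\BZL}$ because it is lower generated. So whenever $\id\otimes\Xi$ sends an element to one whose $f_i$ acts on the second factor, the corresponding $f_i$ upstairs acts on the second factor too, and the two agree. When $f_i$ acts on $r_\lambda$, both sides give $\zero$, since the $\varphi_i$ comparison is identical.

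By induction on the length of $f$-words, $(\id\otimes\Xi)(\cC_\lambda)=\sqrtB(\lambda)$ as sets, with matching $f_i$. Restricting an embedding to a subcrystal and its image gives an isomorphism, and composing with Corollary~\ref{cor:finite_recovery} yields $\sqrtB(\lambda)\iso\SetTab_n(\lambda)$.
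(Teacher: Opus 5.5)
Your proposal is correct and follows the paper's own route: the paper calls the corollary immediate from Corollary~\ref{cor:finite_recovery} and the embedding $\Xi$ of Theorem~\ref{bzl-thm}, and your check that $\id\otimes\Xi$ carries the $f$-descendants of $r_\lambda\otimes u_\infty$ bijectively onto those of $r_\lambda\otimes\uj{\BZL}{0,\dotsc,0}$ spells out the details the paper leaves implicit. One step needs tightening: closure of $\sqrtB(\infty)$ under the ambient $f_i$ does not by itself rule out $f_i\Xi(b)=\zero$ when $f_ib\neq\zero$, so instead state that $\Xi$ commutes exactly with every $f_i$ of $\sqrtE_{\BZL}$, which holds by construction (since $\Xi$ is a tensor product of the maps in Lemma~\ref{lemma:root_Kashiwara_embedding}, each commuting with all $f_i$) or follows from the upper regularity in Corollary~\ref{cor:cutting_out} together with axiom (A1).
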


See Figure~\ref{rb-fig} for an example of the isomorphism in this corollary. 

\begin{figure}[t]
\def\ytabd{\ytabc{0.4cm}}
\[
    \begin{tikzpicture}[xscale=2.3, yscale=1.75,>=latex,baseline=(D0.base)]
\node at (1,0) (A1) {$\ytabd{1 \\ 2}$};
\node at (2,0) (A2) {$\ytabd{1 \\ 23}$};
\node at (1,-1) (B1) {$\ytabd{1 \\ \scalebox{.8}{234}}$};
\node at (2,-1) (B2) {$\ytabd{1 \\ 3}$};
\node at (0,-2) (C0) {$\ytabd{1 \\ 24}$};
\node at (1,-2) (C1) {$\ytabd{1 \\ 34}$};
\node at (2,-2) (C2) {$\ytabd{12 \\ 3}$};
\node at (0,-3) (D0) {$\ytabd{1 \\ 4}$};
\node at (1,-3) (D1) {$\ytabd{12 \\ 34}$};
\node at (2,-3) (D2) {$\ytabd{2 \\ 3}$};
\node at (0,-4) (E0) {$\ytabd{13 \\ 4}$};
\node at (1,-4) (E1) {$\ytabd{12 \\ 4}$};
\node at (2,-4) (E2) {$\ytabd{2 \\ 34}$};
\node at (1,-5) (F1) {$\ytabd{\scalebox{.8}{123} \\ 4}$};
\node at (2,-5) (F2) {$\ytabd{2 \\ 4}$};
\node at (1,-6) (G1) {$\ytabd{3 \\ 4}$};
\node at (2,-6) (G2) {$\ytabd{23 \\ 4}$};
\draw[->,thick,red]  (A1) -- (A2) node[midway,above,scale=0.75] {$2$};
\draw[->,thick,red]  (A2) -- (B2) node[midway,right,scale=0.75] {$2$};
\draw[->,thick,gray]  (A2) -- (B1) node[midway,below right,scale=0.75] {$3$};
\draw[->,thick,gray]  (B1) -- (C0) node[midway,below right,scale=0.75] {$3$};
\draw[->,thick,red]  (B1) -- (C1) node[midway,right,scale=0.75] {$2$};
\draw[->,thick,gray]  (B2) -- (C1) node[midway,below right,scale=0.75] {$3$};
\draw[->,thick,blue]  (B2) -- (C2) node[midway,right,scale=0.75] {$1$};
\draw[->,thick,red]  ([xshift=4pt]C0.north) -- ([xshift=0pt]B1.west) node[midway,above left,scale=0.75] {$2$};
\draw[->,thick,gray]  (C1) -- (D0) node[midway,below right,scale=0.75] {$3$};
\draw[->,thick,gray]  (C2) -- (D1) node[midway,below right,scale=0.75] {$3$};
\draw[->,thick,blue]  (C1) -- (D1) node[midway,right,scale=0.75] {$1$};
\draw[->,thick,blue]  (C2) -- (D2) node[midway,right,scale=0.75] {$1$};
\draw[->,thick,gray]  (D1) -- (E1) node[midway,right,scale=0.75] {$3$};
\draw[->,thick,gray]  (D2) -- (E2) node[midway,right,scale=0.75] {$3$};
\draw[->,thick,blue]  (D0) -- (E1) node[midway,above right,scale=0.75] {$1$};
\draw[->,thick,blue]  (D1) -- (E2) node[midway,above right,scale=0.75] {$1$};
\draw[->,thick,blue]  (E0) -- (F1) node[midway,above right,scale=0.75] {$1$};
\draw[->,thick,blue]  (E1) -- (F2) node[midway,above right,scale=0.75] {$1$};
\draw[->,thick,red]  (E1) -- (F1) node[midway,right,scale=0.75] {$2$};
\draw[->,thick,gray]  (E2) -- (F2) node[midway,right,scale=0.75] {$3$};
\draw[->,thick,red]  ([xshift=0pt]F1.west) -- ([xshift=4pt]E0.south) node[midway,below left,scale=0.75] {$2$};
\draw[->,thick,blue]  (F1) -- (G2) node[midway,above right,scale=0.75] {$1$};
\draw[->,thick,red]  (F2) -- (G2) node[midway,right,scale=0.75] {$2$};
\draw[->,thick,red]  (G2) -- (G1) node[midway,above,scale=0.75] {$2$};
     \end{tikzpicture}
\hspace{60pt}
    \begin{tikzpicture}[xscale=2.3, yscale=1.75,>=latex,baseline=(D0.base)]
\node at (1,0) (A1) {$000000$};
\node at (2,0) (A2) {$010000$};
\node at (1,-1) (B1) {$011000$};
\node at (2,-1) (B2) {$020000$};
\node at (0,-2) (C0) {$001000$};
\node at (1,-2) (C1) {$021000$};
\node at (2,-2) (C2) {$020100$};
\node at (0,-3) (D0) {$022000$};
\node at (1,-3) (D1) {$021100$};
\node at (2,-3) (D2) {$020200$};
\node at (0,-4) (E0) {$022010$};
\node at (1,-4) (E1) {$022100$};
\node at (2,-4) (E2) {$021200$};
\node at (1,-5) (F1) {$022110$};
\node at (2,-5) (F2) {$022200$};
\node at (1,-6) (G1) {$022220$};
\node at (2,-6) (G2) {$022210$};
\draw[->,thick,red]  (A1) -- (A2) node[midway,above,scale=0.75] {$2$};
\draw[->,thick,red]  (A2) -- (B2) node[midway,right,scale=0.75] {$2$};
\draw[->,thick,gray]  (A2) -- (B1) node[midway,below right,scale=0.75] {$3$};
\draw[->,thick,gray]  (B1) -- (C0) node[midway,below right,scale=0.75] {$3$};
\draw[->,thick,red]  (B1) -- (C1) node[midway,right,scale=0.75] {$2$};
\draw[->,thick,gray]  (B2) -- (C1) node[midway,below right,scale=0.75] {$3$};
\draw[->,thick,blue]  (B2) -- (C2) node[midway,right,scale=0.75] {$1$};
\draw[->,thick,red]  ([xshift=0pt]C0.north) -- ([xshift=-8pt]B1.south) node[midway,above left,scale=0.75] {$2$};
\draw[->,thick,gray]  (C1) -- (D0) node[midway,below right,scale=0.75] {$3$};
\draw[->,thick,gray]  (C2) -- (D1) node[midway,below right,scale=0.75] {$3$};
\draw[->,thick,blue]  (C1) -- (D1) node[midway,right,scale=0.75] {$1$};
\draw[->,thick,blue]  (C2) -- (D2) node[midway,right,scale=0.75] {$1$};
\draw[->,thick,gray]  (D1) -- (E1) node[midway,right,scale=0.75] {$3$};
\draw[->,thick,gray]  (D2) -- (E2) node[midway,right,scale=0.75] {$3$};
\draw[->,thick,blue]  (D0) -- (E1) node[midway,above right,scale=0.75] {$1$};
\draw[->,thick,blue]  (D1) -- (E2) node[midway,above right,scale=0.75] {$1$};
\draw[->,thick,blue]  (E0) -- (F1) node[midway,above right,scale=0.75] {$1$};
\draw[->,thick,blue]  (E1) -- (F2) node[midway,above right,scale=0.75] {$1$};
\draw[->,thick,red]  (E1) -- (F1) node[midway,right,scale=0.75] {$2$};
\draw[->,thick,gray]  (E2) -- (F2) node[midway,right,scale=0.75] {$3$};
\draw[->,thick,red]  ([xshift=-8pt]F1.north) -- ([xshift=0pt]E0.south) node[midway,below left,scale=0.75] {$2$};
\draw[->,thick,blue]  (F1) -- (G2) node[midway,above right,scale=0.75] {$1$};
\draw[->,thick,red]  (F2) -- (G2) node[midway,right,scale=0.75] {$2$};
\draw[->,thick,red]  (G2) -- (G1) node[midway,above,scale=0.75] {$2$};
     \end{tikzpicture}
\]
\caption{Crystal graphs of $\SetTab_4(\Lambda_2)$ (left) and $\sqrtB(\Lambda_2)$ for $n = 4$ (right), where the word $ABCDEF$ represents the element $r_{\Lambda_2} \otimes \uj{\BZL}{-\tfrac{A}{2},-\tfrac{B}{2},-\tfrac{C}{2},-\tfrac{D}{2},-\tfrac{E}{2},-\tfrac{F}{2}}$.}
\label{rb-fig}
\end{figure}

Now we make precise how to obtain a sequence of Kashiwara embeddings corresponding to an initial segment of the BZL word   for $w_0 \in S_n$ defined in~\eqref{bzl-eq}.
This comes from splitting the embedding~\eqref{eq:Kashiwara_BZL_embedding} at some point in the middle of the BZL word.

\begin{theorem}[BZL Kashiwara-type embedding]
\label{thm:Kashiwara_embedding}
Let $J$ be an initial prefix of $\BZL$ from~\eqref{bzl-eq}, and let $\overline{J}$ be such that $J \overline{J} = \BZL$.
Let $\overline{w} \in S_n$ be the element with reduced word $\overline{J}$.
Then there exists a $\sqgln$-crystal embedding
$
\SetTab_n(\infty) \hookrightarrow \sqrtE_J \otimes \SetTab_n(\infty)_{\overline{w}}
$
sending $u_{\infty} \mapsto \uj{J}{0, \dotsc, 0} \otimes u_{\infty}$.
\end{theorem}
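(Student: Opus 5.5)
The plan is to obtain the embedding by splitting the embedding $\Xi$ of Theorem~\ref{bzl-thm} at the boundary between $J$ and $\overline{J}$. Because we tensor factor by factor, and associativity holds by Proposition~\ref{prop:monoidal_cat}, we have $\sqrtE_{\BZL} = \sqrtE_J \otimes \sqrtE_{\overline{J}}$. The goal is then to identify the image of $\Xi$ with a subcrystal of $\sqrtE_J \otimes \Xi'\bigl(\SetTab_n(\infty)_{\overline{w}}\bigr)$, where $\Xi'$ is an embedding of the Demazure crystal into $\sqrtE_{\overline{J}}$.

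\textbf{Step 1: suffix factors and the Demazure crystal.} Write the prefix as $J = (n-1, \dots, k, k+1, \dots, n-1, \; k-1, \dots, j)$, that is, a union of full BZL blocks followed by a partial block $(k-1,\dots,j)$. The suffix is then $\overline{J} = (j+1,\dots,n-1)(k-2,\dots,n-1)\cdots(1,\dots,n-1)$.

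Using Theorem~\ref{BZL-cor} together with Lemma~\ref{lemma:root_Kashiwara_embedding}, I would first show that the subcrystal of $\sqrtE_{\overline{J}}$ lower generated by $\uj{\overline{J}}{0,\dots,0}$ is isomorphic to $\SetTab_n(\infty)_{\overline{w}}$. For full blocks, the remark after Conjecture~\ref{conj:demazure_char} already asserts that when $\overline{w}$ is a trailing word of the BZL word, the Demazure crystal is $\fkD_{\overline{J}}\{u_\infty\}$. In the Lusztig parameterization~\eqref{eq:Lusztig_param}, this consists of the elements whose first several tensor factors $\VCrysR{n-1}\otimes\cdots$ are zero. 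For the leading partial block $(j+1,\dots,n-1)$, I would verify directly from the formulas in Lemma~\ref{lemma:PCrys_structure} that applying $\fkD_{j+1}\cdots\fkD_{n-1}$ to $\vec{0}$ in $\PCrysR{k-1}$ gives exactly the vectors with $v_{k-1}=\dots=v_j=0$. The operators $f_i$ for $i>j$ never touch coordinates with index $\le j$, because $f_i$ only changes $v_i$ or $v_{i-1}$, and the case $v_{i-1}=v_i\notin\ZZ$ forces $v_{i-1}\neq 0$.

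\textbf{Step 2: define the embedding.} For $b\in\SetTab_n(\infty)$, write $\Xi(b) = x\otimes y$ with $x\in\sqrtE_J$ and $y\in\sqrtE_{\overline{J}}$. By the inequalities in Theorem~\ref{bzl-thm}, the coordinates in $y$ satisfy the constraints $v_{j,i}\le\lceil v_{j,i+1}\rceil$ within each row. These constraints involve only coordinates of the same row, so $y$ lies in the Demazure image from Step 1. The only exception is the partial row, where the constraint $v_{i}\le \lceil v_{i+1}\rceil$ at $i=j$ couples the two sides. I would handle it by noting that the suffix part of that row, extended by zeros, still satisfies the inequalities, because $v\le 0$ throughout.

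Define $\Upsilon(b) := x\otimes (\Xi')^{-1}(y)$. Injectivity is inherited from $\Xi$. By Lemma~\ref{lemma:lower_embed_check}, since $\SetTab_n(\infty)$ is upper regular (Corollary~\ref{cor:inf_upper_reg}) and lower generated by $u_\infty$, it suffices to check three things:
\begin{itemize}
\item[(i)] $\Upsilon$ commutes with every $e_i$;
\item[(ii)] the codomain is upper regular, or at least satisfies what the lemma needs;
\item[(iii)] weights agree at $u_\infty$, which is immediate.
\end{itemize}
Commutation with $e_i$ follows because the tensor rule is computed identically in $\sqrtE_J\otimes\sqrtE_{\overline{J}}$ and in $\sqrtE_J\otimes\SetTab_n(\infty)_{\overline{w}}$, provided $\Xi'$ preserves $\varepsilon_i$ and $\varphi_i$ on the relevant elements.

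\textbf{Main obstacle.} The key difficulty is that $\SetTab_n(\infty)_{\overline{w}}$ is not a full subcrystal: the values of $\varepsilon_i$ and $\varphi_i$ that enter the tensor rule must be those of the ambient crystal, and $e_i$ may leave the Demazure subset. I will argue as follows. On the image of $\Upsilon$, whenever $e_i$ acts on the right factor, the result stays inside the Demazure crystal, because Demazure crystals are closed under $e_i$ by Corollary~\ref{cor:string_decomp}. The statistics agree because $\Xi'$ is the restriction of a genuine embedding. The hard part is the partial block, where I would carry out a case check with the explicit formulas in Lemma~\ref{lemma:PCrys_structure}(d) at the split index $i=j$.
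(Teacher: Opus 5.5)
Your Step~1 is false, and everything after it depends on it. The Demazure crystal of a suffix of $\BZL$ is not carried by the suffix coordinates of $\sqrtE_{\BZL}$. It is also not isomorphic to the subcrystal of $\sqrtE_{\overline J}$ lower generated by $\uj{\overline J}{0,\dots,0}$.

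Take $n=3$, $J=(2)$, $\overline J=(1,2)$, $\overline w=s_1s_2$. In $\sqrtE_1\otimes\sqrtE_2$ the tie $\varepsilon_2(\uj{1}{0})=0=\varphi_2(\uj{2}{0})$ sends $f_2$ to the left factor, and $f_2\uj{1}{0}=\zero$. So the generator is killed by $f_2$. (This lower generated crystal is the copy of $\VCrysR{1}$ from Lemma~\ref{lemma:root_Kashiwara_embedding}, with character $\prod_{i=2}^{3}\frac{1+\beta x_i}{1-x_ix_1^{-1}}$.)

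By contrast, $f_2u_\infty\in\fkD_1\fkD_2\{u_\infty\}$. At $u_\infty$ the same tie sends $f_2$ to the \emph{leftmost} factor of $\sqrtE_{\BZL}$. One finds that $\SetTab_3(\infty)_{s_1s_2}$ is the set of elements $\uj{\BZL}{v,v',0}$ with $v,v'\le 0$, with character $\frac{(1+\beta x_2)(1+\beta x_3)}{(1-x_2x_1^{-1})(1-x_3x_2^{-1})}$. So the map $\Xi'$ does not exist.

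The same computation undermines the rest of Step~1:
\begin{itemize}
\item Your Lusztig-parameterization description is wrong: $\fkD_{n-1}\{u_\infty\}$ already moves the first factor $\VCrysR{n-1}$.
\item Within one row, $f_i\vec 0=\zero$ in $\PCrysR{k}$ for every $i>k$ by~\eqref{vec-f-eq}. Hence $\fkD_{j+1}\cdots\fkD_{n-1}\{\vec 0\}=\{\vec 0\}$, not the set you describe.
\item Padding the suffix part of the partial row with zeros breaks $v_i\le\lceil v_{i+1}\rceil$ at the cut as soon as the first suffix coordinate is $\le -1$. The sign condition $v\le 0$ works against you, not for you.
\end{itemize}

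The paper never realizes the Demazure part inside $\sqrtE_{\overline J}$. It works in the Lusztig parameterization~\eqref{eq:Lusztig_param}. Rows lying entirely inside $J$ are converted by Lemma~\ref{lemma:root_Kashiwara_embedding}. The single partial row is split as $\VCrysR{j}\hookrightarrow\sqrtE_{(j,\dots,k-1)}\otimes\VCrysR{j}$, via Corollary~\ref{cor:cutting_out} and the restriction functor. The remaining rows $\VCrysR{j-1}\otimes\cdots\otimes\VCrysR{1}$ are left untouched. You also cannot invoke Lemma~\ref{lemma:lower_embed_check} as planned, since $\sqrtE_J\otimes(\cdot)$ is not upper regular.

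The same example also shows that the second tensor factor must be the Demazure crystal of $\overline w^{-1}$ (the reduced word $\overline J$ reversed), as in the classical $\cB(\infty)\hookrightarrow\cE_J\otimes\cB_{\overline w^{-1}}(\infty)$ coming from the $*$-involution. Here is why $\overline w$ itself fails for $n=3$:
\begin{enumerate}
\item Any embedding is forced by (M3) to send $f_1u_\infty\mapsto\uj{2}{0}\otimes f_1u_\infty$.
\item In $\sqrtE_2\otimes\SetTab_3(\infty)_{s_1s_2}$, $f_2$ of this element is $\zero$, because $\varphi_2(f_1u_\infty)=1>0$ and $f_2f_1u_\infty\notin\fkD_1\fkD_2\{u_\infty\}$.
\item By (M2) and injectivity, the image of $f_2f_1u_\infty$ must then be killed by $e_2$.
\item Since $e_2$ would act on a right factor lying in an $e_2$-closed subset of the upper regular $\SetTab_3(\infty)$, this forces $\varepsilon_2=0\neq\frac12=\varepsilon_2(f_2f_1u_\infty)$.
\end{enumerate}
The row-splitting output $\sqrtE_2\otimes\VCrysR{1}$ does embed in $\sqrtE_2\otimes\SetTab_3(\infty)_{s_2s_1}$.
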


\begin{proof}
First, for any $j \leq k$, we have $\restrictF{[k,n]}(\sqrtE_{j,n}) \iso \sqrtE_{k,n}$, where the map $\uj{j,n}{v_j, \dotsc, v_{n-1}} \mapsto \uj{k,n}{v_k, \dotsc, v_{n-1}}$ is the crystal isomorphism by~\eqref{vec-f-eq},
whose inverse is given by setting $v_j = \cdots v_{k-1} = 0$.
Using this with Lemma~\ref{lemma:root_Kashiwara_embedding} and Corollary~\ref{cor:cutting_out}, we have
\begin{align*}
\VCrysR{j} & \iso \sqrtE_{(j, \dotsc, n-1)} \otimes \one \iso \sqrtE_{(j, \dotsc, k-1)} \otimes \sqrtE_{k,n} \otimes \one \iso \sqrtE_{(j,\dotsc,k-1)} \otimes \restrictF{[k,n]}(\sqrtE_{j,n} \otimes \one)
\\ &
\iso \sqrtE_{(j,\dotsc,k-1)} \otimes \restrictF{[k,n]}(\VCrysR{j}) \hookrightarrow \sqrtE_{(j,\dotsc,k-1)} \otimes \VCrysR{j},
\end{align*}
where the final embedding follows from Definition~\ref{def:vec_crystal2}.
This gives the claimed Kashiwara-type embedding for a single row, and the theorem now follows from~\eqref{eq:Lusztig_param}.
\end{proof}

\subsection{String parameterization}

We now give our last description of $\SetTab_n(\infty)$ using the string data parameterization of a crystal studied in~\cite{BZ93,BZ01,Kashiwara93,Littelmann98}.
This construction here actually describes the elements of $\SetTab_n(\lambda)$ for any $\lambda \in P^+\sqcup\{\infty\}$.
This parameterization depends on a reduced word for $w_0 \in S_n$, but 
unlike in previous sections this word can be arbitrary (rather than just the BZL word).

\begin{definition}[String data]
\label{string-data-def}
Let $\cB$ be an upper regular $\sqgln$-crystal.
Fix a reduced expression $\ii = (i_1, \dotsc, i_N)$ for $w_0 \in S_n$.
The \defn{$\ii$-string data} $\Sigma = (\Sigma_1, \dotsc, \Sigma_N)$ for $b \in \cB$ is defined recursively by
\[
\Sigma_{j-1}:= \varepsilon_{i_{j-1}}(e_{i_j}^{2\Sigma_j} e_{i_{j+1}}^{2\Sigma_{j+1}} \cdots e_{i_N}^{2\Sigma_N} b)
\qquad\text{with }\Sigma_N := \varepsilon_{i_N}(b).
\]
\end{definition}

This definition is based on similar constructions in \cite{BZ93,Kashiwara93}.
We briefly remark that it is sometimes interesting to consider string data corresponding to an infinite reduced word $\ii$ in which each $i \in [n-1]$ occurs infinitely often~\cite{BZ01,Littelmann98}.

Our main result concerning Definition~\ref{string-data-def} is to show that for $\SetTab_n(\lambda)$ for $\lambda \in P^+ \sqcup \{\infty\}$  each element can be recovered from its string data, and that applying the sequence of raising operators indexed by $\Sigma$ always returns to the highest weight element.
The latter property is not 
guaranteed (and actually fails for connected polynomial $\sqgln$-crystals in general), as 2-cycles can occur in the relevant crystals as a consequence of 
our choice of $\bal$ data~\eqref{eq:sqrt_roots}.

\begin{proposition}
\label{prop:string_data_exists}
Fix a reduced expression $\ii = (i_1, \dotsc, i_N)$ for $w_0 \in S_n$ and $\lambda \in P^+ \sqcup \{\infty\}$.
Then each $T \in \SetTab_n(\lambda)$ is uniquely determined by its $\ii$-string data $\Sigma = (\Sigma_1, \dotsc, \Sigma_N)$, and it holds that
$
e_{i_1}^{2\Sigma_1} e_{i_2}^{2\Sigma_2} \cdots e_{i_N}^{2\Sigma_N} T = u_{\lambda}.
$
\end{proposition}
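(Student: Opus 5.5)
The plan is to use the Demazure filtration (Theorem~\ref{thm:crystal_SVT} for $\lambda\in P^+$, Theorem~\ref{dem-thm} for $\lambda=\infty$) together with the string decomposition property of Corollary~\ref{cor:string_decomp}, arguing by descending induction along the reduced word. For $0\le j\le N$, let $w_j := s_{i_{j+1}} s_{i_{j+2}}\cdots s_{i_N}$. This is reduced, being a suffix of a reduced word for $w_0$, so $w_0 = w_N$ in the indexing convention where $w_j$ uses letters $j+1,\dots,N$ and $w_N=1$. Reindexing suitably, the first target is the inductive statement: for $T_N := T$ and $T_{j-1} := e_{i_j}^{2\Sigma_j} T_j$, the element $T_{j-1}$ lies in the Demazure crystal $\SetTab_n(\lambda)_{s_{i_1}\cdots s_{i_{j-1}}}$. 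Equivalently, applying $e$-operators from the right end of $\ii$ peels off one letter of the reduced word $w_0 = s_{i_1}\cdots s_{i_N}$ at a time.

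\textbf{Key step.} Suppose $T_j\in \SetTab_n(\lambda)_{v s_{i}}$ with $v=s_{i_1}\cdots s_{i_{j-1}}$, $i=i_j$, and $\ell(vs_i)>\ell(v)$. I would want $e_i^{\max}T_j \in \SetTab_n(\lambda)_v$. This is the wrong side for Corollary~\ref{cor:string_decomp}, since $\fkD$ operators are applied on the left: $\SetTab_n(\lambda)_{s_i w}=\fkD_i\SetTab_n(\lambda)_w$. So I would instead order the induction as follows. Note $\SetTab_n(\lambda)_{w_0} = \fkD_{i_1}\fkD_{i_2}\cdots\fkD_{i_N}\{u_\lambda\}$, and $T\in\fkD_{i_1}X$ with $X=\SetTab_n(\lambda)_{s_{i_2}\cdots s_{i_N}}$. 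By the definition of $\fkD_{i_1}$, some $e_{i_1}^kT\in X$. By Corollary~\ref{cor:string_decomp}, the $i_1$-string of $T$ meets $X$ in all of the string, in nothing, or only in its top. In each nonempty case the top element $e_{i_1}^{\max}T$ lies in $X$, and upper regularity identifies $\max$ with $2\varepsilon_{i_1}(T)$.

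This forces applying the raising operators in the order $i_1$ first. Here I must be careful: the proposition states $e_{i_1}^{2\Sigma_1}\cdots e_{i_N}^{2\Sigma_N}T$, applying $e_{i_N}$ first. So I would run the argument with the reversed reduced word $(i_N,\dots,i_1)$, which is also a reduced word for $w_0^{-1}=w_0$. Writing $\SetTab_n(\lambda)=\fkD_{i_N}\fkD_{i_{N-1}}\cdots\fkD_{i_1}\{u_\lambda\}$ gives $\Sigma_N=\varepsilon_{i_N}(T)$ and $e_{i_N}^{2\Sigma_N}T\in\fkD_{i_{N-1}}\cdots\fkD_{i_1}\{u_\lambda\}$. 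Iterating $N$ times, each step uses Corollary~\ref{cor:string_decomp} with $w=s_{i_{j-1}}\cdots s_{i_1}$, and ends in $\SetTab_n(\lambda)_{1}=\{u_\lambda\}$. The exponents produced are exactly $2\varepsilon$ of the current element, matching Definition~\ref{string-data-def}, which proves the identity $e_{i_1}^{2\Sigma_1}\cdots e_{i_N}^{2\Sigma_N}T=u_\lambda$. For $\lambda=\infty$, the same argument applies using Theorem~\ref{dem-thm} and Corollary~\ref{cor:inf_upper_reg}.

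\textbf{Uniqueness.} Uniqueness then follows from axiom (A1). From $u_\lambda$ one recovers $T = f_{i_N}^{2\Sigma_N}\cdots f_{i_1}^{2\Sigma_1}u_\lambda$, since each $e$-step was nonzero and is inverted by the corresponding $f$-step. Hence two elements with the same string data coincide.

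\textbf{Main obstacle.} The main subtlety is the key step: verifying that Corollary~\ref{cor:string_decomp} really yields $e_i^{2\varepsilon_i(T)}T\in X$ rather than only $e_i^kT\in X$ for some $k$. In the ``only the highest weight element'' case, the top of the string is the intersection, which is fine. In the ``all of $\cS$'' case, the top is again in $X$. I would also check that Corollary~\ref{cor:string_decomp} applies to $X=\SetTab_n(\lambda)_w$ for an arbitrary $w$, not just to a full Demazure crystal of $w_0$, which is how it is stated.
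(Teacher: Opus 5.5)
Your proof is correct, but it takes a different route from the paper's.

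\textbf{The paper's route.} The paper cites Theorems~\ref{thm:crystal_SVT} and~\ref{dem-thm} and then reduces to showing that no $i$-string begins with a $2$-cycle, i.e.\ there is no $T$ with $e_iT=\zero$ and $e_{i-1}T=f_iT$. It excludes this by weights: $e_iT=\zero$ forces $\varphi_i(T)\in\ZZ$, so $f_i$ changes the weight by $-\alpha_i^{(1)}=-\e_0+\e_{i+1}$. By contrast, $e_{i-1}$ changes it by some $\alpha_{i-1}^{(m)}$, which never involves $\e_{i+1}$.

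\textbf{Your route.} You never use this check. Your key input is the string property, Corollary~\ref{cor:string_decomp}, applied along the reversed reduced word to $\SetTab_n(\lambda)_{s_{i_j}\cdots s_{i_1}}=\fkD_{i_j}\SetTab_n(\lambda)_{s_{i_{j-1}}\cdots s_{i_1}}$. You combine it with upper regularity to identify the top of the $i_j$-string with $e_{i_j}^{2\varepsilon_{i_j}}$ applied to the current element.

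\textbf{Comparison.} Your version spells out exactly how the Demazure structure forces each maximal raising into the next smaller Demazure crystal, a step the paper leaves implicit. It also gets uniqueness cleanly from (A1). The cost is that it depends on Yu's string lemma and its direct-limit extension. The paper's version is shorter and isolates the square-root-specific phenomenon ($2$-cycles) that it regards as the potential obstruction.

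The concerns in your final paragraph are settled. Corollary~\ref{cor:string_decomp} is stated for every $w\in S_n$, and in both nonempty cases the top of the string lies in the Demazure crystal.

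Do discard the invariant in your opening paragraph, $T_{j-1}\in\SetTab_n(\lambda)_{s_{i_1}\cdots s_{i_{j-1}}}$. It uses the inverse permutation and is false in general. For $n=3$, $\lambda=\Lambda_1$, $\ii=(1,2,1)$ and $T=\{3\}$, one has $T_2=\{3\}\notin\SetTab_3(\Lambda_1)_{s_1s_2}=\{\{1\},\{1,2\},\{2\}\}$. The invariant you actually prove is $T_j\in\SetTab_n(\lambda)_{s_{i_j}\cdots s_{i_1}}$.
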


\begin{proof}
By Theorems~\ref{thm:crystal_SVT} and \ref{dem-thm}, it is sufficient to show that there does not exist an element $T \in \SetTab_n(\lambda)$ such that $e_i T = \zero$ but $e_{i-1} T = f_i T$ (in other words, there is not a 2-cycle at the beginning of an $i$-string).
Suppose $T$ is such an element.
Then $\varepsilon_i(T)=0$ so $\varphi_i(T) \in \ZZ$, and by Proposition~\ref{prime-prop}, we have
$
\weight(f_i T) = \weight(T) - \alpha_i^{(1)} = \weight(T) - \e_0 + \e_{i+1} = \weight(e_{i-1} T) = \weight(T) + \alpha_{i-1}^{(m)}
$
for some $m \in \{ 0, 1\} = \ZZ/2\ZZ$.
However, this is impossible as $\alpha_{i-1}^{(m)}$ does not involve $\e_{i+1}$.
\end{proof}

For examples of the string data parameterization of $\SetTab_n(\lambda)$, see Figure~\ref{fig:gl3string}.

\begin{remark}
Proposition~\ref{prop:string_data_exists} does not hold for arbitrary connected polynomial $\sqgln$-crystals, as such crystals can have multiple highest weight elements, and even when there is a unique highest weight element $u_{\lambda}$
the identity $e_{i_1}^{2\Sigma_1} e_{i_2}^{2\Sigma_2} \cdots e_{i_N}^{2\Sigma_N}T=u_\lambda $ may fail to hold.

However, if we restrict our choice of reduced word $\ii$ and also include the weight of the element obtained after applying $e_{i_1}^{2\Sigma_1} e_{i_2}^{2\Sigma_2} \cdots e_{i_N}^{2\Sigma_N}$ as part of the $\ii$-string data, then we can get a unique parameterization of any connected crystal $\cB \in \sqrt{\sC}_{\poly}$.
\cite[Thm.~2.21]{MTY} shows that this works for the reversed dual BZL word
$\ii= (1,2,\dots,n-1,\dots,1,2,3,1,2,1)$.
\end{remark}

We can explicitly compute the string data for $\SetTab_n(\lambda)$ when $\ii = \BZL$ and $\lambda$ is a one-row partition.
In this case, only the rightmost part $(1, 2, 3,\dotsc,  n-1)$ of the BZL word contributes nonzero terms to $\Sigma$.
Recall from Proposition~\ref{prop:single_row_crystal} that  $\VCrys{m} \iso \SetTab_n(m\Lambda_1)$ when $m$ is finite.

\begin{proposition}
Fix $\bfa=(a_1,a_2,\dots,a_n) \in \VCrys{m}$ for $m \in \ZZ_{>0} \sqcup \{\infty\}$.
For $i \in [n-1]$, let $b_i = \frac{1}{2}$ if there exists $k \in [i+2,n]$  such that $a_{i+1} = \cdots = a_{k-1} = 0$ and $a_k \notin \ZZ$; otherwise set $b_i = 0$.
Also let $c_i = \frac{1}{2}$ if  $a_i = 0$ (taking $a_1=\infty$ when $m = \infty$)
and either $b_i = \frac{1}{2}$ or $a_{i+1} \notin \ZZ$;
otherwise set $c_i = 0$.
Then the BZL string data of $\bfa$ has the form $(0, \dotsc, 0, \Sigma_1, \Sigma_2, \dotsc, \Sigma_{n-1})$, where
\[
\Sigma_i = a_{i+1} + \sum_{j=i+2}^n \lfloor a_j \rfloor + b_i + c_i 
\quad\text{for each }i \in[n-1].
\]
\end{proposition}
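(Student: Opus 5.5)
We compute the string data by working entirely within $\VCrys{m}$ via the explicit formulas of Definition~\ref{def:vec_crystal} (and Definition~\ref{def:vec_crystal2} for $m=\infty$, where we treat $a_1$ as arbitrarily large). The BZL word ends with $(1,2,\dots,n-1)$, so by Definition~\ref{string-data-def} we first compute $\varepsilon_{n-1}(\bfa)$, then apply $e_{n-1}^{2\Sigma_{n-1}}$, then compute $\varepsilon_{n-2}$ of the result, and so on down to $i=1$. Here $\Sigma_i$ denotes the entry of the string data attached to the letter $i$ in this final segment. Let $\bfa^{(i)} := e_{i}^{2\Sigma_{i}}\cdots e_{n-1}^{2\Sigma_{n-1}}\bfa$. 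The key claim, proved by downward induction on $i$, is that $\bfa^{(i)}$ has explicit coordinates: entries $i+1,\dots,n$ vanish, except possibly for a half-integer residue, and entry $i$ equals $a_i + \sum_{j>i}\lfloor a_j\rfloor$ plus a correction in $\{0,\tfrac12\}$.

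Since $\VCrys{m}$ is upper regular (Proposition~\ref{prop:single_row_crystal}), $\varepsilon_i$ counts half-steps in $e_i$-strings. Using \eqref{eq:vec_e}, an $e_i$-string on a vector with given $(a_i,a_{i+1})$ performs the following moves. It first removes a half from $a_{i+1}$ if $a_{i+1}\notin\ZZ$ and $a_i>0$. Otherwise, when $a_i=0$ and $a_{i+1}\notin\ZZ$, it adds a half to $a_i$; this is the source of the $c_i$ correction. After that it alternates between moving a unit from $a_{i+1}$ into $a_i$ and half-steps. Summing these moves gives $\varepsilon_i=\lceil a_{i+1}\rceil$ or $a_{i+1}$ according to \eqref{eq:vec_ep}, applied to the current vector $\bfa^{(i+1)}$.

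The inductive hypothesis should then read as follows. The $(i+1)$-th entry of $\bfa^{(i+1)}$ is $a_{i+1}+\sum_{j\ge i+2}\lfloor a_j\rfloor + b_i$, where $b_i$ records whether a non-integral half has propagated down through a run of zero entries $a_{i+1}=\dots=a_{k-1}=0$ with $a_k\notin\ZZ$. Each such zero entry forces the first case of \eqref{eq:vec_e} to fire, which shifts the half one position lower. All entries strictly after $i+1$ become zero. Applying $\varepsilon_i$ via \eqref{eq:vec_ep} then gives the floor or ceiling of this quantity, depending on whether the current $i$-th entry $a_i$ is zero. The discrepancy is exactly $c_i$, which yields the stated $\Sigma_i$.

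After the $i=1$ step, the vector is $u$ (or the highest element in the $m=\infty$ limit). Hence all earlier letters of the BZL word have $\varepsilon=0$, and the remaining string data entries are zero. This agrees with Proposition~\ref{prop:string_data_exists}.

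The main obstacle is the bookkeeping of the half-integer residue. We must verify that, after $e_{i}^{2\Sigma_i}$, the half sits exactly where the induction claims and that the definitions of $b_{i-1}$ and $c_{i-1}$ match the next step. This requires a case split on whether $a_i=0$, whether $a_{i+1}\in\ZZ$, and whether a propagated half is present. I would handle it by first checking the single-string behavior on $(x,y)$ pairs with $x\in\{0,\text{positive}\}$ and $y$ integral or not, then reading the induction off that table. For $m=\infty$, the condition $a_1=\infty$ simply makes $c_1=0$.
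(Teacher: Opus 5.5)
Your proposal is correct and follows the paper's own argument: downward induction through the final segment $(1,2,\dots,n-1)$ of $\BZL$ using \eqref{eq:vec_e} and \eqref{eq:vec_ep}. The step that closes the induction is exactly the paper's remark that $b_i$ carries forward the half contributed by $c_{i+1}$; indeed $b_i=c_{i+1}$ for $i\le n-2$.

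Two small wording fixes are needed. After applying $e_i^{2\Sigma_i}$, the entries $i+1,\dots,n$ vanish exactly, and the extra half, if any, sits in entry $i$ as $c_i$. Also, $\varepsilon_i$ of the current vector is either its $(i+1)$-th entry itself or that entry's ceiling, not ``floor or ceiling'' as you wrote once.
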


\begin{proof}
This follows by a straightforward exercise by downward induction on $i$  using~\eqref{eq:vec_e} and~\eqref{eq:vec_ep}.
Indeed, for any $\bfa \in \VCrys{m}$, we have
\[
\varepsilon_{n-1}(\bfa) =  a_{n}  + c_{n-1}
\qquand
e_{n-1}^{\varepsilon_{n-1}(\bfa)} \bfa = \bfa + (\lfloor a_{n} \rfloor + c_{n-1}) \e_{n-1} - a_{n} \e_{n},
\]
where the last expression is an element in $\restrictF{[1,n-1]}\bigl(\VCrys{m}\bigr)$.
The claim follows by noting that the role of the $b_i$ is to carry forward the $c_i$ contribution.
\end{proof}

For more general partitions $\lambda$, it seems difficult to find closed formulas for the string data of the elements of $\SetTab_n(\lambda)$.

\begin{figure}[t]
\[
\newcommand{\st}[2]{\substack{\displaystyle #1 \\[1pt] \displaystyle #2}}
    \begin{tikzpicture}[xscale=2.1, yscale=2.3,>=latex] 
\node at (0,0) (A1) {$\st{000}{000}$};
\node at (-1,-1) (A2) {$\st{001}{010}$};
\node at (0,-1) (B2) {$\st{012}{120}$};
\node at (1,-1) (C2) {$\st{010}{001}$};
\node at (-2,-2) (A3)  {$\st{002}{020}$};
\node at (-1,-2) (B3) {$\st{110}{011}$};
\node at (0,-2) (C3) {$\st{013}{130}$};
\node at (1,-2) (D3) {$\st{011}{121}$};
\node at (2,-2) (E3) {$\st{020}{002}$};
\node at (-2,-3) (A4) {$\st{210}{021}$};
\node at (-3,-3) (B4) {$\st{003}{030}$};
\node at (-1,-3) (C4) {$\st{120}{012}$};
\node at (0,-3) (D4) {$\st{111}{131}$};
\node at (1,-3) (H4) {$\st{014}{140}$};  
\node at (2,-3) (E4) {$\st{021}{122}$};
\node at (3,-3) (F4) {$\st{030}{003}$};
\node at (-3.8,-3.8) (C5) {$\vdots$};
\node at (-3,-3.8) (B5) {$\vdots$};
\node at (-2,-3.8) (A5) {$\vdots$};
\node at (-1.5,-3.8) (D5) {$\vdots$};
\node at (-0.5,-3.8) (E5) {$\vdots$};
\node at (0.5,-3.8) (F5) {$\vdots$};
\node at (1.5,-3.8) (J5) {$\vdots$};  
\node at (2,-3.8) (G5) {$\vdots$};
\node at (3,-3.8) (H5) {$\vdots$};
\node at (3.8,-3.8) (I5) {$\vdots$};
\draw[->,thick,red]  (A1) -- (A2) node[midway,above left,scale=0.75] {$2$};
\draw[->,thick,blue]  (A1) -- (C2) node[midway,above right,scale=0.75] {$1$};
\draw[->,thick,red]  (A2) -- (A3) node[midway,above left,scale=0.75] {$2$};
\draw[->,thick,blue]  (A2) -- (B3) node[midway,left,scale=0.75] {$1$};
\draw[->,thick,red]  (B2) -- (C3) node[midway,right,scale=0.75] {$2$};
\draw[->,thick,blue]  ([xshift=4pt]B2.south) -- ([xshift=-8pt]D3.north) node[midway,below left,scale=0.75] {$1$};
\draw[<-,thick,red]  ([xshift=8pt]B2.south) -- ([xshift=-4pt]D3.north) node[midway,above right,scale=0.75] {$2$};
\draw[->,thick,red]  (C2) -- (D3) node[midway,right,scale=0.75] {$2$};
\draw[->,thick,blue]  (C2) -- (E3) node[midway,above right,scale=0.75] {$1$};
\draw[->,thick,blue]  (A3) -- (A4) node[midway,above left,scale=0.75] {$1$};
\draw[->,thick,red]  (A3) -- (B4) node[midway,above left,scale=0.75] {$2$};
\draw[->,thick,blue]  (B3) -- (C4) node[midway,left,scale=0.75] {$1$};
\draw[->,thick,red]  (B3) -- (D4) node[midway,above right,scale=0.75] {$2$};
\draw[->,thick,blue]  (C3) -- (D4) node[midway,left,scale=0.75] {$1$};
\draw[->,thick,blue]  (D3) -- (E4) node[midway,above right,scale=0.75] {$1$};
\draw[->,thick,red]  (E3) -- (E4) node[midway,above left,scale=0.75] {$2$};
\draw[->,thick,blue]  (E3) -- (F4) node[midway,above right,scale=0.75] {$1$};
\draw[->,thick,red]  (C3) -- (H4) node[midway,above right,scale=0.75] {$2$};  
\draw[->,thick,blue]  (A4) -- (A5) node[midway,above left,scale=0.75] {$1$};
\draw[->,thick,blue]  (B4) -- (B5) node[midway,above left,scale=0.75] {$1$};
\draw[->,thick,blue]  (C4) -- (D5) node[midway,above left,scale=0.75] {$1$};
\draw[->,thick,blue]  (D4) -- (E5) node[midway,above left,scale=0.75] {$1$};
\draw[->,thick,blue]  (E4) -- (H5) node[midway,above right,scale=0.75] {$1$};
\draw[->,thick,blue]  (F4) -- (I5) node[midway,above right,scale=0.75] {$1$};
\draw[->,thick,blue]  (H4) -- (F5) node[midway,above left,scale=0.75] {$1$};  
\draw[->,thick,red]  (A4) -- (B5) node[midway,above left,scale=0.75] {$2$};
\draw[->,thick,red]  (B4) -- (C5) node[midway,above left,scale=0.75] {$2$};
\draw[->,thick,red]  (C4) -- (E5) node[midway,above right,scale=0.75] {$2$};
\draw[->,thick,red]  (D4) -- (F5) node[midway,above right,scale=0.75] {$2$};
\draw[->,thick,red]  (E4) -- (G5) node[midway,right,scale=0.75] {$2$};
\draw[->,thick,red]  (F4) -- (H5) node[midway,right,scale=0.75] {$2$};
\draw[->,thick,red]  (H4) -- (J5) node[midway,right,scale=0.75] {$2$};  
     \end{tikzpicture}
\]
\caption{For the string data $(\Sigma_1, \Sigma_2, \Sigma_3)$ with respect to the BZL word $w_0 = s_2 s_1 s_2$ (above) and dual BZL word $w_0 = s_1 s_2 s_1$ (below) for the $\sqrt{\gl_3}$-crystal $\SetTab_3(\infty)$ up to depth at least 3. We write the string data as the word $A_1 A_2 A_3$ with $A_i = 2\Sigma_i$.}
\label{fig:gl3string}
\end{figure}

\section{Idiosyncrasies}
\label{sec:oddities}

We conclude with some additional noteworthy  observations regarding $\sqgln$-crystals.

\subsection{Constructions using non-BZL words}
\label{sec:nonBZL}

As mentioned in Remark~\ref{rem:braid_rels}, in the classical version of Theorem~\ref{bzl-thm}, the BZL word can be replaced by any reduced word for $w_0 \in S_n$.
This is no longer the case in the square root setting as our construction of $\sqrtB(\infty) \iso \SetTab_n(\infty) $ is quite sensitive to the choice of the BZL word for $w_0 \in S_n$.
One can change the BZL word in Theorem~\ref{bzl-thm} to any word in its \defn{restricted commutation class}, which is generated by the transformations $(\dots,i,j,\dots) \leftrightarrow (\dots,j,i,\dots)$ when $\abs{i-j} > 2$ as $\sqrtE_i\otimes \sqrtE_j \iso \sqrtE_j\otimes \sqrtE_i$ as noted in Remark~\ref{rem:braid_rels}.
As per Remark~\ref{rem:square_func_elem}, this difference arises as our looped-path crystals $\sqrtE_{j}$ are not exact analogs of the elementary $\gl_n$-crystals $\cE_j$ as $\defectF{2}(\sqrtE_j) \not\iso \cE_j$.
This already hints that one should not expect the classical braid relation isomorphisms. 

Nevertheless, one might ask if there are versions of Theorem~\ref{bzl-thm} with the BZL word replaced by other nice reduced expressions for $w_0 \in S_n$.
For example, the \defn{dual BZL word} is given by applying the Dynkin diagram automorphism of $S_n$
to the BZL word:
\begin{align*}
w_0 & = s_1 (s_2 s_1) \cdots (s_{n-2} \cdots s_2 s_1) (s_{n-1} \cdots s_2 s_1) \in S_n,
\\
\BZL^* & := (1, 2,1, \dotsc, n-2, \dotsc, 2, 1, n-1, \dotsc, 2, 1).
\end{align*}
However, we will argue there is no reasonable analog of the Kashiwara embedding for $i = 1$ for the $\sqrt{\gl_3}$-crystal $\SetTab_3(\infty)$.
In fact, by branching rules (or the restriction functor), this implies that \emph{only} the (restricted commutation class of) the BZL word yields a Kashiwara-type embedding and corresponds to a nice construction for $\sqgln$-crystals

Before proceeding, let us motivate why this is actually not unexpected for $\SetTab_n(\infty)$ (and subsequently all polynomial $\sqgln$-crystals).
For $n = 3$, the BZL word and the dual BZL are related by the $\gl_3$ root system symmetry interchanging $\alpha_1 \leftrightarrow \alpha_2$.
On the other hand, there is an asymmetry between the $\sqrt{\gl_3}$-crystal statistics in $\SetTab_3(\infty)$, in the sense that the root system symmetry does not induce a twisted crystal automorphism as it would interchange $\varepsilon_1 \leftrightarrow \varepsilon_2$ and $\varphi_1 \leftrightarrow \varphi_2$.
This is effectively a consequence of the asymmetry from splitting $\alpha_i$ into $\alpha_i^{(0)}$ and $\alpha_i^{(1)}$.
(For another perspective, both $\alpha_i^{(0)}$ and $\alpha_i^{(1)}$ are roots of a $\gl_{n+1}$ root system, but the desired automorphism of the $\gl_n$ root system does not come from an automorphism of the larger $\gl_{n+1}$ root system.)

\begin{example}
\label{ex:non_twisted_auto}
In $\SetTab_3(\infty)$, consider the elements $b_{12} = f_1 f_2 u_{\infty}$ and $b_{21} = f_2 f_1 u_{\infty}$.
We can see from Figure~\ref{binfty-fig} that
$
\varepsilon_1(b_{12}) = \frac{1}{2}
\neq
\varepsilon_2(b_{12}) = 0
$
and
$
\varepsilon_1(b_{21}) = \varepsilon_2(b_{21}) = \frac{1}{2}.
$
Clearly the automorphism interchanging $1 \leftrightarrow 2$ does \emph{not} induce a twisted automorphism on the crystal $\SetTab_3(\infty)$ as it would need to interchange $b_{12} \leftrightarrow b_{21}$.
Similarly, we have $f_1 f_2^2 f_1 u_{\infty} = f_2 f_1 u_{\infty}$ but $f_2 f_1^2 f_2 u_{\infty} \neq f_1 f_2 u_{\infty}$.
\end{example}

\begin{proposition}
\label{prop:no_kashiwara_here}
Suppose $\cB_1$ is a $\sqgln$-crystal with a $\sqgln$-crystal embedding 
\[\Upsilon_1 \colon \SetTab_n(\infty) \hookrightarrow \cB_1 \otimes \SetTab_n(\infty).\] 
Then there is no element $b_0 \in \cB_1$ satisfying $\Upsilon_1(f_1^k u_{\infty}) = (f_1^k b_0) \otimes u_{\infty}$ for all $k \in \NN$ and
\be\label{b1-umption}
\cB_1 \subseteq \{ f_1^k b_0 \mid k \in \NN \} \cup \{e_1^k b_0 \mid k \in \NN \}.
\ee
\end{proposition}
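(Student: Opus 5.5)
We argue by contradiction: assume $b_0$ exists with the stated properties and derive incompatible constraints on the statistics of the elements $f_1^k b_0$. Only the indices $1,2$ matter, so we assume $n\geq 3$ and work inside the $\sqrt{\gl_3}$-part of $\SetTab_n(\infty)$. For explicit values we use the picture of $\SetTab_3(\infty)$ in Figure~\ref{binfty-fig}, transported via Theorem~\ref{bzl-thm}. The facts we use are: $\Upsilon_1$ preserves $\weight$, $\varepsilon_i$, $\varphi_i$ and commutes with the crystal operators whenever both sides are nonzero; $f_i$ never kills an element of $\SetTab_n(\infty)$ (Proposition~\ref{mala-lem}); $\varepsilon_i(u_\infty)=\varphi_i(u_\infty)=0$; and the tensor product rule of Definition~\ref{def:tensor_product}.

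\emph{Step 1: pin down $b_0$ and the weights in $\cB_1$.} From $\Upsilon_1(u_\infty)=b_0\otimes u_\infty$ we get $\weight(b_0)=0$. By \eqref{b1-umption} and axiom (A5), every element of $\cB_1$ has weight in $\ZZ\e_0+\ZZ\e_1+\ZZ\e_2$, since $\alpha_1^{(0)},\alpha_1^{(1)}$ involve only $\e_0,\e_1,\e_2$.

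\emph{Step 2: decide where $f_2$ acts.} The element $f_2u_\infty$ has weight $-\alpha_2^{(1)}=-\e_0+\e_3$, which involves $\e_3$. So the image of $f_2u_\infty$ cannot be $(f_2b_0)\otimes u_\infty$, because the weight of $f_2b_0$ would have to involve $\e_3$, contradicting Step 1. Hence $f_2$ acts on the right factor, so $\varepsilon_2(b_0)<\varphi_2(u_\infty)=0$. In particular $\varepsilon_2(b_0)$ is finite, and by Step 1 and axiom (2) of Definition~\ref{square-root-def} it lies in $\frac12\ZZ_{<0}$.

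Similarly, the hypothesis $\Upsilon_1(f_1^ku_\infty)=(f_1^kb_0)\otimes u_\infty$ forces $f_1$ to act on the left at each step. Matching $\varepsilon_i$ and $\varphi_i$ of $f_1^k u_\infty$, which are read off from $\VCrysR{1}\otimes\VCrysR{2}$, with the tensor formulas \eqref{eq:simpler_ephi} then determines $\varepsilon_1(f_1^kb_0)$ and $\varepsilon_2(f_1^kb_0)$ for all $k$. Since $\cB_1$ has no $2$-edges, the values of $\varepsilon_2$ along the $1$-string are constrained only by these matchings and by axiom (2).

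\emph{Step 3: reach the contradiction at a small element.} Compute $\Upsilon_1$ on $f_1f_2u_\infty$, $f_2f_1u_\infty$, and the elements entering the identity $f_1f_2^2f_1u_\infty=f_2f_1u_\infty$ of Example~\ref{ex:non_twisted_auto}. At each application of $f_i$, use the tensor rule and the values fixed in Steps 1 and 2 to decide which factor is acted on. Step 1 again rules out any left-acting $f_2$, so every $2$-arrow must act on the right factor, while $1$-arrows act on the left or right according to the sign of $\varepsilon_1(f_1^kb_0)-\varphi_1(\cdot)$.

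We expect the $2$-cycle $b_{12}\leftrightarrow f_2b_{12}$ (the loop in Figure~\ref{binfty-fig}) to be decisive. Following it through $\Upsilon_1$ requires $e_1$, respectively $f_1$, to act on the left factor with a specific value of $\varepsilon_1$, equal to $\frac12$ as in Example~\ref{ex:non_twisted_auto}. That value should contradict the $\varepsilon_1$ of $f_1^kb_0$ (or the negativity of $\varepsilon_2(b_0)$) obtained in Step 2. If instead the $1$-arrow is forced onto the right factor, the resulting image should fail to equal the required element, contradicting injectivity or the weight.

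The main obstacle is the bookkeeping in Step 3: choosing the right short words in $f_1,f_2$ and verifying, case by case on the unknown half-integers $\varepsilon_1(b_0)$ and $\varepsilon_2(b_0)$, that each admissible branch is inconsistent. I would first try the word $f_1f_2$ together with the loop $f_2f_1f_2$, and enlarge to $f_1f_2^2f_1$ only if a case survives.
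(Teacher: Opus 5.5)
\textbf{There is a genuine gap.} Step 1 and your derivation of $\Upsilon_1(f_2u_\infty)=b_0\otimes f_2u_\infty$ are fine. The problems are in Step 3 and in the premise it relies on.

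\emph{Step 3 does not prove anything yet.} It is where the contradiction must come from, but as written it is only a list of expectations (``should contradict'', ``should fail'').

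\emph{The premise is false.} You claim at the end of Step 2, and again in Step 3, that $\cB_1$ has no $2$-edges and that $f_2$ never acts on the left factor. Step 1 does not give this. It only excludes a left-acting $f_2$ whose weight change is $-\alpha_2^{(1)}=-\e_0+\e_3$, that is, at sources $x\in\SetTab_n(\infty)$ with $\varphi_2(x)\in\ZZ$. A $2$-step of type $-\alpha_2^{(0)}=\e_0-\e_2$ never leaves $\ZZ\e_0+\ZZ\e_1+\ZZ\e_2$. Concretely, $\cB_1=\sqrtE_1$ with $b_0=\uj{1}{0}$ satisfies \eqref{b1-umption} and has $f_2f_1b_0=b_0$.

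This matters exactly where you want to use it. The $2$-cycle you plan to exploit sits at $b_{21}=f_2f_1u_\infty$, not at $b_{12}$, and there $\varphi_2=\tfrac12$. So your rule says nothing about which factor $f_2$ acts on, and your case analysis would omit precisely the branch that $\sqrtE_1$ realizes. You also plan to case-split on $\varepsilon_1(b_0)$, but it is already determined.

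\emph{The missing idea: no loop is needed, and the contradiction appears at depth two.}

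First, the cases $k=0,1$ force $\varepsilon_1(b_0)=0$. Indeed, $\varepsilon_1(b_0\otimes u_\infty)=\max\{0,\varepsilon_1(b_0)\}$ must equal $\varepsilon_1(u_\infty)=0$. Meanwhile, $f_1$ acting on the left requires $\varepsilon_1(b_0)\ge\varphi_1(u_\infty)=0$.

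Second, compute $\Upsilon_1(f_1f_2u_\infty)$. You have $\Upsilon_1(f_2u_\infty)=b_0\otimes f_2u_\infty$. Since $\varphi_1(f_2u_\infty)=0=\varepsilon_1(b_0)$, the tensor rule makes $f_1$ act on the left. Hence $\Upsilon_1(f_1f_2u_\infty)=(f_1b_0)\otimes f_2u_\infty$.

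Third, compute $\Upsilon_1(f_2f_1u_\infty)$. Here $f_2$ must act on the right of $(f_1b_0)\otimes u_\infty$. Your weight argument does work at this step, because $\varphi_2(f_1u_\infty)=1$. The paper argues differently: a left action would force $f_2f_1b_0=b_0$, hence $\Upsilon_1(f_2f_1u_\infty)=\Upsilon_1(u_\infty)$. Either way, $\Upsilon_1(f_2f_1u_\infty)=(f_1b_0)\otimes f_2u_\infty$.

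Since $f_1f_2u_\infty\neq f_2f_1u_\infty$ but both have the same image, injectivity fails. This is the paper's proof.

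\emph{A caveat shared with the paper.} The argument tacitly uses that $\Upsilon_1$ genuinely intertwines $f_2$ at $u_\infty$. With only the ``both sides nonzero'' compatibility you quote, Step 2 does not exclude $\varepsilon_2(b_0)=0$. That is exactly what happens for $\sqrtE_1$, where $f_2(b_0\otimes u_\infty)=\zero$. Granting the intertwining, your Step 2 does supply the strict inequality $\varepsilon_2(b_0)<0$ that the paper's ``$\varepsilon_2(b_0)\le 0$'' actually needs.
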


\begin{proof}
Suppose there exists such an 
element $b_0 \in \cB_1$.
%
Since the embedding $\Upsilon_1$ preserves weights and all statistics,
we must have $\weight(b_0) = 0$ and $\varepsilon_i(b_0) = \varphi_i(b_0)\leq 0$ for all $i \in [n-1]$ in order for $\Upsilon_1$ to be defined.
Notice that $f_1$ acts on the left factor of $(f_1^k b_0) \otimes u_{\infty}$
since 
\[ f_1\((f_1^k b_0) \otimes u_{\infty}\) = f_1 \Upsilon_1(f_1^k u_\infty) = \Upsilon_1(f_1^{k+1} u_\infty)  = (f_1^{k+1}b_0)\otimes  u_\infty,\] so it also follows that $\varphi_1(b_0) = \varepsilon_1(b_0) \geq \varphi_1(u_{\infty}) = 0$.
Thus $\varphi_1(b_0) = \varepsilon_1(b_0) = 0$.

Since $f_1 f_2 u_{\infty} \neq f_2 f_1 u_{\infty}$, which are both distinct from $u_{\infty}$, we claim that we must have
\[
f_2 f_1(b_0 \otimes u_{\infty}) = f_2\bigl((f_1 b_0) \otimes u_{\infty}\bigr) = (f_1 b_0) \otimes (f_2 u_{\infty})
\quand
f_1 f_2(b_0 \otimes u_{\infty}) = b_0 \otimes (f_1 f_2 u_{\infty}).
\]
For the first equation, we have $f_1$ acting on the left factor by the definition of $\Upsilon_1$, and if $f_2$ acts on the second factor, then $f_2 f_1 b_0 = b_0$ by our assumption \eqref{b1-umption} and weight considerations.
But $f_2f_1(b_0\otimes u_\infty) =\Upsilon_1(f_2f_1u_\infty)$ cannot be equal to
$ b_0\otimes u_\infty=\Upsilon_1(u_\infty)$
since $\Upsilon_1$ is an embedding.

For the second equation, note that $f_2(b_0 \otimes u_{\infty}) = b_0\otimes f_2u_\infty$ since $\varepsilon_2(b_0) \leq 0$, and so 
$
f_1f_2(b_0\otimes u_\infty) $ is either $(f_1b_0)\otimes(f_2u_\infty)$ or $b_0\otimes(f_1f_2u_\infty).
$
Yet the first case cannot occur as
\[
f_1f_2(b_0\otimes u_\infty) = \Upsilon_1(f_1f_2u_\infty) \neq \Upsilon_1(f_2f_1u_\infty) = (f_1b_0)\otimes(f_2u_\infty).
\]
Finally, for the second equation to hold, we need to have $\varepsilon_1(b_0) < \varphi_1(f_2 u_{\infty}) = 0$ by the tensor product rule, which is 
impossible.
Therefore, we have a contradiction and the result holds.
\end{proof}

If we wanted to extend the potential construction in the proof of Proposition~\ref{prop:no_kashiwara_here} a bit further, we would see that $f_1 f_2^2 f_1 u_{\infty} = f_2 f_1 u_{\infty}$ implies $f_1 f_2 f_1 b_0 = f_1 b_0$.
Hence, in order to construct a Kashiwara-like embedding for $\SetTab_n(\infty)$, any reasonable analog of $\cE_i$ must include $2$-cycles.

\begin{figure}[t]
\[
\begin{tikzpicture}[xscale=2, yscale=1.75,>=latex,baseline=(D0.base)]
\node at (1,0) (A1) {$000000$};
\node at (2,0) (A2) {$100000$};
\node at (1,-1) (B1) {$101000$};
\node at (2,-1) (B2) {$200000$};
\node at (0,-2) (C0) {$001000$};
\node at (1,-2) (C1) {$201000$};
\node at (2,-2) (C2) {$210000$};
\node at (0,-3) (D0) {$202000$};
\node at (1,-3) (D1) {$211000$};
\node at (2,-3) (D2) {$220000$};
\node at (0,-4) (E0) {$212200$};
\node at (1,-4) (E1) {$212000$};
\node at (2,-4) (E2) {$221000$};
\node at (0,-5) (F0) {$212210$};
\node at (1,-5) (F1) {$212100$};
\node at (2,-5) (F2) {$222000$};
\node at (0,-6) (G0) {$222210$};
\node at (1,-6) (G1) {$222200$};
\node at (2,-6) (G2) {$222100$};
\draw[->,thick,red]  (A1) -- (A2) node[midway,above,scale=0.75] {$2$};
\draw[->,thick,red]  (A2) -- (B2) node[midway,right,scale=0.75] {$2$};
\draw[->,thick,gray]  (A2) -- (B1) node[midway,right,scale=0.75] {$3$};
\draw[->,thick,gray]  (B1) -- (C0) node[midway,right,scale=0.75] {$3$};
\draw[->,thick,red]  (B1) -- (C1) node[midway,right,scale=0.75] {$2$};
\draw[->,thick,gray]  (B2) -- (C1) node[midway,right,scale=0.75] {$3$};
\draw[->,thick,blue]  (B2) -- (C2) node[midway,right,scale=0.75] {$1$};
\draw[->,thick,red]  ([xshift=0pt]C0.north) -- ([xshift=-8pt]B1.south) node[midway,left,scale=0.75] {$2$};
\draw[->,thick,gray]  (C1) -- (D0) node[midway,right,scale=0.75] {$3$};
\draw[->,thick,gray]  (C2) -- (D1) node[midway,right,scale=0.75] {$3$};
\draw[->,thick,blue]  (C1) -- (D1) node[midway,right,scale=0.75] {$1$};
\draw[->,thick,blue]  (C2) -- (D2) node[midway,right,scale=0.75] {$1$};
\draw[->,thick,gray]  (D1) -- (E1) node[midway,right,scale=0.75] {$3$};
\draw[->,thick,gray]  (D2) -- (E2) node[midway,right,scale=0.75] {$3$};
\draw[->,thick,blue]  (D0) -- (E1) node[midway,right,scale=0.75] {$1$};
\draw[->,thick,blue]  (D1) -- (E2) node[midway,right,scale=0.75] {$1$};
\draw[->,thick,blue]  (E1) -- (F2) node[midway,right,scale=0.75] {$1$};
\draw[->,thick,red]  (E1) -- (F1) node[midway,right,scale=0.75] {$2$};
\draw[->,thick,gray]  (E2) -- (F2) node[midway,right,scale=0.75] {$3$};
\draw[->,thick,red]  (F1) -- (E0) node[midway,left,scale=0.75] {$2$};
\draw[->,thick,blue]  (F1) -- (G2) node[midway,right,scale=0.75] {$1$};
\draw[->,thick,red]  (F2) -- (G2) node[midway,right,scale=0.75] {$2$};
\draw[->,thick,red]  (G2) -- (G1) node[midway,above,scale=0.75] {$2$};
\draw[->,thick,blue]  (E0) -- (F0) node[midway,right,scale=0.75] {$1$};
\draw[->,thick,blue]  (F0) -- (G0) node[midway,right,scale=0.75] {$1$};
     \end{tikzpicture}
\hspace{60pt}
    \begin{tikzpicture}[xscale=2, yscale=1.75,>=latex,baseline=(D0.base)]
\node at (1,0) (A1) {$000000$};
\node at (2,0) (A2) {$010000$};
\node at (1,-1) (B1) {$011000$};
\node at (2,-1) (B2) {$020000$};
\node at (0,-2) (C0) {$012000$};
\node at (1,-2) (C1) {$021000$};
\node at (2,-2) (C2) {$020100$};
\node at (-1,-3) (D0) {$012010$};
\node at (0,-3) (D1) {$022000$};
\node at (1,-3) (D2) {$021100$};
\node at (2,-3) (D3) {$020200$};
\node at (0,-4) (E0) {$022010$};
\node at (1,-4) (E1) {$022100$};
\node at (2,-4) (E2) {$021200$};
\node at (1,-5) (F1) {$022110$};
\node at (2,-5) (F2) {$022200$};
\node at (0,-5) (G1) {$022120$};
\node at (2,-6) (G2) {$022210$};
\node at (1,-6) (H1) {$022220$};
\draw[->,thick,red]  (A1) -- (A2) node[midway,above,scale=0.75] {$2$};
\draw[->,thick,red]  (A2) -- (B2) node[midway,right,scale=0.75] {$2$};
\draw[->,thick,gray]  (A2) -- (B1) node[midway,right,scale=0.75] {$3$};
\draw[->,thick,gray]  (B1) -- (C0) node[midway,right,scale=0.75] {$3$};
\draw[->,thick,red]  (B1) -- (C1) node[midway,right,scale=0.75] {$2$};
\draw[->,thick,gray]  (B2) -- (C1) node[midway,right,scale=0.75] {$3$};
\draw[->,thick,blue]  (B2) -- (C2) node[midway,right,scale=0.75] {$1$};
\draw[->,thick,red]  (C0) -- (D0) node[midway,right,scale=0.75] {$2$};
\draw[->,thick,gray]  (C1) -- (D1) node[midway,right,scale=0.75] {$3$};
\draw[->,thick,blue]  (C1) -- (D2) node[midway,right,scale=0.75] {$1$};
\draw[->,thick,gray]  (C2) -- (D2) node[midway,right,scale=0.75] {$3$};
\draw[->,thick,blue]  (C2) -- (D3) node[midway,right,scale=0.75] {$1$};
\draw[->,thick,red]  (D0) -- (E0) node[midway,right,scale=0.75] {$2$};
\draw[->,thick,blue]  (D2) -- (E2) node[midway,right,scale=0.75] {$1$};
\draw[->,thick,gray]  (D3) -- (E2) node[midway,right,scale=0.75] {$3$};
\draw[->,thick,blue]  (D1) -- (E1) node[midway,right,scale=0.75] {$1$};
\draw[->,thick,gray]  (D2) -- (E1) node[midway,right,scale=0.75] {$3$};
\draw[->,thick,blue]  (E0) -- (F1) node[midway,right,scale=0.75] {$1$};
\draw[->,thick,red]  (E1) -- (F1) node[midway,right,scale=0.75] {$2$};
\draw[->,thick,blue]  (E1) -- (F2) node[midway,right,scale=0.75] {$1$};
\draw[->,thick,gray]  (E2) -- (F2) node[midway,right,scale=0.75] {$3$};
\draw[->,thick,red]  (F1) -- (G1) node[midway,above,scale=0.75] {$2$};
\draw[->,thick,blue]  (F1) -- (G2) node[midway,right,scale=0.75] {$1$};
\draw[->,thick,red]  (F2) -- (G2) node[midway,right,scale=0.75] {$2$};
\draw[->,thick,blue]  (G1) -- (H1) node[midway,above,scale=0.75] {$1$};
\draw[->,thick,red]  (G2) -- (H1) node[midway,above,scale=0.75] {$2$};
     \end{tikzpicture}
\]
\caption{For $n = 4$ and $J = (2,1,3,2,1,3)$, the crystal graph of $\sqrtB_J(\Lambda_2)$ (left) and the connected component of $r_{\lambda} \otimes \elem{\BZL}{0,\dotsc,0} \in \cR_{\Lambda_2} \otimes \sqrtE_{\BZL}$ (right).
The word $ABCDEF$ represents the element $r_{\Lambda_2} \otimes \uj{J}{-\frac{A}{2},-\frac{B}{2},-\frac{C}{2},-\frac{D}{2},-\frac{E}{2},-\frac{F}{2}}$ and $r_{\Lambda_2} \otimes \elem{\BZL}{-\frac{A}{2},-\frac{B}{2},-\frac{C}{2},-\frac{D}{2},-\frac{E}{2},-\frac{F}{2}}$, respectively.
}
\label{rb-non-fig}
\end{figure}

\begin{remark}
\label{rem:non_BZL_failure}
One might hope that for reduced words $J$ for $w_0 \in S_n$ outside the restricted commutation class of $\BZL$, the crystals $\cB_J^{\halfpower}(\lambda)$ for $\lambda \in P^+$ are still interesting objects.
However, we do not have any version of Theorem~\ref{thm:standard_characters} for these crystals.
Indeed, for such words $J$ we typically have $\cB_{J}^{\halfpower}(\lambda) \not\iso \SetTab_n(\lambda)$ and while the crystal
$\cB_{J}^{\halfpower}(\lambda)$ is still finite, it can fail to be regular, and its character may not even be symmetric; for example, compare Figure~\ref{rb-fig} with Figure~\ref{rb-non-fig} (left).
\end{remark}

\subsection{Other definitions of square root crystals}

From the Kashiwara $\gl_n$-crystal construction of $\cB(\lambda)$ using $\cB(\infty)$ and the polyhedral model, it would be natural to guess that we could obtain $\sqrtB(\infty)$ by using the $\sqgln$-crystal $\cE_i$ from Example~\ref{ex:elem_crystal} and performing the analogous tensor product construction.
Indeed, the looped path crystal $\sqrtE_j$ is a somewhat unusual variant of the classical elementary crystals $\cE_j$.

However, no embedding as in Theorem~\ref{bzl-thm} holds if $\sqrtE_j$ is replaced by these crystals, and for sequences $J = (j_k \in I)_{k=1}^{\ell}$, the crystals $\overline \cB_{J}^{\halfpower}(\lambda)$, which we form by swapping out $\sqrtE_j$ for the crystal $\cE_j$ in the definition of $\cB_{J}^{\halfpower}(\lambda)$, exhibit similar pathologies.

\begin{remark}
Making this precise, we first define $\overline{\cB}^{\halfpower}_J(\infty) \subseteq \cE_J := \cE_{j_1} \otimes \cdots \otimes \cE_{j_N}$ as the subcrystal lower generated by $\{\elem{J}{0,\dotsc,0}\}$ for some reduced word $J = (j_1, \dotsc, j_{\ell})$ of  $w_0 \in S_n$, and then define $\overline{\cB}^{\halfpower}_J(\lambda) := \cR_{\lambda} \otimes \overline{\cB}^{\halfpower}_J(\infty)$ for all $\lambda \in P^+$.
The characters of these crystals are not always symmetric, even when using the BZL word, as can be checked using Figure~\ref{rb-non-fig} (contrast this with Figure~\ref{rb-fig}).
\end{remark}

\begin{example}
The $\sqgln$-crystals $\cE_i$ from Example~\ref{ex:elem_crystal} do not satisfy the braid relation $\cE_i \otimes \cE_j \otimes \cE_i \not\iso \cE_j \otimes \cE_i \otimes \cE_j$ for $\abs{i - j} = 1$.
We can see this by noting in $\cE_{(1,2,1)}$ that we have
\[
f_1 f_2 \elem{(1,2,1)}{0,0,0} = \elem{(1,2,1)}{-1,-1,0} = f_2 f_1 \elem{(1,2,1)}{0,0,0},
\]
whereas in $\cE_{(2,1,2)}$, we have distinct elements
\begin{align*}
f_1 f_2 \elem{(2,1,2)}{0,0,0} & = \elem{(2,1,2)}{-1,-1,0},
& 
f_2 f_1 \elem{(2,1,2)}{0,0,0} & = \elem{(2,1,2)}{0,-1,-1}.
\end{align*}
However, it does hold that $\cE_i \otimes \cE_j \iso \cE_j \otimes \cE_i$ whenever $\abs{i - j} > 1$.
\end{example}

These observations are again reflecting the asymmetry between $\alpha_i^{(0)}$ and $\alpha_i^{(1)}$.

\subsection{Dualities}
\label{sec:dualities}

For $\gl_n$-crystals, there are three primary dualities that come from (anti)involutions of an associated quantum group and/or root system:
\begin{itemize}
\item[$\vee$:] \defn{Contragredient duality} given by interchanging the crystal operators $e_i \leftrightarrow f_i$ and negating the weight map (first introduced in~\cite{Kashiwara93}).

\item[$\dagger$:] \defn{Lusztig duality} given by the action of $w_0$, which for $\gl_n$ corresponds to interchanging the crystal operators $e_i \leftrightarrow f_{n-i}$ and twisting the weight map by $w_0$ (see, e.g.,~\cite{Lusztig93,Lenart07}).

\item[$*$:] The \defn{$*$-involution}, which is only defined on $\cB(\infty)$ and introduced in~\cite{Kashiwara93}.
  This duality is closely related to the Kashiwara embedding and can be used to characterize $\cB(\infty)$.
\end{itemize}

Both the contragredient dual and the Lusztig dual operations send highest weight elements to lowest weight elements.

Let us examine the first two dualities for polynomial $\sqgln$-crystals.
The definitions of $\vee$ and $\dagger$ naturally extend to all $\sqgln$-crystals, and they act functorially as for Kashiwara crystals (see, e.g.,~\cite{Kashiwara95,Lenart07}).
For the Lusztig involution, this was noted just before~\cite[Prop.~4.41]{MT2023}.

\begin{proposition}
Contragredient duality and the Lusztig involution are involutive functors on $\sqrt{\sC}$ that reverse tensor products; that is, $(\cB \otimes \cC)^{\square} \iso \cC^{\square} \otimes \cB^{\square}$ when $\square \in\{ \vee, \dagger\}$.
\end{proposition}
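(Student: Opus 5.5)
The first step is to pin down the two operations on objects of $\sqrt{\sC}$ precisely, since the paper only describes them informally. For contragredient duality I will set $\cB^\vee = \{b^\vee \mid b\in\cB\}$ with
\[
e_i b^\vee := (f_i b)^\vee,\quad f_i b^\vee := (e_i b)^\vee,\quad \varepsilon_i(b^\vee):=\varphi_i(b),\quad \varphi_i(b^\vee):=\varepsilon_i(b),\quad \weight(b^\vee):=-\weight(b).
\]
For the Lusztig involution I will set $\cB^\dagger=\{b^\dagger\}$ with
\[
e_i b^\dagger := (f_{n-i}b)^\dagger,\quad \varepsilon_i(b^\dagger):=\varphi_{n-i}(b),\quad \weight(b^\dagger):=\omega(\weight(b)),
\]
where $\omega$ is a lattice automorphism fixing $\e_0$ up to sign and acting by $w_0$ on the $\e_k$. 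The choice of $\omega$ is dictated by compatibility with the split roots $\alpha_i^{(m)}$.

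The first substantive check is that $\cB^\square$ satisfies Definition~\ref{square-root-def}.
\begin{itemize}
\item Axioms (1) and (2) are immediate, because $\langle -\lambda,\alpha_i^\vee\rangle = -\langle\lambda,\alpha_i^\vee\rangle$ and $\langle w_0\lambda,\alpha_i^\vee\rangle = -\langle \lambda,\alpha_{n-i}^\vee\rangle$.
\item Axiom (3) is the only nontrivial point: the weight-change rule depends on whether $\varepsilon_i$ is integral.
\item For $\vee$, $e_i b^\vee = (f_ib)^\vee$ changes the weight by $+\alpha_i^{(m)}$, where $m\equiv 2\varphi_i(b)+1$, by (A5$'$) of Proposition~\ref{prime-prop}. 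So the weight convention must be rewritten as the $-\e_0$ negation twisted so that the parity shift by $1$ is absorbed. Concretely, I will verify that $-\alpha_i^{(m+1)}$ equals $\alpha_i^{(m)}$ up to the convention chosen. If it does not, I will replace the weight by $-\weight(b)$ composed with $\e_0\mapsto -\e_0+\dots$ as needed.
\item For $\dagger$, the same parity bookkeeping applies, together with the identity $\omega(\alpha_{n-i}^{(m)})=-\alpha_i^{(m')}$.
\end{itemize}
This bookkeeping is where I expect the main obstacle. The asymmetry of $\bal$ noted in Section~\ref{sec:dualities} means $\omega$ cannot simply be $w_0$ on $\e_1,\dots,\e_n$. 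I would first try $\omega(\e_0)=\e_0-\Lambda_n$-type shifts, or negation of $\e_0$, and test them on $\SS_n$ before committing.

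Once the objects are defined, functoriality is routine. A morphism $\Psi$ induces $\Psi^\square(b^\square):=\Psi(b)^\square$, and conditions (M0)–(M3) of Definition~\ref{morphism-def} transform into one another, because $\square$ swaps the roles of (M2) and (M3) while preserving (M1). Involutivity holds since $(b^\square)^\square$ recovers $b$ with all the data, using $\omega^2=\id$ and $n-(n-i)=i$.

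For the tensor reversal, I will show that $(b\otimes c)^\square\mapsto c^\square\otimes b^\square$ is an isomorphism, and I will do this only for $\vee$; the $\dagger$ case is identical after relabeling $i\leftrightarrow n-i$. The key is the form \eqref{eq:simpler_ephi}. One computes $\varepsilon_i(c^\vee\otimes b^\vee)=\varphi_i(b)+\max\{0,\varphi_i(c)-\varepsilon_i(b)\}=\varphi_i(b\otimes c)$, and similarly for $\varphi_i$; the infinite cases are handled by the $\max$ formulas. For the operators:
\begin{itemize}
\item $e_i(c^\vee\otimes b^\vee)$ acts on the right factor $b^\vee$ exactly when $\varphi_i(c)\le\varepsilon_i(b)$.
\item In that case it yields $c^\vee\otimes (f_ib)^\vee$.
\item This matches $(f_i(b\otimes c))^\vee=((f_ib)\otimes c)^\vee$, since $f_i$ acts on the left factor exactly when $\varepsilon_i(b)\ge\varphi_i(c)$.
\item The strict and weak inequalities interchange correctly, and the complementary case is symmetric.
\end{itemize}
Weights add, so the map is an isomorphism.
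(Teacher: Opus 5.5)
The gap is the step you defer as ``bookkeeping'': for $\vee$ it cannot be closed by any choice of weight map. In $\SS_3$ there is the $2$-cycle $f_2\{1,2,3\}=\{1,3\}$, $f_1\{1,3\}=\{1,2,3\}$. In $\SS_3^\vee$ this gives $e_2\{1,2,3\}^\vee=\{1,3\}^\vee$ with $\varepsilon_2(\{1,2,3\}^\vee)=\varphi_2(\{1,2,3\})=\tfrac12$. It also gives $e_1\{1,3\}^\vee=\{1,2,3\}^\vee$ with $\varepsilon_1(\{1,3\}^\vee)=\varphi_1(\{1,3\})=1$. Axiom (3) of Definition~\ref{square-root-def} then forces the weight to change by $(\e_0-\e_3)+(\e_1-\e_0)=\e_1-\e_3\neq 0$ around a closed loop. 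Hence $\SS_3^\vee\notin\sqrt{\sC}$ for every possible $\weight(b^\vee)$. The identity ``$-\alpha_i^{(m+1)}=\alpha_i^{(m)}$ up to convention'' that you hope to verify is false, and no twist of $\e_0$ repairs it.

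What your (A5$'$) computation actually shows is this: with $\weight(b^\vee):=-\weight(b)$, $\cB^\vee$ is an abstract crystal for the root type $\boldsymbol{\gamma}$ with $\gamma_i^{(m)}:=\alpha_i^{(m+1)}$, i.e.\ with the two halves of each $\alpha_i$ swapped. After $\e_0\mapsto-\e_0$ this is the split $-\e_0-\e_{i+1}$, $\e_0+\e_i$ mentioned at the end of Section~\ref{sec:dualities}. So $\vee$ is a pair of mutually inverse functors $\sqrt{\sC}\rightleftarrows\sC_{\boldsymbol{\gamma}}$ given by the same recipe, not an endofunctor of $\sqrt{\sC}$. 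Definition~\ref{def:tensor_product} never refers to the split roots, so your functoriality argument and your tensor-reversal computation (which is correct) go through verbatim in that setting.

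Shifting $\varepsilon_i(b^\vee)$ and $\varphi_i(b^\vee)$ by $\tfrac12$ would restore axiom (3) inside $\sqrt{\sC}$. But then $\cB^{\vee\vee}$ has every $\varepsilon_i,\varphi_i$ shifted by $\pm1$, so involutivity fails. The paper's one-line proof follows the same route as yours and does not check this axiom either. The obstruction you sensed is real, but your proposal does not resolve it, and the $\vee$ half of the statement has to be read in the corrected form above.

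For $\dagger$ the situation is the opposite: there is no obstacle, and $\omega$ can be exactly $w_0$ fixing $\e_0$, as the paper specifies. By (A5$'$) for $f_{n-i}$, $e_ib^\dagger=(f_{n-i}b)^\dagger$ moves the weight by $-w_0\alpha_{n-i}^{(m+1)}$ with $m=2\varepsilon_i(b^\dagger)$. Now $-w_0\alpha_{n-i}^{(0)}=\e_0-\e_{i+1}=\alpha_i^{(1)}$ and $-w_0\alpha_{n-i}^{(1)}=-\e_0+\e_i=\alpha_i^{(0)}$, which is precisely (A5). Axiom (A2) follows from $w_0\alpha_i=-\alpha_{n-i}$. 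So $\dagger$ is a genuine involutive endofunctor of $\sqrt{\sC}$, and your tensor argument with $i\leftrightarrow n-i$ completes that case.
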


\begin{proof}
As for Kashiwara crystals, this follows directly from the definitions and the tensor product rule, noting that we interchange $\varepsilon_i \leftrightarrow \varphi_j$ for specific $i, j$.
\end{proof}

The category of polynomial $\sqgln$-crystals is not closed under $\vee$.
In the following example, note that tensoring a $\sqgln$-crystal with $\cT_{k_n \Lambda_n + k_0 \e_0}$ for $k_n ,k_0 \in \ZZ$  only shifts the weight map by a constant as $\langle k_n \Lambda_n + k_0 \e_0, \alpha_i^{\vee} \rangle = 0$ for all $i \in [n-1]$.
In particular, this does not change the crystal graph or the statistics $\varepsilon_i$, $\varphi_i$,
and we have $\cT_{k_n \Lambda_n + k_0 \e_0} \otimes \cT_\lambda \cong \cT_{\lambda + k_n \Lambda_n + k_0 \e_0}$.

\begin{example}
\label{ex:dual_nonclosure}
Consider the standard $\sqrt{\gl_3}$-crystal $\SS_3$, which has a unique highest weight element of weight $\Lambda_1$.
The contragredient dual crystal $\SS_3^{\vee}$ also has a unique highest weight element, but now of weight $-w_0 \Lambda_1 = \Lambda_2 - \Lambda_3$.
We claim that $\SS_3^{\vee}$ is not isomorphic to a (weight shifted) polynomial crystal.
To see this, let $\mu \in \ZZ \e_0 \oplus \NN \Lambda_3$.
When $n=3$ the symmetric Grothendieck polynomial $G_{\Lambda_2+\mu}(x; \beta)$ is a sum of 5 monomials, as opposed to $7 = \abs{\SS_3^{\vee}}$.
By Theorem~\ref{thm:standard_characters}, any polynomial $\sqrt{\gl_3}$-crystal with unique highest weight $\Lambda_2 + \mu$ must have $5$ elements.
Hence,  $\cT_{\mu} \otimes \SS_3^{\vee} \notin \sqrt{\sC}_{\poly}$.
\end{example}

Polynomial $\sqgln$-crystals are better behaved under the Lusztig involution $\dagger$.
This is defined exactly as for $\gl_n$-crystals, except that $w_0$ twists the weight map using the action
\[
w_0 \colon\lambda_0 \e_0 + \lambda_1\e_1 + \lambda_2 \e_2 + \dots +\lambda_n \e_n 
\mapsto 
\lambda_0 \e_0 + \lambda_n \e_1 + \dots + \lambda_2 \e_{n-1}+ \lambda_1\e_n.
\]

\begin{proposition}[{\cite[Props.~4.41 and 4.42]{MT2023}}]
The category $\sqrt{\sC}_{\poly}$ is closed under the Lusztig involution~$\dagger$.
Moreover, if $\lambda=k_n \Lambda_n + k_i \Lambda_i$ for $k_i, k_n\in \NN$, then $\SetTab_n(\lambda)^{\dagger} \iso \SetTab_n(\lambda)$.
\end{proposition}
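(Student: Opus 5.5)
The plan is to exploit the functoriality of $\dagger$ established in the preceding proposition. First I would reduce both claims to statements about the standard crystal $\SS_n$ and about reading words.

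\emph{Closure of $\sqrt{\sC}_{\poly}$.} Since $\dagger$ is an involutive functor that reverses tensor products, it suffices to check three things.
\begin{enumerate}
\item The functor $\dagger$ commutes with direct sums and sends full subcrystals to full subcrystals. This is clear, because $\dagger$ does not change the underlying crystal graph, only relabels the arrows by $i\leftrightarrow n-i$.
\item We have $\one^\dagger\iso\one$.
\item We have $\SS_n^\dagger\iso\SS_n$.
\end{enumerate}
Granting these, every iterated tensor product $(\SS_n^{\otimes k})^\dagger\iso(\SS_n^\dagger)^{\otimes k}\iso\SS_n^{\otimes k}$ with the factors reversed, and full subcrystals go to full subcrystals. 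This gives closure.

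For (3) I would use the explicit bijection $S\mapsto w_0(S):=\{n+1-s\mid s\in S\}$. The map $w_0$ sends $\{n-i,n-i+1\}$ to $\{i,i+1\}$ with the two elements swapped. Hence the case $S\cap\{n-i,n-i+1\}=\{n-i\}$, in which $f_{n-i}$ adds $n-i+1$, corresponds exactly to the case $w_0(S)\cap\{i,i+1\}=\{i+1\}$, in which $e_i$ adds $i$. Likewise the ``remove'' cases correspond, and $\varphi_{n-i}(S)=\varepsilon_i(w_0S)$. The coefficient of $\e_0$ in the weight is $1-\abs{S}$, which is unchanged, and the $\e_i$-part is twisted by $w_0$. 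So the map $S\mapsto w_0(S)$ is a crystal isomorphism $\SS_n^\dagger\to\SS_n$, and this is a routine case check.

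\emph{Self-duality of $\SetTab_n(\lambda)$.} I would first strip off the full columns. A column of height $n$ has the unique filling $\{1\},\dots,\{n\}$, so $\SetTab_n(\Lambda_n)$ is a single element with $\varepsilon_i=\varphi_i=0$ and weight $\Lambda_n$. Tensoring with it only shifts weights by the $w_0$-invariant vector $\Lambda_n$. By Corollary~\ref{cor:column_description} it therefore suffices to treat the rectangle $\lambda=k\Lambda_i$.

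For the rectangle, define $\mathrm{rot}(T)$ by rotating $T$ by $180^\circ$ and replacing every entry $a$ by $n+1-a$. This map preserves the rectangular shape and the set-valued semistandard conditions: the condition $\max A\le\min B$ turns into $\max w_0B\le\min w_0A$, and similarly for columns. Now take an admissible reading order $\psi$ of the rectangle. Then $\psi'(c):=k i+1-\psi(\mathrm{rot}(c))$ is again admissible, since rotation reverses the northwest order. Applying the isomorphism from (3) factorwise and reversing the tensor order, we get
\[
R_\psi(T)^\dagger\;\mapsto\;R_{\psi'}(\mathrm{rot}\,T).
\]
This map is a crystal isomorphism $(\SS_n^{\otimes ki})^\dagger\iso\SS_n^{\otimes ki}$, and it carries $R_\psi(\SetTab_n(\lambda))^\dagger$ onto $R_{\psi'}(\SetTab_n(\lambda))$. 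By Lemma~\ref{lemma:admissible_reading}, both sides are $\SetTab_n(\lambda)$ with its standard crystal structure, which gives $\SetTab_n(\lambda)^\dagger\iso\SetTab_n(\lambda)$.

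I expect the main obstacle to be checking that the tensor-reversal isomorphism for $\dagger$ is compatible with the admissible-reading identification. Concretely, I must confirm that the reversed, $w_0$-twisted word of an admissible reading is again an admissible reading; this is exactly where the rectangular shape is needed, and it explains why the statement is restricted to $\lambda=k_n\Lambda_n+k_i\Lambda_i$.
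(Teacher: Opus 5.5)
Your proposal is correct. The paper gives no argument of its own for this statement; it only cites [MT2023, Props.~4.41 and~4.42]. So your write-up is a self-contained proof rather than a variant of an in-paper one, and each of its steps checks out:
\begin{itemize}
\item The map $S\mapsto w_0(S)$ intertwines $e_i^\dagger=f_{n-i}$, $\varepsilon_i^\dagger=\varphi_{n-i}$ and the $w_0$-twisted weight with the structure of $\SS_n$.
\item Closure follows by minimality. The class of $\cB$ with $\cB^\dagger\in\sqrt{\sC}_{\poly}$ contains $\SS_n$ and $\one$, and it is closed under full subcrystals, direct sums and (reversed) tensor products. Hence it contains $\sqrt{\sC}_{\poly}$.
\item On a rectangle, $180^\circ$ rotation combined with $w_0$ on the entries is a bijection of $\SetTab_n(k\Lambda_i)$. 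It matches the reversed, factorwise $w_0$-twisted reading word.
\end{itemize}

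Three small remarks.

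\emph{The step you flag as the main obstacle is automatic.} For the column reading order on an $i\times k$ rectangle, $\psi(r,c)=(c-1)i+(i+1-r)$ and $\psi(\rho(r,c))=(k-c)i+r$. Hence $ki+1-\psi(\rho(r,c))=\psi(r,c)$, so $\psi'=\psi$. Your isomorphism is therefore just $R_{\col}(T)^\dagger\mapsto R_{\col}(\mathrm{rot}\,T)$, and Lemma~\ref{lemma:admissible_reading} is not needed. The same invariance holds for the row reading order.

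\emph{The reduction to $k\Lambda_i$ is cleaner stated directly than via Corollary~\ref{cor:column_description}.} The first $k_n$ columns are forced to be $\{1\},\dots,\{n\}$, and they impose no condition on the remaining columns. So $R_{\col}(\SetTab_n(\lambda))=u_{\Lambda_n}^{\otimes k_n}\otimes R_{\col}(\SetTab_n(k_i\Lambda_i))$. Since $u_{\Lambda_n}$ has $\varepsilon_j=\varphi_j=0$ and $\langle\Lambda_n,\alpha_j^\vee\rangle=0$, tensoring it on the left of a lower regular crystal is a pure weight shift by $\Lambda_n=w_0\Lambda_n$.

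\emph{$\dagger$ also reverses arrows.} Besides relabeling $i\leftrightarrow n-i$, it reverses every edge, since $f_i^\dagger=e_{n-i}$. This does not affect your step (1), because connected components are unchanged.

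Your explanation of why the rectangle hypothesis is needed is consistent with the paper's remark that $\SetTab_3(\Lambda_1+\Lambda_2)^\dagger\not\iso\SetTab_3(\Lambda_1+\Lambda_2)$. Rotating a non-rectangular shape produces a skew shape, and there is no square-root rectification available to straighten it.
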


However, the isomorphism $\SetTab_n(\lambda)^{\dagger} \iso \SetTab_n(\lambda)$ does not hold in general, in contrast to the case for Kashiwara crystals.
For example, a direct computation shows that for $\lambda = \Lambda_1+\Lambda_2$, the $\sqrt{\gl_3}$-crystal $\SetTab_3(\lambda)^{\dagger} \not\iso \SetTab_3(\lambda)$ from~\cite[Fig.~4.1]{Yu23}.
However, $\SetTab_3(\lambda)^{\dagger}$ is isomorphic to the connected component of $\SS^{\otimes 3}$ containing $\{1\} \otimes \{2\} \otimes \{1\}$.

As our final result, we mention dual analogs to Theorems~\ref{thm:crystal_SVT} and \ref{thm:standard_characters}.

\begin{corollary}
The following properties hold:

\begin{enumerate}
\item[(a)] Suppose $\cB \in \sqrt{\sC}_{\poly}$ has lowest weight elements of weight $\lambda^{(1)},\lambda^{(2)}, \dotsc, \lambda^{(k)}$. Then
\[
\ch(\cB) = \sum_{m=1}^k G_{w_0\lambda^{(m)}}(x; \beta).
\]
\item[(b)] There exists a filtration of $\SetTab_n(\lambda)^{\dagger}$ by $w \in S_n$ relative to Bruhat order given by
\[
\SetTab_n(\lambda)^{\dagger}_w := \fkD_{i_1}^{\dagger} \cdots \fkD_{i_N}^{\dagger} \{u_{\lambda}^{\ddagger}\},
\]
where $(i_1, \dotsc, i_N)$ is any Hecke word for $w$, where the \defn{dual crystal Demazure operator} $\fkD_i^{\dagger}$ is
\begin{subequations}
\begin{align}
\fkD^{\dagger}_i X & := \{ b \in \cB \mid f_i^k b\in X\text{ for some }k \geq 0\}
\\ & = \{ e_i^k x \mid k\geq 0 \text{ and } x \in X\} \setminus\{\zero\},
\end{align}
\end{subequations}
and where $u_{\lambda}^{\ddagger}$ is the unique lowest weight element of $\SetTab_n(\lambda)^{\dagger}$.
\end{enumerate}
\end{corollary}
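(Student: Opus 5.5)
The plan is to transport Theorems~\ref{thm:crystal_SVT} and \ref{thm:standard_characters} through the Lusztig involution $\dagger$. Three facts do the work: $\dagger$ is an involutive functor on $\sqrt{\sC}$; $\sqrt{\sC}_{\poly}$ is closed under $\dagger$ (by the cited result of \cite{MT2023}); and on characters $\dagger$ acts by $x^{\mu}\mapsto x^{w_0\mu}$.

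For part (a), let $\cB\in\sqrt{\sC}_{\poly}$ and consider $\cB^{\dagger}$, which again lies in $\sqrt{\sC}_{\poly}$.
\begin{itemize}
\item Since $\dagger$ exchanges $e_i$ with $f_{n-i}$, an element $b$ is lowest weight in $\cB$ exactly when it is highest weight in $\cB^{\dagger}$.
\item Its weight in $\cB^{\dagger}$ is $w_0\weight(b)$. So the highest weights of $\cB^{\dagger}$ are $w_0\lambda^{(1)},\dots,w_0\lambda^{(k)}$.
\item Theorem~\ref{thm:standard_characters} then gives $\ch(\cB^{\dagger})=\sum_m G_{w_0\lambda^{(m)}}(x;\beta)$.
\item Also $\ch(\cB^{\dagger})=w_0\cdot\ch(\cB)=\ch(\cB)$, because the character of a polynomial crystal is $S_n$-symmetric (Proposition~\ref{sym-prop}).
\end{itemize}
One point needs care: the weights $w_0\lambda^{(m)}$ must have the form \eqref{gen-weight} so that $G$ is defined. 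This is exactly the output of Theorem~\ref{thm:standard_characters} applied to $\cB^{\dagger}$, so nothing extra has to be checked.

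For part (b), I will compare Demazure operators on the two sides. For $X\subseteq\cB$ we have $\fkD^{\dagger}_i X=\{e_i^k x\}\setminus\{\zero\}$. Inside $\cB^{\dagger}$, the operator $e_i$ is the original $f_{n-i}$, so this set equals $\fkD_{n-i}X$ computed in the original crystal. (One could also read $\fkD^\dagger$ literally in $\cB^\dagger$; the plan is the same either way.)

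Under the identification of underlying sets $\SetTab_n(\lambda)^{\dagger}=\SetTab_n(\lambda)$, the lowest weight element $u_\lambda^{\ddagger}$ of $\SetTab_n(\lambda)^{\dagger}$ is the highest weight element $u_\lambda$ of $\SetTab_n(\lambda)$. This uses uniqueness of the highest weight element in $\SetTab_n(\lambda)$, which holds since the crystal is lower generated by $u_\lambda$. Hence
\[
\fkD_{i_1}^{\dagger}\cdots\fkD_{i_N}^{\dagger}\{u_\lambda^{\ddagger}\}=\fkD_{n-i_1}\cdots\fkD_{n-i_N}\{u_\lambda\}=\SetTab_n(\lambda)_{w_0 w w_0}.
\]
The last equality is Theorem~\ref{thm:crystal_SVT}. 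It applies because conjugation by $w_0$ is the diagram automorphism $s_i\mapsto s_{n-i}$, which preserves Demazure products; therefore $(n-i_1,\dots,n-i_N)$ is a Hecke word for $w_0ww_0$. Independence of the choice of Hecke word for $w$ now follows from the same statement for $w_0ww_0$. Conjugation by $w_0$ is a Bruhat order automorphism, so the filtration property transfers directly.

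The main obstacle is bookkeeping: keeping straight which operators are interchanged and making sure the lowest element of the dual is identified correctly. No genuinely new combinatorics should be needed.
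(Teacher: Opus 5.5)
Your proposal is correct and follows the route the paper intends: the corollary is stated without a separate proof as the transport of Theorems~\ref{thm:crystal_SVT} and~\ref{thm:standard_characters} through the Lusztig involution $\dagger$. You use exactly the intended ingredients: closure of $\sqrt{\sC}_{\poly}$ under $\dagger$, $S_n$-symmetry of characters, and the identifications $\fkD^\dagger_i=\fkD_{n-i}$ and $u_\lambda^\ddagger=u_\lambda$, which give $\SetTab_n(\lambda)^\dagger_w=\SetTab_n(\lambda)_{w_0ww_0}$. The only point to tighten is your parenthetical remark: the operators in $\fkD^\dagger_i$ must be those of $\SetTab_n(\lambda)^\dagger$, as in your main computation, since reading them in $\SetTab_n(\lambda)$ starting from $u_\lambda$ would only ever produce $\{u_\lambda\}$.
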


We do not currently know an analog of the $*$-involution because we do not have a Kashiwara embedding analog for arbitrary $i$ (only for the initial prefix of the BZL word~\eqref{bzl-eq}).
Combined with the lack of a nice subcategory of $\sqgln$-crystals closed under contragredient duality, this suggests that there may not exist a good analog of the Lie algebra $\gl_n$ or the quantum group $U_q(\gl_n)$ for $\sqgln$-crystals.

On the other hand, the asymmetry between $\alpha_i^{(0)}$ and $\alpha_i^{(1)}$, as well as the lack of contragredient duality, suggests that we can build an interesting theory of dual $\sqgln$-crystals by using the other natural choice of $\bfN$-root data with the simple roots split by $\overline{\alpha}_i^{(0)} = -\e_0 - \e_{i+1}$ and $\overline{\alpha}_i^{(1)} = \e_0 + \e_i$.
For example, we would then expect the looped $j$-path crystals to involve $\{j-1, j\}$ instead of $\{j, j+1\}$.
In this case, we would have these as split roots inside of a type $B_n$ or $C_n$ root system instead of a type $A_n$ root system, and so it might provide some additional flexibility because it has two different root lengths.
We leave this as the following open problem.

\begin{problem}
Determine if there is a category of ``dual $\sqgln$-crystals'' with 
a polynomial subcategory
analogous to the constructions given in this paper. Identify ``dual'' versions of symmetric Grothendieck polynomials whose sums give the character
of any polynomial dual crystal.
\end{problem}

We mention one other future direction. 
This paper has focused on  $\bfN$-root crystals corresponding to 
the root system and weight lattice of type $\gl_n$. 
There appear to be interesting variants of this category for 
both of the Lie superalgebras $\q_n$ and $\gl_{m|n}$ in type A.
Some results on (regular) square root crystals for the queer Lie superalgebra $\q_n$
 already appear in \cite{MT2023}.
On the other hand, \cite{SalScr} studies a limiting object $\cB(-\infty)$
for polynomial $\q_n$-crystals as developed in \cite{GJKKK10,GJKKK}.

\begin{problem}
Extend the $\cB(-\infty)$ construction in \cite{SalScr}
to square root crystals. More generally, develop the basic properies of 
the Lie superalgebra versions of square root crystals in type A
and investigate positivity phenomena exhibited by these objects, analogous to Theorem~\ref{thm:standard_characters}. 
\end{problem}

\bibliographystyle{alpha}
\bibliography{crystals}

\end{document}